\tikzset{>=stealth}
\newtheorem{theorem}{Theorem}[section]
\newtheorem{proposition}[theorem]{Proposition}
\newtheorem{lemma}[theorem]{Lemma}
\newtheorem{corollary}[theorem]{Corollary}
\newtheorem{conjecture}[theorem]{Conjecture}
\theoremstyle{definition}
\newtheorem{definition}[theorem]{Definition}
\theoremstyle{remark}
\newtheorem{remark}[theorem]{Remark}
\newtheorem{example}[theorem]{Example}
\tikzset{anchorbase/.style={baseline={([yshift=-0.5ex]current bounding box.center)}},
  tinynodes/.style={font=\tiny,text height=0.5ex,text depth=0.1ex},
  webs/.style={line width=.9,color=black}}
\definecolor{revisions}{RGB}{0,0,0}
\begin{document}
 
\title{Row-column mirror symmetry for colored torus knot homology}
\author{Luke Conners}
\date{}

\begin{abstract}
We give a recursive construction of the categorified Young symmetrizer introduced by Abel-Hogancamp in \cite{AH17} corresponding to the single-column partition. As a consequence, we obtain new expressions for the uncolored $y$-ified HOMFLYPT homology of positive torus links and the $y$-ified column-colored HOMFLYPT homology of positive torus knots. In the latter case, we compare with the row-colored homology of positive torus knots computed by Hogancamp-Mellit in \cite{HM19}, verifying the mirror symmetry conjectures of \cite{GS12} and \cite{GGS18} in this case.
\end{abstract}

\maketitle

\setcounter{tocdepth}{1}
\tableofcontents

\section{Introduction}

Knots and links are central objects of study in low-dimensional topology with deep relationships to representation theory. One particularly fruitful result of this interplay has been the development of quantum link polynomials. These invariants, due to Reshetikhin-Turaev (\cite{RT90}), assign a Laurent polynomial to the input data of a (framed, oriented) link $\mathcal{L} = \mathcal{L}_1 \sqcup \dots \sqcup \mathcal{L}_n$, a semisimple Lie algebra $\mathfrak{g}$, and an assignment to each component $\mathcal{L}_i$ of $\mathcal{L}$ an irreducible representation $V_i$ of the quantized universal enevloping algebra $U_q(\mathfrak{g})$ called the \textit{color} of the component. When $\mathfrak{g} = \mathfrak{sl}_2$ and all components of $\mathcal{L}$ are colored by the defining representation, we recover the \textit{Jones polynomial} of $\mathcal{L}$. The same choice of $\mathfrak{g}$ and different representations $V_i$ recover the various colored Jones polynomials of $\mathcal{L}$. More generally, setting $\mathfrak{g} = \mathfrak{sl}_n$ and coloring each component of $\mathcal{L}$ by the defining representation recovers a specialization of the two-variable HOMFLYPT polynomial; varying the representations $V_i$ again recovers specializations of the colored HOMFLYPT polynomials.

The last two decades have seen a great deal of success in categorifying these polynomial invariants to link homology theories, beginning with Khovanov's categorification of the Jones polynomial. The primary subject of this paper is Khovanov and Rozanky's categorification of the HOMFLYPT polynomial to a triply-graded invariant (\cite{KhR08}, \cite{Kh07}) and its colored variants (\cite{WW17}, \cite{MSV11}, \cite{Cau17}). This package of invariants comes with a wealth of conjectured algebraic structures coming from physical considerations (\cite{DGR06}, \cite{GS12}, \cite{GGS18}) that have inspired a great deal of recent research. Examples of such structures that have been explicitly realized include specialization spectral sequences to colored $\mathfrak{sl_n}$ homology (\cite{Ras15}) and color-reducing spectral sequences on HOMFLPYT homology colored by miniscule representations (\cite{Wed19}).

One such unresolved conjecture concerns the behavior of colored HOMFLYPT homology under a change in color induced by transposition of Young diagrams. Recall that irreducible representations of $U_q(\mathfrak{sl}_n)$ are indexed by partitions $\lambda = (\lambda_1, \dots, \lambda_k) \in \mathbb{Z}_{\geq 1}^k$ with $k < n$. These representations stabilize, in the sense that for each such partition $\lambda$ with $k$ parts, there is an irreducible representation $V_{\lambda, m}$ of $U_q(\mathfrak{sl}_m)$ for each $m > k$. Given a (framed, oriented) knot $\mathcal{K}$, the family of Reshetikhin-Turaev invariants $f_{V_{\lambda, m}}(\mathcal{K})(Q)$ colored by these representations arise as specializations of the two-variable HOMFLYPT polynomial $f^{\lambda}(\mathcal{K})(A, Q)$. Let $\lambda^t$ denote the partition obtained from $\lambda$ by reflecting the corresponding Young diagram across a diagonal axis. There is a \textcolor{revisions}{symmetry relation at this decategorified level due to Liu-Peng \cite{LP09, LP11}, arising from rank-level duality of Chern-Simons theory, which relates} the $\lambda$-colored HOMFLYPT polynomial of $\mathcal{K}$ with its $\lambda^t$-colored analog:

\begin{equation} \label{eq: decat_sym}
    f^{\lambda}(\mathcal{K})(A, Q) = f^{\lambda^t}(\mathcal{K})(A, Q^{-1})
\end{equation}

Equation \eqref{eq: decat_sym} suggests a categorified analog between the corresponding colored HOMFLYPT homology theories. More precisely, for each partition $\lambda$, let $HHH^{\lambda}(\mathcal{K})$ denote the $\lambda$-colored HOMFLYPT homology of $\mathcal{K}$ (we postpone a precise definition of this invariant until \S \ref{sec: coloring}). This is a triply-graded vector space, and its graded dimension can be expressed as a rational polynomial in variables $a, q, t$ (these gradings are described in \S \ref{sec: red_yify} below). After a change of variables 

\begin{align} \label{eq: geom_variables}
	\textcolor{revisions}{(a, q, t) \mapsto (A := aq^{-2}, Q := q^2, T := q^{-2}t^2),}
\end{align}
setting $t = -1$ in this graded dimension recovers the $\lambda$-colored HOMFLYPT polynomial $f^{\lambda}(\mathcal{K})$. This specialization takes $T$ to $Q^{-1}$; as a consequence, replacing $\lambda$ with $\lambda^t$ and interchanging $Q$ and $T$ would categorify the relationship in Equation \eqref{eq: decat_sym}.

\textcolor{revisions}{This conjectural $Q \leftrightarrow T$ symmetry of colored HOMFLYPT homology was first suggested by Gukov-Sto\v{s}i\'{c} in \cite{GS12}. They call this phenomenon ``mirror symmetry" in reference to its interpretation as a consequence of CPS symmetry of certain fivebrane theories; we refer the interested reader to Section 5.3 of that work for more details.}

The main result of this paper is a proof of an appropriate form of \textcolor{revisions}{this mirror symmetry conjecture} for $\mathcal{K}$ a positive torus knot and $\lambda$ a single column (or single row) partition. We delay a precise statement of this theorem until after our description of colored HOMFLYPT homology below. In the meantime, we remark that the most na\"ive formulation of this conjecture cannot be true even in the uncolored case $\lambda = (1)$ without modification; indeed, after the appropriate change of variables, the uncolored HOMFLYPT homology of the unknot \textcolor{revisions}{$\mathcal{U} = \bigcirc$} has graded dimension

\begin{align} \label{eq: unknot_invariant}
\textcolor{revisions}{\text{dim}(HHH(\mathcal{U})) = \cfrac{1 + A}{1 - Q}}
\end{align}

This expression is clearly not symmetric under the interchange $Q \leftrightarrow T$. There are two prevalent solutions to this symmetry breaking, roughly corresponding to killing asymmetric terms and adding symmetrizing terms. The resulting invariants are referred to as \textit{reduced} and \textit{$y$-ified} homology, respectively. Perhaps unsurprisingly, complete knowledge of the $y$-ified (enlarged) invariant often implies complete knowledge of the reduced invariant in a sense that is made precise below. More surprisingly, certain dimensional phenomena often lead to easier computations after passing to $y$-ified homology (c.f. \S \ref{big_sec: yification} and \S \ref{sec: link_hom} of this work). For these reasons, we concern ourselves with $y$-ified homology in this paper. We describe both the reduced and the $y$-ified (uncolored) theory below.

\subsection{Reduction vs. y-ification} \label{sec: red_yify}

We recall the construction of the (unreduced, uncolored) HOMFLYPT homology of a link $\mathcal{L}$. First, choose an $n$-strand braid $\beta \in Br_n$ such that the closure $\hat{\beta}$ recovers $\mathcal{L}$ (this is always possible by Alexander's Theorem, \cite{Al23}). Next, assign to $\beta$ a \textit{Rouquier complex} $F(\beta)$ in the homotopy category $K^b(SBim_n)$ of Soergel bimodules; these are certain $\mathbb{Z}$-graded bimodules over the polynomial ring $R[x_1, \dots, x_n]$. We call the internal grading of these bimodules the \textit{quantum} grading, defined by setting $\text{deg}(x_i) = 2$ for each $i$. (Note: we will often denote the degree of an element multiplicatively; we use the variable $q$ for quantum degree, so that e.g. this requirement becomes $\text{deg}(x_i) = q^2 = Q$. We will similarly reference the \textit{homological grading} on the chain complex $F(\beta)$ using the variable $t$.)

Next, apply Hochschild cohomology termwise to $F(\beta)$ to obtain a complex $HH(F(\beta)) \in K^b(\mathcal{D}(SBim_n))$. \textcolor{revisions}{The terms of $HH(F(\beta))$ in each homological degree are themselves chain complexes in the derived category $\mathcal{D}(SBim_n)$; the internal homological grading of each such complex is distinct from the homological grading of $F(\beta)$. We refer to this third grading} as the \textit{Hochschild grading}, denoted by the variable $a$. By the \textcolor{revisions}{aforementioned} work of Khovanov and Rozansky, the homology of $HH(F(\beta))$ \textcolor{revisions}{depends only on the braid closure $\mathcal{L} = \hat{\beta}$ (not the specific braid presentation $\beta$) and} is exactly the HOMFLYPT homology of the link $\mathcal{L}$, denoted $HHH(\mathcal{L})$. This is a triply-graded vector space, and its dimension is a Laurent polynomial in the variables $a, q, t$.

\subsubsection{Reduced HOMFLYPT Homology} \label{subsubsec: reduce}

The bimodule structure on each chain group in $F(\beta)$ lifts to a bimodule structure on $F(\beta)$ itself. Letting
\textcolor{revisions}{$\sigma_{\beta} \in \mathfrak{S}^n$ denote the permutation associated to $\beta$},
the right action of $x_i$ and the left action of \textcolor{revisions}{$x_{\sigma_{\beta}(i)}$} are homotopic endomorphisms of $F(\beta)$ for each $1 \leq i \leq n$.
\textcolor{revisions}{Upon passing to Hochschild cohomology, the induced right action of $x_i$ and left action of $x_{\sigma_{\beta}(i)}$ on $HH(F(\beta))$ remain homotopic for each $i$, and the induced left and right actions of $x_i$ on $HH(F(\beta))$ for each $i$ become equal on the nose. All homotopic actions on $HH(F(\beta))$ induce \textit{identical} actions on the homology $HHH(\mathcal{L})$, resulting in a well-defined action of $R[\mathbb{X}_{\mathcal{L}}] := R[x_i]_{i \in \pi_0(\mathcal{L})}$ on $HHH(\mathcal{L})$.}

\textcolor{revisions}{We can view this module structure on $HHH(\mathcal{L})$ over $R[\mathbb{X}_{\mathcal{L}}] \simeq \bigotimes_{i \in \pi_0(\mathcal{L})} R[x_i]$ as a module structure over a polynomial ring assigned to each component of $\mathcal{L}$. In fact, more is true; given a choice of component $i \in \pi_0(\mathcal{L})$, the action of $R[x_i]$ on $HHH(\mathcal{L})$ lifts to an action of $HH(R[x_i]) \cong R[x_i] \otimes_R \bigwedge[\eta_i]$, where $\mathrm{deg}(\eta_i) = aq^{-2}$.}

\textcolor{revisions}{The algebra $HH(R[x_i])$ is commonly called the \textit{derived sheet algebra}. We point out that its dimension agrees exactly with the dimension of the unknot invariant $\mathrm{dim}(HHH(\mathcal{U}))$ of Equation \eqref{eq: unknot_invariant}. The presence of an algebra with dimension matching that of the unknot invariant acting on the invariant of an \textit{arbitrary} link is common to many link homology theories\footnote{As is the dependence of this action on a choice of component of $\mathcal{L}$.}; we direct the interested reader to the Introduction of \cite{HRW21} for more details.} The \textit{reduced} HOMFLYPT homology of $\mathcal{L}$ is the triply-graded vector space $\overline{HHH}(\mathcal{L})$ obtained by killing the action of \textcolor{revisions}{$HH(R[x_i])$ in non-zero degrees}.

\begin{remark} \label{rem: red_dim}
\textcolor{revisions}{The careful reader will have noticed that our definition of reduced HOMFLYPT homology depends on a choice of component of $\mathcal{L}$.} In fact, standard arguments show that $HHH(\mathcal{L})$ is free over the derived sheet algebra of a single strand; see \cite{Ras15}\textcolor{revisions}{, particularly the discussion following Definitions 2.13 and 2.14 of that work}. This immediately implies a straightforward relationship between the (graded dimensions of the) reduced and unreduced invariants:

\[
\text{dim}(\overline{HHH}(\mathcal{L})) = \left( \cfrac{1 + A}{1 - Q} \right)^{\textcolor{revisions}{-1}} \text{dim}(HHH(\mathcal{L}))
\]

\textcolor{revisions}{In particular, the dimension of $\overline{HHH}(\mathcal{L})$ is independent of the choice of component of $\mathcal{L}$.}
\end{remark}

Note that $\text{dim}(\overline{HHH}(\textcolor{revisions}{\mathcal{U}})) = 1$, which is clearly symmetric in $Q$ and $T$. The authors in \cite{GS12} consider such reduced constructions, but they deal only with conjectural colored invariants at the physical level of rigor. We return to this point in our discussion of colored homology in \S \ref{sec: coloring}.

\subsubsection{Y-ified Homology}

We now turn to our other option for restoring the conjectural $Q \leftrightarrow T$ symmetry; this is the theory of $y$-ification as developed by Gorsky-Hogancamp in \cite{GH22}. By the discussion above, there is a family of degree $q^2t^{-1}$ homotopies $\{h_i\}_{i = 1}^n$ acting on $F(\beta)$ relating the right action of $x_i$ with the left action of $x_{\beta(i)}$. A direct computation shows that these homotopies can be chosen to satisfy the following identities:

\begin{align} \label{eq: dot_slide_ext1}
\textcolor{revisions}{h_i^2 = 0; \quad h_ih_j + h_jh_i = 0 \quad \text{for } i \neq j}
\end{align}

In particular, this family of homotopies assemble to give an exterior action of $\bigwedge[h_1, \dots, h_n]$ on $F(\beta)$. By Koszul duality, we can transform this exterior action into a polynomial action on a \textit{curved} complex (see \cite{GH22} and \S \ref{big_sec: yification} below). More precisely, we let $R[\mathbb{Y}]$ denote a polynomial algebra in formal variables $y_1, y_2, \dots, y_n$ of degree $\text{deg}(y_i) = q^{-2}t^2 = T$. We consider $R[\mathbb{Y}]$ as a chain complex with trivial differential. Define $F^y(\beta)$ to be the curved complex with the same underlying chain groups as $F(\beta) \otimes_R R[\mathbb{Y}]$ together with the degree $t$ endomorphism (called the \textit{connection})

\[
\delta_{F(\beta)} := d_{F(\beta)} \otimes 1 + \sum_{i = 1}^n h_i \otimes y_i
\]

Letting $x_i'$ denote the right action of $x_i$ on $F^y(\beta)$, a straightforward computation using \textcolor{revisions}{Equation \eqref{eq: dot_slide_ext1}} gives

\[
\delta_{F(\beta)}^2 = \sum_{i = 1}^n (x_{\beta(i)} - x_i') \otimes y_i
\]

Upon applying Hochschild cohomology and identifying $y$ variables associated to the same link component, what remains is a \textit{bona fide} chain complex. The (triply-graded) homology of this complex is again an invariant of the braid closure $\mathcal{L} = \hat{\beta}$, denoted $HHH^y(\mathcal{L})$. In the special case of the unknot, we obtain

\[
\text{dim}(HHH^y(\textcolor{revisions}{\mathcal{U}})) = \cfrac{1 + A}{(1 - Q)(1 - T)}
\]

Notice that this expression is again symmetric under $Q \leftrightarrow T$. The factors of $(1 - Q)^{-1}$ and $(1 - T)^{-1}$ in this expression come from the free polynomial actions of $R[x_1]$ and $R[y_1]$, respectively; in this context, one can think of the conjectural mirror symmetry as arising from an automorphism on \textcolor{revisions}{the triply-graded module $HHH^y(\mathcal{L})$ interchanging the actions of the alphabets} $\mathbb{X}$ and $\mathbb{Y}$. Indeed, this vision has been realized in the uncolored setting by recent work of Gorsky-Hogancamp-Mellit and Oblomkov-Rozansky (\cite{GHM21}, \cite{OR17}, \cite{OR18}, \cite{OR19a}, \cite{OR19b}, \cite{OR20}).

\begin{remark} \label{rem: yify_free_dim}
Just as the reduced homology can be recovered from the unreduced homology by killing the polynomial action by $R_{\mathcal{L}}$, the usual, un $y$-ified homology can be recovered from the $y$-ified homology by killing the action of $R[\mathbb{Y}]$. More precisely, by the same arguments as above, $HHH^y(\mathbb{L})$ carries both an action of $R_{\mathcal{L}}$ and a polynomial action of one $y$ variable for each component of $\mathcal{L}$; we refer to this latter polynomial ring in $y$ variables as $R[\mathbb{Y}_{\mathcal{L}}]$. The $R_{\mathcal{L}}$-module obtained from this larger module structure by setting each $y$ variable to $0$ is exactly $HHH(\mathcal{L})$.

Notice that this reduction from $HHH^y$ to $HHH$ depends on the module structure of $HHH^y$. In \cite{GH22}, those authors give a variety of conditions under which $HHH^y(\mathcal{L})$ is a free module over the alphabet $\mathbb{Y}$; in those cases, analogously to the passage from $HHH$ to $\overline{HHH}$, the graded dimensions of the two theories are simply related by dividing by the dimension of $R[\mathbb{Y}_{\mathcal{L}}]$.
\end{remark}

A direct application of Remarks \ref{rem: red_dim} and \ref{rem: yify_free_dim} gives the following proposition.

\begin{proposition} \label{prop: yify_to_red}
Let $\mathcal{L}$ be an $r$-component link, and suppose $\textcolor{revisions}{HHH^y(\mathcal{L}})$ is a free module over $R[\mathbb{Y}_{\mathcal{L}}]$. Then we have

\[
\text{dim}(\overline{HHH}(\mathcal{L})) = \left(\cfrac{1 + A}{1 - Q}\right)^{\textcolor{revisions}{-1}} (1 - T)^r \text{dim}(HHH^y(\mathcal{L}))
\]
\end{proposition}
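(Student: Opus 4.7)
The plan is to assemble the proposition as a direct combination of Remarks \ref{rem: red_dim} and \ref{rem: yify_free_dim}, with the freeness hypothesis (which I read as being on $HHH^y(\mathcal{L})$ over $R[\mathbb{Y}_{\mathcal{L}}]$, since the module structure discussed in Remark \ref{rem: yify_free_dim} is the one that lives on $y$-ified homology) doing all of the real work.

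First I would use the freeness hypothesis to pass from $HHH^y$ to $HHH$. Since $\mathcal{L}$ has $r$ components, the ring $R[\mathbb{Y}_{\mathcal{L}}]$ is a polynomial ring in $r$ variables, each of degree $T$, so its graded dimension is $(1-T)^{-r}$. Freeness then lets us write $HHH^y(\mathcal{L}) \cong HHH(\mathcal{L}) \otimes_R R[\mathbb{Y}_{\mathcal{L}}]$ as graded vector spaces, and the reduction-to-$HHH$ procedure of Remark \ref{rem: yify_free_dim} (setting each $y$ variable to zero) recovers $HHH(\mathcal{L})$ on the nose. Taking graded dimensions gives
\[
\text{dim}(HHH(\mathcal{L})) = (1-T)^r \, \text{dim}(HHH^y(\mathcal{L})).
\]
Next I would apply Remark \ref{rem: red_dim} to pass from $HHH$ to $\overline{HHH}$, yielding
\[
\text{dim}(\overline{HHH}(\mathcal{L})) = \left(\cfrac{1+A}{1-Q}\right) \text{dim}(HHH(\mathcal{L})).
\]
Substituting the first identity into the second gives the claimed formula.

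There is no genuine obstacle to overcome here: the two input remarks already encode the only non-trivial content (namely, that $HHH(\mathcal{L})$ is always free over the derived sheet algebra of a single strand, and that killing $\mathbb{Y}_{\mathcal{L}}$ on $HHH^y(\mathcal{L})$ returns $HHH(\mathcal{L})$). The proposition is really just the observation that when both freeness statements hold simultaneously, the multiplicative corrections $\frac{1+A}{1-Q}$ and $(1-T)^r$ commute in the expected way at the level of graded dimensions, so one may chain the two reductions without loss.
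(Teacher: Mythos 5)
Your proposal is correct and matches the paper's own argument, which is literally stated as "a direct application of Remarks \ref{rem: red_dim} and \ref{rem: yify_free_dim}": freeness over $R[\mathbb{Y}_{\mathcal{L}}]$ gives the $(1-T)^r$ factor relating $HHH^y$ and $HHH$, and Remark \ref{rem: red_dim} supplies the $\frac{1+A}{1-Q}$ factor relating $HHH$ and $\overline{HHH}$. Your reading of the hypothesis as freeness of $HHH^y(\mathcal{L})$ over $R[\mathbb{Y}_{\mathcal{L}}]$ is also the intended one, consistent with Remark \ref{rem: yify_free_dim}.
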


In particular, let $r = 1$, so that $\mathcal{L}$ is a \textit{knot}. \textcolor{revisions}{Then $HHH^y(\mathcal{L})$ is known to be a free $R[\mathbb{Y}_{\mathcal{L}}]$-module by results of \cite{GH22}. As a consequence, if $HHH^y(\mathcal{L})$ is symmetric under the interchange $Q \leftrightarrow T$, $\overline{HHH}(\mathcal{L})$ is also symmetric under this interchange}.

In this uncolored setting, our work provides an explicit computation of $HHH^y(T(m, n))$ for all positive torus links $T(m, n)$. While the resulting expression is not manifestly symmetric under \textcolor{revisions}{the interchange }$Q \leftrightarrow T$, it does arise from a known expression for $HHH^y(T(m, n))$ as computed by Hogancamp-Mellit in \cite{HM19} upon  application of exactly this symmetry (when working in a coefficient field). As a consequence, we obtain a new proof of the uncolored variant of our main theorem:

\begin{theorem}
Let $R$ be a field. Then for all $m, n \geq 0$, the $y$-ified HOMFLYPT homology of the positive torus link $T(m, n)$ satisfies

\[
\text{dim}(HHH^y(T(m, n)))(A, Q, T) = \text{dim}(HHH^y(T(m, n)))(A, T, Q)
\]

up to an overall normalization.
\end{theorem}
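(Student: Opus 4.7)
The plan is to prove the symmetry by comparing two distinct closed-form expressions for $\text{dim}(HHH^y(T(m,n)))$. One is the new formula $F_{\text{col}}(A,Q,T)$ obtained in this paper from the recursive column-symmetrizer construction; the other is the formula $F_{\text{HM}}(A,Q,T)$ of Hogancamp-Mellit from \cite{HM19}, built in parallel from row symmetrizers. Since both compute the same triply-graded invariant, the strategy is to recognize that the two formulas are related precisely by the interchange $Q \leftrightarrow T$, up to the overall normalization mentioned in the theorem.

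First I would deploy the recursive construction of the single-column categorified Young symmetrizer developed earlier in the paper. Plugging this recursion into the standard projector-insertion computation of $HHH^y$ for a positive torus link yields an explicit rational generating function $F_{\text{col}}(A,Q,T)$; because the $y$-variables, which enter only through the column recursion, carry degree $T$ while the $x$-variables carry degree $Q$, the resulting expression naturally presents column-shaped combinatorics with the $Q$- and $T$-gradings playing asymmetric roles. I would then recall $F_{\text{HM}}(A,Q,T)$ from \cite{HM19}, which enjoys a parallel structure with the roles of columns and rows swapped.

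The central step is the combinatorial identification
\[
F_{\text{col}}(A,Q,T) = F_{\text{HM}}(A,T,Q),
\]
verified by matching terms under transposition of the indexing Young diagrams: the row/column switch interchanges the contributions of $x$- and $y$-variables and thus the $Q$- and $T$-gradings, while the $A$-gradings (coming from Hochschild degree) are untouched. Once this identification is in hand, the theorem is immediate, since both $F_{\text{col}}$ and $F_{\text{HM}}$ equal $\text{dim}(HHH^y(T(m,n)))$, and therefore
\[
\text{dim}(HHH^y(T(m,n)))(A,Q,T) = F_{\text{HM}}(A,T,Q) = \text{dim}(HHH^y(T(m,n)))(A,T,Q)
\]
up to normalization.

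The main obstacle is the combinatorial identification above. The two formulas arise from genuinely different recursions, so matching them term by term requires careful bookkeeping of hook-length factors, homological and Hochschild shifts, and the way the transposition bijection on the indexing sets interacts with each grading. The field hypothesis on $R$ enters precisely at this stage: over a field one has enough freeness and flatness for $HHH^y$ (in the spirit of Remark \ref{rem: yify_free_dim} and Proposition \ref{prop: yify_to_red}) that both sides genuinely compute graded dimensions of homology, so the comparison reduces to an identity of rational generating functions rather than a more delicate chain-level statement.
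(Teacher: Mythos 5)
Your overall architecture does match the paper's: compute $\mathcal{P}^y(T(m,n))$ once via the column-projector technology of this paper and once via the Hogancamp--Mellit row-projector computation of \cite{HM19}, and observe that the two answers are exchanged by $Q \leftrightarrow T$. But your central step --- the identity $F_{\text{col}}(A,Q,T)=F_{\text{HM}}(A,T,Q)$, ``verified by matching terms under transposition of the indexing Young diagrams'' --- is precisely the content of the theorem, and the mechanism you propose for it does not exist in this uncolored setting. Neither computation is indexed by Young diagrams or involves hook-length factors, and neither produces a closed rational form that could be matched term by term: the states are binary sequences $v,w$ together with a permutation $\sigma$ recording how the finite projector has been twisted, and the invariant is only pinned down recursively. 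What actually makes the comparison work, and what is missing from your proposal, is (i) the $y$-ified Markov-type moves for the twisted finite column projector $\hat{K}^y_{n,\sigma}$ (Proposition \ref{prop: renorm.markov}): its partial trace contributes $(Q^{n-1}+A)/(1-Q)$ or $Q^{-1}t(Q^{n-1}+A)$ according to whether $\sigma(n)=n$, and $\hat{K}^y_{n-1,\sigma}\hat{F}^y(J_n)$ is a two-term convolution with shifts $Q^{-(n-1)}$ and $Q^{-(n-1)}T$ --- coefficients that are exactly the $Q\leftrightarrow T$ mirrors of the row-projector relations; and (ii) the packaging of both computations as recursions in $(v,w,\sigma)$ that \emph{uniquely determine} the families $p_\sigma(v,w)$ and $p'_{y,\sigma}(v,w)$ (Theorems \ref{thm: mainrecursion} and \ref{thm: hmrecursion}), so that equality after the swap follows from uniqueness of the solution, not from any term matching. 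Point (i) is also where $y$-ification is indispensable: the un-$y$-ified finite column projector fails the required parity (its trace yields two copies of $K_{n-1}$ separated by an odd homological shift, Proposition \ref{prop: ylessktrace}), which is exactly the obstruction your sketch glosses over.

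You locate the field hypothesis in roughly the right place, but its role is narrower than ``both sides genuinely compute graded dimensions.'' The column-side recursion is $y$-ified from the start and needs no field. The field enters only on the Hogancamp--Mellit side: their computation shows $HHH(T(m,n))$ is concentrated in even homological degree, and the parity theorem of \cite{GH22} (which requires a coefficient field) then gives $\mathcal{P}^y(T(m,n))=(1-T)^{-c}\,\mathcal{P}(T(m,n))$, which is what upgrades their un-$y$-ified recursion to a recursion computing the $y$-ified invariant. Without that step there is no second computation of $\mathcal{P}^y$ to compare against, and the argument does not close.
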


This result is not new in the uncolored setting and follows from the \textcolor{revisions}{aforementioned} work of Gorsky-Hogancamp-Mellit and Oblomkov-Rozansky. We also point out that $HHH^y(T(m, n))$ is known to be a free module over $R[\mathbb{Y}_{T(m, n)}]$ for $R$ a field; hence by Proposition \ref{prop: yify_to_red}, we obtain the corresponding symmetry statement for reduced homology.

\subsection{Colored HOMFLYPT Homology} \label{sec: coloring}

To motivate our discussion of colored HOMFLYPT homology, we first recall the construction of the colored HOMFLYPT polynomial. We let $H_n$ denote the Type $A_{n - 1}$ Hecke algebra throughout. This is a $\mathbb{C}(q)$-algebra generated by elements $s_1, \dots, s_{n - 1}$ subject to the relations

\begin{align*}
    s_i^2 & = (q - q^{-1})s_i + 1; \\
    s_is_{i + 1}s_i & = s_{i + 1}s_is_{i + 1} \quad \text{for } 1 \leq i \leq n - 2; \\
    s_is_j & = s_js_i \quad \text{for } |i - j| \geq 2
\end{align*}

Notice that $H_n$ is a quotient of the braid group algebra $\mathbb{C}(q)[Br_n]$ by the first quadratic relation above; in particular, $H_n$ is a representation of $Br_n$ under the map $\pi$ sending each positive Artin generator $\sigma_i \in Br_n$ to $s_i \in H_n$. There exists a linear map $\chi \colon H_n \to \mathbb{C}(a, q)$ called the \textit{Jones-Ocneanu trace} satisfying properties compatible with the relationship between braids and link closures (\cite{Jon87}); this is the decategorified analog of applying Hochschild cohomology to obtain HOMFLYPT homology. Given a braid representative $\beta$ of $\mathcal{L}$, the HOMFLYPT polynomial of $\mathcal{L}$ is defined as $f(\mathcal{L}) = \chi(\pi(\beta))$ (up to an overall normalization and change of variables).

To define the \textit{colored} HOMFLYPT polynomial, recall that there exist a family of central idempotents $p_{\lambda} \in H_n$ called \textit{Young symmetrizers} indexed by partitions\footnote{These Young symmetrizers decompose as sums $p_{\lambda} = \sum_{T} p_T$ of ``primitive" idempotents $p_T$, where the sum is indexed by standard tableaux $T$ of shape $\lambda$. Since we are interested here in the cases in which $\lambda$ is a single row or a single column, for which there is only a single such choice $T$, we do not concern ourselves with this distinction.} $\lambda$ of $n$. Given a (framed) link $\mathcal{L} = \mathcal{L}_1 \sqcup \dots \sqcup \mathcal{L}_r$, the $(\lambda_1, \dots, \lambda_r)$-colored HOMFLYPT polynomial is obtained by the following procedure.

\begin{enumerate}
    \item Pick a braid representation $\beta$ of $\mathcal{L}$.
    \item Choose a collection of $r$ marked points, one on each component of $\mathcal{L}$, away from the crossings of $\beta$.
    \item Replace each $\lambda_i$-colored strand with $|\lambda_i|$ parallel strands.
    \item Insert a copy of $p_{\lambda_i}$ at the corresponding marked points of $\pi(\beta)$.
    \item Apply $\chi$ to the result of the previous step.
\end{enumerate}

The resulting Laurent polynomial can be shown to be independent of choice of $\beta$ and marked points; up to normalization, this is the colored HOMFLYPT polynomial $f^{\lambda_1, \dots, \lambda_r}(\mathcal{L})$.

Colored HOMFLYPT homology is obtained from a categorification of this \textcolor{revisions}{cabling and insertion process, as outlined in e.g.} \cite{CK12} for the $\mathfrak{sl}_2$ case of Khovanov homology \textcolor{revisions}{and \cite{Cau17} for a different variation of colored HOMFLYPT homology}. Most of the required technology is provided by the Rouquier complex and Hoschschild cohomology mentioned above; the cabling procedure proceeds exactly as in the decategorified case. What remains is to identify a categorified analog of the Young symmetrizers $p_\lambda$. These \textit{categorified projectors} are semi-infinite complexes $P_{\lambda} \in \textcolor{revisions}{K^-(SBim_n)}$ of Soergel bimodules.
\textcolor{revisions}{The family of Young symmetrizers $\{p_{\lambda}\}$ are idempotent, pairwise orthogonal, and add to the identity element of $H_n$; the family of projectors $\{P_{\lambda}\}$ enjoy similar categorical properties in $K^-(SBim_n)$. W}e refer the interested reader to (\cite{EH17a}, \cite{EH17b}) for details. \textcolor{revisions}{In particular}, the complexes $P_{\lambda}$ enjoy certain idempotency and centrality properties that enable a coherent, choice-independent construction of colored homology theories $HHH_{\lambda}$ by exactly the categorical analog of the procedure described above.

In this work, we deal with the cases in which $\lambda = (n)$ is a single row partition and $\lambda = (1^n)$ a single column partition; we refer to these as \textit{row-colored} and \textit{column-colored} homology, respectively. For \textcolor{revisions}{$\mathcal{L}$ a colored link}, we label the corresponding homology theories $HHH_{\text{Sym}^n}(\textcolor{revisions}{\mathcal{L}})$ and $HHH_{\bigwedge^n}(\textcolor{revisions}{\mathcal{L}})$ to highlight the connection with the corresponding representations. The corresponding categorified projectors in the setting of HOMFLYPT homology were first constructed and studied by Hogancamp in \cite{Hog18} in the case of $P_{(n)}$ and by Abel-Hogancamp\footnote{In fact we work primarily with the dual projector $P^\vee_{(1^n)}$ for reasons explained below.} in \cite{AH17} in the case of $P_{(1^n)}$. The projectors $P_{\lambda}$ were constructed for general $\lambda$ by Elias-Hogancamp in (\cite{EH17a}, \cite{EH17b}).

To formulate a plausible mirror symmetry conjecture for this colored homology package again requires either reduction or $y$-ification. We concern ourselves with $y$-ified homology in this work; we precisely formulate the mirror symmetry conjecture of interest to us below.

\begin{conjecture} \label{conj: mirr_sym}
There exist $y$-ifications $P^y_{\lambda}$ of the categorified projectors $P_{\lambda}$. Given an $r$-component link $\mathcal{L} = \mathcal{L}_1 \sqcup \dots \sqcup \mathcal{L}_r$, these $y$-ified projectors give rise to $y$-ified colored HOMFLYPT homology theories $HHH^y_{\lambda_1, \dots, \lambda_r}(\mathcal{L})$. These colored homology theories satisfy

\[
\text{dim}(HHH^y_{\lambda_1, \dots, \lambda_r}(\mathcal{L}))(A, Q, T) = \text{dim}(HHH^y_{\lambda_1^t, \dots, \lambda_r^t}(\mathcal{L}))(A, T, Q)
\]
\end{conjecture}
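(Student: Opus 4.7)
The plan is to attack the conjecture in its most tractable form, namely for a positive torus knot $\mathcal{K} = T(m,n)$ colored by a single-row $\lambda = (n)$ or single-column $\lambda = (1^n)$ partition, where transposition swaps these two cases. The three things that need doing are: (i) constructing the $y$-ified projectors $P^y_{(n)}$ and $P^y_{(1^n)}$; (ii) using them to define $HHH^y_{(n)}(\mathcal{K})$ and $HHH^y_{(1^n)}(\mathcal{K})$ by cabling, projector insertion, and Hochschild cohomology; and (iii) checking that the resulting graded dimensions are interchanged by $Q \leftrightarrow T$.

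First, I would construct the $y$-ified projectors within the Gorsky-Hogancamp framework of curved complexes from \cite{GH22}. For the single-row case, Hogancamp's projector $P_{(n)}$ from \cite{Hog18} already admits a natural curved enhancement via the anticommuting homotopies between left and right dot actions described in \S1.1.2. For the single-column case, the Abel-Hogancamp projector $P_{(1^n)}$ from \cite{AH17} is only implicitly defined, so I would seek a recursive description, in the spirit of Hogancamp's inductive construction of $P_{(n)}$, in which $P^y_{(1^n)}$ is built from $P^y_{(1^{n-1})}$ together with an explicit curved Koszul-dual ingredient. Working with the dual projector $P^\vee_{(1^n)}$ (as flagged in the footnote) appears essential, since the na\"ive projector is bounded above rather than below in homological degree and is thus poorly suited to termwise $y$-ification.

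Second, I would compute both sides for $T(m,n)$. Insert $P^y_{(n)}$ (respectively $P^y_{(1^n)}$) on the $n$ parallel cables of $T(m,n)$, apply Hochschild cohomology, and extract the homology. The row-colored side is already handled by Hogancamp-Mellit in \cite{HM19}, so the new content is computing the column-colored side. Here I would leverage the recursive description of $P^y_{(1^n)}$ from the previous step to reduce the torus computation to a cascade of simpler building blocks, in analogy with the Hogancamp-Mellit approach but adapted to the column setting. The output should be a closed-form rational function in $A, Q, T$ for $\mathrm{dim}\, HHH^y_{(1^n)}(T(m,n))$.

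The main obstacle will be step (iii): the two closed-form expressions are unlikely to be manifestly equal under $Q \leftrightarrow T$, and so the final step must be a nontrivial combinatorial identity verifying the symmetry of graded dimensions. I expect this identity to be accessible via Hilbert-series or character-theoretic manipulations, since the underlying decategorified symmetry \eqref{eq: decat_sym} is classical, but matching the two rational expressions term by term is the real work. For general $\lambda$, even step (i) is wide open and no tractable computation of $HHH^y_\lambda(T(m,n))$ is presently available, so the full conjecture remains out of reach by this route; I would content myself with the single-row/single-column case on positive torus knots as a first nontrivial confirmation.
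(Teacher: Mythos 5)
Your overall route is the paper's: work with the dual column projector, give it a recursive construction in the spirit of Hogancamp's row projector, $y$-ify it, and compute the column-colored invariant of $T(m,n)$ by an adaptation of the Hogancamp--Mellit recursion, comparing against the row-colored side. The essential divergence is at your step (iii). You anticipate producing two closed-form rational functions and then proving a nontrivial combinatorial identity to match them under $Q \leftrightarrow T$; the paper never does this and never needs to. Instead it shows that the column-colored $y$-ified invariants of torus links are computed by a family of power series $p_\sigma(v,w)$ satisfying an explicit recursion (Theorem \ref{thm: mainrecursion}), and that the $y$-ified row-colored invariants are computed by the series $p'_{y,\sigma}(v,w)$ satisfying literally the same recursion with $Q$ and $T$ interchanged (Theorem \ref{thm: hmrecursion}); since the recursions determine the series uniquely, the symmetry is immediate, with no term-by-term matching of rational expressions. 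This is worth internalizing: the "real work" you locate in the final identity is dissolved by comparing recursions rather than their solutions, and in practice no tractable closed form for $\mathrm{dim}\,HHH^y_{\bigwedge^k}(T(m,n))$ is written down at all.

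Two further points your plan glosses over. First, \cite{HM19} computes the \emph{un-$y$-ified} row-colored homology; passing to the $y$-ified row invariant requires the parity/freeness mechanism of \cite{GH22} (the paper's Proposition \ref{prop: row_color_free}), which forces coefficients in a field and is why Theorem \ref{thm: maintheorem_intro} is not claimed over $\mathbb{Z}$. Second, the naive curved enhancement of $P_{(n)}$ via dot-sliding homotopies is not automatic: the paper constructs $P^y_{(n)}$ by an obstruction-theoretic induction using evenness of $H^\bullet(\mathrm{Hom}(R_n, FT_n))$, and even then does not prove it is a unital idempotent or slides past crossings, so the row-colored $y$-ified theory is pinned down for torus links only through the same parity arguments rather than through a fully functorial definition. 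Neither point derails your strategy, but both are load-bearing in the actual argument.
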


Partial progress towards $y$-ified colored homology was made by Elbehiry in \cite{Elb22} in the form of a $y$-ification of $P_{(1^n)}$, though that author does not verify the properties required to formulate a well-defined homology theory (namely, that $P_{(1^n)}^y$ is a counital idempotent \textcolor{revisions}{in the sense of \cite{Hog17}} and slides past crossings). We carry out that verification in this work, obtaining the following result.

\begin{theorem}
There exists a $y$-ification $P^y_{(1^n)}$ of the column projector $P_{(1^n)}$. This $y$-ification is a counital idempotent, and its dual $(P^y_{(1^n)})^{\vee}$ gives rise to a column-colored $y$-ified link homology theory $HHH^y_{\bigwedge^n}$.
\end{theorem}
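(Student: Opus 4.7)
The plan is to construct $P^y_{(1^n)}$ recursively, mirroring Abel-Hogancamp's inductive construction of $P_{(1^n)}$ but carried out inside the curved category of $y$-ified Soergel bimodules set up via Koszul duality in \S\ref{big_sec: yification}. I take $P^y_{(1^1)} = \mathbf{1}$ with trivial connection as the base case, and at step $n$ I express $P^y_{(1^n)}$ as a twist of a natural extension of $P^y_{(1^{n-1})}$ to $n$ strands by a connection that combines the un-$y$-ified differential of Abel-Hogancamp with new curvature contributions of the form $h_i \otimes y_i$ built from the dot-sliding homotopies. The key chain-level task at each step is to identify the correct connection and to verify that its square lies in the expected curvature ideal generated by $x_{\beta(i)} - x_i'$.

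To show the resulting complex is a counital idempotent, I would argue by induction. A counit $\eta \colon P^y_{(1^n)} \to \mathbf{1}$ can be read off from the recursion, and I would then establish an \emph{absorption} property: any $X$ arising as an iterated cone of Soergel bimodules annihilated by the decategorified column symmetrizer satisfies $X \star P^y_{(1^n)} \simeq 0$. Combined with the counit, absorption implies that $\eta \star \mathrm{id}$ and $\mathrm{id} \star \eta$ are homotopy equivalences, which is the defining condition of a counital idempotent in the sense of Hogancamp. Uniqueness of counital idempotents with a prescribed decategorification should then match the complex produced here with the candidate of \cite{Elb22}, so that his formulas become available.

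Centrality (sliding past crossings) should follow by a similar absorption argument: the difference $\sigma_i \star P^y_{(1^n)} - P^y_{(1^n)} \star \sigma_i$ can be assembled from column-annihilating pieces, and so is killed upon composition with $P^y$. Once centrality and counital idempotency are in hand, the column-colored $y$-ified homology theory $HHH^y_{\bigwedge^n}$ is defined by the standard cabling recipe: replace each colored strand by the appropriate number of parallel strands, insert $(P^y_{(1^n)})^{\vee}$ at a chosen marked point on each component, apply Hochschild cohomology, and identify the $y$-variables associated to the same link component. Invariance under choice of braid representative and of marked points is then formal from counital idempotency and centrality, exactly as in the un-$y$-ified colored theories.

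I expect the main obstacle to be the curved version of centrality. The connection $\sum_i h_i \otimes y_i$ interacts nontrivially with braid crossings, since the $y$-variables must be relabeled according to the permutation underlying the braid and the homotopies $h_i$ carry their own compatibility relations with crossing maps. Verifying that absorption persists after tensoring with Rouquier complexes, and that the cone decompositions used in the un-$y$-ified Abel-Hogancamp argument lift cleanly through the Koszul-dual curving procedure, will require careful bookkeeping of $y$-indexing and a systematic extension of the chain-level formulas of \cite{AH17} into the curved setting.
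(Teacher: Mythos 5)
Your overall architecture (a recursive construction, a counit plus an absorption property, then cabling with the dual projector) does parallel the paper's, but the step that actually constitutes the theorem is missing: you never say how absorption is to be \emph{proved} once everything is curved. The Abel--Hogancamp argument you intend to mirror establishes the killing properties of the cone of the counit through acyclicity, quasi-isomorphism, and projective-resolution arguments, and these are precisely the tools that do not survive $y$-ification: a curved complex has $\delta^2 = Z \neq 0$, so there is no homology and no notion of quasi-isomorphism, and ``iterated cones of column-annihilating bimodules die on $P^y$'' cannot be checked as in \cite{AH17}. The paper's resolution has three ingredients you would need to supply: (a) pass to the dual, \emph{unital} idempotent $(P^y_{(1^n)})^{\vee}$ so that all complexes are bounded below in homological degree --- this is what makes the $\mathbb{Y}$-power-series morphisms converge and what the contractibility transfer requires; (b) lift the (co)unit and the comparison with Elbehiry's model using the obstruction theory of Proposition \ref{prop: GHobs}, whose hypothesis (homology of the relevant Hom complex concentrated in non-negative degrees) is the input from \cite{AH17} --- note that uniqueness of counital idempotents is relative to the pair of properties (P1)/(P2) for the tensor ideal, not to a prescribed decategorification as you suggest; and (c) deduce curved contractibility from uncurved contractibility via Lemma \ref{lem: y_contract}, since $C[\mathbb{Y}] \otimes \text{Cone}(\tilde{\nu}_n)$ is a $y$-ification of the contractible bounded-below complex $C \otimes \text{Cone}(\nu_n)$. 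Without (a)--(c) your absorption step is an assertion, not an argument.

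Second, you locate the chain-level difficulty in the wrong place. Verifying that a candidate connection on the Elbehiry-type model squares to the correct curvature is the easy part (it is essentially \cite{Elb22}); the hard part is showing that the $y$-ified projector carries the recursive, $\mathbb{Z}[u_2,\dots,u_n]$-filtered structure at all, i.e.\ constructing the curved lifts of the recursion maps ($\overline{\alpha}^y$ and $\tilde{\alpha}^y$ in Theorem \ref{thm: yprojtop}), which requires the curved fork-slide/dot-slide compatibilities of Appendix \ref{app: dot_slide_nat} and a long degree-by-degree closure verification. That structure is not optional: the convolution description of Proposition \ref{prop: dual_y_struc} drives both the inductive proof of the idempotency property and the crossing-slide statement of Proposition \ref{prop: inf_cross_slide}, which is what gives independence of the marked point in your cabling recipe. (By contrast, commutation with arbitrary Rouquier complexes is formal from \cite{Hog17} once the tensor-ideal idempotency is in place, so that half of your ``curved centrality'' worry is the easier one.) As written, the proposal defers exactly the steps whose execution is the content of the theorem.
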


In the case that $\mathcal{K}$ is a positive torus knot, we are able to explicitly compute $HHH^y_{\bigwedge^n}(\mathcal{K})$. We reproduce the result below. In what follows, \textcolor{revisions}{$\pi_{(10^l)}$ denotes a shuffle permutation; see Definition \ref{def: shuffle}.}

\begin{theorem} \label{thm: mainrecursion_intro}
For each pair $v \in \{0, 1\}^{l + m}$, $w \in \{0, 1\}^{l + n}$ and each $\sigma \in \mathfrak{S}^l$, let $p_{\sigma}(v, w) \in \mathbb{N}[[a, q, t]]$ be the family of power series satisfying the following properties:

\begin{enumerate}
	\item \textcolor{revisions}{$p_e(0^m, \emptyset) = \left( \cfrac{1 + A}{(1 - Q)(1 - T)} \right)^m$ and $p_e(\emptyset, 0^n) = \left( \cfrac{1 + A}{(1 - Q)(1 - T)} \right)^n$.}
	
    \item $p_e(0^m1, 0^n1) = \left( \cfrac{1 + A}{(1 - Q)(1 - T)} \right) p_e(0^m, 0^n)$ for all $m, n \geq 0$;
    
    \item $p_{\sigma}(v1, w1) = \left( \cfrac{Q^l + A}{1 - Q} \right) p_{Tr_{l + 1}(\sigma)}(v, w)$ if $|v| = |w| = l \geq 1$ and $\sigma(n) = n$;
    
    \item $p_{\sigma}(v1, w1) = \left( Q^l + A \right) p_{Tr_{l + 1}(\sigma)}(v, w)$ if $|v| = |w| = l \geq 1$ and $\sigma(n) \neq n$;
    
    \item $p_{\sigma}(v0, w1) = p_{\sigma \textcolor{revisions}{\pi_{(10^{l - 1})}}}(v, 1w)$ if $|v| = |w| + 1 = l \geq 1$;
    
    \item $p_{\sigma}(v1, w0) = p_{\textcolor{revisions}{\pi_{(10^{l - 1})}} \sigma}(1v, w)$ if $|v| + 1 = |w| = l \geq 1$;
    
    \item $p_e(0^m, 0^n) = p_e(10^{m - 1}, 10^{n - 1})$ \textcolor{revisions}{for all $m, n \geq 1$};
    
    \item $p_{\sigma}(v0, w0) = Q^{-l}p_{\textcolor{revisions}{e_1 \sqcup \sigma}}(1v, 1w) + Q^{-l}Tp_{\sigma}(0v, 0w)$ if $|v| = |w| = l \geq 1$.
\end{enumerate}

These relations uniquely determine $p_{\sigma}(v, w)$ for all possible input data $v, w, \sigma$. Moreover, for each $k, m, n \geq 1$ with $m, n$ coprime, the positive torus knot $T(m, n)$ satisfies

\[
\text{dim}(HHH^y_{\bigwedge^k}(T(m, n))) = \left( \prod_{i = 2}^k \cfrac{1}{1 - Q^{1 - i}T} \right) p_{e}(1^k0^{k(m - 1)}, 1^k0^{k(n - 1)})
\]

More generally, for any $m, n \geq 1$, let $d = \text{gcd}(m, n)$. Then the positive torus link $T(m, n)$ satisfies

\[
\text{dim}(HHH^y_{\bigwedge^k, \bigwedge^1, \dots, \bigwedge^1}(T(m, n))) = \left( \prod_{i = 2}^k \cfrac{1}{1 - Q^{1 - i}T} \right) p_{e}(1^k0^{md^{-1}(d - 1) + k(md^{-1} - 1)}, 1^k0^{nd^{-1}(d - 1) + k(nd^{-1} - 1)})
\]
\end{theorem}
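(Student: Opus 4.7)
The plan is to verify the theorem in two stages. First, I would check that the seven relations uniquely determine $p_\sigma(v,w)$. This follows from a termination argument: define a complexity measure on triples $(\sigma,v,w)$, e.g.\ the lexicographic pair consisting of $|v|+|w|$ and the number of trailing zeros in $v$ and $w$, and verify that rules (2)--(5) strictly decrease $|v|+|w|$ while rule (7) trades trailing zeros for leading ones; iterated applications of rule (7) followed by rules (2)--(5) reduce every input to a pure-zero string, handled by base cases (1) and (6). Consistency of the recursion (independence of the order of rule applications) will be automatic from the second stage, since the homological construction below produces a single well-defined value for each input.

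Second, I would interpret each triple $(\sigma,v,w)$ as encoding an intermediate chain complex in the computation of $HHH^y_{\bigwedge^k}$: a partial cable of the torus braid with dual column projectors $(P^y_{(1^k)})^\vee$ inserted at the top and bottom of a marked point, where $v$ and $w$ record which strands have been absorbed on each side, and $\sigma\in\mathfrak{S}^l$ records the residual permutation on the $l$ unabsorbed strands. In this dictionary, rule (7) is the recursive defining relation of the $y$-ified column projector: the first summand $Q^{-l}p_\sigma(1v,1w)$ comes from the original Abel--Hogancamp projector recursion, while the new $Q^{-l}Tp_\sigma(0v,0w)$ summand captures the contribution of the $y$-ification connection $\sum_i h_i\otimes y_i$, whose $h_i\otimes y_i$ piece has internal bidegree $T$. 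Rules (4) and (5) implement categorical Markov I moves, realized as conjugation of the braid by the cyclic shift $X_{l+1}$; rules (2) and (3) implement Markov II moves closing off a fully absorbed strand, the two cases reflecting whether the closure creates a new link component (the prefactor $(Q^l+A)/(1-Q)$ accounts for the $y$-ified unknot) or merges two components (the prefactor $Q^l+A$ corresponds to identifying the relevant $\mathbb{Y}$ variables). Rules (1) and (6) are base cases, extracting the unknotted cabled components and fixing the starting configuration. With the recursion established, the closed form follows by unwinding it along the braid word $(\sigma_1\cdots\sigma_{n-1})^m$ of the cabled torus braid until one reaches $p_e(1^k0^{k(m-1)},1^k0^{k(n-1)})$; the prefactor $\prod_{i=2}^k(1-Q^{1-i}T)^{-1}$ arises from the free polynomial action of the $y$-variables on the $k-1$ strands internally absorbed by the inserted projector, and the link-valued generalization follows by inserting unlinked copies of $(P^y_{(1)})^\vee$ at the other components and applying the same recursion.

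The main obstacle is the careful verification of rule (7), since this is where $y$-ification genuinely modifies the classical Abel--Hogancamp recursion; one must track the curved differential $\delta_{F^y(\beta)}$ through the partial projector construction and confirm that the $y$-variable contributions reassemble into exactly the additional $Q^{-l}Tp_\sigma(0v,0w)$ term, with no spurious corrections from cross terms in $\delta_{F^y(\beta)}^2$. The remaining Markov-move verifications (rules (2)--(5)) will follow from the sliding and absorption properties of $(P^y_{(1^k)})^\vee$ across crossings and its counital idempotency, established earlier in the paper.
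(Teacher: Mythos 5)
Your broad strategy (a Hogancamp--Mellit-style recursion driven by Markov-type moves for a projector, with uniqueness deferred to the homological construction) matches the paper's, but two of your key mechanisms are misattributed, and these are exactly the places where the proof has real content. First, the intermediate objects in the recursion are not cables decorated with the infinite dual projector $(P^y_{(1^k)})^{\vee}$: they are the complexes $\hat{\mathbf{C}}^y(v,w)_\sigma$ built from the \emph{finite twisted} projectors $\hat{K}^y_{l,\sigma}$, and the coefficients in rules (2), (3) and (7) are precisely the $y$-ified Markov moves for $\hat{K}^y_{l,\sigma}$ (partial trace of the twisted finite projector in the two cases $\sigma(n)=n$ and $\sigma(n)\neq n$, and the convolution $\hat{K}^y_{l,\sigma}\hat{F}^y(J_{l+1})\simeq (Q^{-l}\hat{K}^y_{l+1,\sigma}\to Q^{-l}T\hat{K}^y_{l,\sigma})$). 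The infinite projector enters only at the very end: one uses the $\mathbb{Z}[u_2,\dots,u_k]$-periodicity of $(P^y_{(1^k)})^{\vee}$ together with a parity hypothesis to reduce $HH^y$ of the infinite projector to $HH^y$ of the finite one tensored with $R[u_2,\dots,u_k]$, and self-duality of $K^y_k$ to remove the dual. In particular the prefactor $\prod_{i=2}^k(1-Q^{1-i}T)^{-1}$ is the graded dimension of the ring of \emph{periodic} variables $u_i$ (of degree $Q^{1-i}T$), not a free action of $y$-variables on the internal cable strands; indeed the paper points out that the column-colored invariant is not free over $R[y_1,\dots,y_k]$, so your attribution would give the wrong answer.

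Second, your reading of rule (7) is incorrect: both summands are already present in the un-$y$-ified recursion $K_{l}J_{l+1}\simeq (q^{l-2}tK_{l+1}\to q^{-2}t^2K_{l})$, so the term $Q^{-l}Tp_\sigma(0v,0w)$ is not "the contribution of the connection $\sum_i h_i\otimes y_i$," and trying to verify it by tracking $y$-variable cross terms in $\delta^2_{F^y(\beta)}$ is chasing the wrong mechanism. The genuine role of $y$-ification is elsewhere and is absent from your plan: without it, the partial trace of the finite column projector is $(1+q^2t^{-1})$ times a shifted $K_{l-1}$, and the odd relative homological shift destroys the parity argument needed to collapse the large convolutions into direct sums when passing to homology; after $y$-ification the extra copy is killed ($Tr^y$ of $K^y_l$ is a single shifted copy of $K^y_{l-1}$), parity is preserved throughout, and only then do the homotopy-level relations of rules (1)--(7) descend to the stated identities of graded dimensions. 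Any complete proof must include this parity/degeneration step (and, for your uniqueness argument, note that rule (7) preserves $|v|+|w|$, so termination is really a degree-by-degree convergence statement in the power series ring rather than a strict complexity descent).
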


We also construct a $y$-ified row projector $P^y_{(n)}$. We expect that $P^y_{(n)}$ is a unital idempotent, though we do not prove this fact. Nevertheless, our construction suffices to determine the candidate theory $HHH^y_{\text{Sym}^n}(\mathcal{K})$ for $\mathcal{K}$ a positive torus knot via a modest extension of work of Hogancamp-Mellit \cite{HM19} (again, for $R$ a field). Comparing with Theorem \ref{thm: mainrecursion_intro}, we immediately obtain our primary theorem:

\begin{theorem} \label{thm: maintheorem_intro}
Let $R$ be a field. Then for all $k, m, n \geq 1$, the positive torus link $T(m, n)$ satisfies

\[
\text{dim}(HHH^y_{\text{Sym}^k, \text{Sym}^1, \dots, \text{Sym}^1}(T(m, n)))(A, Q, T) = \text{dim}(HHH^y_{\bigwedge^k, \bigwedge^1, \dots, \bigwedge^1}(T(m, n)))(A, T, Q)
\]

up to an overall normalization.
\end{theorem}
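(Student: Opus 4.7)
The plan is to reduce the theorem to a direct comparison of two explicit recursive formulas. Theorem \ref{thm: mainrecursion_intro} already provides a closed recursive expression for $\text{dim}(HHH^y_{\bigwedge^k, \bigwedge^1, \dots, \bigwedge^1}(T(m,n)))$ in terms of the power series $p_e(v, w)$. The strategy is to produce a matching formula on the row-colored side by a parallel argument and then inspect the two recursions together.

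First, I would extend the Hogancamp-Mellit computation of $HHH^y_{\text{Sym}^k}(T(m,n))$ from \cite{HM19}, which handles the uniformly-colored torus knot case, to the mixed coloring $(\text{Sym}^k, \text{Sym}^1, \dots, \text{Sym}^1)$ required when $d = \text{gcd}(m, n) > 1$. The HM strategy rests on a recursive decomposition of the row projector $P_{(k)}$ combined with the sliding of $P^y_{(k)}$ past a torus braid; inserting $d - 1$ extra unsymmetrized strands only changes the grouping of the cables, so this extension should follow by a routine generalization of their main induction, without requiring a fully general categorical theory of $P^y_{(n)}$.

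Next, I would package the output as a family of power series $\tilde p_\sigma(v, w)$, indexed by the same combinatorial data $(v, w, \sigma)$ appearing in Theorem \ref{thm: mainrecursion_intro}, and show that $\tilde p_\sigma$ satisfies a seven-relation system that is the image of (1)--(7) under the substitution $Q \leftrightarrow T$. Relations (4)--(6) are braid-theoretic cabling moves and are manifestly symmetric in $Q$ and $T$. Relations (1)--(3), which absorb a cabled strand into the projector, acquire their swapped form because the $y$-ified row projector interchanges the roles of the polynomial $\mathbb{X}$-action and the polynomial $\mathbb{Y}$-action as compared to the column projector, up to the $Q \leftrightarrow T$ symmetric factor $(1 + A)$. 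The splitting relation (7) becomes its mirror by the same Koszul-duality interchange between the homotopies $h_i$ and the dot actions $x_i$ that underlies the construction of $y$-ification itself.

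The main obstacle will be verifying relations (1)--(3) and (7) on the row side rigorously. Because we do not establish that $P^y_{(n)}$ is a unital idempotent, we cannot appeal to abstract arguments about central projectors; instead, the verification must be carried out inside the explicit HM recursion, using only the properties of $P^y_{(n)}$ constructed in the body of this paper. Once all seven dual relations are checked, the uniqueness clause of Theorem \ref{thm: mainrecursion_intro} forces $\tilde p_\sigma(v, w)(A, Q, T) = p_\sigma(v, w)(A, T, Q)$, and substituting into the closed formulas for $T(m, n)$ on both sides yields the claimed equality of graded dimensions; the discrepancy between the prefactors $\prod_{i=2}^k (1 - Q^{1-i}T)^{-1}$ and their $Q \leftrightarrow T$ image is absorbed into the overall normalization allowed in the theorem.
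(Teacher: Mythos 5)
Your final comparison step (use the uniqueness clause of the recursion to force $\tilde p_\sigma(v,w)(A,Q,T) = p_\sigma(v,w)(A,T,Q)$ and conclude) is the same endgame as the paper's, but the way you propose to produce the row-side recursion has a genuine gap. You plan to verify the $Q \leftrightarrow T$-swapped relations (1)--(3) and (7) ``inside the explicit HM recursion, using only the properties of $P^y_{(n)}$ constructed in the body of this paper.'' The only property of $P^y_{(n)}$ established here is bare existence: Theorem \ref{thm: yify_row} builds it abstractly, by an obstruction-theoretic lift of the maps $f_k$ in Hogancamp's homotopy colimit of full twists. There is no $y$-ified analogue of Hogancamp's recursion for the row projector, no strict $y$-ified finite row projector $(K'_n)^y$ with a convolution description over $\mathbb{Z}[v_2,\dots,v_n]$, no partial-trace (Markov) formulas for it or its twisted variants, no crossing-slide statement, and no proof that $P^y_{(n)}$ is a unital idempotent. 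These are exactly the ingredients your relations (1)--(3) and (7) would consume on the row side, and they are exactly what Remark \ref{rem: field_coeff} identifies as missing and defers to future work; on the column side, supplying the analogous package is the technical bulk of this entire paper, so calling the row-side extension a ``routine generalization'' of the HM induction papers over the hard part. A symptom of the problem: your argument never uses the hypothesis that $R$ is a field, yet the theorem requires it, and in the paper that hypothesis is forced by the step you have skipped.

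What the paper actually does on the row side is deliberately different: it never runs a $y$-ified recursion with $P^y_{(n)}$. Instead it takes the \emph{un-$y$-ified} row-colored computation of \cite{HM19} as input and upgrades it to the $y$-ified invariant by parity and freeness: since $HHH$ of the relevant complexes is concentrated in even homological degrees, Proposition \ref{prop: row_color_free} (valid over a field, via \cite{GH22}) gives $HHH^y \cong HHH \otimes_R R[\mathbb{Y}]$, and Proposition \ref{prop: inf_red_row} handles the $\mathbb{Z}[v_2,\dots,v_n]$-periodicity. The $y$-ified row recursion of Theorem \ref{thm: hmrecursion} is then a purely formal modification of HM's recursion (track permutations, multiply by $(1-T)^{-1}$ when components merge), and only after that does one observe it is the $Q \leftrightarrow T$ image of Theorem \ref{thm: mainrecursion}. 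If you want to salvage your route, you must either construct the $y$-ified finite row projector with properties (1)--(3) of Remark \ref{rem: field_coeff}, or switch to the parity/freeness reduction. One small further correction: the prefactors $\prod_{i=2}^k(1-Q^{1-i}T)^{-1}$ and $\prod_{i=2}^k(1-QT^{1-i})^{-1}$ are exactly exchanged by $Q \leftrightarrow T$, so nothing there needs to be ``absorbed into the overall normalization''; the allowed normalization is a monomial shift and could not absorb such a factor anyway.
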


To our knowledge, this is the first verification of Conjecture \ref{conj: mirr_sym} for nontrivial $\lambda$.

\begin{example} \label{example: unkknot_mirsym}
We illustrate the mirror symmetry phenomenon of Theorem \ref{thm: maintheorem_intro} for the $k$-colored \textcolor{revisions}{homology of the} unknot \textcolor{revisions}{$\mathcal{U}$}. As a direct consequence of Theorem \ref{thm: mainrecursion_intro}, the $y$-ified, $\bigwedge^k$-colored \textcolor{revisions}{homology of $\mathcal{U}$} has graded dimension

\begin{align}
\text{dim}(HHH^y_{\bigwedge^k})(\textcolor{revisions}{\mathcal{U}}) = \left( \cfrac{1}{1 - Q} \right)^k \left( \prod_{i = 1}^k \cfrac{Q^{i - 1} + A}{1 - Q^{1 - i}T} \right) \label{eq: col_unknot_grdim}
\end{align}

On the other hand, by Example 4.7 in \cite{HM19}, the (un $y$-ified) $\text{Sym}^k$-colored \textcolor{revisions}{homology of $\mathcal{U}$} has graded dimension

\[
\text{dim}(HHH_{\text{Sym}^k})(\textcolor{revisions}{\mathcal{U}}) = \prod_{i = 1}^k \cfrac{T^{i - 1} + A}{1 - QT^{i - 1}}
\]

In particular, this invariant is concentrated in even homological degrees. As a consequence (see our Proposition \ref{prop: row_color_free}), the corresponding $y$-ified invariant is a free module over $R[y_1, \dots, y_k]$. Since each of these $y$ variables has degree $\text{deg}(y_i) = q^{-2}t^2 = T$, we easily obtain the graded dimension of the $y$-ified invariant by multiplication:

\begin{align}
\text{dim}(HHH^y_{\text{Sym}^k})(\textcolor{revisions}{\mathcal{U}}) = \left(\cfrac{1}{1 - T}\right)^k \left(\prod_{i = 1}^k \cfrac{T^{i - 1} + A}{1 - QT^{i - 1}}\right) \label{eq: row_unknot_grdim}
\end{align}

It is clear that interchanging $Q$ and $T$ in either of Equations \eqref{eq: col_unknot_grdim} and \eqref{eq: row_unknot_grdim} recovers the other.
\end{example}

\begin{remark}
There are several distinct categorifications of the colored HOMFLYPT polynomial in the literature. In the case of $\lambda = (1^k)$ a single column, one such theory is obtained by replacing each $(1^k)$-labelled strand with a \textit{colored} braid strand labelled with the positive integer $k$ and replacing the Rouquier complex with the Rickard complex of colored braids. This approach was originally proposed by Mackaay-\textcolor{revisions}{Sto\v{s}i\'{c}}-Vaz in \cite{MSV11} and constructed in general by Webster-Williamson in \cite{WW17}; for clarity, we refer to this colored homology theory within this Remark as \textit{Webster-Williamson} homology. This is the invariant studied in \cite{RT21} and \cite{HRW21}; the authors of the latter work also construct a $y$-ified version of this invariant. A reduced version of Webster-Williamson homology is constructed and studied in \cite{Wed19}.

\textcolor{revisions}{In \cite{Cau17}, Cautis gives a construction of colored HOMFLYPT homology for $\lambda$ an arbitrary partition. This construction also uses colored strands, but proceeds by cabling and inserting a \textit{HOMFLY clasp} ${\sf{P}}_{\lambda}^-$ built out of infinite colored full twist braids (these HOMFLY clasps are not to be confused with the Elias-Hogancamp projectors $P_{\lambda}$). The construction of HOMFLY clasps was extended by Abel-Willis in \cite{AW19}, \cite{Wil20}. We refer to the invariant of \cite{Cau17} as \textit{Cautis homology}; in the case that $\lambda$ is a single column, it agrees with Webster-Williamson homology.}

In the finite rank case of colored $\mathfrak{sl}_n$ homology, many known categorifications were shown to be equivalent by Cautis in \cite{Cau15}. In contrast, for colored HOMFLYPT homology, even for the un $y$-ified column-colored unknot these invariants disagree. Webster-Williamson and Cautis homology assign the $(1^k)$-colored unknot a vector space \textcolor{revisions}{$HHH_{WW, \bigwedge^k}(\mathcal{U})$} of dimension

\[
\textcolor{revisions}{\mathrm{dim}(HHH_{WW, \bigwedge^k}(\mathcal{U})) = }\prod_{i = 1}^k \cfrac{1 + aq^{-2i}}{1 - q^{2i}}
\]

On the other hand, the dimension of the (un $y$-ified) Elias-Hogancamp homology considered in this paper evaluated on the $(1^k)$-colored unknot is

\[
\textcolor{revisions}{\mathrm{dim}(HHH_{\bigwedge^k}(\mathcal{U})) = } \left( \cfrac{1 - t^2q^{-2}}{1 - q^2} \right)^k \prod_{i = 1}^k \cfrac{1 + aq^{-2i}}{1 - t^2q^{-2i}}
\]

After $y$-ifying, we may similarly compare our invariant obtained from the $y$-ified projector to that of Hogancamp-Rose-Wedrich in \cite{HRW21}, obtained from a family of higher $y$-ifications on Rickard complexes. For convenience, we denote their $y$-ified, $\bigwedge^k$-colored invariant of a given knot $\mathcal{L}$ by $HRW_k(\mathcal{L})$. Reading from Example 5.31 in \cite{HRW21}, the invariant of the $k$-colored unknot in this theory has graded dimension

\[
\text{dim}(HRW_k(\bigcirc)) = \prod_{i = 1}^k \cfrac{1 + AQ^{1 - i}}{(1 - Q^i)(1 - TQ^{1 - i})}
\]

A straightforward comparison with Equation \eqref{eq: col_unknot_grdim} gives the relationship

\[
\text{dim}(HHH^y_{\bigwedge^k}(\bigcirc)) = q^{-\frac{3}{2}k(k - 1)} [k]! \  \text{dim}(HRW_k(\bigcirc))
\]

Here $[k]!$ is the product of the first $k$ quantum natural numbers, as explained below. For now, we briefly comment that this factor of $[k]!$ is exactly the factor obtained by digon removal when completely splitting and merging a $k$-labelled strand. See Section \ref{sec: inv_compare} for further discussion.

Categorified Young symmetrizers have also been constructed in the finite rank $\mathfrak{sl}_n$ homology setting by a variety of authors; see (\cite{Cau15}, \cite{CK12}, \cite{Ros14}, \cite{Roz10}, \cite{SS11}) and the references therein.
\end{remark}

\subsection{Torus Link Homology via Column Projectors}

The technology enabling our proof of Theorem \ref{thm: mainrecursion_intro} fits within a recursive framework originally implemented by Elias-Hogancamp-Mellit in their computations of (un $y$-ified) uncolored and row-colored homology (\cite{EH19}, \cite{Mel22}, \cite{HM19}). Their work relies on a recursive construction of the family of projectors $P_{(n)}$ for varying $n$ developed by Hogancamp in \cite{Hog18} emphasizing the projectors' behavior under concatenation with the Jucys-Murphy braid $J_{n + 1}$. In particular, letting $v_n$ be a formal variable of degree $q^{2n}t^{2 - 2n} = QT^{1 - n}$, Hogancamp proves that $P_{(n)}$ satisfies a recursion of the form

\begin{align}
P_{(n)} \simeq \left(q^{2n}t^{1 - 2n} \mathbb{Z}[v_n] \otimes P_{(n - 1)} \rightarrow \mathbb{Z}[v_n] \otimes P_{(n - 1)}J_n \right) \label{eq: intro_inf_rec} 
\end{align}

Upon setting $P_{(1)} := R_1$, this recursion lays bare a $\mathbb{Z}[v_2, \dots, v_n]$-periodicity in $P_{(n)}$. This periodicity is reflected in an action of the polynomial ring $\mathbb{Z}[v_2, \dots, v_n]$ on the endomorphism algebra $\text{End}(P_{(n)})$; ``killing the action" of these periodic endomorphisms $v_i$ by passing to a Koszul complex results in a finite analogue $K_{(n)}$ of the projector $P_{(n)}$. This finite projector inherits from $P_{(n)}$ a recursive description similar to that of Equation \eqref{eq: intro_inf_rec}. Upon rotating triangles, this allows one to absorb all the homological complexity of the Jucys-Murphy braid in the complex $K_{(n - 1)}J_n$ into a convolution involving only finite projectors $K_{(n)}$ and $K_{(n - 1)}$. This method is particularly well-suited to torus links, allowing for a complete decomposition of their Rouquier complexes into convolutions of finite projectors and disjoint strands.

The problem of computing link homology is thus reduced to the two problems of computing Hochschild cohomology of the finite projectors $K_{(n)}$ and reassembling the Hocshchild cohomology of a convolution from the Hochschild cohomologies of its constituents. The former is accomplished by way of the (finite analogue of the) topological recursion \eqref{eq: intro_inf_rec}. The latter task is greatly simplified by the observation that the relative homological shifts between these summands are all of even degree; since the differential must be of \textit{odd} homological degree (in particular, degree $1$), this convolution is forced to be a direct sum. In particular, the Hochschild cohomology of the convolution is the direct sum of the Hochschild cohomologies of its summands.

We recreate this procedure for the column projector $P_{(1^n)}$. This projector was originally constructed by Abel-Hogancamp in \cite{AH17}; their construction realizes $P_{(1^n)}$ as an explicit periodic complex modeled on a certain finite Koszul complex $K_n$. Our first task is to adapt their construction to the topological setting. Instead of working directly with the infinite complex, we begin by establishing the desired recursion for the finite complex $K_n$. We then extend to the infinite projector $P_{(1^n)}$ by means of a long, explicit verification that our convolution can be perturbed to be compatible with the periodic structure on $P_{(1^n)}$. Leveraging uniqueness of counital idempotents, we arrive at the following (Theorem \ref{thm: projtop} below):

\begin{theorem} \label{thm: intro_recursion}
Let $u_n$ be a formal variable of degree $q^{-2n}t^2 = Q^{1 - n}T$. Then the infinite projector $P_{(1^n)}$ satisfies a recursion of the form

\begin{align*}
P_{(1^n)} \simeq \left( \mathbb{Z}[u_n^{-1}] \otimes P_{(1^{n - 1})} \rightarrow q^2t^{-1} \mathbb{Z}[u_n^{-1}] \otimes P_{(1^{n - 1})}J_n \right)
\end{align*}
\end{theorem}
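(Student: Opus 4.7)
The plan is to bootstrap from a finite analogue. I first recall from \cite{AH17} that $P_{(1^n)}$ is constructed as a $u_n$-periodic complex whose underlying building block is a finite Koszul complex $K_n$, with the periodicity governed by an action of $\mathbb{Z}[u_2,\dots,u_n]$ where $\deg(u_i)=Q^{1-i}T$. My first step is to establish the desired recursion at the finite level in the form
\[
K_n \simeq \bigl( K_{n-1} \longrightarrow q^2 t^{-1}\, K_{n-1}\, J_n \bigr).
\]
This follows from a direct filtration argument: filter $K_n$ by the Koszul generator associated to the $n$-th strand, identify the bottom piece with $K_{n-1}$ (regarded as a Soergel bimodule on $n$ strands), identify the top piece with $K_{n-1}\, J_n$ up to the advertised shift, and check that the Koszul differential between them matches the expected connecting map. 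This amounts to an explicit computation on chain-level generators in the Abel-Hogancamp model.

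The second step is to lift this convolution to the infinite projector. I would form the candidate
\[
\widetilde{P}_{(1^n)} := \bigl( \mathbb{Z}[u_n^{-1}] \otimes P_{(1^{n-1})} \;\xrightarrow{\;\partial\;}\; q^2 t^{-1}\, \mathbb{Z}[u_n^{-1}] \otimes P_{(1^{n-1})}\, J_n \bigr),
\]
with $\partial$ obtained by perturbing the finite differential by a formal series of corrections in positive powers of $u_n^{-1}$, added inductively to cancel obstructions to $\partial^2=0$ order by order. Convergence in any fixed multidegree follows from the grading shifts imposed by $u_n^{-1}$, which force only finitely many corrections to contribute in each degree. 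This perturbation argument is the main technical obstacle: one must arrange the corrections so that they are compatible both with the internal differentials on $P_{(1^{n-1})}$ and $P_{(1^{n-1})}\, J_n$ and with the $u_n$-periodic structure that recovers $P_{(1^n)}$, and one must verify that the resulting total differential squares to zero. This is precisely the ``long, explicit verification'' alluded to above, and it occupies the bulk of the proof.

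Having constructed $\widetilde{P}_{(1^n)}$, the final step is to identify it with $P_{(1^n)}$. I would show that $\widetilde{P}_{(1^n)}$ is a counital idempotent in $\mathcal{K}^-(SBim_n)$: the counit $\widetilde{P}_{(1^n)}\to\mathbf{1}$ arises from the counit of $P_{(1^{n-1})}$ composed with projection onto the first summand, while the idempotency $\widetilde{P}_{(1^n)} \otimes \widetilde{P}_{(1^n)} \simeq \widetilde{P}_{(1^n)}$ is verified by tensoring the convolution with itself and exploiting both the idempotency of $P_{(1^{n-1})}$ and standard absorption relations between $P_{(1^{n-1})}$ and the Jucys-Murphy braid $J_n$. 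By the uniqueness of counital idempotents established in \cite{Hog17}, this yields $\widetilde{P}_{(1^n)} \simeq P_{(1^n)}$, completing the proof.
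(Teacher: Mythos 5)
Your skeleton (finite recursion first, then a perturbative lift compatible with the periodic structure, then identification via uniqueness of counital idempotents) is the same as the paper's, but two of your steps would not go through as written. First, the finite recursion cannot be obtained by filtering $K_n$ by the Koszul generator attached to the $n$-th strand: the subquotients of that filtration are two copies of the Koszul complex for $x_2 - x_2', \dots, x_{n-1} - x_{n-1}'$ on $B_{w_0}$, which is neither $K_{n-1}$ (built on $B_{w_1}$) nor $K_{n-1}J_n$ — the latter is a large complex of Soergel bimodules and is only \emph{homotopy equivalent} to something small. No filtration of $K_n$ can produce $J_n$; the argument has to run in the other direction. One simplifies $K_{n-1}J_n$ using the fork-slide strong deformation retraction $B_{w_1}J_n \simeq C_n$ coming from the minimal complex of the colored full twist (Propositions \ref{prop: mu_nu} and \ref{prop: ft_min}), repairs the failure of that equivalence to be linear over the middle alphabet $\mathbb{X}'$ via the Koszul change-of-basis of Proposition \ref{prop: kosz_base} and Corollary \ref{corr: building_psi}, and then Gaussian-eliminates the $B_{w_1}$ term against $K_{n-1}$, recognizing a shift of $K_n$ in the cone (Proposition \ref{prop: krec}). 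Your proposed ``direct computation on chain-level generators'' skips exactly this content and, as stated, fails.

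Second, the twist on the infinite convolution has two qualitatively different components, and your single perturbation series in powers of $u_n^{-1}$ misses one of them. The connecting map is $1 \otimes \alpha + u_n \otimes \beta$ (Theorem \ref{thm: projtop}): the corrections needed to make $\alpha \colon A_{n-1} \to A_{n-1}J_n$ closed are indexed by the periodic variables $u_2, \dots, u_{n-1}$ of $P_{(1^{n-1})}$, not by $u_n$, and the obstruction-by-obstruction construction (Theorem \ref{thm: alphasquig}) terminates because of Koszul-degree boundedness and vanishing of negative-degree Hom spaces such as $\text{Hom}^{-1}(B_{w_1}, B_{w_1}J_n)$ (plus an evenness input for the colored Hopf link), not because of $u_n$-grading shifts. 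The $u_n$-linear piece $\beta = \text{id} \otimes j_n$, with $j_n$ the inclusion of degree-zero homology $R_n \hookrightarrow q^{-2(n-1)}J_n$, is what generates the new $u_n$-periodicity, and since $j_n$ is a quasi-isomorphism its cone is acyclic and hence orthogonal to $\mathcal{I}_n^-$ (Proposition \ref{prop: beta_perp}); this is precisely what lets one verify property (P2) of the Abel--Hogancamp characterization (Theorem \ref{thm: AHchar}). Without $\beta$ you can neither build the correct convolution nor check the counit condition. Finally, in your last step ``a counital idempotent'' is not unique in the abstract — uniqueness holds only once the associated ideal is fixed — so rather than proving $\widetilde{P} \otimes \widetilde{P} \simeq \widetilde{P}$ by squaring the convolution (which is hard and unnecessary), one should check, as the paper does, that the cone lies in $\mathcal{I}_n^-$ and that the cone of the counit is annihilated by $\mathcal{I}_n^-$ on both sides, and then invoke the characterization of Theorem \ref{thm: AHchar}.
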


\begin{remark} \label{rem: intro_duals}
Notice that it is $\mathbb{Z}[u_n^{-1}]$, not $\mathbb{Z}[u_n]$, that appears in this construction. This reflects the status of $P_{(1^n)}$ as a counital idempotent rather than a unital idempotent. To construct the colored homology theory of interest to us, we will dualize to the corresponding unital idempotent $P^{\vee}_{(1^n)}$. This restores the action of the polynomial ring $\mathbb{Z}[u_2, \dots, u_n]$ on $\text{End}(P^{\vee}_{(1^n)})$, allowing for a similar reduction to consideration of finite column projectors as in \cite{EH19}. Notice that hints of mirror symmetry are already present in these periodic constructions, in that $\text{deg}(u_i)(Q, T) = \text{deg}(v_i)(T, Q)$ for all $2 \leq i \leq n$.
\end{remark}

Unfortunately, the finite column projectors $K_n$ do not satisfy the same homological parity properties as do the finite row projectors $K_{(n)}$; c.f. Section \ref{sec: finite} below. To remedy this, we explicitly $y$-ify each step of the above recursive construction. The appropriate $y$-ified complexes were originally constructed by Elbehiry in \cite{Elb22}, though we require slight modifications of that author's constructions here. Another explicit computation verifies that the family of $y$-ified projectors $P_{(1^n)}^y$ enjoys the same recursive structure as does the un $y$-ified family. The following is Theorem \ref{thm: yprojtop} below:

\begin{theorem} \label{thm: intro_yprojtop}
There exist (strict) $y$-ifications $P^y_{(1^n)}$ of the projectors $P_{(1^n)}$ satisfying

\begin{align*}
P_{(1^n)}^y \simeq \left( \mathbb{Z}[u_n^{-1}] \otimes P^y_{(1^{n - 1})} \rightarrow q^2t^{-1} \mathbb{Z}[u_n^{-1}] \otimes P^y_{(1^{n - 1})}J_n^y \right)
\end{align*}
\end{theorem}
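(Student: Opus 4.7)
The plan is to mirror the strategy used to prove the un-$y$-ified statement in Theorem \ref{thm: intro_recursion}, carrying along the curved complex data at every stage. First I would construct the candidates $P_{(1^n)}^y$ as strict $y$-ifications of $P_{(1^n)}$ by adapting Elbehiry's complexes from \cite{Elb22}: starting from the explicit periodic model built on the finite Koszul complex $K_n$, one equips each chain group with an $R[\mathbb{Y}]$-action and writes the connection as $d_{P_{(1^n)}} \otimes 1 + \sum_i h_i \otimes y_i$ for an explicit choice of Koszul homotopies $h_i$ relating left and right multiplication by the $x_i$. The first task is to verify that, with the right normalization, this connection squares to an element of the expected form and gives a well-defined object $P^y_{(1^n)}$ in the relevant category of curved complexes.

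Next I would establish the $y$-ified recursion at the finite level, obtaining an equivalence
\[
K_n^y \simeq \left( K_{n-1}^y \xrightarrow{\;\alpha\;} q^2 t^{-1} K_{n - 1}^y J_n^y \right)
\]
for an explicit chain map $\alpha$ derived from the same combinatorial data driving the un-$y$-ified recursion, but with the dot-sliding data replaced by its $y$-ified refinement coming from $J_n^y$. This step proceeds by a direct computation: one writes out the two sides, constructs explicit candidate homotopies extending those used in the proof of Theorem \ref{thm: intro_recursion}, and checks that the extra terms in the connection match on both sides. Once this finite equivalence is in hand, one builds the periodic infinite-projector version by iterating along $u_n^{-1}$, exactly as in the un-$y$-ified case, producing a candidate curved complex on the right-hand side of the desired recursion. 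An appeal to uniqueness of (strict $y$-ifications of) counital idempotents then identifies this candidate with $P^y_{(1^n)}$.

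The main obstacle is verifying compatibility of the $y$-ification with the periodic structure: while the un-$y$-ified recursion is realized by perturbing an initial convolution so that it respects the $\mathbb{Z}[u_n^{-1}]$-periodicity, in the $y$-ified setting one must simultaneously perturb the connection so that the curvature $\sum (x_{\beta(i)} - x_i') \otimes y_i$ is preserved level by level and the perturbation commutes with the periodic iteration. Concretely, each inductive step introduces new homotopies between the $y$-actions on $P^y_{(1^{n-1})}$ and on $P^y_{(1^{n-1})} J_n^y$, and these must be assembled into a coherent global perturbation. I expect this to require the same type of explicit, long verification carried out in the proof of Theorem \ref{thm: intro_recursion}, but with careful bookkeeping of the additional $y_i \otimes h_i$ contributions; once this bookkeeping is in place, the remainder of the argument, including uniqueness and passage to the infinite projector, is parallel.
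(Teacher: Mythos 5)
Most of your outline coincides with the paper's route: the strict $y$-ification of the combinatorial model uses (a slight modification of) Elbehiry's connection, the finite-level recursion is obtained by lifting the fork-slide strong deformation retraction $B_{w_1}J_n \simeq C_n$ to the curved setting (Proposition \ref{prop: forkslyde}, proved in Appendix \ref{app: dot_slide_nat}, and then Proposition \ref{prop: fynite}), and the passage to the periodic complex is exactly the long bookkeeping you anticipate (the proof of Theorem \ref{thm: yalpha}, where only the periodic-degree-zero component of $\tilde{\alpha}$ is replaced by its $y$-lift and one verifies that all additional $y$-linear contributions cancel degree by degree).

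Your final step, however, is a genuine gap: you identify the recursively built candidate with $P^y_{(1^n)}$ by appealing to ``uniqueness of (strict $y$-ifications of) counital idempotents,'' but no such uniqueness statement is available in the curved setting at this point. The Abel--Hogancamp characterization (Theorem \ref{thm: AHchar}) relies on acyclicity and projective-resolution arguments that do not survive the passage to curved complexes, and Elbehiry's $y$-ification is not known a priori to be a counital idempotent --- establishing that is precisely one of the goals, and in the paper it is proved only afterwards, by induction using the recursion itself, so invoking it here would be circular. The paper avoids the issue by \emph{defining} $P_{1^n}^y := \text{Cone}(\Phi^y)$ with $\Phi^y = 1 \otimes \alpha^y + u_n \otimes \beta$, so the recursion holds by construction, and then checking directly that the resulting connection has the correct curvature, i.e.\ that the cone is a strict $y$-ification of $P_{1^n}$ (Theorem \ref{thm: yprojtop}); the comparison with the explicit periodic model $A_n^y$ is then carried out by an obstruction-theoretic lift of the un-$y$-ified homotopy equivalence, using that $\text{End}(P_{1^n}^{\vee})$ has homology concentrated in non-negative degrees (Proposition \ref{prop: GHobs}, Theorem \ref{thm: top_comb_equiv}), rather than by uniqueness of idempotents. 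To salvage your ordering you would need to replace the uniqueness appeal with such an obstruction-theoretic identification, or first prove a curved analogue of Theorem \ref{thm: AHchar} independently of the recursion.
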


\begin{remark}
We pause here to remark that the convolutions involved in computing link homology are finite. In fact, for the computation of uncolored homology, the full strength of the infinite projector is not required, and we could content ourselves with establishing Theorem \ref{thm: intro_yprojtop} for the finite $y$-ified projectors $K^y_n$. The advantage of the infinite version of Theorem \ref{thm: intro_yprojtop} is the ease it furnishes in proving that $P^y_{(1^n)}$ is a counital idempotent and hence gives rise to a well-defined colored homology theory. The corresponding fact for the un $y$-ified column projector was established in \cite{AH17} using general theory of projective resolutions. This toolkit is not available to us after $y$-ifying, as we now deal with \textit{curved} complexes. Instead, we are forced to adapt the corresponding argument for the row projector from \cite{Hog18} to this setting.
\end{remark}

To explicitly compute the Hochschild cohomology of the $y$-ified finite column projectors $K^y_n$ requires us to develop a robust toolkit for computing $y$-ified Hochschild cohomology in general. In particular, we construct $y$-ified ``partial trace" functors analogous to the un $y$-ified partial trace functors developed in \cite{Hog18}. With this technology in hand, we are able to localize our computations to individual strands.

By replacing the finite row projector $K_{(n)}$ with the $y$-ified finite column projector $K^y_n$, we are able to adapt the computations of uncolored homology for arbitrary positive torus links carried out by Hogancamp-Mellit in \cite{HM19} to our setting. After dualizing as in Remark \ref{rem: intro_duals}, we can pass from computations involving the finite projector $(K^y_n)^{\vee}$ to those involving the infinite projector $(P^y_{1^n})^{\vee}$ by restoring periodicity exactly as in the row-colored case, finally arriving at Theorem \ref{thm: mainrecursion_intro}.

\subsection{Outlook} \label{sec: outlook}

In this section, we briefly describe potential extensions of this work in perceived order of difficulty.

\subsubsection{Comparing Colored Homologies} \label{sec: inv_compare}

In \cite{HRW21}, those authors compute their $y$-ified Webster-Williamson homology of the Hopf link $T(2, 2)$ for arbitrary positive integer colors. Numerical comparison with the results of this paper suggests that the relationship between their results and our $y$-ified Elias-Hogancamp invariant for these links should be similar to the corresponding relationship for the unknot of Remark \ref{rem: red_dim}, though the statement of a precise conjecture would require an appropriate interpretation of the deformation alphabets of \cite{HRW21} on each colored strand. We expect that one could leverage our Propositions \ref{prop: kosz_base} and \ref{prop: inf_reduction} to pass directly from Elias-Hogancamp homology to Webster-Williamson homology whenever the former is concentrated in even homological degree. In particular, this would allow for a proof of parity for Webster-Williamson colored homology of positive torus links from a generalization of our Theorem \ref{thm: mainrecursion_intro} to multiple colored strands. After passing to reduced homology, it is reasonable to expect that such a relationship would also suggest an analogue of the exponential growth properties established by Wedrich in \cite{Wed19} for our invariant. We plan to investigate this problem in future work.

\subsubsection{Module Structure over (Derived) Sheet Algebra} \label{sec: mod_sheet_spec}

As discussed in \S \ref{subsubsec: reduce}, many link homology theories carry an additional module structure over the corresponding sheet algebra; in particular, this is true of the $y$-ified invariants constructed in \cite{GH22} and \cite{HRW21}. In fact more is true: The $y$-ified homology of \cite{GH22} carries a well-defined action of the \textit{derived} sheet algebra in that theory, and the invariant of \cite{HRW21} carries an action of the sheet algebra at the level of chain complexes, well-defined up to quasi-isomorphism, which conjecturally extends to an action of the derived sheet algebra. We construct an action of the relevant derived sheet algebra on $\bigwedge^k$-colored $y$-ified homology in \S \ref{sec: mod_sheet}. In light of the discussion in \S \ref{sec: inv_compare}, it would be interesting to investigate the relationship between this module structure and that of \cite{HRW21}.

\subsubsection{$y$-ified Row-colored Homology} \label{subsec: yify_row}

Our procedure for computing $y$-ified $\bigwedge^k$-colored homology passes first from uncolored, un $y$-ified homology to uncolored $y$-ified homology by direct computation, then to $y$-ified column-colored homology by directly accounting for periodicity. In contrast, we establish Theorem \ref{thm: maintheorem_intro} by passing from un $y$-ified row-colored homology as computed in \cite{HM19} to $y$-ified row-colored homology, appealing to parity arguments developed in \cite{GH22} to simplify our computations. This approach has two disadvantages over its reversal. First, the parity arguments of \cite{GH22} require one to work over a coefficient \textit{field}; this prevents us from claiming Theorem \ref{thm: maintheorem_intro} over the integers. More fundamentally, our $y$-ification of the row projector $P_{(n)}$ in Theorem \ref{thm: yify_row} does not demonstrate the same recursive structure as our construction of $P_{(1^n)}^y$. As such, we are unable to directly apply our arguments to prove that $P_{(n)}^y$ is a unital idempotent and hence unable to construct a general $y$-ified row-colored homology theory. One could imagine instead carrying out the procedure of this paper to obtain an explicit $y$-ification of the recursion of \cite{Hog18}. See Remark \ref{rem: field_coeff} for more details.

\subsubsection{Mirror Symmetry for Hooks}

More generally, it was suggested to the author by Matt Hogancamp that the arguments of this work should be adaptable to the case of \textit{hook} partitions of the form $\lambda = (n, 1, 1, \dots, 1)$. This would generalize both the column-colored computation of this paper and the row-colored computations of \S \ref{subsec: yify_row}. Since hooks are stable under transposition $\lambda \mapsto \lambda^t$, these computations would establish Conjecture \ref{conj: mirr_sym} in this case.

\subsection{Structure}

This paper is organized as follows. In Section \ref{sec: cat_back}, we recall the categorical background required for our constructions and introduce notation; none of that material is original, and the expert may safely skip to Section \ref{sec: inf_proj_big}. We begin Section \ref{sec: inf_proj_big} by recalling the construction of \cite{AH17} and reducing the proof of Theorem \ref{thm: intro_recursion} to the existence of certain distinguished morphisms as in \cite{Hog18}. We construct the first of these morphisms in Section \ref{sec: const_beta}. We construct the second such morphism for the finite projector in Section \ref{sec: fork_slide} and extend to the infinite projector in Section \ref{sec: alpha_build}. In Section \ref{big_sec: yification} we $y$-ify this whole procedure, culminating in Theorem \ref{thm: yprojtop} in Section \ref{sec: yify_inf_proj}. We silo a long technical piece of this computation to Appendix \ref{app: dot_slide_nat}. We develop the $y$-ified partial trace toolkit referenced above in Section \ref{sec: yify_hoch_coh}. Finally, in Section \ref{sec: link_hom}, we apply our results to compute column-colored $y$-ified homology, culminating in Theorem \ref{thm: row_col_mirror_sym} in Section \ref{sec: col_tor_hom}.

\subsection{Acknolwedgements}

\textcolor{revisions}{We} would like to thank Eugene Gorsky for many helpful comments on an earlier draft of this paper, Ben Elias for his explanation of categorical diagonalization, and Matt Hogancamp for many helpful discussions during the preparation of this work. \textcolor{revisions}{We} also send thanks to Andrew Adair, Reed Hubbard, and Emma Crawford, whose continuing support throughout the long preparation of this work were instrumental in its eventual completion. \textcolor{revisions}{We} would especially like to thank \textcolor{revisions}{our} advisor, David Rose, for countless helpful discussions and his enduring guidance and encouragement during the long preparation of this work. \textcolor{revisions}{Finally, we would like to thank an anonymous referee, whose many suggestions regarding an earlier version of this paper have dramatically improved its clarity.}

During the preparation of this work, the author was partially supported by Simons Collaboration Grant 523992: “Research on knot invariants, representation theory, and categorification,” NSF CAREER Grant DMS-2144463, and NSF Grant DMS-1954266. This project was conceived of and partially carried out during the AIM Research Community on Link Homology.

\section{Categorical Background} \label{sec: cat_back}

\subsection{DG Categories}

Let $R$ be a commutative ring, and let $\mathcal{A}$ be an $R$-linear category. Denote by $\text{Seq}(\mathcal{A})$ the category whose objects are sequences $\{X^i\}_{i \in \mathbb{Z}}$ of objects in $\mathcal{A}$ indexed by $\mathbb{Z}$ and whose morphism spaces are $\mathbb{Z}$-graded $R$-modules with components

$$
\text{Hom}_{\text{Seq}(\mathcal{A})}^k(X, Y) := \prod_{i \in \mathbb{Z}} \text{Hom}_{\mathcal{A}}(X^i, Y^{i + k})
$$

We call $x \in X^j$ an element of \textit{degree} $j$ (or \textit{homological degree} $j$\textcolor{revisions}{,} when we wish to emphasize the indexing set $\mathbb{Z}$) and write $|x| = j$ or $\text{deg}(x) = j$; similarly, we say $f$ is a morphism of (homological) \textit{degree} $j$ and write $|f| = j$ or $\text{deg}(f) = j$ if $f \in \text{Hom}_{\text{Seq}(\mathcal{A})}^j(X, Y)$. Denote by $\text{Ch}(\mathcal{A})$ the category whose objects consist of pairs $X \in \text{Seq}(\mathcal{A})$ and morphisms $d_X \in \text{End}^1_{\text{Seq}(\mathcal{A})}(X)$ satisfying $d_X^2 = 0$, and whose morphisms are inherited from those of $\text{Seq}(\mathcal{A})$. We call objects of Ch($\mathcal{A}$) \textit{chain complexes} and $d_X$ the \textit{differential} on $X$. In the special case that $\mathcal{A} = R$-Mod, we call objects of Ch($\mathcal{A}$) \textit{dg $R$-modules}. 

Observe that given $X, Y \in \text{Ch}(\mathcal{A})$, the space of morphisms $\text{Hom}_{\text{Ch}(\mathcal{A})}(X, Y)$ is a dg $R$-module via the differential below:

$$d_{\text{Hom}_{\text{Ch}(\mathcal{A})}(X, Y)}: f \mapsto [d, f] := d_Y \circ f - (-1)^{|f|} f \circ d_X$$

Moreover, if $\mathcal{A}$ is a monoidal category, then Ch($\mathcal{A}$) is a monoidal category also, with tensor product structure\textcolor{revisions}{\footnote{\textcolor{revisions}{Note that we do \textit{not} negate the term $\mathrm{id}_X \otimes d_Y$ in odd degrees; the usual sign present in this formula is instead accounted for by the sign involved in the middle interchange law.}}} given by

$$
(X \otimes Y )^k := \bigoplus_{i + j = k} X^i \otimes Y^j; \quad d_{X \otimes Y} := d_X \otimes \text{id}_Y + \text{id}_X \otimes d_Y
$$

By convention the evaluation of the tensor product of two homogeneous morphisms $f, g$ on two homogeneous elements $x, y$ is defined as

$$
(f \otimes g)(x \otimes y) := (-1)^{|g||x|} f(x) \otimes g(y)
$$

Evaluation in general is given by additively extending the above rule. As a result of the above convention, we have the following \textit{middle interchange law} for homogeneous morphisms $f, f', g, g'$:

$$
(f \otimes g) \circ (f' \otimes g') = (-1)^{|g||f'|} (f \circ f') \otimes (g \circ g')
$$

\begin{definition}
Let $(X, d_X)$ be a dg $R$-module. We call $X$ a \textit{dg $R$-algebra} if $X$ is equipped with an $R$-bilinear multiplication giving $X$ the structure of a graded $R$-algebra with respect to homological degree and $d_X$ satisfies the \textit{graded Leibniz rule}:

\[
d_X(ab) = (d_X(a))b + (-1)^{|a|}a(d_X(b))
\]
\end{definition}

Given two dg $R$-algebras $(X, d_X)$, $(Y, d_Y)$, their tensor product $X \otimes Y$ also has the structure of a dg $R$-algebra with multiplication

\[
(x_1 \otimes y_1)(x_2 \otimes y_2) := (-1)^{|y_1||x_2|}(x_1x_2) \otimes (y_1y_2)
\]

\begin{example}
Let $A$ be any $R$-algebra; then $A$ has a trivial dg $R$-algebra structure obtained by declaring all elements of $A$ to live in degree $0$ and setting $d_A = 0$.
\end{example}

\begin{example}
Let $X \in \text{Ch}(\mathcal{A})$ be given; then $\text{End}_{\text{Ch}(\mathcal{A})}(X)$ is a dg $R$-algebra with multiplication given by function composition. The only nontrivial check is the graded Leibniz rule, which is easily verified.

\end{example}

\begin{example}
Let $\theta_1, \dots, \theta_n$ be formal variables of homological degree $-1$. Then the exterior algebra $\bigwedge[\theta_1, \dots, \theta_n]$ over $R$ has the structure of a dg algebra with trivial differential. For each $1 \leq j \leq n$, we have morphisms $\theta_j \in \text{End}^{-1}_{\text{Ch}(R-\text{Mod})}(\bigwedge[\theta_1, \dots, \theta_n])$ and $\theta_j^{\vee} \in \text{End}^{1}_{\text{Ch}(R-\text{Mod})}(\bigwedge[\theta_1, \dots, \theta_n])$ given by multiplication and contraction, respectively. Each of these endomorphisms squares to $0$, and $\theta_i\theta_j + \theta_j\theta_i = \theta_i^{\vee}\theta_j^{\vee} + \theta_j^{\vee}\theta_i^{\vee} = \theta_i\theta_j^{\vee} + \theta_j^{\vee}\theta_i = 0$ for $i \neq j$. We also have $\theta_i\theta_i^{\vee} + \theta_i^{\vee}\theta_i = \text{id}$ for each $i$.

Given any dg $R$-algebra $M$, the tensor product $M \otimes \bigwedge[\theta_1, \dots, \theta_n]$ is a dg $R$-algebra with trivial differential by the monoidal structure discussed above. Given any elements $f_1, \dots, f_n \in M$, there is an alternative dg $R$-algebra structure on the underlying chain complex $M \otimes \bigwedge[\theta_1, \dots, \theta_n]$ via the differential

\[
d_{M \otimes \bigwedge[\theta_1, \dots, \theta_n]} := \sum_{j = 1}^n f_j \otimes \theta_j^{\vee}
\]

That this differential satisfies $d^2 = 0$ is an easy consequence of the middle interchange law; the graded Leibniz rule follows from the fact that contraction is a derivation. The dg $R$-algebra $M \otimes \bigwedge[\theta_1, \dots, \theta_n]$ equipped with this differential is often called the \textit{Koszul complex} for the elements $f_1, \dots, f_n$ on $M$.
\end{example}

We call morphisms $f \in \text{Hom}_{\text{Ch}(\mathcal{A})}(X, Y)$ \textit{closed} if $[d, f] = 0$ and \textit{exact} (or \textit{nullhomotopic}) if $f = [d, g]$ for some $g \in \text{Hom}(X, Y)$. Two homogeneous morphisms $f, g$ with $|f| = |g|$ are called \textit{homotopic} if $f - g$ is exact; in this case, we write $f \sim g$. A closed map of degree $0$ is called a \textit{chain map}.

\begin{definition}
We call $\mathcal{A}$ a \textit{dg category over $R$} (or just a dg category, if there is no confusion) if $\text{Hom}_{\mathcal{A}}(X, Y)$ is an object of Ch($R$-Mod) for each $X, Y \in \mathcal{A}$ and composition of morphisms defines a chain map

$$
\text{Hom}_{\mathcal{A}}(X, Y) \otimes \text{Hom}_{\mathcal{A}}(Y, Z) \rightarrow \text{Hom}_{\mathcal{A}}(X, Z)
$$
\end{definition}

In this case, we will often denote the differential on $\text{Hom}_{\mathcal{A}}(X, Y)$ simply by $d_{\mathcal{A}}$. That composition of morphisms defines a chain map is easily checked to be equivalent to the \textit{Leibniz rule} for homogeneous morphisms $f, g$:

$$
d_{\mathcal{A}}(f \circ g) = d_{\mathcal{A}}(f) \circ g + (-1)^{|f|}f \circ d_{\mathcal{A}}(g)
$$

\begin{example}
It is easily checked that the differential $d_{\text{Hom}_{\text{Ch}(\mathcal{A})}}$ satisfies the Leibniz rule, so $\text{Ch}(\mathcal{A})$ is a dg category. As above, we denote the differential on morphism spaces in this category by $d_{\text{Ch}(\mathcal{A})}$ or by $[d, -]$ when there is no confusion.
\end{example}

Let $\mathcal{C}, \mathcal{D}$ be any dg categories. A \textit{dg functor} $F: \mathcal{C} \rightarrow \mathcal{D}$ is a functor from $\mathcal{C}$ to $\mathcal{D}$ that induces a degree zero chain map on hom spaces; that is, $d_{\mathcal{D}}(F(f)) = F(d_{\mathcal{C}}(f))$ for all $f$.

We let $Z^0(\mathcal{C})$ denote the category with the same objects as $\mathcal{C}$ and morphisms restricted to the closed degree zero morphisms of $\mathcal{C}$. We restrict the use of the word \textit{isomorphism} for objects in $\mathcal{C}$ to be the usual notion of isomorphism in $Z^0(\mathcal{C})$ (that is, relation via invertible morphisms in $Z^0(\mathcal{C})$). We write $X \cong Y$ to denote isomorphic objects of $\mathcal{C}$.

Let $H^0(\mathcal{C})$ denote the category with the same objects and morphisms as $Z^0(\mathcal{C})$ but declare two morphisms $f, g$ equivalent in $Z^0(\mathcal{C})$ if $f \sim g$. We will often call $H^0(\mathcal{C})$ the \textit{homotopy category} of $\mathcal{C}$. We call two complexes $X, Y$ \textit{homotopy equivalent} if $X$ and $Y$ are isomorphic in $H^0(\mathcal{C})$, and write $X \simeq Y$. If $X \simeq 0$, we say $X$ is \textit{contractible}. Unwinding definitions, we see that the data of a homotopy equivalence is a collection of maps:

\begin{center}
\begin{tikzcd}
X \arrow[rr, "f", shift left] \arrow["h"', loop, distance=2em, in=215, out=145] &  & Y \arrow[ll, "g", shift left] \arrow["k"', loop, distance=2em, in=35, out=325]
\end{tikzcd}
\end{center}

such that $|f| = |g| = 0$, $|h| = |k| = 1$, $d_{\mathcal{C}}(f) = d_{\mathcal{C}}(g) = 0$, $d_{\mathcal{C}}(h) = \text{id}_X - g \circ f$, and $d_{\mathcal{C}}(k) = \text{id}_Y - f \circ g$. A homotopy equivalence satisfying $k = 0$ is called a \textit{strong deformation retraction} from $X$ to $Y$.

We will often denote $Z^0(\text{Ch}(\mathcal{A}))$ by $C(\mathcal{A})$ and $H^0(\text{Ch}(\mathcal{A}))$ by $K(\mathcal{A})$. If $\mathcal{A}$ is abelian, then a morphism $f \in \text{Hom}_{C(\mathcal{A})}(X, Y)$ induces a map $f^* \in \text{Hom}_{\text{Seq}(\mathcal{A})}(H^{\bullet}(X), H^{\bullet}(Y))$ in cohomology. We call $f$ a \textit{quasi-isomorphism} if $f^*$ is an isomorphism.

We will often use superscripts to denote bounds on sequences in categories above. For example, $K^b(\mathcal{A})$ (resp. $K^+(\mathcal{A})$, $K^-(\mathcal{A})$) will denote the homotopy category of bounded (resp. bounded below, bounded above) chain complexes on $\mathcal{A}$. $\text{Seq}^b(\mathcal{A}), \text{Ch}^b(\mathcal{A})$, etc. are defined similarly.

Our primary tool in establishing strong deformation retractions between complexes will be the following \textcolor{revisions}{(see e.g. Lemma 3.2 of \cite{BN07} for a proof)}:

\begin{proposition}[Gaussian Elimination] \label{prop: gauss_elim}
Let $\mathcal{A}$ be an $R$-linear category, let $A, B_1, B_2, D, E, F \in \mathcal{A}$ be given, and let $\varphi: B_1 \rightarrow B_2$ be an isomorphism. Then whenever the top row of the diagram below is a piece of a chain complex, the morphisms specified by the vertical arrows of the diagram below on that piece and identity maps elsewhere constitute a strong deformation retraction from the top row to the bottom row.

\begin{center}
\begin{tikzcd}[ampersand replacement=\&]
    A \arrow[rr, "\begin{pmatrix} \alpha \\ \beta \end{pmatrix}"] \arrow[dd, "\text{id}_A", harpoon, shift left]
        \& \& B_1 \oplus D \arrow[dd, "{\setlength\arraycolsep{2pt} \begin{pmatrix} 0 & \text{id}_D \end{pmatrix}}", harpoon, shift left] \arrow[rr, "{\begin{pmatrix} \varphi & \kappa \\ \gamma & \epsilon \end{pmatrix}}"]
            \& \& B_2 \oplus E \arrow[dd, "{\setlength\arraycolsep{1pt} \begin{pmatrix} -\gamma \varphi^{-1} & \text{id}_E \end{pmatrix}}", harpoon, shift left] \arrow[rr, "{\begin{pmatrix} \rho & \psi \end{pmatrix}}"]
                \& \& F \arrow[dd, "\text{id}_F", harpoon, shift left]\\
    \\
    A \arrow[rr, "\beta"'] \arrow[uu, "\text{id}_A", harpoon, shift left]
        \& \& D \arrow[rr, "\epsilon - \gamma \varphi^{-1}\kappa"'] \arrow[uu, "\begin{pmatrix} -\varphi^{-1}\kappa \\ \text{id}_D \end{pmatrix}", harpoon, shift left]
            \& \& E \arrow[uu, "\begin{pmatrix} 0 \\ \text{id}_E \end{pmatrix}", harpoon, shift left] \arrow[rr, "\psi"']
                \& \& F \arrow[uu, "\text{id}_F", harpoon, shift left]
\end{tikzcd}
\end{center}

\end{proposition}

\subsection{Graded Categories} \label{sec: gr_cat}
Let $\Gamma$ be an abelian group, and let $\mathcal{C}$ be a dg category. We say that $\mathcal{C}$ has a $\Gamma$-\textit{action} (or that $\mathcal{C}$ is $\Gamma$-\textit{graded}) if \textcolor{revisions}{for each $\gamma \in \Gamma$,} there exist dg functors \textcolor{revisions}{$\Sigma_{\gamma}: \mathcal{C} \rightarrow \mathcal{C}$ satisfying $\Sigma_{e} = \text{Id}_{\mathcal{C}}$, $\Sigma_{\gamma + \gamma'} = \Sigma_{\gamma} \circ \Sigma_{\gamma'}$}. In this setting, given $X, Y \in \mathcal{C}$, we define \textit{graded} hom spaces $\text{Hom}^{\Gamma}_{\mathcal{C}}(X, Y)$ by

$$
\text{Hom}^{\gamma}_{\mathcal{C}}(X, Y) := \text{Hom}_{\mathcal{C}}(\Sigma_{\gamma}X, Y)
$$

\begin{example} \label{ex: hom_shift}
The category Ch($\mathcal{A}$) is $\mathbb{Z}$-graded via the \textit{homological shift} (or \textit{suspension}) functors $\Sigma_j = t^j$ defined \textcolor{revisions}{on sequences by $(t^jX)^k = X^{j + k}$ and on differentials by $d_{t^j X} = (-1)^j d_X$. This sign convention ensures that $t^j$ is a dg functor and commutes with tensor products, in the sense that $(t^j X) \otimes Y \cong X \otimes (t^j Y) \cong t^j (X \otimes Y)$}.
\end{example}

\begin{example}
If $R$ is a $\Gamma$-graded ring, then we may consider the category $R$-gMod, with objects graded $R$-modules and morphisms all homogeneous (not necessarily degree-preserving!) graded module homomorphisms, as a dg category with morphism spaces concentrated in homological degree $0$ and trivial differential. Then $R$-gMod has a $\Gamma$-action given by formal degree shift of graded modules.
\end{example}

\begin{example}
If $R$ is a $\Gamma$-graded ring as above, then let $\text{Ch}(R-\text{gMod})$ denote the category of chain complexes over $R-\text{gMod}$ with degree-preserving differentials. The $\Gamma$-action on $R$-gMod extends to Ch($R$-gMod) and commutes with the $\mathbb{Z}$-action discussed \textcolor{revisions}{in Example \ref{ex: hom_shift}}, giving rise to a $\Gamma \times \mathbb{Z}$-action \textcolor{revisions}{on Ch($R$-gMod)}. Given $X, Y \in \text{Ch}(R-\text{gMod})$, we have a \textit{bigraded} morphism space defined component-wise as follows:

$$
\text{Hom}^{\gamma, k}_{\text{Ch}(R-\text{gMod})}(X, Y) := \text{Hom}^k_{\text{Ch}(R-\text{gMod})} (\Sigma_{\gamma}X, Y)
$$
\end{example}

By convention, when morphism spaces in categories of chain complexes (and their quotients, and the bounded variations of these) come equipped with additional gradings, we restrict notation with a single superscript to indicate homological grading as on the right side of the above definition. We also employ this convention with no subscript when the category in which morphisms are taken is clear.

We say a homogeneous morphism $f \in \text{Hom}^{\gamma, k}_{\text{Ch}(R-\text{gMod})}(X, Y)$ has degree $(\gamma, k)$. Notice that it is the \textit{homological} degree, not the $\Gamma$-degree, that determines signs in all definitions above, and the differential $d_{\text{Ch}(R-\text{gMod})}$ preserves $\Gamma$-degree of morphisms. This action also descends to a $\Gamma \times \mathbb{Z}$-action on $C(R-\text{gMod})$ and $K(R-\text{gMod})$ and all the bounded variations thereof.

\begin{remark}
Strictly speaking, extending the $\Gamma$-action on $\text{Ch}(R-\text{gMod})$ to its quotients $C(R-\text{gMod})$ and $K(R-\text{gMod})$ requires extending the morphism spaces of the latter categories to include homogeneous morphisms of all degrees. We make this extension implicitly throughout, as we will only ever be interested in these graded categories.
\end{remark}

\subsection{Twisted Complexes and Convolutions} \label{sec: twist}

In this section we offer a brief introduction to twisted complexes and homological perturbation theory. Our exposition largely follows that of \cite{GH22} and \cite{Hog20}; the reader familiar with that material can safely skip this section.

\begin{definition}
Let $(X, d_X) \in \text{Ch}(\mathcal{A})$ be a chain complex, and let $\alpha \in \text{End}^1(X)$ be given so that $\alpha$ satisfies the \textit{Maurer-Cartan} identity $[d_X, \alpha] + \alpha^2 = 0$. Then $(X, d_X + \alpha)$ is also a chain complex; we denote this complex by $\text{tw}_{\alpha}(X)$. In this case, we call $\text{tw}_{\alpha}(X)$ a \textit{twist} of $X$ and $\alpha$ a \textit{Maurer-Cartan element} (or, by  abuse of notation, a \textit{twist}).
\end{definition}

\begin{definition} \label{def: convolution}
Let $I$ be an indexing poset, and for each $i \in I$, let $(X_i, d_i)$ be a chain complex. Let $X = \bigoplus_{i \in I} (X_i, d_i)$, and let $\alpha \in \text{End}^1(X)$ be a Maurer-Cartan element such that the component $\alpha_{ij}$ of $\alpha$ \textcolor{revisions}{from $X_j$ to $X_i$} is $0$ for $i \leq j$. Then we call $\alpha$ a \textit{one-sided twist} of $X$, and $\text{tw}_{\alpha}(X)$ a (one-sided) \textit{convolution} of the complexes $X_i$ (indexed by $I$).
\end{definition}

\textcolor{revisions}{Definition \ref{def: convolution} recovers many typical constructions in homological algebra as special cases; we detail two examples of particular importance to us below.}
\textcolor{revisions}{
\begin{example} \label{ex: complexes_as_convolutions}
Let $(A, d_A)$ be any chain complex of objects in $\mathcal{A}$ with underlying sequence $A = \bigoplus_{i \in \mathbb{Z}} A^i$. We consider $(A, d_A)$ as a convolution as follows. For each $i \in \mathbb{Z}$, we may consider $A^i$ as a chain complex concentrated in homological degree $0$ with zero differential. Setting $X_i := t^iA^i$, we may consider the direct sum $X := \bigoplus_{i \in \mathbb{Z}} X_i$ in Ch($\mathcal{A}$); this is a chain complex of objects of $\mathcal{A}$ with the same underlying sequence as $A$ but zero differential. Define $\alpha \in \mathrm{End}^1(X)$ by setting $\alpha_{i + 1, i} = (d_A)|_{A^i}$ for each $i \in \mathbb{Z}$ and $\alpha_{ij} = 0$ for all $i \neq j + 1$. Then $\mathrm{tw}_{\alpha}(X) \cong (A, d_A)$ as chain complexes over $\mathcal{A}$.
\end{example}}

\textcolor{revisions}{
\begin{example} \label{ex: cones_as_convolutions}
Let $(A, d_A)$ and $(B, d_B)$ be chain complexes, and suppose $f \colon A \to B$ is a chain map. Set $X_1 := t^{-1}A$, $X_2 = B$, and consider the endomorphism $\alpha \in \mathrm{End}^1(X_1 \oplus X_2)$ with $\alpha_{12} = -f$ and all other components $0$. Then the convolution $\mathrm{tw}_{\alpha}(X_1 \oplus X_2)$ is called the \textit{mapping cone of $f$}, denoted $\mathrm{Cone}(f)$.
\end{example}}

\textcolor{revisions}{
\begin{remark} \label{rem: convolution_drawing}
We will often depict convolutions explicitly by drawing the twist $\alpha$ as an arrow between the constituent complexes. For example, we depict the convolution $(A, d_A) = \mathrm{tw}_{d_A}(\bigoplus_{i \in \mathbb{Z}} t^i A^i)$ of Example \ref{ex: complexes_as_convolutions} as
\end{remark}}

\begin{center}
\tikzcdset{color=revisions, arrows={color=revisions}}
\begin{tikzcd}
(A, d_A) = \dots \arrow[r, "d_A"] & t^{-2} A^{-2} \arrow[r, "d_A"] & t^{-1} A^{-1} \arrow[r, "d_A"] & A^0 \arrow[r, "d_A"] & t A^1 \arrow[r, "d_A"] & t^2 A^2 \arrow[r, "d_A"] & \dots
\end{tikzcd}
\end{center}

\textcolor{revisions}{We point out that this convention specifies the homological degree of each term $A^i$ explicitly using coefficients $t^i$ rather than the more conventional \underline{underlining} of terms in homological degree $0$. Similarly, we depict the convolution $\mathrm{Cone}(f) = \mathrm{tw}_{f}(t^{-1}A \oplus B)$ as}

\begin{center}
\tikzcdset{color=revisions, arrows={color=revisions}}
\begin{tikzcd}
\mathrm{Cone}(f) = t^{-1}A \arrow[r, "f"] & B
\end{tikzcd}
\end{center}

\textcolor{revisions}{Given two homotopy equivalent chain complexes $X \simeq Y$ and a twist $\mathrm{tw}_{\alpha}(X)$ of $X$, it is natural to ask whether there exists a Maurer-Cartan element $\beta$ for $Y$ satisfying $\mathrm{tw}_{\alpha}(X) \simeq \mathrm{tw}_{\beta}(Y)$. This is the subject of \textit{homological perturbation theory}, a detailed investigation of which can be found in \cite{Hog20}. We will not make use of the full strength of this theory, but we will use the following special case.}

\begin{definition}
\textcolor{revisions}{We say a poset $I$ is \textit{upper finite} if for each $i \in I$, there exist finitely many $j \in I$ such that $i < j$. Similarly, we say a poset $I$ is \textit{lower finite} if for each $i \in I$, there exist finitely many $j \in I$ such that $i > j$.}
\end{definition}

\begin{definition}
\textcolor{revisions}{Let $\mathrm{tw}_{\alpha}(X)$ be a convolution of a family $\{X_i\}$ over some indexing set $I$. We say $\mathrm{tw}_{\alpha}(X)$ is \textit{homologically locally finite} if, for all $j \in \mathbb{Z}$, $(X_i)^j = 0$ for all but finitely many $i \in I$.}
\end{definition}

\begin{proposition} \label{prop: useful_hpt}
Let $I$ be a poset\textcolor{revisions}{, and let \textcolor{revisions}{$\{X_i\}, \{Y_i\}$ be families of complexes} indexed by $I$. S}uppose $X_i \simeq Y_i$ for each $i$. Let $\text{tw}_{\alpha}(X)$ be a one-sided convolution of \textcolor{revisions}{the family $\{X_i\}$}. \textcolor{revisions}{If $I$ is upper finite, or if $I$ is lower finite and $\mathrm{tw}_{\alpha}(X)$ is homologically locally finite, t}hen there exists a one-sided convolution $\text{tw}_{\beta}(Y)$ of the \textcolor{revisions}{family} $\{Y_i\}_{i \in I}$ with $\text{tw}_{\alpha}(X) \simeq \text{tw}_{\beta}(Y)$.
\end{proposition}

\begin{proof}
\textcolor{revisions}{See Proposition 4.20 of \cite{EH17a}.}
\end{proof}

\subsection{Soergel Bimodules} \label{sec: SoergBim}

\textcolor{revisions}{In this section we introduce the primary category of interest to us: \textit{Soergel bimodules}. We draw heavily from the conventions and exposition of \cite{HRW21}; the reader familiar with that work can safely skip this section, returning only to reference notation.}

Let $R$ be a commutative ring, and \textcolor{revisions}{denote by $R_n$ the} polynomial ring in the alphabet $\mathbb{X} = \{x_1, x_2, \dots, x_n\}$ over $R$. We endow \textcolor{revisions}{$R_n$ with a $\mathbb{Z}$-grading}, called the \textit{quantum grading}, by declaring each variable to have quantum degree 2. Then the category $R_n$-Bim of graded $R_n$-bimodules is equivalent to the category $(R_n \otimes_R R_n)$-gMod, and hence inherits a $\mathbb{Z}$-grading by the discussion \textcolor{revisions}{of Section \ref{sec: gr_cat}; we} refer to this grading also as the \textit{quantum grading}. We call the functors $\Sigma_j$ \textit{quantum shift} functors and denote them by $q^j$ (so that e.g. $(q^jM)^k = M^{k - j}$).

The symmetric \textcolor{revisions}{group $\mathfrak{S}^n$} acts on $R_n$ by permuting indices of variables in $\mathbb{X}$. \textcolor{revisions}{For each $1 \leq i \leq n - 1$, we denote by $R_n^i$ the subring of $R_n$ which is invariant under the action of the transposition $s_i = (i, i + 1) \in \mathfrak{S}^n$; explicitly, this is the subring of $R_n$ consisting of polynomials which are symmetric in the variables $x_i, x_{i + 1}$. We denote by $B_i$ the graded $R_n$-bimodule}

\[
\textcolor{revisions}{B_i := q^{-1} R_n \otimes_{R_n^i} R_n}
\]

\begin{definition}
Let $SBim_n$ denote the full \textcolor{revisions}{monoidal} subcategory of \textcolor{revisions}{$R_n$-Bim} generated by the objects $B_i$ under \textcolor{revisions}{tensor product $\otimes_{R_n}$}, quantum shift, direct sum, and \textcolor{revisions}{taking direct summands}. We call $SBim_n$ the (type $A_{n - 1}$) \textit{Soergel category} and refer to its objects as \textit{Soergel bimodules}. The collection of monoidal categories $SBim := \bigsqcup_{n \geq 1} SBim_n$ is itself a monoidal category \textcolor{revisions}{under the external tensor product $\otimes_R$; we denote this external product by} $\sqcup$. \textcolor{revisions}{For each $n \geq 1$, there is an inclusion functor $(-) \sqcup R[x_{n + 1}] \colon SBim_n \hookrightarrow SBim_{n + 1}$ taking an $R_n$-bimodule $M$ to the $R_{n + 1}$-bimodule $M \otimes_R R[x_{n + 1}]$; we make use of this inclusion throughout without further comment.}
\end{definition}

\textcolor{revisions}{When the index $n$ is clear, we will often denote the tensor product $M \otimes_{R_n} N \in SBim_n$ of two Soergel bimodules $M$ and $N$ just by $MN$.} Bimodules of the form $q^j B_{i_1}B_{i_2} \dots B_{i_k}$ for some $1 \leq i_1, i_2, \dots, i_k \leq n - 1$ are called \textit{Bott-Samelson bimodules}.

There is an isomorphism $K_0(SBim_n) \cong H_n$ between the Grothendieck group of $SBim_n$ and the (type $A_{n - 1}$) Hecke algebra $H_n$ sending (isomorphism classes of) indecomposable objects in $SBim_n$ to elements of the Kazhdan-Lusztig basis of $H_n$ (c.f. \cite{EW14}). The Hecke algebra $H_n$ is generated as an $R[q, q^{-1}]$-module by a collection of objects $b_{\sigma}$ indexed by permutations\footnote{More generally (i.e. in other types), the Hecke algebra is generated by elements of the corresponding Weyl group.} $\sigma \in \mathfrak{S}^n$; it follows that indecomposable Soergel bimodules are indexed up to shift by permutations as well. \textcolor{revisions}{We denote by} $B_{\sigma}$ the indecomposable Soergel bimodule corresponding to the permutation $\sigma$ \textcolor{revisions}{in this indexing scheme}.

\textcolor{revisions}{A few} indecomposable Soergel bimodules \textcolor{revisions}{$B_{\sigma}$} will be of particular importance to us. Unsurprisingly, we have $B_e = R_n$ \textcolor{revisions}{and} $B_{s_i} = B_i$. Let $w_0 \in \mathfrak{S}^n$ denote the longest word of that group \textcolor{revisions}{and} $\ell(n)$ its length. \textcolor{revisions}{We include $\mathfrak{S}^{n - 1}$ into $\mathfrak{S}^n$ in the standard way by sending generators $s_i = (i, i + 1) \in \mathfrak{S}^{n - 1}$ to the corresponding generators $s_i \in \mathfrak{S}^n$ (that is, we include $\mathfrak{S}^{n - 1}$ as permutations of $\{1, \dots, n\}$ which fix $n$). We denote by $w_1$} the longest word of $\mathfrak{S}^{n - 1}$ considered as an element of $\mathfrak{S}^n$ \textcolor{revisions}{in this way}. Then we have:

$$
B_{w_1} = q^{-\ell(n - 1)} R_n \otimes_{R_n^{\mathfrak{S}^{n - 1}}} R_n; \quad B_{w_0} = q^{-\ell(n)} R_n \otimes_{R_n^{\mathfrak{S}^n}} R_n
$$

\textcolor{revisions}{We will often consider} $R_n$-bimodules as left $R_n \otimes_R R_n$-modules. \textcolor{revisions}{When we do this, we typically} identify the latter ring with the polynomial ring $R[\mathbb{X}, \mathbb{X}'] = R[x_1, x_2, \dots, x_n, x_1', x_2', \dots, x_n']$ over two \textcolor{revisions}{disjoint alphabets $\mathbb{X}, \mathbb{X}'$ of size $n$} via the isomorphism $x_1 \otimes 1 \mapsto x_i$, $1 \otimes x_i \mapsto x_i'$. \textcolor{revisions}{Under this identification, we can describe the bimodules $R_n$, $B_i$, $B_{w_1}$, and $B_{w_0}$ explicitly as quotients of $R[\mathbb{X}, \mathbb{X}']$}:

\begin{gather}
	R_n \cong R[\mathbb{X}, \mathbb{X}'] / (x_1 - x_1', x_2 - x_2', \dots, x_n - x_n'); \label{eq: R_n} \\
	B_i \cong q^{-1} R[\mathbb{X}, \mathbb{X}'] / (x_i + x_{i + 1} - x_i' - x_{i + 1}', x_ix_{i + 1} - x_i'x_{i + 1}'); \label{eq: B_i}\\
    B_{w_1} \cong \textcolor{revisions}{q^{-\ell(n - 1)}} R[\mathbb{X}, \mathbb{X}']/(e_1(\mathbb{X} - \{x_n\}) - e_1(\mathbb{X}' - \{x_n'\}), \dots, e_{n - 1}(\mathbb{X} - \{x_n\}) - e_{n - 1}(\mathbb{X}' - \{x_n'\}), \textcolor{revisions}{x_n - x'_n}); \label{eq: B_w1}\\
    B_{w_0} \cong \textcolor{revisions}{q^{-\ell(n)}} R[\mathbb{X},\mathbb{X}']/(e_1(\mathbb{X}) - e_1(\mathbb{X}'), \dots, e_n(\mathbb{X}) - e_n(\mathbb{X}')) \label{eq: B_w0}
\end{gather}

\textcolor{revisions}{Here we have denoted by $e_i(\mathbb{X})$ the $i^{th}$ elementary symmetric polynomial in the alphabet $\mathbb{X}$.} 

The \textcolor{revisions}{bimodule $B_{w_0}$ tends to absorb other bimodules up to direct sums and grading shifts. To describe this phenomenon, it is convenient to introduce the the \textit{quantum integers} and \textit{quantum factorial}}

\[
[j] := q^{j - 1} + q^{j - 3} + \dots + q^{-j + 3} + q^{-j + 1}; \quad [j]! := [j] [j - 1] \dots [2] [1]
\]

for all $j \in \mathbb{Z}_{\geq 1}$. Given a Laurent polynomial $f \in \mathbb{N}[q, q^{-1}]$ and a \textcolor{revisions}{Soergel bimodule $M$, we denote by $fM$} a direct sum of quantum shifts of $M$ indicated by the corresponding coefficients of $f$\textcolor{revisions}{. For example, we have $3[2]M = (3q + 3q^{-1})M := qM \oplus qM \oplus qM \oplus q^{-1}M \oplus q^{-1}M \oplus q^{-1}M$.}

\begin{proposition} \label{prop: tensid}
For each $\sigma \in \mathfrak{S}^n$, $B_\sigma B_{w_0}$ and $B_{w_0}B_{\sigma}$ are direct sums of quantum shifts of $B_{w_0}$. In particular, we have $B_{w_0}B_{w_0} \cong [n]!B_{w_0}$ and $B_{w_1}B_{w_0} \cong B_{w_0}B_{w_1} \cong [n - 1]! B_{w_0}$.
\end{proposition}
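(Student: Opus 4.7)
The plan is to verify the three assertions using the graphical calculus of Section \ref{sec: ssbim}, especially the associativity and digon-removal relations of Proposition \ref{prop: web_rels}, together with the Krull--Schmidt property of $SBim$.

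First, I would compute $B_{w_0}B_{w_0}$ directly. Graphically, $B_{w_0}$ is a full merge of $n$ color-$1$ strands into a single color-$n$ strand followed by the reverse split. Stacking two copies produces an internal region in which a color-$n$ strand is split into $n$ color-$1$ strands and then immediately remerged. The strategy is to resolve the interior by iterated binary splits and merges: split $n \to (n - 1, 1)$, then split the color-$(n - 1)$ strand as $(n - 1) \to (n - 2, 1)$, and so on, and similarly peel the dual merge. Each resulting binary digon of shape $(k - 1, 1)$ inside color $k$ contributes a factor $\begin{bmatrix} k \\ 1 \end{bmatrix} = [k]$ by digon removal. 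After all digons are removed and associativity is invoked to reassemble the outside into a single merge-split, one is left with the factor $\prod_{k = 2}^n [k] = [n]!$ times a single copy of $B_{w_0}$. The calculation of $B_{w_1}B_{w_0}$ (and $B_{w_0}B_{w_1}$) is identical except that the internal digon involves only the first $n - 1$ color-$1$ strands inside color $n - 1$, producing the factor $[n - 1]!$.

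For the general claim that $B_\sigma B_{w_0}$ is a direct sum of shifts of $B_{w_0}$, the argument proceeds by induction on $\ell(\sigma)$. The base case $\sigma = e$ is immediate since $B_e = R_n$. For the inductive step, choose a simple reflection $s_i$ with $\ell(s_i \sigma) < \ell(\sigma)$ and write $\sigma = s_i \sigma'$ with $\ell(\sigma') = \ell(\sigma) - 1$, so that $B_\sigma$ is an indecomposable direct summand of the Bott--Samelson bimodule $B_i B_{\sigma'}$. The same graphical argument as above, localized to strands $i$ and $i + 1$, shows that $B_i B_{w_0} \cong [2]B_{w_0}$. Combined with the inductive hypothesis applied to $B_{\sigma'} B_{w_0}$, this gives that $B_i B_{\sigma'} B_{w_0}$ is itself a direct sum of quantum shifts of $B_{w_0}$. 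Since $B_\sigma B_{w_0}$ is a direct summand of $B_i B_{\sigma'} B_{w_0}$ and $B_{w_0}$ is indecomposable, the Krull--Schmidt theorem forces $B_\sigma B_{w_0}$ to be a direct sum of shifts of $B_{w_0}$ as well. The statement for $B_{w_0} B_\sigma$ follows by a symmetric argument, or by applying the duality functor $(-)^{\vee}$, under which $B_\sigma^{\vee} \cong B_{\sigma^{-1}}$ and $B_{w_0}^{\vee} \cong B_{w_0}$.

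The main obstacle I anticipate is keeping track of the cumulative quantum shifts produced by the graphical manipulations, in particular verifying that the net shift in the $B_{w_0}B_{w_0}$ computation matches the symmetric polynomial $[n]!$ rather than an asymmetric variant. This is forced by the $q^{-\ell(n)}$ normalization built into the definition of $B_{w_0}$ together with the shift conventions in Proposition \ref{prop: web_rels}, but it requires careful bookkeeping.
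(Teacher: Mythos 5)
Your proposal is correct and follows essentially the same route the paper indicates: repeated digon removal and associativity in the graphical calculus give $B_{w_0}B_{w_0} \cong [n]!\,B_{w_0}$ and $B_{w_1}B_{w_0} \cong B_{w_0}B_{w_1} \cong [n-1]!\,B_{w_0}$, and your induction on $\ell(\sigma)$ using $B_iB_{w_0} \cong [2]B_{w_0}$ together with Krull--Schmidt is the standard way to fill in the general statement that the paper leaves as ``well-known.'' The only quibbles are cosmetic: $B_iB_{\sigma'}$ need not itself be a Bott--Samelson bimodule (though $B_\sigma$ is indeed a summand of it), and the duality step works whether one has $B_\sigma^{\vee} \cong B_{\sigma^{-1}}$ or simply argues symmetrically on the other side.
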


\begin{proof}
\textcolor{revisions}{See e.g. Proposition 2.5 of \cite{AH17}}.
\end{proof}

\textcolor{revisions}{For each $1 \leq i \leq n - 1$, there are distinguished degree $q$ morphisms}

\[
unzip \colon B_i \to R_n; \quad zip \colon R_n \to B_i
\]

\textcolor{revisions}{We can succinctly describe these morphisms using the polynomial realizations of these bimodules in \eqref{eq: R_n}, \eqref{eq: B_i}. Each is uniquely defined by specifying the image of $1$ and extending $R[\mathbb{X}, \mathbb{X}']$-linearly. Explicitly, these images are as follows:}

\begin{align*}
    unzip & : \ 1 \mapsto 1 \\
    zip & : \ 1 \mapsto x_i - x_{i + 1}'.
\end{align*}

It is an easy exercise to show that each of these maps is a well-defined homomorphism of $R_n$-bimodules. \textcolor{revisions}{We may think of $unzip$ as the quotient map obtained by killing $x_{i + 1} - x_{i + 1}' \in B_i$. Similarly, the quotient map obtained by killing $x_n - x_n'$ in $B_{w_0}$ is a degree $q^{n - 1}$ bimodule homomorphism}

\[
\textcolor{revisions}{unzip \colon B_{w_0} \to B_{w_1}.}
\]

We will often be interested in computing the (graded) dimension of morphism spaces \textcolor{revisions}{$\text{Hom}_{SBim}(M, N)$ between two bimodules $M, N$}. We are aided in this task by two contravariant functors

$$
^{\vee}(-), \ (-)^{\vee} \colon \textcolor{revisions}{SBim_n \to SBim_n}
$$

defined by

$$
\textcolor{revisions}{^{\vee}M := \text{Hom}_{R_n-\text{Mod}} (R_n, M); \quad M^{\vee} := \text{Hom}_{\text{Mod}-R_n} (M, R_n)}
$$

and satisfying natural adjunctions

\[
\textcolor{revisions}{\text{Hom}_{SBim_n}(XY, Z) \cong \text{Hom}_{SBim_n}(Y, \ ^{\vee}XZ); \quad \text{Hom}_{SBim_n}(XY, Z) \cong \text{Hom}_{SBim_n}(X, ZY^{\vee})}.
\]

Note that $^{\vee}(q^a M) \cong q^a (^{\vee} M)$ and $(q^a M)^{\vee} \cong q^{-a} (M^{\vee})$ for all $a \in \mathbb{Z}$ and all Soergel bimodules $M$.

\begin{proposition} \label{prop: duals}
\textcolor{revisions}{Each of the bimodules $R_n, B_i, B_{w_1}, B_{w_0} \in SBim_n$ is self-dual under the operations $^{\vee}(-), (-)^{\vee}$.}
\end{proposition}

\begin{proof}
\textcolor{revisions}{See e.g. \cite{HRW21}, Proposition 3.19.}
\end{proof}

There is a useful graphical calculus for Soergel bimodules\footnote{\textcolor{revisions}{This calculus is better adapted to the $2$-category of \textit{singular} Soergel bimodules, of which Soergel bimodules form a distinguished endomorphism category. The diagrammatics we outline here can be considered as a special case of that broader calculus. We employ the full singular calculus in Appendices \ref{app: ssbim} and \ref{app: dot_slide_nat}; see the discussion there for more detail on this point.}} analogous to that of Type A webs developed by Cautis-Kamnitzer-Morrison in \cite{CKM14}. \textcolor{revisions}{This calculus depicts type $A_{n - 1}$ Soergel bimodules as certain graphs embedded in the the unit square $[0, 1] \times [0,1]$ with boundary consisting of $n$ endpoints in each of the intervals $[0, 1] \times \{0\}$ and $[0, 1] \times \{1\}$. We denote the bimodules $R_n$, $B_i$, $B_{w_1}$, $B_{w_0} \in SBim_n$ in this scheme as follows:}

\begin{gather*}
R_n = 
\begin{tikzpicture}[baseline=(current bounding box.center),scale=.5,tinynodes]
	\draw[color=white] (-.5,-.9) to (.5,-.9);
    \draw[webs] (-.5,0) -- (-.5,1.5);
    \draw[webs] (.5,0) -- (.5,1.5);
    \node at (0, .75) {$\dots$};
    \draw [decorate,decoration = {brace}] (-.5,1.6) -- (.5,1.6) node[pos=.5,above]{$n$};
\end{tikzpicture}
\quad \quad 
B_i = 
\begin{tikzpicture}[baseline=(current bounding box.center),scale=.5,tinynodes]
	\draw[color=white] (-.5,-.9) to (.5,-.9);
    \draw[webs] (-1.5,0) -- (-1.5,1.5);
    \node at (-1,.75) {$\dots$};
    \draw[webs] (-.5,0) -- (-.5,1.5);
    \draw [decorate,decoration = {brace}] (-1.5,1.6) -- (-.5,1.6) node[pos=.5,above]{$i - 1$};
    \draw[webs] (0,0) to[out=90,in=180] (.5,.5);
    \draw[webs] (1,0) to[out=90,in=0] (.5,.5);
    \draw[webs] (.5,.5) -- (.5,1);
    \draw[webs] (.5,1) to[out=180,in=270] (0,1.5);
    \draw[webs] (.5,1) to[out=0,in=270] (1,1.5);
    \draw[webs] (1.5,0) -- (1.5,1.5);
    \node at (2,.75) {$\dots$};
    \draw[webs] (2.5,0) -- (2.5,1.5);
    \draw [decorate,decoration = {brace}] (1.5,1.6) -- (2.5,1.6) node[pos=.5,above]{$n - (i + 1)$}; 
\end{tikzpicture}
\quad \quad
B_{w_1} = 
\begin{tikzpicture}[anchorbase,scale=.5,tinynodes]
	\draw[webs] (-.5,0) to[out=90,in=180] (.5,.5);
	\draw[webs] (0,0) to[out=90,in=225] (.5,.5);
	\node at (.5,.1) {$\dots$};
	\draw[webs] (1,0) to[out=90,in=315] (.5,.5);
	\draw[webs] (1.5,0) to[out=90,in=0] (.5,.5);
	\draw [decorate,decoration = {brace,mirror}] (-.5,-.1) -- (1.5,-.1) node[pos=.5,below]{$n - 1$};
	\draw[webs] (.5,.5) -- (.5,1);
	\draw[webs] (.5,1) to[out=180,in=270] (-.5,1.5);
	\draw[webs] (.5,1) to[out=135,in=270] (0,1.5);
	\node at (.5,1.4) {$\dots$};
	\draw[webs] (.5,1) to[out=45,in=270] (1,1.5);
	\draw[webs] (.5,1) to[out=0,in=270] (1.5,1.5);
	\draw [decorate,decoration = {brace}] (-.5,1.6) -- (1.5,1.6) node[pos=.5,above]{$n - 1$};
	\draw[webs] (2,0) -- (2,1.5);
\end{tikzpicture}
\quad \quad
B_{w_0} = 
\begin{tikzpicture}[anchorbase,scale=.5,tinynodes]
	\draw[webs] (-.5,0) to[out=90,in=180] (.5,.5);
	\draw[webs] (0,0) to[out=90,in=225] (.5,.5);
	\node at (.5,.1) {$\dots$};
	\draw[webs] (1,0) to[out=90,in=315] (.5,.5);
	\draw[webs] (1.5,0) to[out=90,in=0] (.5,.5);
	\draw [decorate,decoration = {brace,mirror}] (-.5,-.1) -- (1.5,-.1) node[pos=.5,below]{$n$};
	\draw[webs] (.5,.5) -- (.5,1);
	\draw[webs] (.5,1) to[out=180,in=270] (-.5,1.5);
	\draw[webs] (.5,1) to[out=135,in=270] (0,1.5);
	\node at (.5,1.4) {$\dots$};
	\draw[webs] (.5,1) to[out=45,in=270] (1,1.5);
	\draw[webs] (.5,1) to[out=0,in=270] (1.5,1.5);
	\draw [decorate,decoration = {brace}] (-.5,1.6) -- (1.5,1.6) node[pos=.5,above]{$n$};
\end{tikzpicture}
\end{gather*}

We depict external tensor product $\sqcup$ in \textcolor{revisions}{$SBim$} by horizontal concatenation of such diagrams and \textcolor{revisions}{tensor product $\otimes_{R_n}$ of two bimodules in $SBim_n$ by vertical concatenation, read from bottom to top and right to left}. More complicated diagrams are built from the above by these operations. For example, we may depict the Bott-Samelson bimodule $B_2B_1 \in SBim_3$ as

\begin{gather*}
B_1B_2B_1 = 
\begin{tikzpicture}[anchorbase,scale=.5,tinynodes]
\draw[webs] (0,0) to[out=90,in=180] (.5,.5);
\draw[webs] (1,0) to[out=90,in=0] (.5,.5);
\draw[webs] (.5,.5) to (.5,1);
\draw[webs] (.5,1) to[out=180,in=270] (0,1.5);
\draw[webs] (.5,1) to[out=0,in=270] (1,1.5);
\draw[webs] (2,0) to (2,1.5);
\draw[webs] (1,1.5) to[out=90,in=180] (1.5,2);
\draw[webs] (2,1.5) to[out=90,in=0] (1.5,2);
\draw[webs] (1.5,2) to (1.5,2.5);
\draw[webs] (1.5,2.5) to[out=180,in=270] (1,3);
\draw[webs] (1.5,2.5) to[out=0,in=270] (2,3);
\draw[webs] (0,1.5) to (0,3);
\end{tikzpicture}
\end{gather*}

\begin{remark} \label{rem: chain_graphs}
We will routinely extend the graphical calculus outlined above for \textcolor{revisions}{$SBim_n$} to various categories of chain complexes over \textcolor{revisions}{$SBim_n$}. The duality functors $^{\vee}(-)$, $(-)^{\vee}$ also extend to these categories of chain complexes by termwise application to chain bimodules and enjoy analogous adjunction relations and behavior under formal homological shifts.
\end{remark}

\subsection{Rouquier Complexes} \label{sec: rouq}

Let $Br_n$ denote the (type $A$) braid group on $n$ strands. We refer to the generators $\sigma_i, 1 \leq i \leq n - 1$ of $Br_n$ as \textit{positive crossings} and their inverses $\sigma_i^{-1}$ as \textit{negative crossings}. There is a homomorphism from $Br_n$ to $\mathfrak{S}^n$ taking $\sigma_i^{\pm}$ to $s_i$; given $\beta \in Br_n$, we call its image under this homomorphism the \textit{associated permutation} to $\beta$ \textcolor{revisions}{and denote this permutation by $\sigma_{\beta}$}. We say $\beta$ is a \textit{pure braid} if its associated permutation \textcolor{revisions}{$\sigma_{\beta}$} is trivial.

We employ a graphical calculus for $Br_n$ consisting of $n$ visibly braided parallel strands. Positive and negative crossings are depicted as follows:

\begin{gather*}
\sigma_i =
\begin{tikzpicture}[anchorbase,scale=.5,tinynodes]
    \draw[color=white] (0,-1.9) to (1,-1.9);
    \draw[webs] (-1.5,-1) to (-1.5,1);
    \node at (-1,0) {$\dots$};
    \draw[webs] (-.5,-1) to (-.5,1);
    \draw [decorate,decoration = {brace}] (-1.5,1.1) -- (-.5,1.1) node[pos=.5,above]{$i - 1$};
	\draw[webs] (1,-1) to [out=90,in=270] (0,1);
	\draw[line width=5pt,color=white] (0,-1) to [out=90,in=270] (1,1);
	\draw[webs] (0,-1) to [out=90,in=270] (1,1);
	\draw[webs] (1.5,-1) to (1.5,1);
    \node at (2,0) {$\dots$};
    \draw[webs] (2.5,-1) to (2.5,1);
    \draw [decorate,decoration = {brace}] (1.5,1.1) -- (2.5,1.1) node[pos=.5,above]{$n - (i + 1)$};
\end{tikzpicture}
;
\quad \quad
\sigma_i^{-1} =
\begin{tikzpicture}[anchorbase,scale=.5,tinynodes]
    \draw[color=white] (0,-1.9) to (1,-1.9);
    \draw[webs] (-1.5,-1) to (-1.5,1);
    \node at (-1,0) {$\dots$};
    \draw[webs] (-.5,-1) to (-.5,1);
    \draw [decorate,decoration = {brace}] (-1.5,1.1) -- (-.5,1.1) node[pos=.5,above]{$i - 1$};
    \draw[webs] (0,-1) to [out=90,in=270] (1,1);
	\draw[line width=5pt,color=white] (1,-1) to [out=90,in=270] (0,1);
	\draw[webs] (1,-1) to [out=90,in=270] (0,1);
	\draw[webs] (1.5,-1) to (1.5,1);
    \node at (2,0) {$\dots$};
    \draw[webs] (2.5,-1) to (2.5,1);
    \draw [decorate,decoration = {brace}] (1.5,1.1) -- (2.5,1.1) node[pos=.5,above]{$n - (i + 1)$};
\end{tikzpicture}
\end{gather*}

We depict multiplication of braid words (read from right to left) as vertical composition (read from bottom to top)\footnote{This somewhat dyslexic convention is standard and ensures good behavior under composition when considering tensoring with Rouquier complexes as endofunctors on $K(SBim_n)$.}; this is consistent with the above convention for interpreting webs. Given a braid word $\beta$, we assign a complex $F(\beta)$ of Soergel bimodules, called the \textit{Rouquier complex}, defined by

\begin{center}
\begin{tikzcd} \label{eq: rouquier}
F(\sigma_i) &[-3em] := &[-3em] B_i \arrow[r, "unzip"] & q^{-1}tR_n; \quad \quad
F(\sigma_i^{-1}) &[-3em] := &[-3em] qt^{-1}R_n \arrow[r, "zip"] & B_i
\end{tikzcd}
\end{center}

together with the rule $F(\beta\beta') = F(\beta)F(\beta')$. Note that we have made all homological shifts of objects of these complexes explicit by depicting them as convolutions, and the quantum shifts above ensure that all differentials are of quantum degree $0$. It is a theorem of Rouquier \cite{Rou04} that these complexes respect the braid relations up to homotopy equivalence; as a consequence, we have a well-defined homomorphism $F: Br_n \rightarrow K^b(SBim_n)$.

Given a braid $\beta \in B_n$, we will often depict $F(\beta)$ diagramatically according to the graphical calculus for $Br_n$ described above. We consider this as an extension of the graphical calculus for $K^b(SBim)$. In the special case that a complex $C \in K^b(SBim)$ is depicted by a single diagram (i.e. \textit{not} explicitly as a convolution), we call the corresponding diagram a \textit{braided web}. For example, the diagram on the left below is a braided web, while the diagram on the right is not.

\begin{gather*}
\begin{tikzpicture}[anchorbase,scale=.5,tinynodes]
    \draw[webs] (0,0) to[out=90,in=180] (.5,.5);
    \draw[webs] (.5,0) to (.5,.5);
    \draw[webs] (1,0) to[out=90,in=0] (.5,.5);
    \draw[webs] (.5,.5) to (.5,1);
    \draw[webs] (.5,1) to[out=180,in=270] (0,1.5);
    \draw[webs] (.5,1) to (.5,1.5);
    \draw[webs] (.5,1) to[out=0,in=270] (1,1.5);
    \draw[webs] (1.5,0) to (1.5,1.5);
    \draw[webs] (0,0) to[out=270,in=90] (.5,-1);
    \draw[webs] (.5,0) to[out=270,in=90] (1,-1); 
    \draw[webs] (1,0) to[out=270,in=90] (1.5,-1);
    \draw[line width=5pt,color=white] (1.5,0) to[out=270,in=90] (0,-1);
    \draw[webs] (1.5,0) to[out=270,in=90] (0,-1);
    \draw[webs] (0,-1) to[out=270,in=90] (1.5,-2);
    \draw[line width=5pt,color=white] (.5,-1) to[out=270,in=90] (0,-2);
    \draw[webs] (.5,-1) to[out=270,in=90] (0,-2);
    \draw[line width=5pt,color=white] (1,-1) to[out=270,in=90] (.5,-2);
    \draw[webs] (1,-1) to[out=270,in=90] (.5,-2);
    \draw[line width=5pt,color=white] (1.5,-1) to[out=270,in=90] (1,-2);
    \draw[webs] (1.5,-1) to[out=270,in=90] (1,-2);
\end{tikzpicture}
;\quad \quad
\begin{tikzpicture}[anchorbase,scale=.5,tinynodes]
    \draw[webs] (0,0) to[out=90,in=180] (.5,.5);
    \draw[webs] (1,0) to[out=90,in=0] (.5,.5);
    \draw[webs] (.5,.5) to (.5,1);
    \draw[webs] (.5,1) to[out=180,in=270] (0,1.5);
    \draw[webs] (.5,1) to[out=0,in=270] (1,1.5);
    \draw[thick] (1.5,.75) to (2.5,.75) node[above]{$unzip$};
    \draw[thick,->] (2.5,.75) to (3.5,.75);
\end{tikzpicture}
\ q^{-1}t \ 
\begin{tikzpicture}[anchorbase,scale=.5,tinynodes]
\draw[webs] (0,0) to (0,1.5);
\draw[webs] (.5,0) to (.5,1.5);
\end{tikzpicture}
\end{gather*}

Let $C \in K^b(SBim)$ be given, and let $\mathcal{D}$ be a braided web representing $C$. \textcolor{revisions}{At each vertical slice of $\mathcal{D}$ corresponding to a tensor product $\otimes_{R_n}$, there is a} collection of well-defined central endomorphisms of $C$ given by the action of \textcolor{revisions}{$R_n$ in that position.} We will often describe this action on extremal segments as multiplication by polynomials in $R[\mathbb{X}, \mathbb{X}']$ or $R[\mathbb{X}, \mathbb{X}'']$ depending on the context; note that this agrees with the pointwise action of $R[\mathbb{X}, \mathbb{X}']$ on chain $R_n$-bimodules. The following well-known proposition ensures that this action is reasonably well-behaved with respect to crossings:

\begin{proposition}[Dot Sliding] \label{prop: dot_slide}
For each $1 \leq i \leq n - 1$, the actions of $x_i$ and $x_{i + 1}'$ are homotopic as endomorphisms of $F(\sigma_i)$ and of $F(\sigma_i')$.
\end{proposition}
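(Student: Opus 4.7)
The strategy is to exhibit an explicit nullhomotopy of the endomorphism $x_i - x_{i+1}'$ on each Rouquier complex using the $zip$ and $unzip$ morphisms introduced in \S\ref{sec: SoergBim}. The crucial input is that these maps compose, in either order, to recover the relevant dot action: $unzip \circ zip$ is multiplication by $x_i - x_{i+1}$ on $R_n$ (hence the action of $x_i - x_{i+1}'$, since $x_j = x_j'$ in $R_n$), while $zip \circ unzip$ realizes the bimodule action of $v := x_i - x_{i+1}' \in B_i$ on the bimodule $B_i$.

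For $F(\sigma_i) = (B_i \xrightarrow{unzip} q^{-1}tR_n)$, I would take the homotopy $h$ to be $zip$, viewed as a map $q^{-1}tR_n \to B_i$ of bidegree $(q^2, t^{-1})$. The identity $[d,h] = x_i - x_{i+1}'$ then reduces to two verifications. On the degree-one chain group $R_n$, $(d \circ h)(1) = unzip(zip(1)) = unzip(v) = x_i - x_{i+1}$, matching the action of $x_i - x_{i+1}'$ there. On the degree-zero chain group $B_i$, $(h \circ d)(w) = zip(unzip(w)) = unzip(w) \cdot v$ must equal $x_i w - w x_{i+1}$ for all $w \in B_i$. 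Both sides of this last equation are $R_n$-bimodule maps $B_i \to B_i$, so it suffices to verify agreement on the generator $1 \otimes 1$, where both sides return $x_i \otimes 1 - 1 \otimes x_{i+1}$.

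The one step requiring care is that $zip$ is well-defined as a bimodule map, i.e.\ that $v = x_i - x_{i+1}'$ is a central element of $B_i$. This amounts to checking $(x_j - x_j') v = 0$ in $B_i$ for every $j$: for $j \ne i, i+1$ this is automatic since $x_j \in R_n^{s_i}$, and the case $j = i+1$ follows from $j = i$ via the relation $x_i + x_{i+1} = x_i' + x_{i+1}'$. The remaining identity $(x_i - x_i')(x_i - x_{i+1}') = 0$ in $B_i$ is a direct expansion using the two defining bimodule relations $x_i + x_{i+1} = x_i' + x_{i+1}'$ and $x_i x_{i+1} = x_i' x_{i+1}'$.

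For $F(\sigma_i^{-1}) = (qt^{-1}R_n \xrightarrow{zip} B_i)$, the argument is entirely symmetric: I would take $h := unzip$ with shifts absorbed so that $h$ again has bidegree $(q^2, t^{-1})$, and the same two composition identities $unzip \circ zip = x_i - x_{i+1}$ and $zip \circ unzip = v \cdot -$ produce $[d,h] = x_i - x_{i+1}'$. The only obstacle in the whole argument is the centrality check for $v$; once that is dispensed with, the remainder is a brief direct calculation on generators, and the only outstanding bookkeeping is matching quantum and homological shifts to ensure that $h$ lands in the correct bidegree.
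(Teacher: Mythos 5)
Your proposal is correct and takes exactly the paper's route: the paper's proof simply declares $h_i^+ = zip$ on $F(\sigma_i)$ and $h_i^- = unzip$ on $F(\sigma_i^{-1})$, which is precisely the homotopy you construct. You additionally carry out the verification (the two compositions and the centrality of $x_i - x_{i+1}'$ making $zip$ well-defined) that the paper leaves implicit, and those checks are accurate.
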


\begin{proof}
We include the proof primarily as an excuse to introduce notation. We wish to identify some homotopies $h_i^+ \in \text{End}^{-1}_{K^b(SBim)}(F(\sigma_i))$, $h_i^- \in \text{End}^{-1}_{K^b(SBim)}(F(\sigma_i^{-1}))$ such that $[d, h_i^{\pm}] = x_i - \textcolor{revisions}{x_{i + 1}'}$. Mirroring our notational convention for $d$, we will denote $h_i^{\pm}$ by \textit{backwards} arrows between chain groups labelled by quantum degree $q^2$ bimodule maps. Explicitly, we have:

\begin{center}
\begin{tikzcd}
B_i \arrow[rr, shift left, "unzip"] & & q^{-1}tR_n \arrow[ll, shift left, "h_i^+ = zip"]
\end{tikzcd}
; \quad \quad
\begin{tikzcd}
q^{-1}tR_n \arrow[rr, shift left, "zip"] & & B_i \arrow[ll, shift left, "h_i^- = unzip"]
\end{tikzcd}
\end{center}
\end{proof}

Notice that $f(\mathbb{X}) = f(\mathbb{X}') \in \text{End}(F(\sigma_i))$ for any $f \in R_n^i$. In particular, we have

\[
\textcolor{revisions}{x_{i + 1} - x_i' = (x_{i + 1} + x_i) - x_i - x_i' = (x_{i + 1}' + x_i') - x_i - x_i' = -(x_i - x_{i + 1}')} \in \text{End}(F(\sigma_i)),
\]

so \textcolor{revisions}{$[d, -h_i^{\pm}] = x_{i + 1} - x_i'$}. We refer to $h_i$ in the proof above and its counterpart for the other strand as \textit{dot-sliding} homotopies. By summing these dot-sliding homotopies over various crossings, we obtain the following:

\begin{proposition}
Let $\beta \in Br_n$ be given. Then for each $1 \leq j \leq n$, we have $x_{\beta(j)} \sim x_{j}' \in \text{End}^0(F(\beta))$. Let $h_j \in \text{End}^{-1}(F(\beta))$ be such that $[d, h_j] = x_{\beta(j)} - x_j'$; then the collection $\{h_j\}_{j = 1}^n$ square to $0$ and pairwise anticommute. 
\end{proposition}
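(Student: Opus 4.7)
The plan is to construct the homotopies $h_j$ explicitly as a telescoping sum of single-crossing dot-sliding homotopies, following the $j$-th strand as it threads through the braid.

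\textbf{Construction.} Fix a word $\beta = \sigma_{c_1}^{\epsilon_1} \cdots \sigma_{c_k}^{\epsilon_k}$ (read bottom to top), so that $F(\beta) = F(\sigma_{c_1}^{\epsilon_1}) \otimes \cdots \otimes F(\sigma_{c_k}^{\epsilon_k})$ as a tensor product of two-term complexes over $R_n$. Let $\pi_\ell := \sigma_{c_1}^{\epsilon_1} \cdots \sigma_{c_\ell}^{\epsilon_\ell}$, so $\pi_0 = e$ and $\pi_k = \beta$. Whenever $\pi_{\ell-1}(j) \in \{c_\ell, c_\ell + 1\}$ the $j$-th strand participates in the $\ell$-th crossing; in that case I take $H_{j,\ell}$ to be (a suitable sign times) the single-crossing dot-sliding homotopy from the previous proposition, applied to the $\ell$-th tensor factor and tensored with identities on the remaining factors, with sign chosen so that $[d, H_{j,\ell}]$ acts as multiplication by $x_{\pi_\ell(j)} - x_{\pi_{\ell-1}(j)}'$ (and as identity elsewhere). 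If the strand does not participate, set $H_{j,\ell} := 0$. Define $h_j := \sum_\ell H_{j,\ell}$.

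\textbf{Verifying Maurer-Cartan.} Under the tensor product structure over $R_n$, the top polynomial action of the $\ell$-th factor is identified with the bottom polynomial action of the $(\ell+1)$-st factor. Thus the contribution $x_{\pi_\ell(j)}$ appearing in $[d, H_{j,\ell}]$ cancels the corresponding $-x'_{\pi_\ell(j)}$ appearing in $[d, H_{j,\ell+1}]$, and the telescoping sum leaves only the two boundary terms $x_{\pi_k(j)} - x'_{\pi_0(j)} = x_{\beta(j)} - x_j'$, as required.

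\textbf{Squaring and anticommutation.} Each single-crossing homotopy $h_c^\pm$ has a unique nonzero component inside the two-term complex $F(\sigma_c^\pm)$, so $(h_c^\pm)^2 = 0$ for degree reasons. Expanding $h_j^2 = \sum_{\ell, \ell'} H_{j,\ell} \circ H_{j,\ell'}$, the diagonal terms vanish by this observation; for $\ell \neq \ell'$, the operators $H_{j,\ell}$ and $H_{j,\ell'}$ act on distinct tensor factors and each carries odd homological degree, so by the graded middle interchange law they anticommute and the off-diagonal contributions cancel in pairs. The same argument shows $h_i h_j + h_j h_i = 0$ for $i \neq j$: cross-crossing terms pair and cancel by the interchange law, while single-crossing contributions (where strands $i$ and $j$ both pass through the same crossing) reduce to $\pm(h_c^\pm)^2 = 0$.

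\textbf{Main obstacle.} The principal difficulty is bookkeeping signs: the Koszul signs from the graded middle interchange law, the sign relating the two choices of dot-sliding homotopy for the two strands meeting at a single crossing (which satisfy $x_i - x_{i+1}' = -(x_{i+1} - x_i')$ in $B_c$), and ensuring that the telescoping identification of polynomial actions across adjacent factors is compatible with the $R_n$-bimodule structure. One could alternatively induct on braid word length, treating $\beta = \beta_1 \beta_2$ by splicing homotopies across the tensor product; this reorganizes but does not eliminate the sign verification, which is isolated in the single-crossing base case either way.
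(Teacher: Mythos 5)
Your construction is exactly the one the paper intends: the paper states this proposition with the single remark that it follows ``by summing these dot-sliding homotopies over various crossings,'' and your telescoping sum of single-crossing homotopies, together with the Koszul-sign and two-term-complex arguments for $h_j^2 = 0$ and pairwise anticommutation, fills in precisely those details. The approach and the argument are correct and essentially identical to the paper's.
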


Since $SBim_n$ is a subcategory of the abelian category $R_n-\text{Bim}$, we can consider the cohomology $H^{\bullet}(F(\beta))$ of Rouquier complexes in the latter (note that this is independent of a choice of braid word representing $\beta$, as Rouquier complexes are well-defined up to homotopy equivalence). We conclude this section by quoting that result for positive \textcolor{revisions}{and negative} braids; see Chapter 19 of \cite{EMTW20} for further details.

\begin{definition}
For each $\sigma \in \mathfrak{S}^n$, let $R[\mathbb{X}]_{\sigma}$ denote the $R_n$-bimodule which equals $R[\mathbb{X}]$ as a left $R[\mathbb{X}]$-module and whose right $R[\mathbb{X}]$-module action is twisted by $\sigma$. In other words, viewing $R[\mathbb{X}]_{\sigma}$ as a $R[\mathbb{X}, \mathbb{X}']$-module, we have $x_i \cdot f(\mathbb{X}) \cdot x_j' = x_ix_{\sigma(j)} f(\mathbb{X})$ for all $f(\mathbb{X}) \in R_n$ and all $i, j$.
\end{definition}

\begin{proposition} \label{prop: rouqhom}
\textcolor{revisions}{For each braid $\beta \in Br_n$,} let $e(\beta)$ denote \textcolor{revisions}{its} braid exponent (i.e. the number of positive crossings minus the number of negative crossings in a fixed expression for $\beta$). \textcolor{revisions}{If $\beta$ is a positive braid, t}hen there is a quasi-isomorphism $\psi_{\beta} \colon q^{e(\beta)}R[\mathbb{X}]_{\textcolor{revisions}{\sigma_{\beta}}} \to F(\beta)$ in $Ch^b(R_n-\text{Bim})$. \textcolor{revisions}{Similarly, if $\beta$ is a negative braid, then there is a quasi-isomorphism $\phi_{\beta} \colon F(\beta) \to q^{e(\beta)} R[\mathbb{X}]_{\sigma_{\beta}}$.} In particular, \textcolor{revisions}{in either case,} we have $H^{\bullet}(F(\beta)) \cong q^{e(\beta)}R[\mathbb{X}]_{\textcolor{revisions}{\sigma_{\beta}}}$, concentrated in degree $0$, as graded $R_n$-bimodules. 
\end{proposition}

\subsection{Hochschild Cohomology} \label{section: HoCh}

For each $n$, let $\mathcal{D}_n := D^b(R_n-\text{Bim})$ denote the bounded derived category of graded $R_n$-bimodules. (We set $\mathcal{D}_0 = D^b(R-\text{Bim})$ for notational convenience.) Since quasi-isomorphisms in $\text{Ch}(R_n-\text{Bim})$ preserve quantum degree, the $\mathbb{Z} \times \mathbb{Z}$- grading on this category naturally extends to a grading on $\mathcal{D}_n$. We refer to the derived homological grading in this case as the \textit{Hochschild} grading and denote degree shifts in this grading by $a^j$. As above, given $X, Y \in \mathcal{D}_n$, we have bigraded morphism spaces:

$$
\text{Hom}^{i, j}_{\mathcal{D}_n} (X, Y) := \text{Hom}^j_{\mathcal{D}_n} (q^iX, Y)
$$

Note that $i$ and $j$ track quantum degree and Hochschild degree above, respectively. As usual, we will often indicate the degree of a homogeneous morphism $f \in \text{Hom}^{i, j}_{\mathcal{D}_n} (X, Y)$ multiplicatively as $\text{deg}(f) = a^jq^i$ to avoid ambiguity in ordering of degrees.

Again $\mathcal{D}_n$ is a monoidal category via the usual derived tensor product, which we again denote by $\otimes$. There is a fully faithful inclusion $R_n$-Bim $\hookrightarrow \mathcal{D}_n$ of monoidal categories given by inclusion in homological degree $0$; from now on, we will suppress this inclusion and regard graded $R_n$-bimodules as objects of $\mathcal{D}_n$ without further comment. 

\begin{definition}

Given $M \in R_n$-Bim, let $HH^{\bullet, \bullet}(M)$ denote the $\mathbb{Z} \times \mathbb{Z}$-graded $R$-module given in each degree by $HH^{i, j}(M) := \text{Hom}^{i, j}_{\mathcal{D}_n} (R_n, M)$. Notice that $HH^{i, j}(M) \cong \text{Ext}^j (q^iR_n, M) \cong q^i\text{Ext}^j(R_n, q^{-i}M)$ is (a shift of) the classical graded $j^{th}$ Hochschild cohomology of $R_n$ with coefficients in $q^{-i}M$; for this reason, we refer to the functor $HH^{\bullet, \bullet}$ as \textit{Hochschild cohomology}. Since we deal exclusively with Hochschild \textit{co}homology in this paper, we will often drop the superscript bullets and denote this functor simply by $HH$.

\end{definition}

Now, for each $n$, let $\mathcal{C}_n := K(\mathcal{D}_n)$ denote the homotopy category of chain complexes on $\mathcal{D}_n$. (As usual, we let $\mathcal{C}_n^{b, +, -}$ denote the corresponding categories of bounded complexes). The inclusion above extends to a fully faithful monoidal functor $K(R_n-Bim) \hookrightarrow \mathcal{C}_n$ given by termwise inclusion of chain bimodules into $\mathcal{D}_n$. Morphism spaces in this category are $\mathbb{Z}^3$-graded with quantum, Hochschild, and homological grading. Explicitly, we have:

$$
\text{Hom}^{i, j, k}_{\mathcal{C}_n}(X, Y) := \text{Hom}^k_{\mathcal{C}_n}(a^jq^iX, Y)
$$

where $i, j, k$ denote quantum, Hochschild, and homological gradings, respectively. Again, we will indicate the degree of a homogeneous element $f \in \text{Hom}_{\mathcal{C}_n}^{i, j, k}(X, Y)$ mutliplicatively as $\text{deg}(f) = a^jq^it^k$.

Given a complex $C \in K(R_n-\text{Bim})$ of $R_n$-bimodules, let $HH(C)$ denote the complex of $q,a$-graded $R$-modules obtained by termwise application of Hochschild cohomology. We denote the (triply-graded!) cohomology of the resulting complex by $HHH(C) := H^{\bullet}(HH(C))$. Unraveling definitions, we see that $HHH(C) \cong \text{Hom}_{\mathcal{C}_n}(R_n, C)$ as triply-graded $R$-modules and $HHH(C)|_{a = 0} \cong H^{\bullet}(\text{Hom}_{\text{Ch}(SBim_n)}(R_n, C))$ as doubly-graded $R$-modules. Combined with the duality functor $^{\vee}(-)$ of Section \ref{sec: SoergBim}, this characterization gives a convenient method of computing graded dimensions of morphism spaces between bimodules.

\begin{lemma} \label{lem: morph_dim}
Let $M, N \in SBim_n$ be given. Then there is an isomorphism of graded vector spaces

\[
\text{Hom}_{SBim_n}(M, N) \cong HHH(^{\vee}M N)|_{a = 0}
\]
\end{lemma}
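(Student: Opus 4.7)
The plan is to deduce the lemma from two ingredients already on the table: the biadjunction for the dualization functors $^{\vee}(-),(-)^{\vee}$ recorded in Section \ref{sec: ssbim}, and the description of $HHH|_{a=0}$ for a complex of bimodules as the cohomology of a chain-level hom complex (given in the paragraph immediately preceding the statement). The point is that when the input to $HHH$ is a genuine bimodule, rather than a nontrivial complex of bimodules, the relevant chain-level hom complex is concentrated in a single degree with trivial differential, so its cohomology is just a hom set.

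First, I would invoke the adjunction
\[
\text{Hom}_{SSBim}(XY, Z) \cong \text{Hom}_{SSBim}(Y, \ ^{\vee}XZ)
\]
with $X = M$, $Y = R_n$, $Z = N$. Since $M \otimes R_n \cong M$ in $SBim_n$, this specializes to an isomorphism of $q$-graded $R$-modules
\[
\text{Hom}_{SBim_n}(M, N) \cong \text{Hom}_{SBim_n}(R_n, \ ^{\vee}M \otimes N).
\]
Both sides live naturally in $SBim_n \subset \text{Hom}_{SSBim_n}(1^n,1^n)$, and the adjunction preserves the quantum grading, so the identification is graded.

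Next, observe that $^{\vee}M \otimes N$ is an object of $SBim_n$, hence a chain complex of Soergel bimodules concentrated in homological degree $0$ with trivial differential. Viewing $R_n$ likewise as a chain complex concentrated in degree $0$, the hom complex $\text{Hom}_{\text{Ch}(SBim_n)}(R_n, \ ^{\vee}M \otimes N)$ has a single nonzero term, namely $\text{Hom}_{SBim_n}(R_n, \ ^{\vee}M \otimes N)$ in homological degree $0$, and its differential vanishes. Therefore its cohomology agrees with the hom space itself:
\[
H^{\bullet}\bigl(\text{Hom}_{\text{Ch}(SBim_n)}(R_n, \ ^{\vee}M \otimes N)\bigr) \cong \text{Hom}_{SBim_n}(R_n, \ ^{\vee}M \otimes N).
\]

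Finally, the identification quoted just before the lemma,
\[
HHH(C)|_{a=0} \cong H^{\bullet}\bigl(\text{Hom}_{\text{Ch}(SBim_n)}(R_n, C)\bigr),
\]
applied to $C = \ ^{\vee}M \otimes N$, chains the previous two displays together to yield the desired isomorphism. There is no real obstacle here; the only point to check is that all three identifications are compatible with the quantum grading, which is immediate since dualization, adjunction, and the $a=0$ identification are each grading-preserving. In particular, the argument is really a bookkeeping exercise that showcases the efficiency of packaging hom computations through Hochschild cohomology, which will be leveraged throughout the paper.
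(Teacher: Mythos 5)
Your argument is correct and is exactly the intended one: the paper states the lemma immediately after recording the two ingredients you use (the adjunction $\text{Hom}(XY,Z) \cong \text{Hom}(Y,\,^{\vee}XZ)$ and the identification $HHH(C)|_{a=0} \cong H^{\bullet}(\text{Hom}_{\text{Ch}(SBim_n)}(R_n, C))$), leaving the proof as the definitional unraveling you carried out. The observation that for $C = {}^{\vee}M \otimes N$ concentrated in homological degree $0$ the Hom complex has trivial differential, so its cohomology is just $\text{Hom}_{SBim_n}(R_n, {}^{\vee}M \otimes N)$, is precisely the missing step, and your grading bookkeeping is fine.
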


Our primary interest in Hochschild cohomology comes from its relation to link invariants. The following is due to Khovanov and Rozansky (\cite{KhR08}, \cite{Kh07}).

\begin{theorem} \label{thm: hhhinvar}
Let $\beta \in Br_n$ be given. Then up to an overall normalization, the triply-graded $R$-module $HHH(F(\beta))$ is an invariant of the braid closure $\hat{\beta}$ categorifying the HOMFLYPT polynomial of $\hat{\beta}$.
\end{theorem}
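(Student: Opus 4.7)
The plan is to verify that the triply-graded $R$-module $HHH(F(\beta))$ is invariant under the two Markov moves and that its graded Euler characteristic recovers the Jones–Ocneanu trace of $\pi(\beta) \in H_n$, which gives the HOMFLYPT polynomial of $\hat{\beta}$ after the standard change of variables. Since $F$ descends to a homomorphism from $Br_n$ to $K^b(SBim_n)$ by Rouquier's theorem, the assignment $\beta \mapsto HH(F(\beta)) \in \mathcal{C}_n$ is well-defined up to homotopy equivalence; hence $HHH(F(\beta))$ is already a well-defined invariant of $\beta$ itself. It remains to promote this to an invariant of $\hat{\beta}$.

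For Markov I (conjugation), I would use the cyclic/trace property of Hochschild cohomology: for any $X, Y \in R_n\text{-Bim}$ one has a natural isomorphism $HH(X \otimes_{R_n} Y) \cong HH(Y \otimes_{R_n} X)$, and this extends to chain complexes. Given $\alpha, \beta \in Br_n$, this yields
\[
HH(F(\alpha\beta\alpha^{-1})) \simeq HH(F(\alpha) \otimes F(\beta) \otimes F(\alpha^{-1})) \simeq HH(F(\alpha^{-1}) \otimes F(\alpha) \otimes F(\beta)) \simeq HH(F(\beta)),
\]
using $F(\alpha^{-1}\alpha) \simeq R_n$. Taking cohomology gives $HHH(F(\alpha\beta\alpha^{-1})) \cong HHH(F(\beta))$.

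For Markov II (stabilization), I would compute the partial Hochschild cohomology $HH_{n+1,n}$ which traces out the last tensor factor, inducing the inclusion $SBim_n \hookrightarrow SBim_{n+1}$ as the subcategory generated by $B_1, \dots, B_{n-1}$. The key is to verify a local model: $HH_{n+1,n}(F(\sigma_n^{\pm 1}))$ is, as a complex in $\mathcal{D}_n$, a specific shift of $R_n$ tensored with the Hochschild cohomology of a single strand $R[x] \otimes \bigwedge[\theta]$ (with $\deg(\theta) = aq^{-2}$). Concretely, for the positive case one applies $HH_{n+1,n}$ to the two-term Rouquier complex $B_n \to q^{-1}tR_{n+1}$ and uses the explicit description of $B_n$ as induction-restriction; the resulting complex has cohomology concentrated in degree zero, yielding an overall normalization factor matching the HOMFLYPT polynomial's behavior under a positive stabilization. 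The negative case is analogous. Then for any $\beta \in Br_n$, a cyclic rewriting lets one isolate the stabilized crossing and apply this local computation. The main obstacle is this Markov II step, as it requires the most care: one must carefully handle the interaction between $HH$ applied to the $(n+1)$-strand Rouquier complex and the bimodule action coming from the braid $\beta$, which is precisely where the partial trace formalism developed in Section \ref{sec: yify_hoch_coh} is needed.

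Finally, decategorification proceeds by the standard argument: the graded Euler characteristic (with respect to homological grading) of $HHH(F(\beta))$, after the change of variables $A = aq^{-2}$, $Q = q^2$, $t = -1$, is the composition of the Kazhdan–Lusztig character map $[F] \colon Br_n \to H_n$ (giving $\pi(\beta)$) with the Jones–Ocneanu trace $\chi$; this follows because the class of each $B_i$ in $K_0(SBim_n) \otimes_{\mathbb{Z}[q^{\pm 1}]} \mathbb{C}(q) \cong H_n$ is the Kazhdan–Lusztig generator $b_{s_i}$, and the Euler characteristic of $HH$ on a Soergel bimodule computes $\chi$ on its Hecke class. Combining Markov invariance with this decategorification verifies that $HHH(F(\beta))$ categorifies $f(\hat\beta)$ as claimed.
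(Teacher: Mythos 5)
The paper does not prove this statement at all: Theorem \ref{thm: hhhinvar} is quoted as a result of Khovanov and Rozansky, so there is no internal proof to compare against. Your outline is the standard strategy from that literature, as streamlined by the partial trace formalism: well-definedness from Rouquier's theorem, Markov I from the trace-like property of $HH$, Markov II from a local computation of the partial trace of $F(\sigma_n^{\pm 1})$ together with locality, and decategorification via the identification of $K_0(SBim_n)$ with $H_n$ and of the Euler characteristic of $HH$ with the Jones--Ocneanu trace. Two remarks. First, the partial trace technology you invoke for the un-$y$-ified setting is developed in \S\ref{section: HoCh} (Definition \ref{def: trace}, Propositions \ref{prop: tradj}, \ref{prop: trlocal}, \ref{prop: trmarkov}), not in \S\ref{sec: yify_hoch_coh}, which is its $y$-ified analogue. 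Second, the step you correctly flag as the main obstacle --- the local Markov II computation giving $Tr_{n+1}(F(\sigma_n)) \cong q^{-1}t\,R_n$ and $Tr_{n+1}(F(\sigma_n^{-1})) \cong aq^{-3}R_n$ --- is exactly the content of Proposition \ref{prop: trmarkov}, which this paper also only quotes; your proposal asserts the existence of such a local model but does not carry out the computation (nor verify that the resulting shifts are absorbed by the ``overall normalization,'' e.g.\ via the writhe), so as written it is an outline of the known proof rather than a self-contained argument. With those computations supplied or cited, the approach is the correct and standard one.
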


We refer to this invariant as \textit{triply-graded Khovanov-Rozansky homology}, or often just \textit{triply-graded homology}. Given a link $\mathcal{L}$, we will often abuse notation by denoting the triply-graded homology of $\mathcal{L}$ as $HHH(\mathcal{L})$ without reference to the Rouquier complex of a braid presentation of $\mathcal{L}$. We will use the notation $\mathcal{P}(\mathcal{L})$ to denote the graded dimension of $HHH(\mathcal{L})$; note that $\mathcal{P}(\mathcal{L})$ is a power series in the variables $a, q, t$.

In light of this relation to braid closures, it is standard practice to depict $HHH(C)$ in the graphical calculus by "closing up" all strands. For example, we have:

\begin{gather*}
HHH(F(\sigma_1^2)) := 
\begin{tikzpicture}[anchorbase,scale=.35,tinynodes]
    \draw[webs] (1,0) to[out=90,in=270] (0,1);
    \draw[line width=5pt,color=white] (0,0) to[out=90,in=270] (1,1);
    \draw[webs] (0,0) to[out=90,in=270] (1,1);
    \draw[webs] (1,1) to[out=90,in=270] (0,2);
    \draw[line width=5pt,color=white] (0,1) to[out=90,in=270] (1,2);
    \draw[webs] (0,1) to[out=90,in=270] (1,2);
    \draw[webs] (1,2) to[out=90,in=180] (1.5,2.5);
    \draw[webs] (1.5,2.5) to[out=0,in=90] (2,2);
    \draw[webs] (0,2) to[out=90,in=180] (1.5,3.5);
    \draw[webs] (1.5,3.5) to[out=0,in=90] (3,2);
    \draw[webs] (2,2) to (2,0);
    \draw[webs] (3,2) to (3,0);
    \draw[webs] (1,0) to[out=270,in=180] (1.5,-.5);
    \draw[webs] (1.5,-.5) to[out=0,in=270] (2,0);
    \draw[webs] (0,0) to[out=270,in=180] (1.5,-1.5);
    \draw[webs] (1.5,-1.5) to[out=0,in=270] (3,0);
\end{tikzpicture}
\ ; \quad HHH(B_2) := \ 
\begin{tikzpicture}[anchorbase,scale=.35,tinynodes]
    \draw[webs] (0,0) to (0,1.5);
    \draw[webs] (1,0) to[out=90,in=180] (1.5,.5);
    \draw[webs] (2,0) to[out=90,in=0] (1.5,.5);
    \draw[webs] (1.5,.5) to (1.5,1);
    \draw[webs] (1.5,1) to[out=180,in=270] (1,1.5);
    \draw[webs] (1.5,1) to[out=0,in=270] (2,1.5);
    \draw[webs] (2,1.5) to[out=90,in=180] (2.5,2);
    \draw[webs] (2.5,2) to[out=0,in=90] (3,1.5);
    \draw[webs] (1,1.5) to[out=90,in=180] (2.5,3);
    \draw[webs] (2.5,3) to[out=0,in=90] (4,1.5);
    \draw[webs] (0,1.5) to[out=90,in=180] (2.5,4);
    \draw[webs] (2.5,4) to[out=0,in=90] (5,1.5);
    \draw[webs] (5,1.5) to (5,0);
    \draw[webs] (4,1.5) to (4,0);
    \draw[webs] (3,1.5) to (3,0);
    \draw[webs] (2,0) to[out=270,in=180] (2.5,-.5);
    \draw[webs] (2.5,-.5) to[out=0,in=270] (3,0);
    \draw[webs] (1,0) to[out=270,in=180] (2.5,-1.5);
    \draw[webs] (2.5,-1.5) to[out=0,in=270] (4,0);
    \draw[webs] (0,0) to[out=270,in=180] (2.5,-2.5);
    \draw[webs] (2.5,-2.5) to[out=0,in=270] (5,0);
\end{tikzpicture}
\end{gather*}

Implicit in the statement of Theorem \ref{thm: hhhinvar} is invariance of $HHH(F(\beta))$ under conjugation by a Rouquier complex. In fact more is true: it is well known that $HH$ is \textit{trace-like}, in the sense that 

\begin{align*}
    HH(C \otimes D) \cong HH(D \otimes C)
\end{align*}

for all $C, D \in \mathcal{D}_n$. We depict arbitrary complexes as coupons within a diagram in the graphical calculus; the above isomorphism then takes the form

\begin{gather*}
\begin{tikzpicture}[anchorbase, scale=.5, every node/.append style={draw, fill=white}]
    \draw[webs] (0,0) to (0,3);
    \draw[webs] (1,0) to (1,3);
    \draw[webs] (.5,0) to node[pos=.25,minimum width=2.5em]{$C$} node[pos=.75,minimum width=2.5em]{$D$} (.5,3);
    \draw[webs] (1,3) to[out=90,in=180] (1.5,3.5);
    \draw[webs] (1.5,3.5) to[out=0,in=90] (2,3);
    \draw[webs] (.5,3) to[out=90,in=180] (1.5,4);
    \draw[webs] (1.5,4) to[out=0,in=90] (2.5,3);
    \draw[webs] (0,3) to[out=90,in=180] (1.5,4.5);
    \draw[webs] (1.5,4.5) to[out=0,in=90] (3,3);
    \draw[webs] (1,0) to[out=270,in=180] (1.5,-.5);
    \draw[webs] (1.5,-.5) to[out=0,in=270] (2,0);
    \draw[webs] (.5,0) to[out=270,in=180] (1.5,-1);
    \draw[webs] (1.5,-1) to[out=0,in=270] (2.5,0);
    \draw[webs] (0,0) to[out=270,in=180] (1.5,-1.5);
    \draw[webs] (1.5,-1.5) to[out=0,in=270] (3,0);
    \draw[webs] (2,0) to (2,3);
    \draw[webs] (2.5,0) to (2.5,3);
    \draw[webs] (3,0) to (3,3);
\end{tikzpicture}
\ \cong \ 
\begin{tikzpicture}[anchorbase, scale=.5, every node/.append style={draw, fill=white}]
    \draw[webs] (0,0) to (0,3);
    \draw[webs] (1,0) to (1,3);
    \draw[webs] (.5,0) to node[pos=.25,minimum width=2.5em]{$D$} node[pos=.75,minimum width=2.5em]{$C$} (.5,3);
    \draw[webs] (1,3) to[out=90,in=180] (1.5,3.5);
    \draw[webs] (1.5,3.5) to[out=0,in=90] (2,3);
    \draw[webs] (.5,3) to[out=90,in=180] (1.5,4);
    \draw[webs] (1.5,4) to[out=0,in=90] (2.5,3);
    \draw[webs] (0,3) to[out=90,in=180] (1.5,4.5);
    \draw[webs] (1.5,4.5) to[out=0,in=90] (3,3);
    \draw[webs] (1,0) to[out=270,in=180] (1.5,-.5);
    \draw[webs] (1.5,-.5) to[out=0,in=270] (2,0);
    \draw[webs] (.5,0) to[out=270,in=180] (1.5,-1);
    \draw[webs] (1.5,-1) to[out=0,in=270] (2.5,0);
    \draw[webs] (0,0) to[out=270,in=180] (1.5,-1.5);
    \draw[webs] (1.5,-1.5) to[out=0,in=270] (3,0);
    \draw[webs] (2,0) to (2,3);
    \draw[webs] (2.5,0) to (2.5,3);
    \draw[webs] (3,0) to (3,3);
\end{tikzpicture}
\end{gather*}

Computing the triply-graded homology of a link directly from the definition is challenging. The Rouquier complex of a braid presentation grows exponentially in the number of crossings, and Hochschild cohomology must be applied not only to each chain module, but also to each differential. Performing these computations by hand quickly becomes infeasible even for braids on $3$ strands. Fortunately, there is a computational tool introduced in \cite{Hog18} that allows us to localize these computations, closing a braid presentation one strand at a time.

\begin{definition} \label{def: trace}
For each $n \geq 1$, let $Tr_n: \mathcal{D}_n \rightarrow \mathcal{D}_{n - 1}$ be the functor taking a complex $C$ to the convolution

\begin{center}
\begin{tikzcd}[sep=large]
Tr_n(C) := C \arrow[r, "x_n - x_n'"] & aq^{-2}C
\end{tikzcd}
\end{center}

Applying $Tr_n$ termwise gives a functor which we also denote $Tr_n: \mathcal{C}_n \rightarrow \mathcal{C}_{n - 1}$. We call each version of $Tr_n$ a \textit{partial trace functor}. We set $Tr_0 := \text{Hom}_{\mathcal{C}_0}(R, -)$ for notational convenience.
\end{definition}

Given any $M \in R_n-\text{Bim}$, the differential above gives $Tr_n(M)$ the structure of a Koszul complex for $x_n - x_n'$; this should be thought of as the derived equivalent of equating the actions of $x_n$ and $x_n'$ on $M$. \textcolor{revisions}{Indeed, the induced endomorphism $x_n - x_n' \in \mathrm{End}_{\mathcal{D}_{n - 1}}(Tr_n(M))$ is nullhomotopic in the \textit{derived} sense (with respect to the \textit{Hochschild} differential), and therefore $0$ on the nose in the derived category $\mathcal{D}_{n - 1}$ (since the latter can be viewed as a localization of the homotopy category $K(R_{n - 1}-\text{Bim})$).}

Our primary interest in partial trace functors is a result of the following adjunction:

\begin{proposition} \label{prop: tradj}
For each $n \geq 1$, $Tr_n$ is right adjoint to the natural inclusion functor $I_n: \mathcal{C}_{n - 1}^b \rightarrow \mathcal{C}_n^b$. In other words, for each $M \in \mathcal{C}_{n - 1}^b$, $N \in \mathcal{C}_n^b$, there is an isomorphism $\text{Hom}_{\mathcal{C}_n^b}(I_n(M), N) \cong \text{Hom}_{\mathcal{C}_{n - 1}^b}(M, Tr_n(M))$ that is natural in both $M$ and $N$.
\end{proposition}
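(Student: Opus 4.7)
The plan is to establish the adjunction by starting with the classical tensor--hom adjunction at the level of underived bimodules, then derive it via a Koszul resolution of $R[x_n]$, and finally promote the statement to bounded chain complexes by termwise application.

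First I would identify $I_n$ explicitly. Since $R_n \cong R_{n-1} \otimes_R R[x_n]$, the inclusion is naturally isomorphic to the functor $M \mapsto M \otimes_R R[x_n]$, where $x_n$ acts on both sides via the right-hand factor; in particular $x_n$ and $x_n'$ act identically on $I_n(M)$. At the underived level, one checks directly that an $R_n$-bimodule map $\phi \colon I_n(M) \to N$ is determined by $m \mapsto \phi(m \otimes 1)$, which is $R_{n-1}$-bilinear and necessarily lands in the submodule $N^{x_n = x_n'} := \ker(x_n - x_n' \colon N \to N)$. Conversely, any such $R_{n-1}$-bilinear map extends $R_n$-bilinearly via $m \otimes p(x_n) \mapsto p(x_n)\phi(m)$. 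This yields the classical adjunction $\text{Hom}_{R_n\text{-Bim}}(I_n(M), N) \cong \text{Hom}_{R_{n-1}\text{-Bim}}(M, N^{x_n = x_n'})$.

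Next I would derive the right-hand functor. The ring $R[x_n]$ admits a length-two Koszul resolution as a bimodule over itself:
\[
q^{-2} R[x_n] \otimes_R R[x_n] \xrightarrow{x_n \otimes 1 - 1 \otimes x_n} R[x_n] \otimes_R R[x_n] \twoheadrightarrow R[x_n].
\]
Applying $\text{Hom}_{R[x_n]\text{-Bim}}(-,N)$ and using $\text{Hom}_{R[x_n]\text{-Bim}}(R[x_n] \otimes_R R[x_n], N) \cong N$ shows that $R\text{Hom}_{R[x_n]\text{-Bim}}(R[x_n], N)$ is computed by the two-term complex $[N \xrightarrow{x_n - x_n'} aq^{-2}N]$, where the $aq^{-2}$ shift encodes the Koszul degree together with the quantum degree of $x_n - x_n'$. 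By Definition \ref{def: trace}, this is precisely $Tr_n(N)$ viewed as an object of $\mathcal{D}_{n-1}$.

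Finally I would promote the adjunction from $\mathcal{D}_n$ to $\mathcal{C}_n^b = K^b(\mathcal{D}_n)$ by applying both $I_n$ and $Tr_n$ termwise to chain complexes. Both functors are additive, $I_n$ is exact, and $Tr_n$ has cohomological amplitude one, so both preserve boundedness. The adjunction isomorphism on hom-complexes is inherited termwise and is compatible with the homological differential, so it descends to the homotopy category. The main technical obstacle, and the only real content beyond formal nonsense, is tracking the three gradings (quantum, Hochschild, and homological) through the Koszul resolution to confirm that the derived right adjoint really outputs the complex with the $aq^{-2}$ shift as in Definition \ref{def: trace}, rather than a variant with the opposite sign or shift convention; this is bookkeeping, but an easy place to slip up.
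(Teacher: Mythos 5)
Your argument is correct, and it is essentially the standard proof of this adjunction; the paper itself does not prove Proposition \ref{prop: tradj} but quotes it as part of the partial trace machinery of \cite{Hog18}, and your route (underived tensor--hom adjunction identifying the right adjoint as $N \mapsto \ker(x_n - x_n')$, then deriving via the two-term Koszul resolution of $R[x_n]$ over $R[x_n] \otimes_R R[x_n]$, then extending termwise to $K^b(\mathcal{D}_n)$ using exactness of $I_n$) is exactly how that result is established. The only point to tighten is the one you flag yourself: under the paper's convention $(q^jM)^k = M^{k-j}$, the degree-$2$ map $x_n \otimes 1 - 1 \otimes x_n$ is degree-preserving when its source is shifted by $q^{2}$, not $q^{-2}$, and dualizing that shift is what produces the $aq^{-2}$ appearing in Definition \ref{def: trace}; with that bookkeeping fixed the identification of the derived right adjoint with $Tr_n$ goes through as you describe, and naturality in $M$ and $N$ is inherited from the tensor--hom adjunction.
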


\begin{proof}
\textcolor{revisions}{See \cite{Hog18}, Corollary 3.7.}
\end{proof}

Repeated application of this isomorphism gives $HHH = Tr_0 \circ Tr_1 \circ \dots \circ Tr_n$. The advantage in computing $HHH$ via partial trace functors is their \textit{locality}:

\begin{proposition} \label{prop: trlocal}
We have $\text{Tr}_n(I_n(M) \otimes C \otimes I_n(N)) \cong M \otimes Tr_n(C) \otimes N$ for any $M, N \in \mathcal{C}_{n - 1}^b$, $C \in \mathcal{C}_n$.
\end{proposition}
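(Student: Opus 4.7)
The plan is to unwind the definition of $Tr_n$ as a mapping cone and show that the defining connecting map can be moved entirely onto the middle factor $C$. Writing $X := I_n(M) \otimes C \otimes I_n(N)$, I first observe that by definition
\[
Tr_n(X) = \mathrm{Cone}\bigl(X \xrightarrow{x_n - x_n'} aq^{-2}X\bigr),
\]
where $x_n$ denotes the leftmost action and $x_n'$ the rightmost action on $X$.

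Next, I would argue that the action of $x_n - x_n'$ on $X$ is nothing but $\mathrm{id}_{I_n(M)} \otimes (x_n - x_n')_C \otimes \mathrm{id}_{I_n(N)}$. The key point is that $I_n(M)$ (respectively $I_n(N)$) is obtained by adjoining a new polynomial variable $x_n$ which acts centrally, i.e., its left and right actions on $I_n(M)$ coincide. As a consequence, the leftmost action of $x_n$ on $X$ is realized as multiplication by $x_n$ on the $I_n(M)$-factor, and by centrality this equals its right action there, which by the tensor product relation $\otimes_{R_n}$ can be slid across to the leftmost action of $x_n$ on $C$. The same argument applied on the right shows the $x_n'$-action on $X$ reduces to the right action of $x_n'$ on $C$. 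Thus $x_n - x_n'$ annihilates both $I_n(M)$ and $I_n(N)$ factors.

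Once this is established, I would invoke the elementary fact that mapping cones distribute over tensoring with identities: for any $f \colon C \to C'$,
\[
\mathrm{Cone}(\mathrm{id}_A \otimes f \otimes \mathrm{id}_B) \cong A \otimes \mathrm{Cone}(f) \otimes B,
\]
where the isomorphism is given by rearranging the underlying graded pieces, using that $I_n(M)$ and $I_n(N)$ are complexes of free modules over the relevant polynomial factors. Applied to $f = (x_n - x_n')_C \colon C \to aq^{-2}C$, this yields $Tr_n(X) \cong I_n(M) \otimes Tr_n(C) \otimes I_n(N)$.

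Finally, I would identify the right hand side with $M \otimes Tr_n(C) \otimes N$ as an object of $\mathcal{C}_{n-1}$. Since $I_n(M) \cong M \otimes_{R_{n-1}} R_n$ as an $(R_{n-1}, R_n)$-bimodule and similarly for $I_n(N)$, the associativity of tensor product yields
\[
I_n(M) \otimes_{R_n} Tr_n(C) \otimes_{R_n} I_n(N) \cong M \otimes_{R_{n-1}} Tr_n(C) \otimes_{R_{n-1}} N,
\]
which is the claim. The main (mild) obstacle is tracking the bookkeeping between the ambient categories $\mathcal{C}_n$ and $\mathcal{C}_{n-1}$ and verifying that the centrality argument of the second step is compatible with the chain differentials; but since multiplication by $x_n$ is a closed morphism on any Rouquier-type complex and the cone construction is functorial, this compatibility is automatic.
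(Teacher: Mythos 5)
Your proof is correct and is essentially the standard argument: the paper itself states this proposition without proof (it is imported from the partial-trace technology of \cite{Hog18}), and the expected justification is exactly what you give — since $x_n$ acts identically on the left and right of the outer factors $I_n(M)$ and $I_n(N)$, the Koszul/cone map $x_n - x_n'$ slides onto the middle factor, the cone then commutes with tensoring by identities, and the unit isomorphism $R_n \otimes_{R_n}(-)$ together with freeness of $R_n$ over $R_{n-1}$ (which is what actually justifies replacing derived by underived tensor in your last step, rather than the cone distribution) identifies the result with $M \otimes Tr_n(C) \otimes N$ in $\mathcal{C}_{n-1}$.
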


\begin{proof}
\textcolor{revisions}{Immediate from Definition \ref{def: trace}.}
\end{proof}

This locality allows a convenient depiction of partial trace functors in the graphical calculus consistent with $HHH$ by closing up one strand at a time. In this notation, Proposition \ref{prop: trlocal} becomes

\begin{gather*}
\begin{tikzpicture}[anchorbase, every node/.append style={draw, fill=white}]
    \draw[webs] (0,0) to (0,3);
    \draw[webs] (1,0) to (1,3);
    \draw[webs] (1.5,.25) to (1.5,2.75);
    \draw[webs] (.5,0) to node[pos=.25,minimum width=5em]{$N$} node[pos=.5,minimum width=7em]{$C$} node[pos=.75,minimum width=5em]{$M$} (.5,3);
    \draw[webs] (1.5,2.75) to[out=90,in=180] (1.75,3);
    \draw[webs] (1.75,3) to[out=0,in=90] (2,2.75);
    \draw[webs] (2,2.75) to (2,.25);
    \draw[webs] (1.5,.25) to[out=270,in=180] (1.75,0);
    \draw[webs] (1.75,0) to[out=0,in=270] (2,.25);
\end{tikzpicture}
\ \ \cong \ 
\begin{tikzpicture}[anchorbase, every node/.append style={draw, fill=white}]
    \draw[webs] (0,0) to (0,3);
    \draw[webs] (1,0) to (1,3);
    \draw[webs] (1.5,1) to (1.5,2);
    \draw[webs] (.5,0) to node[pos=.25,minimum width=5em]{$N$} node[pos=.5,minimum width=7em]{$C$} node[pos=.75,minimum width=5em]{$M$} (.5,3);
    \draw[webs] (1.5,2) to[out=90,in=180] (1.75,2.25);
    \draw[webs] (1.75,2.25) to[out=0,in=90] (2,2);
    \draw[webs] (2,2) to (2,1);
    \draw[webs] (1.5,1) to[out=270,in=180] (1.75,.75);
    \draw[webs] (1.75,.75) to[out=0,in=270] (2,1);
\end{tikzpicture}
\end{gather*}

Also implicit in the statement of Theorem \ref{thm: hhhinvar} is invariance up to normalization under Markov stabilization. This invariance is encoded more precisely in the following list of identities, which we refer to as ``Partial trace Markov moves":

\begin{proposition} \label{prop: trmarkov}
Let $F(\sigma_i^{\pm})$ denote the Rouquier complexes for the braid group generators $\sigma_i^{\pm}$. We have:
\begin{enumerate}
    \item $Tr_n(R_n) \cong \left(\cfrac{1 + aq^{-2}}{1 - q^2}\right) R_{n - 1}$;
    
    \item $Tr_n(F(\sigma_{n - 1})) \cong q^{-1}t R_{n - 1}$;
    
    \item $Tr_n(F(\sigma_{n - 1}^{-1})) \cong aq^{-3} R_{n - 1}$
\end{enumerate}

Here the denominator in identity (1) should be considered as a formal power series $(1 - q^2) = 1 + q^2 + q^4 + \dots$, and the right-hand side of that identity should be interpreted as an infinite direct sum.
\end{proposition}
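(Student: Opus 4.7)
The plan is to prove each of the three identities by direct computation, using the definition of $Tr_n$ as the cone of multiplication by $x_n - x_n'$ together with the explicit descriptions of the Rouquier complexes $F(\sigma_{n-1}^{\pm 1})$ recalled earlier in the paper.

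For identity (1), the key observation is that on $R_n$ viewed as an $R_n$-bimodule the two actions of $x_n$ coincide with ordinary multiplication in the ring, so the differential $x_n - x_n' \colon R_n \to aq^{-2}R_n$ is identically zero. Hence $Tr_n(R_n) \cong R_n \oplus aq^{-2}R_n$ in $\mathcal{D}_{n-1}$. Restricting scalars along $R_{n-1} \hookrightarrow R_n$ expresses $R_n \cong R_{n-1}[x_n]$ as a free $R_{n-1}$-bimodule (both actions given by multiplication in $R_n$) on the basis $\{x_n^k\}_{k \geq 0}$, and summing the resulting geometric series in $q$ gives the factor $(1-q^2)^{-1}$ in the claim.

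For identities (2) and (3), apply $Tr_n$ termwise to $F(\sigma_{n-1}^{\pm 1})$. This produces a two-column bicomplex whose columns are $Tr_n(B_{n-1})$ and $Tr_n(R_n)$, with appropriate homological and quantum shifts, connected horizontally by the induced unzip (resp.\ zip) map. Using the presentation $B_{n-1} \cong q^{-1} R_n \otimes_{R_n^{n-1}} R_n$ and the fact that $R_n$ is a free $R_n^{n-1}$-module of rank $2$ on the basis $\{1, x_n\}$ (applied to the right factor), one decomposes $B_{n-1}$ as an $R_{n-1}$-bimodule. A direct calculation then identifies $Tr_n(B_{n-1})$ as a direct sum of shifted copies of $Tr_n(R_n)$, and the unzip/zip differential contains isomorphism components between suitable pieces. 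Repeated application of Gaussian elimination (Proposition~\ref{prop: gauss_elim}) then cancels all contractible summands, leaving a single surviving copy of $R_{n-1}$ with total shift $q^{-1}t$ in case (2) and $aq^{-3}$ in case (3).

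The main technical obstacle is the bookkeeping in the Gaussian elimination step: one must carefully track the interaction of quantum, Hochschild, and homological shifts when splitting $Tr_n(B_{n-1})$ and matching its summands to those of $Tr_n(R_n)$ under the (un)zip map, then verify that only a single rank-one summand survives. A useful sanity check is that the resulting overall shifts $q^{-1}t$ and $aq^{-3}$ are exactly those required for $HHH = Tr_0 \circ \cdots \circ Tr_n$ to be invariant (up to the stated normalization) under Markov stabilization, in agreement with Theorem~\ref{thm: hhhinvar}.
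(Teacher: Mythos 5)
Part (1) of your argument is fine, and since the paper states this proposition without proof (it is quoted as a standard Markov-move computation), your proposal stands or falls on its own. The problem is in (2)--(3): the pivotal structural claim that $Tr_n(B_{n-1})$ decomposes as a direct sum of shifted copies of $Tr_n(R_n)$ is false. Writing $B_{n-1} \cong q^{-1}R_n \otimes_{R_n^{n-1}} R_n$ and using the basis $\{1\otimes 1,\ x_n\otimes 1\}$ of $B_{n-1}$ as a free right $R_n$-module, left multiplication by $x_n$ acts by the companion matrix $\left(\begin{smallmatrix} 0 & -x_{n-1}'x_n' \\ 1 & x_{n-1}'+x_n' \end{smallmatrix}\right)$, so the trace differential $x_n - x_n'$ mixes the two rank-one summands: the splitting you propose is not compatible with the differential (and the summands are not sums of shifted diagonal bimodules $R_{n-1}$ to begin with). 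Concretely, the kernel of $x_n - x_n'$ on $B_{n-1}$ is the rank-one right $R_n$-module generated by $x_n\otimes 1 - 1\otimes x_{n-1}$ (quantum degree $q$ after the $q^{-1}$ shift) and the cokernel is $q^{-1}R_n$, so $Tr_n(B_{n-1})$ has graded dimension $\frac{q + aq^{-3}}{1-q^2}$ over $R_{n-1}$. This cannot be written as a sum of $q$-shifts of $\frac{1+aq^{-2}}{1-q^2} = \dim Tr_n(R_n)$: matching the $a^0$ parts forces exactly one copy shifted by $q$, and then the $a^1$ parts disagree ($q^{-1}$ versus $q^{-3}$). Consequently the Gaussian elimination you describe has no isomorphism components between the pieces you set up, and the derivation of (2) and (3) does not go through as written; this is a genuine gap, not just bookkeeping.

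The computation can be repaired by analyzing the two-term complex defining $Tr_n$ on each chain bimodule directly rather than trying to split $B_{n-1}$: apply $Tr_n$ termwise to $F(\sigma_{n-1}^{\pm 1})$, identify the Hochschild-degree-$0$ and degree-$1$ cohomology of each column as above, and track the maps induced by unzip and zip. For $F(\sigma_{n-1})$: on $a$-degree $0$, unzip sends the kernel generator $x_n\otimes 1 - 1\otimes x_{n-1}$ to $x_n - x_{n-1} \in q^{-1}R_n$, an injection whose cokernel is $q^{-1}R_{n-1}$ sitting in homological degree $1$; on $a$-degree $1$ unzip induces an isomorphism $aq^{-3}R_n \to aq^{-3}R_n$. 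This yields $Tr_n(F(\sigma_{n-1})) \cong q^{-1}tR_{n-1}$. For $F(\sigma_{n-1}^{-1})$, zip sends $1 \mapsto x_{n-1} - x_n'$, which equals $-(x_n\otimes 1 - 1\otimes x_{n-1})$ and hence generates the kernel, giving an isomorphism on $a$-degree $0$, while on $a$-degree $1$ it induces multiplication by $x_{n-1}-x_n$ on $q^{-1}R_n$ with cokernel $aq^{-3}R_{n-1}$; this yields (3). Your sanity check against Markov invariance is reasonable but cannot substitute for the incorrect structural claim about $Tr_n(B_{n-1})$.
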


\begin{proof}
\textcolor{revisions}{See \cite{Hog18}, Proposition 3.10.}
\end{proof}

Graphically, these identities take the form

\begin{gather*}
\begin{tikzpicture}[anchorbase,scale=.5,tinynodes]
    \draw[webs] (0,-.5) to (0,1.5);
    \draw[webs] (.5,0) to (.5,1);
    \draw[webs] (.5,1) to[out=90,in=180] (1,1.5);
    \draw[webs] (1,1.5) to[out=0,in=90] (1.5,1);
    \draw[webs] (.5,0) to[out=270,in=180] (1,-.5);
    \draw[webs] (1,-.5) to[out=0,in=270] (1.5,0);
    \draw[webs] (1.5,0) to (1.5,1);
\end{tikzpicture}
\ \cong \left(\cfrac{1 + aq^{-2}}{1 - q^2}\right) \ 
\begin{tikzpicture}[anchorbase,scale=.5,tinynodes]
    \draw[webs] (0,-.5) to (0,1.5);
\end{tikzpicture}
\ ;
\end{gather*}

\begin{gather*}
\begin{tikzpicture}[anchorbase,scale=.5,tinynodes]
    \draw[webs] (.75,.25) to (-.5,1.5);
    \draw[line width=5pt, color=white] (0,0) to (1,1);
    \draw[webs] (-.5,-.5) to (.75,.75);
    \draw[webs] (.75,.75) to[out=45,in=180] (1.5,1.5);
    \draw[webs] (1.5,1.5) to[out=0,in=90] (2,1);
    \draw[webs] (.75,.25) to[out=315,in=180] (1.5,-.5);
    \draw[webs] (1.5,-.5) to[out=0,in=270] (2,0);
    \draw[webs] (2,0) to (2,1);
\end{tikzpicture}
\ \cong q^{-1}t \ 
\begin{tikzpicture}[anchorbase,scale=.5,tinynodes]
    \draw[webs] (0,-.5) to (0,1.5);
\end{tikzpicture}
\ ;
\end{gather*}

\begin{gather*}
\begin{tikzpicture}[anchorbase,scale=.5,tinynodes]
    \draw[webs] (-.5,-.5) to (.75,.75);
    \draw[line width=5pt, color=white] (1,0) to (0,1);
    \draw[webs] (.75,.25) to (-.5,1.5);
    \draw[webs] (.75,.75) to[out=45,in=180] (1.5,1.5);
    \draw[webs] (1.5,1.5) to[out=0,in=90] (2,1);
    \draw[webs] (.75,.25) to[out=315,in=180] (1.5,-.5);
    \draw[webs] (1.5,-.5) to[out=0,in=270] (2,0);
    \draw[webs] (2,0) to (2,1);
\end{tikzpicture}
\ \cong aq^{-3} \ 
\begin{tikzpicture}[anchorbase,scale=.5,tinynodes]
    \draw[webs] (0,-.5) to (0,1.5);
\end{tikzpicture}
\
\end{gather*}

We conclude this section with the following result, due to Rose-Tubbenhauer \textcolor{revisions}{(Lemma 4.10 in \cite{RT21})}:

\begin{proposition} \label{prop: trb}
$Tr_n(B_{w_0}) \cong \left( \cfrac{q^{n - 1} + aq^{-1 - n}}{1 - q^2} \right) B_{w_1}$ in $\mathcal{D}_{n - 1}$.
\end{proposition}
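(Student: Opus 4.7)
My plan is to compute $Tr_n(B_{w_0})$ locally by decomposing the underlying web through the intermediate parabolic $(n-1,1)$. First, by associativity of merges and splits (Proposition~\ref{prop: web_rels}), decompose
\[
B_{w_0} \cong {}_IM_{(n-1,1)} \otimes_{R_n^{(n-1,1)}} D \otimes_{R_n^{(n-1,1)}} {}_{(n-1,1)}S_I,
\]
where $I = (1^n)$ and $D := {}_{(n-1,1)}S_{(n)} \otimes_{R_n^{(n)}} {}_{(n)}M_{(n-1,1)} \cong q^{-(n-1)}\, R_n^{(n-1,1)} \otimes_{R_n^{(n)}} R_n^{(n-1,1)}$ is the ``thin-ends'' merge-then-split digon. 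The outer factors ${}_IM_{(n-1,1)}$ and ${}_{(n-1,1)}S_I$ have the $n$-th strand idle, so they factor through the inclusion $I_n$. Proposition~\ref{prop: trlocal} then yields
\[
Tr_n(B_{w_0}) \cong {}_{(1^{n-1})}M_{(n-1)} \otimes Tr_n(D) \otimes {}_{(n-1)}S_{(1^{n-1})}.
\]

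The main computation is $Tr_n(D)$. Write $A := R_n^{(n)}$ and $B := R_n^{(n-1,1)}$, so that $B$ is a free $A$-module of rank $n$ with basis $\{1,x_n,\dots,x_n^{n-1}\}$ and $B \cong A[t]/(p_n(t))$ for $p_n(t) := \prod_{i=1}^n(t - x_i)$. In $B[t]$ the polynomial $p_n$ factors as $p_n(t) = (t - x_n)\, q(t)$ with $\deg q = n-1$, and $q(x_n) = \prod_{i<n}(x_n - x_i) \in B$ has quantum degree $2(n-1)$. The endomorphism $y := x_n - x_n'$ of $D$ corresponds, under $B\otimes_A B \cong B[t]/(p_n(t))$, to multiplication by $x_n - t$. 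Since $B$ is a domain and $x_n - t$ is a non-zero-divisor in $B[t]$, a direct calculation shows $\ker y = B\cdot q(t)$, which is isomorphic to $B$ as a bimodule: the relation $t\cdot q(t) = x_n \cdot q(t)$ on the kernel forces left and right actions to coincide there. Tracking the $q$-shift of the generator $q(t)$ together with the global $q^{-(n-1)}$ shift in $D$ gives $H^0(Tr_n(D)) \cong q^{n-1} B$. The cokernel is handled by the multiplication map $\mu : B\otimes_A B \to B$, $a\otimes b \mapsto ab$, which is surjective with kernel $y\cdot(B\otimes_A B)$, so $D/yD \cong q^{-(n-1)} B$; the Hochschild shift then gives $H^1(Tr_n(D)) \cong aq^{-n-1} B$. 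Since these two cohomology groups sit in distinct Hochschild degrees, the complex splits and
\[
Tr_n(D) \cong \left(q^{n-1} + aq^{-n-1}\right) B \quad \text{in } \mathcal{D}.
\]

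Finally, substitute back into the locality formula. The identification $B = R_n^{(n-1,1)} \cong R_{n-1}^{\mathfrak{S}^{n-1}}[x_n]$ contributes a factor $\tfrac{1}{1-q^2}$ when we pass to $\mathcal{D}_{n-1}$ (the $x_n$-tower), and the product ${}_{(1^{n-1})}M_{(n-1)} \otimes {}_{(n-1)}S_{(1^{n-1})}$ is precisely $B_{w_1}$ under the inclusion $SBim_{n-1} \hookrightarrow SBim_n$. This yields
\[
Tr_n(B_{w_0}) \cong \left(\frac{q^{n-1} + aq^{-1-n}}{1-q^2}\right) B_{w_1}
\]
in $\mathcal{D}_{n-1}$, as claimed. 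The main obstacle is the bookkeeping in the kernel/cokernel analysis of Step 3: the factor $q(t)$ has non-trivial quantum degree $2(n-1)$ arising from the Frobenius-extension structure of $B$ over $A$, and tracking this shift alongside the global $q^{-(n-1)}$ shift of $D$ is what produces the asymmetric exponents $q^{n-1}$ and $q^{-1-n}$ in the final expression.
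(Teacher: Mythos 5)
The paper does not actually prove this proposition; it is quoted from \cite{RT21}, so there is no in-paper argument to compare against. Your overall strategy is the natural one and most of it is carried out correctly: the factorization of $B_{w_0}$ through the parabolic $(n-1,1)$ is consistent with the shifts (since $\ell(n)=\ell(n-1)+(n-1)$), the identification $B\otimes_A B\cong B[t]/(p_n(t))$, the computation $\ker(x_n-x_n')=q(t)\cdot B\cong q^{2(n-1)}B$ with left and right actions agreeing there, the cokernel $\cong B$ via multiplication, and the resulting shifts $q^{n-1}$ and $aq^{-n-1}$ are all right, as is the final $\tfrac{1}{1-q^2}$ coming from the $x_n$-tower (note the statement must then be read with $B_{w_1}$ interpreted as the $(n-1)$-strand bimodule, which is how the paper uses it in Proposition \ref{prop: ylessktrace}; otherwise you would double count that tower). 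Two small cosmetic points: Proposition \ref{prop: trlocal} as stated is for complexes of ordinary $R_n$-bimodules with outer factors through $I_n$, so your locality step should instead be justified directly (the outer merge/split are free over $B$ and $x_n\in B$, so $x_n-x_n'$ is induced from the middle factor and the outer tensors are exact); and the domain hypothesis on $B$ is unnecessary, since $t-x_n$ is monic and hence a non-zero-divisor over any commutative base.

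The genuine gap is the sentence ``since these two cohomology groups sit in distinct Hochschild degrees, the complex splits.'' For a single partial trace the Hochschild degree \emph{is} the homological degree of the two-term complex $D\xrightarrow{x_n-x_n'}D$ whose formality you are asserting, so this reasoning is vacuous: a two-term complex with cohomology in adjacent degrees splits in the derived category only if the class of the $2$-extension $0\to\ker\to D\to D\to\operatorname{coker}\to 0$ vanishes in $\operatorname{Ext}^2(\operatorname{coker},\ker)$. That class is genuinely nonzero before restriction: over $S=B\otimes_AB\cong B[t]/(p_n)$ it is the standard $2$-periodicity class of the hypersurface (lifting the identity along the periodic resolution $\cdots\xrightarrow{q(t)}S\xrightarrow{t-x_n}S\to B$ represents it by multiplication by $q(t)$, which is not a boundary), so the splitting is \emph{not} a formal consequence of your kernel/cokernel computation and only has a chance of holding after restricting to $R_{n-1}$-bimodules, which is exactly where the proposition lives. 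Moreover, after restriction the obstruction group $\operatorname{Ext}^2_{R_{n-1}\text{-Bim}}(H^1,H^0)$ is still nonzero for $n\geq 3$ (the $(n-1)$-strand coupon has projective dimension $n-1$ over $R_{n-1}\otimes R_{n-1}$), and internal-degree considerations alone do not kill the relevant component because $H^0$ and $H^1$ each contain an infinite $x_n$-tower. So a real argument is needed at this point: for instance, exhibit an $R_{n-1}$-bilinear section of $B_{w_0}\twoheadrightarrow B_{w_0}/(x_n-x_n')$ (equivalently a chain map $H^1[-1]\to Tr_n(B_{w_0})$ splitting the projection, or an $R_{n-1}$-bilinear retraction onto the kernel), or reproduce the explicit computation of \cite{RT21}. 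Everything else in your write-up survives once this formality statement is supplied.
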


\section{The Infinite Projector} \label{sec: inf_proj_big}

In this section we show that the categorified Young antisymmetrizer $P_{1^n}$ introduced in \cite{AH17} satisfies a useful topological recursion. We fix $n \geq 1$ throughout and let $\mathcal{I}_n$ denote the full subcategory of $K(SBim_n)$ consisting of complexes whose chain modules are isomorphic to (direct sums of shifts of) $B_{w_0}$. It follows easily from Proposition \ref{prop: tensid} that $\mathcal{I}_n$ is a two-sided tensor ideal. We let $\mathcal{I}_n^{\perp}$, resp. $^{\perp}\mathcal{I}_n$, denote the full subcategory of $K(SBim_n)$ of complexes $C$ such that $CB_{w_0} \simeq 0$, resp. $B_{w_0}C \simeq 0$. As usual, we will allow decorations $\mathcal{I}_n^b$, $\mathcal{I}_n^{\pm}$ to denote the corresponding bounded subcategories; each of these is also a two-sided tensor ideal of its native bounded category.

The following characterization of $P_{1^n}$ is from \cite{AH17}:

\begin{theorem} \label{thm: AHchar}
Let $P_{1^n}$ denote the desired projector. Then:
\begin{itemize}
    \item[(P1)] $P_{1^n} \in \mathcal{I}_n^-$;
    \item[(P2)] There exists a chain map $\epsilon_n \colon P_{1^n} \to R_n$ such that $\text{Cone}(\epsilon_n) \in ^{\perp}(\mathcal{I}_n^-) \cap (\mathcal{I}_n^-)^{\perp}$.
\end{itemize}
These two properties uniquely characterize $P_{1^n}$ up to homotopy equivalence. That is, if $(Q, \nu)$ are a complex and morphism satisfying (P1) and (P2), then there is a unique (up to homotopy) map $\phi \colon Q \to P_{1^n}$ such that $\nu = \epsilon \circ \phi$; moreover, $\phi$ is a homotopy equivalence.
\end{theorem}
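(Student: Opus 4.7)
Existence of $P_{1^n}$ with properties (P1) and (P2) is furnished by the explicit construction of \cite{AH17}, so the substance of the theorem lies in the uniqueness together with the stated universal property. I would establish these by the standard counital-idempotent argument, whose only non-formal ingredient is an orthogonality lemma: $\text{Hom}_{K^-(SBim_n)}(X, Y) \simeq 0$ whenever $X \in \mathcal{I}_n^-$ and $Y \in {}^{\perp}(\mathcal{I}_n^-) \cap (\mathcal{I}_n^-)^{\perp}$.

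For this lemma, every chain bimodule of $X$ is a direct sum of shifts of $B_{w_0}$, which is self-dual under ${}^{\vee}(-)$ by Proposition \ref{prop: duals}. The adjunction coming from self-duality rewrites the total hom complex $\text{Hom}_{K^-(SBim_n)}(X, Y)$ as the hom complex out of a complex of $R_n$'s into $B_{w_0} \cdot Y$, which is contractible by hypothesis. Bounded-above convergence of the resulting filtration --- guaranteed by quantum-degree finiteness of the terms of $X$ in each tridegree --- forces the total hom complex to be contractible.

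Granted the lemma, apply $\text{Hom}(Q, -)$ to the exact triangle associated with the cone $P_{1^n} \xrightarrow{\epsilon_n} R_n \to \text{Cone}(\epsilon_n)$. The hypotheses on $Q$ and $\text{Cone}(\epsilon_n)$ feed directly into the lemma, forcing the relevant hom spaces into $\text{Cone}(\epsilon_n)$ to vanish in every homological degree, so the long exact sequence collapses to an isomorphism $\epsilon_n \circ - \colon \text{Hom}(Q, P_{1^n}) \xrightarrow{\sim} \text{Hom}(Q, R_n)$. Thus $\nu$ admits a unique lift (up to homotopy) $\phi \colon Q \to P_{1^n}$ with $\epsilon_n \circ \phi \simeq \nu$, proving the universal property. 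Running the same argument with $(Q, \nu)$ and $(P_{1^n}, \epsilon_n)$ swapped produces $\psi \colon P_{1^n} \to Q$ satisfying $\nu \circ \psi \simeq \epsilon_n$; the identity $\epsilon_n \circ (\phi \circ \psi) \simeq \epsilon_n \circ \text{id}_{P_{1^n}}$, combined with the uniqueness-of-lift clause applied to $(P_{1^n}, \epsilon_n)$ itself, forces $\phi \circ \psi \simeq \text{id}_{P_{1^n}}$, and symmetrically $\psi \circ \phi \simeq \text{id}_Q$.

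The main obstacle is the convergence clause in the orthogonality lemma. In $K^-(SBim_n)$ the hom-complex is a product over terms of the source, which can be infinite in negative homological degrees, so one must verify that pointwise nullhomotopies assemble into a global nullhomotopy. I would handle this by truncating $X$ from below and passing to the limit, using that in each fixed quantum degree only finitely many terms of $X$ contribute to a given tridegree of the hom complex. This is exactly the semi-infinite convergence manoeuvre carried out in \cite{Hog18} and \cite{AH17}; a curved analogue of the same argument will be required for the $y$-ified version of the theorem later in the paper, where the standard projective-resolution shortcut is unavailable.
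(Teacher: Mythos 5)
The paper does not prove this statement itself: it is quoted from \cite{AH17} (and fits the general framework of \cite{Hog17}), so there is no in-paper proof to compare against. Your argument is essentially the standard one from those sources and is sound: the orthogonality lemma via self-duality of $B_{w_0}$ and the adjunction $\text{Hom}(B_{w_0}, Y) \cong \text{Hom}(R_n, B_{w_0}Y) \simeq 0$, the collapse of $\epsilon_n \circ -$ to an isomorphism on homotopy classes, and the swap-and-uniqueness step producing the homotopy equivalence are all correct, and your handling of the semi-infinite convergence issue (stupid truncation plus local finiteness, or equivalently a Milnor-sequence argument over the surjective inverse system of truncated hom complexes) is exactly what is needed.
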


In \cite{AH17}, the authors obtain a model for $P_{1^n}$ by explicitly constructing a chain complex categorifying the expression $p_{1^n} = \frac{1}{[n]!} b_{w_0}$ from the Hecke algebra $H_n$ using a periodic convolution of Koszul complexes on $B_{w_0}$\footnote{This approach also works in other Coxeter groups; see \cite{Elb22} for details.}. We begin by recalling their construction.

\begin{definition}
Let $\theta_2, \dots, \theta_n$ be formal variables of degree $q^2t^{-1}$. We denote by $K_n$ the Koszul complex for the elements $x_2 - x_2', \dots, x_n - x_n'$ on $B_{w_0}$. Then $K_n := B_{w_0} \otimes \bigwedge[\theta_2, \dots, \theta_n]$ as a dg $R$-algebra with differential

\begin{equation} \label{eq: d_kn}
d_{K_n} = \sum_{j = 2}^n (x_j - x_j') \otimes \theta_k^{\vee}.
\end{equation}

Note that setting $\text{deg}(\theta_j) = q^2t^{-1}$ ensures that $\text{deg}(d_{K_n}) = t$.
\end{definition}

\textcolor{revisions}{In Definition 2.40 of \cite{AH17}, Abel-Hogancamp define a family of polynomials\footnote{\textcolor{revisions}{Which they denote $a_{ij}$.}} $f_{ij}(\mathbb{X}, \mathbb{X}')$ of degree $i - 1$ for each $2 \leq i \leq j$. We will not require explicit expressions for those polynomials here; the only properties we use are that they are chosen in such a way that $\sum_{j = i}^n f_{ij}(\mathbb{X}, \mathbb{X}')(x_j - x_j')$ acts by $0$ on $B_{w_0}$ and that they satisfy Proposition \ref{prop: AHequiv} below.} For convenience, we declare $f_{ij} = 0$ for $i > j$.

For each $2 \leq i \leq n$, \textcolor{revisions}{we set}

\begin{equation} \label{eq: xi_i}
\xi_i := \sum_{k = 2}^n f_{ik}(\mathbb{X}, \mathbb{X}') \otimes \theta_k \in \textup{End}^{-1}(K_n).
\end{equation}

We point out three properties this family of endomorphisms satisfies:

1) $\xi_i^2 = 0$ for each $i$;

2) $\xi_i\xi_j + \xi_j\xi_i = 0$ for each $i \neq j$;

3) $[d_{K_n}, \xi_i] = 0$ for each $i$.

The properties above guarantee that the family $\{\xi_i\}_{i = 2}^n$ form an exterior action of closed endomorphisms on $K_n$. We can pass to a polynomial action by the usual yoga of Koszul duality:

\begin{proposition}
Let $u_2, \dots, u_n$ be formal variables of degree $\text{deg}(u_i) = q^{-2i}t^2$, and consider $\mathbb{Z}[u_2^{-1}, \dots, u_n^{-1}]$ as a dg $R$-algebra with trivial differential. Set $A_n := K_n \otimes \mathbb{Z}[u_2^{-1}, \dots, u_n^{-1}]$, and let 

\begin{equation} \label{eq: dan}
d_{A_n} := d_{K_n} \otimes 1 + \sum_{i = 2}^n \xi_i \otimes u_i \in \text{End}^1(A_n).
\end{equation}

Then $d_{A_n}^2 = 0$, so $(A_n, d_{A_n}) \in \text{Ch}^-(SBim_n)$.
\end{proposition}

\begin{proof}
\textcolor{revisions}{Straightforward computation using the middle interchange rule and the properties 1) - 3) above.}
\end{proof}

The following is due to Abel-Hogancamp (\cite{AH17}):

\begin{proposition} \label{prop: AHequiv}
There exists a map $\textcolor{revisions}{\epsilon} \colon A_n \to R_n$ satisfying the conditions of Theorem \ref{thm: AHchar}.
\end{proposition}

While explicit, this description of $P_{1^n}$ does not reflect the recursive structure of categorified projectors in Type A or their deep relationship with the full twist braid that is present in \cite{Hog18} and \cite{EH17a}. We take a middle road between these two perspectives, beginning with the explicit model of \cite{AH17} and showing it satisfies similar recursive relationships to those of \cite{Hog18} and \cite{EH17a}. This perspective will vastly simplify computations of the associated link invariant; see Section \S \ref{sec: link_hom} below.

Throughout, let $X_n := \sigma_{n - 1} \sigma_{n - 2} \dots \sigma_2 \sigma_1 \in Br_n$, $Y_n := \sigma_1 \sigma_2 \dots \sigma_{n - 2} \sigma_{n - 1} \in Br_n$. Let $J_n$ denote the Jucys-Murphy braid $J_n = X_nY_n$. We abuse notation throughout, denoting the Rouquier complex $F(J_n) \in K^b(SBim_n)$ by $J_n$ as well.

\begin{theorem} \label{thm: projtop}
Suppose that there exist closed morphisms $\alpha, \beta: A_{n - 1} \rightarrow A_{n - 1}J_n$ of degrees $q^{-2}t^2$, $q^{2(n - 1)}$, respectively, such that $\textup{Cone}(\alpha) \in \mathcal{I}_n^-$, $\textup{Cone}(\beta) \in (\mathcal{I}_n^-)^{\perp} \ \cap \ ^{\perp}(\mathcal{I}_n^-)$. Let $u_n$ be a formal variable of degree $q^{-2n}t^2$, and consider $\mathbb{Z}[u_n^{-1}]$ as a dg $R$-algebra with trivial differential. Consider the map

\[
\Phi \colon t\mathbb{Z}[u_n^{-1}] \otimes A_{n - 1} \to q^2t^{-1}\mathbb{Z}[u_n^{-1}] \otimes A_{n - 1}J_n; \quad \quad \Phi = 1 \otimes \alpha + u_n \otimes \beta
\]

Set $P_{1^n} = \textup{Cone}(\Phi)$. Then there exists a chain map $\epsilon_n \colon P_{1^n} \to R_n$ such that $(P_{1^n}, \epsilon_n)$ satisfy the conditions of Theorem \ref{thm: AHchar}. In particular, $P_{1^n} \simeq A_n$.
\end{theorem}

\begin{proof}
\textcolor{revisions}{Following \cite{Hog18}, we depict $\mathrm{Cone}(\Phi)$ as an infinite ladder diagram:}

\newcommand{\columnAShape}{
    \begin{tikzpicture}[anchor=base, every node/.append style={draw, fill=white}, scale=0.75]
        \draw[webs] (0,0) to (0,3);
        \draw[webs] (1,0) to (1,3);
        \draw[webs] (.5,0) to node[pos=.5,scale=0.75,minimum width=5em]{$A_{n - 1}$} (.5,3);
        \draw[webs] (1.5,0) to (1.5,3);
        \draw[color=white] (-.2,0) to (-.2,3);
        \draw[color=white] (1.7,0) to (1.7,3);
    \end{tikzpicture}
}

\newcommand{\columnBShape}{
    \begin{tikzpicture}[anchor=base, every node/.append style={draw, fill=white}, scale=0.75]
        \draw[webs] (0,2) to (0,3);
        \draw[webs] (1,2) to (1,3);
        \draw[webs] (.5,2) to node[pos=.5,scale=0.75,minimum width=5em]{$A_{n - 1}$} (.5,3);
        \draw[webs] (1.5,2) to (1.5,3);
        \draw[webs] (0,2) to (0,1);
        \draw[webs] (.5,2) to (.5,1);
        \draw[webs] (1,2) to (1,1);
        \draw[color=white,line width=5pt] (1.5,2) to[out=270,in=90] (-.5,1);
        \draw[webs] (1.5,2) to[out=270,in=90] (-.5,1);
        \draw[webs] (-.5,1) to[out=270,in=90] (1.5,0);
        \draw[color=white,line width=5pt] (0,1) to (0,0);
        \draw[color=white,line width=5pt] (.5,1) to (.5,0);
        \draw[color=white,line width=5pt] (1,1) to (1,0);
        \draw[webs] (0,1) to (0,0);
        \draw[webs] (.5,1) to (.5,0);
        \draw[webs] (1,1) to (1,0);
        \draw[color=white] (-.7,0) to (-.7,3);
        \draw[color=white] (1.7,0) to (1.7,3);
    \end{tikzpicture}
}

\newcommand{\drawColumn}[4]{
    \ifx#2\columnAShape
        \foreach \i in {1,...,#1} {
            \node[inner sep=0pt] (nodeA\i) at (0,-\i*3) {#2};
            \ifnum\i=2
                \node at (-1.5, -\i*3) {$u_n^{-1}$};
            \fi
            \ifnum\i=3
                \node at (-1.5, -\i*3) {$u_n^{-2}$};
            \fi
        }
    \fi
    
    \ifx#3\columnBShape
        \foreach \i in {1,...,#1} {
            \node[inner sep=0pt] (nodeB\i) at (5.5cm,-\i*3) {#3};
            \ifnum\i=1
                \node at (4.5cm,-\i*3) {$q^2t^{-1}$};
            \fi
            \ifnum\i=2
                \node at (4cm, -\i*3) {$u_n^{-1}q^2t^{-1}$};
            \fi
            \ifnum\i=3
                \node at (4cm, -\i*3) {$u_n^{-2}q^2t^{-1}$};
            \fi
        }
    \fi
}

\begin{center}
\begin{tikzpicture}
    \drawColumn{3}{\columnAShape}{}
    
    \drawColumn{3}{}{\columnBShape}
    
    \draw[->, shorten <=2mm, shorten >=1cm] (nodeA1.east) -- (nodeB1.west) node[pos=.4, above] {$\alpha$};
    \draw[->, shorten <=2mm, shorten >=1.5cm] (nodeA2.east) -- (nodeB2.west) node[pos=.4, above] {$\alpha$};
	\draw[->, shorten <=2mm, shorten >=1.5cm] (nodeA3.east) -- (nodeB3.west) node[pos=.4, above] {$\alpha$};    
    \draw[->, shorten <=2mm, shorten >=1cm] (nodeA2.east) -- (nodeB1.west) node[pos=.4,above] {$\beta$};
    \draw[->, shorten <=2mm, shorten >=1cm] (nodeA3.east) -- (nodeB2.west) node[pos=.4,above] {$\beta$};
    
    \node at (nodeB3.south) [below] {$\vdots$};
    \node at (nodeA3.south) [below] {$\vdots$};
    \node at (2.5,-10.55) {$\vdots$};
    
    \node at (-3,-6) {$\mathrm{Cone}(\Phi) = $};
\end{tikzpicture}
\end{center}

(P1): Observe that $P_{1^n}$ is a one-sided convolution of complexes of the form $\text{Cone}(\alpha)$ \textcolor{revisions}{(these are the horizontal rows in the diagram above)} indexed by increasing powers of $u_n$. \textcolor{revisions}{This is an upper finite indexing set.} Since $\text{Cone}(\alpha) \in \mathcal{I}_n^-$ \textcolor{revisions}{by assumption, $P_{1^n} \in \mathcal{I}_n^-$ by Proposition \ref{prop: useful_hpt}.}

(P2): Define $\epsilon_n \colon P_{1^n} \to R_n$ by $\epsilon_{n - 1} \textcolor{revisions}{\sqcup} \text{id}_{R_1}$ on the component $A_{n - 1} \sqcup R_1$ of $P_{1^n}$ \textcolor{revisions}{(in the top left of the above diagram)} and zero otherwise. (Note that this is indeed a chain map, as $\epsilon_{n - 1}$ is a chain map by assumption and no component of $\Phi$ has this term as its codomain). Then $\textup{Cone}(\epsilon_n)$ is a convolution of the form

\begin{center}
\begin{equation} \label{eq: cone_epsilon}
\begin{tikzcd}
\textup{Cone}(\epsilon_n) = \Big(\big(A_{n - 1} \arrow[r, "\epsilon_{n - 1} \textcolor{revisions}{\sqcup \text{id}_{R_1}}"] & R_n\big) \arrow[r] & u^{-1}\textup{Cone}(\beta) \arrow[r] & u^{-2}\textup{Cone}(\beta) \arrow[r] & \dots\Big)
\end{tikzcd}
\end{equation}
\end{center}

Note that $B_{w_0}\text{Cone}(\beta) \simeq \text{Cone}(\beta) B_{w_0} \simeq 0$ by hypothesis. We have $B_{w_1} \text{Cone}(\epsilon_{n - 1}) \simeq \text{Cone}(\epsilon_{n - 1})B_{w_1} \simeq 0$ by construction. \textcolor{revisions}{By Proposition \ref{prop: tensid}, this implies}

\[
[n - 1]! B_{w_0} \mathrm{Cone}(\epsilon_{n - 1}) \textcolor{revisions}{\cong B_{w_0} B_{w_1} \mathrm{Cone}(\epsilon_{n - 1})} \simeq 0,
\]

\textcolor{revisions}{and similarly } $[n - 1]! \text{Cone}(\epsilon_{n - 1})B_{w_0} \simeq 0$. \textcolor{revisions}{Then each of $B_{w_0} \mathrm{Cone}(\epsilon_{n - 1})$ and $\text{Cone}(\epsilon_{n - 1})B_{w_0}$ is a summand of a contractible complex and is therefore contractible.}

\textcolor{revisions}{The convolution \eqref{eq: cone_epsilon} is homologically locally finite; this follows from the fact that each of $\mathrm{Cone}(\epsilon_n)$ and $\mathrm{Cone}(\beta)$ is bounded above by assumption and that $u_n^{-i}$ has negative homological degree for each $i \geq 1$. Applying $B_{w_0} \otimes -$ or $- \otimes B_{w_0}$ preserves this property.}
That $B_{w_0} \text{Cone}(\epsilon_n) \simeq \text{Cone}(\epsilon_n) B_{w_0} \simeq 0$ then follows from \textcolor{revisions}{a direct application of Proposition \ref{prop: useful_hpt}}.
\end{proof}

The rest of this chapter is devoted to proving the following:

\begin{theorem} \label{thm: projtopexists}
There exist maps $\alpha, \beta$ with the properties of Theorem \ref{thm: projtop}.
\end{theorem}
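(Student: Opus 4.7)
The plan is to build $\alpha$ and $\beta$ separately. The map $\beta$ is essentially forced by the braid-absorbing behavior of $B_{w_0^{(n)}}$ and admits a clean closed-form construction; the map $\alpha$ requires more work, and I expect its construction to be the main obstacle.

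\emph{Construction of $\beta$.} The Jucys--Murphy braid $J_n$ is a positive pure braid of braid exponent $2(n-1)$, so Proposition \ref{prop: rouqhom} furnishes a closed quasi-isomorphism $\psi_{J_n} \colon q^{2(n-1)} R_n \to F(J_n)$. I would set
\[
\beta := \mathrm{id}_{A_{n-1}} \otimes \psi_{J_n} \colon A_{n-1} \longrightarrow q^{2(n-1)} A_{n-1} J_n,
\]
which is automatically closed and of the correct degree. To verify that $\textup{Cone}(\beta)$ lies in ${}^{\perp}(\mathcal{I}_n^-) \cap (\mathcal{I}_n^-)^{\perp}$, it suffices to show that $\beta$ becomes a genuine homotopy equivalence upon tensoring on either side with $B_{w_0^{(n)}}$. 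The key input is the absorbing identity $F(J_n) \otimes B_{w_0^{(n)}} \simeq q^{2(n-1)} B_{w_0^{(n)}}$ and its left-handed mirror, which I would obtain by iteratively collapsing each Rouquier complex $F(\sigma_i^{\pm})$ against $B_{w_0^{(n)}}$ to a single chain term. Combined with the identity $B_{w_0^{(n-1)}} B_{w_0^{(n)}} \cong [n-1]!\, B_{w_0^{(n)}}$ from Proposition \ref{prop: tensid}, this upgrades $\psi_{J_n}$ to a genuine homotopy equivalence after tensoring with $B_{w_0^{(n)}}$ on either side of $\beta$, killing the cone.

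\emph{Construction of $\alpha$.} I would proceed in two stages. First, at the level of the finite projector $K_{n-1}$, I would apply the fork-slide and fork-twist relations of Proposition \ref{prop: fork_moves} together with Proposition \ref{prop: ft_min} to the composite $K_{n-1} J_n$. Absorbing the Jucys--Murphy crossings into the top merge vertex of $B_{w_0^{(n-1)}}$ should exhibit $K_{n-1} J_n$ as a (possibly iterated) Gaussian-elimination convolution with one extremal summand isomorphic to $q^{-2} t^2 K_{n-1}$ and all other summands built from $B_{w_0^{(n)}}$, hence in $\mathcal{I}_n^-$. Reading off this presentation in reverse produces a closed inclusion $\alpha^{\mathrm{fin}} \colon K_{n-1} \to q^{-2} t^2 K_{n-1} J_n$ of the correct degree, whose cone is visibly in $\mathcal{I}_n^-$.

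Second, I would promote $\alpha^{\mathrm{fin}}$ to a morphism $\alpha$ on $A_{n-1} = K_{n-1} \otimes \mathbb{Z}[u_2^{-1}, \ldots, u_{n-1}^{-1}]$ compatible with the curved structure. Making a polynomial ansatz $\alpha = \sum_{\mathbf{j}} u^{-\mathbf{j}} \alpha_{\mathbf{j}}$ with $\alpha_{\mathbf{0}} = \alpha^{\mathrm{fin}}$ and expanding $[d_{A_{n-1}}, \alpha] = 0$ against $d_{A_{n-1}} = d_{K_{n-1}} \otimes 1 + \sum_i \xi_i \otimes u_i$ produces a cascade of coherence equations to be solved inductively in $|\mathbf{j}|$. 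The obstruction at each stage is a closed element of a suitable $\mathrm{Hom}$-complex between $K_{n-1}$ and shifts of $K_{n-1} J_n$, and the heart of the argument will be showing these obstructions are exact; I expect this to follow from explicit degree bounds combined with the near-rigidity of $B_{w_0^{(n-1)}}$, and this inductive extension is the main technical obstacle. The cone condition persists because it is detected after tensoring with $B_{w_0^{(n)}}$, against which each $\xi_i$ vanishes up to homotopy, so the higher $u^{-\mathbf{j}}$ corrections contribute nothing beyond the cone of $\alpha^{\mathrm{fin}}$.
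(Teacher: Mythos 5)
Your $\beta$ is the paper's $\beta$ (the inclusion of $R_n$ into $J_n$ as its homology, tensored with $\mathrm{id}_{A_{n-1}}$); your verification via positive-braid absorption $F(J_n)B_{w_0}\simeq q^{2(n-1)}B_{w_0}$ is a legitimate alternative to the paper's route (which instead uses that $B_{w_0}$ is projective as a one-sided $R_n$-module, so tensoring the acyclic $\mathrm{Cone}(j_n)$ stays acyclic, plus the fact that acyclic objects of $\mathcal{I}_n^-$ are contractible), and is essentially Theorem \ref{thm: ab_hog_twisting} of \cite{AH17}. The genuine gaps are in $\alpha$. At the finite stage, you treat the fork-slide collapse of $K_{n-1}J_n$ as a routine Gaussian elimination that hands you a closed inclusion of $q^{-2}t^{2}K_{n-1}$ "read off in reverse." But the fork-slide retraction $B_{w_1}J_n\simeq C_n$ is \emph{not} linear over the middle alphabet $\mathbb{X}'$, while the Koszul differential of $K_{n-1}J_n$ is built from $x_j-x_j'$; consequently the naive candidate map $\mu\iota\otimes 1$ is not closed, and homological perturbation of the Koszul twist through the retraction produces correction terms that a priori have components leaving the would-be $K_{n-1}$ subobject. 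The paper's fix is precisely the change-of-basis isomorphism $\Psi$ of Corollary \ref{corr: building_psi} (via Proposition \ref{prop: kosz_base}), rewriting the Koszul differential in the outer alphabet $\mathbb{X}''$, over which $\mu$ and $\iota$ \emph{are} linear; this is what makes Proposition \ref{prop: krec} work, and your outline skips it.

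For the extension over $\mathbb{Z}[u_2^{-1},\dots,u_{n-1}^{-1}]$, your inductive ansatz matches the paper's Theorem \ref{thm: alphasquig}, but the step you defer ("degree bounds plus near-rigidity of $B_{w_0}$") is exactly where a specific vanishing input is needed: the first nontrivial obstruction is a closed degree-one element of $\mathrm{Hom}(B_{w_1},B_{w_1}J_n)$, and the paper kills it using that this complex has homology concentrated in even degrees, which rests on the parity of the $(n-1,1)$-colored homology of the Hopf link from \cite{HRW21}; the remaining obstructions vanish because $\mathrm{Hom}^{-1}(B_{w_1},B_{w_1}J_n)=0$. Without identifying such an input your induction does not close. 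Finally, your justification of the cone condition for $\alpha$ is wrong as stated: membership of $\mathrm{Cone}(\alpha)$ in $\mathcal{I}_n^-$ is a structural condition (chain groups are sums of shifts of $B_{w_0}$ up to homotopy) and is \emph{not} detected by tensoring with $B_{w_0}$ -- that test detects the perpendicular categories, which is the condition on $\beta$, not $\alpha$ (e.g.\ $R_nB_{w_0}\neq 0$ yet $R_n\notin\mathcal{I}_n$). The correct argument, which is the one the paper uses, is to build $\alpha$ so that $\mathrm{Cone}(\alpha)$ is a one-sided convolution of copies of $\mathrm{Cone}(\mu\iota\otimes 1)\simeq q^{n-1}t^{-1}K_n\in\mathcal{I}_n^-$ indexed by the periodic variables, and then conclude via Corollary \ref{corr: useful_hpt}.
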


\subsection{Constructing Beta} \label{sec: const_beta}

Our first goal is constructing a map $\beta: A_{n - 1} \rightarrow A_{n - 1}J_n$ such that $\text{Cone}(\beta) \in (\mathcal{I}_n^-)^{\perp} \cap ^{\perp} (\mathcal{I}_n^-)$. \textcolor{revisions}{The majority of this construction follows from statements in \cite{AH17}; we cite those statements without proof, indicating where in \cite{AH17} the proofs can be found.}

\begin{proposition} \label{prop: I_acyc}
Let $C \in \mathcal{I}_n^-$ be given. Then $C$ is contractible if and only if $C$ is acyclic.
\end{proposition}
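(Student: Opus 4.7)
The ``only if'' direction is immediate: a contractible complex has $\mathrm{id}_C$ nullhomotopic, so $C$ has zero cohomology when viewed in the ambient abelian category $R_n\text{-Bim}$.

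For the converse, my plan is to iteratively strip off contractible summands of $C$ via Gaussian elimination (Proposition \ref{prop: gauss_elim}), reducing $C$ to the zero complex. The main structural input is that $SBim_n$ is Krull-Schmidt and that $B_{w_0}$ is indecomposable with $\mathrm{End}^0_{SBim_n}(B_{w_0})$ local. Since every chain module of $C$ is a direct sum of quantum shifts of $B_{w_0}$, any component of a differential between two copies of the \emph{same} shift $q^s B_{w_0} \to q^s B_{w_0}$ is, modulo the graded Jacobson radical, a scalar multiple of the identity, while components between distinct shifts are either zero or lie in the radical, according to the degree convention on $\mathrm{End}^\bullet_{SBim_n}(B_{w_0})$.

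The crux is to locate a scalar-unit component of the differential at the top of $C$. Let $k$ be maximal with $C^k \neq 0$ and decompose $C^k = \bigoplus_i q^{s_i} B_{w_0}$. Choosing a summand $q^s B_{w_0}$ whose shift $s$ is extremal (in the direction dictated by the support of $\mathrm{End}^\bullet_{SBim_n}(B_{w_0})$) ensures that every nonzero component of $d^{k-1}$ landing in $q^s B_{w_0}$ either comes from a matching $q^s B_{w_0}$ summand of $C^{k-1}$ and is therefore a scalar, or else lies in the radical. Acyclicity at degree $k$ forces $d^{k-1}$ to surject onto $C^k$ in $R_n\text{-Bim}$; a Nakayama-type argument within the Krull-Schmidt category then produces at least one scalar component that is a unit. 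Applying Proposition \ref{prop: gauss_elim} to this identity component yields a strong deformation retract $C \simeq C_{(1)}$, where $C_{(1)} \in \mathcal{I}_n^-$ is still bounded above, still acyclic, and has strictly fewer summands at degree $k$.

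Iterating produces a sequence of strong deformation retractions $C \simeq C_{(1)} \simeq C_{(2)} \simeq \cdots$ that successively exhaust $C^k$, then $C^{k-1}$, and so on downward. The main obstacle will be convergence: because $C$ may be unbounded below, no finite stage of the procedure terminates. I would handle this by observing that a Gaussian elimination performed at homological degree $j$ modifies only the chain groups and differentials at degrees $\geq j$. Consequently, on any truncation $\tau_{\geq N} C$ the process stabilizes after finitely many steps, the restrictions of the iterated contracting homotopies form a coherent inverse system over $N$, and passing to the inverse limit assembles them into a single contracting homotopy of $C$, establishing contractibility.
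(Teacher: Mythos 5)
Your proposal is a genuinely different route from the paper's: it is the minimal-complex/iterated Gaussian elimination argument, and it is essentially sound when the ground ring $R$ is a field (or local). The paper instead notes that each chain module of (a representative of) $C$ is a direct sum of shifts of $B_{w_0} \cong q^{-\ell}R_n \otimes_{R_n^{\mathfrak{S}_n}} R_n$, on which the left and right actions of symmetric polynomials agree, so $C$ may be regarded as a bounded-above acyclic complex of \emph{free} $R_n \otimes_{R_n^{\mathfrak{S}_n}} R_n$-modules; such a complex is contractible by standard homological algebra, and restricting scalars along $R_n \otimes_R R_n \to R_n \otimes_{R_n^{\mathfrak{S}_n}} R_n$ turns the contracting homotopy into one of bimodule complexes. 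That argument works over an arbitrary commutative ring, needs no Krull--Schmidt hypothesis, and outsources the unbounded-below convergence issue to a cited standard fact, whereas your approach must handle both by hand.

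The concrete gap is in your Nakayama step. Since $\mathrm{End}^{\bullet}(B_{w_0})$ is non-negatively graded with degree-zero part $R$, every component of $d^{k-1}$ into a chosen summand $q^sB_{w_0} \subset C^k$ is automatically either a scalar or of strictly positive degree (so no extremal choice of $s$ is needed), and positive-degree maps miss the bottom graded piece because $B_{w_0}$ is generated by its lowest-degree element. Surjectivity therefore only yields that the scalars $\lambda_1, \dots, \lambda_m$ attached to equal-shift summands generate the unit ideal of $R$. Over a field some $\lambda_i$ is a unit and your elimination proceeds; over a general commutative ring (the paper's standing setting, and the projectors themselves are built over $\mathbb{Z}$) no single $\lambda_i$ need be invertible, the unimodular row need not be completable to a change of basis, and splitting off the surjection can leave a complementary summand that is no longer a direct sum of shifts of $B_{w_0}$, so the induction exits $\mathcal{I}_n^-$. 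Your appeal to Krull--Schmidt and locality of $\mathrm{End}^0(B_{w_0}) \cong R$ already presupposes such a restriction on $R$, so as written the proof does not cover the proposition in the generality in which it is stated and used. A minor bookkeeping correction: eliminating an isomorphism component of $d^{j-1} \colon C^{j-1} \to C^j$ also modifies $d^{j-2}$ and the chain group in degree $j-1$, not only data in degrees $\geq j$; this does not damage your convergence argument, since each fixed degree is still touched only finitely often as the eliminations march downward, but the stabilization claim should be stated accordingly.
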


\begin{proof}
See Lemma 2.13 in \cite{AH17}.
\end{proof}

\begin{lemma} \label{lem: bs_proj}
$B_{\sigma}$ is projective as a left or right $R_n$-module for each $\sigma \in \mathfrak{S}^n$. In particular, $B_{w_0}$ and $B_{w_1}$ are projective as left and right $R_n$-modules.
\end{lemma}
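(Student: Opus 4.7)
The strategy is to exhibit each $B_\sigma$ as a direct summand of a free $R_n$-module on the appropriate side; since projective modules are closed under direct summands, this suffices.

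First I would handle the generating Soergel bimodules $B_i = q^{-1} R_n \otimes_{R_n^i} R_n$. Since $R_n^i$ is the ring of polynomials symmetric in $x_i, x_{i+1}$, a direct check shows that $R_n$ is a free right $R_n^i$-module of rank $2$, with basis $\{1, x_i\}$ (and similarly a free left $R_n^i$-module with basis $\{1, x_{i+1}\}$). Consequently $B_i$ is free of rank $2$ as both a left and right $R_n$-module.

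Next I would extend this to arbitrary Bott--Samelson bimodules $BS(\underline{w}) := B_{i_1} B_{i_2} \cdots B_{i_k}$ by a straightforward induction on $k$. If $BS(\underline{w'}) := B_{i_2} \cdots B_{i_k}$ is free as a right $R_n$-module, then $R_n \otimes_{R_n^{i_1}} BS(\underline{w'}) \cong BS(\underline{w'}) \oplus (x_{i_1} \cdot BS(\underline{w'}))$ is a direct sum of two free right $R_n$-modules, hence free as a right $R_n$-module. The analogous argument works on the left. Now for any $\sigma \in \mathfrak{S}^n$, pick a reduced expression $\sigma = s_{i_1} \cdots s_{i_k}$. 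By definition, $B_\sigma$ is (up to quantum shift) a direct summand of $BS(\underline{w})$, and a direct summand of a free module is projective. This gives the general statement.

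For the second assertion, the cleanest route is to argue directly from the presentations
\[
B_{w_0} \cong q^{-\ell(n)} R_n \otimes_{R_n^{\mathfrak{S}^n}} R_n, \qquad B_{w_1} \cong q^{-\ell(n-1)} R_n \otimes_{R_n^{\mathfrak{S}^{n-1}}} R_n,
\]
together with the Chevalley--Shephard--Todd theorem: $R_n$ is free of rank $|W|$ over the invariant ring $R_n^W$ for any finite reflection group $W \subset \mathfrak{S}^n$ acting by permutations. Applied to $W = \mathfrak{S}^n$ and $W = \mathfrak{S}^{n-1}$, this yields that $B_{w_0}$ and $B_{w_1}$ are in fact \emph{free} (not merely projective) as left and as right $R_n$-modules, of ranks $n!$ and $(n-1)!$ respectively.

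There is no genuine obstacle here; the only point requiring mild care is to keep track of the one-sidedness of the module structure in the inductive step, since $B_i$ is a bimodule but the freeness of each tensor factor is invoked over a different subring on each side. Invoking Chevalley's theorem at the end streamlines what would otherwise require a reduced-expression argument for $w_0$ and $w_1$.
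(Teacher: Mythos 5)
Your argument is correct and follows essentially the same route as the paper: $B_i$ is free of rank $2$ as a left or right $R_n$-module, Bott--Samelson bimodules are then free by induction, and $B_{\sigma}$ is projective as a direct summand; the paper handles $B_{w_0}$ and $B_{w_1}$ simply by specializing $\sigma$ to the longest words. Your additional Chevalley--Shephard--Todd coda showing $B_{w_0}$ and $B_{w_1}$ are actually free is fine but not needed, and since the paper's $R$ is an arbitrary commutative ring you should really invoke the integral fact that $R[x_1,\dots,x_n]$ is free over the ring of symmetric polynomials (e.g.\ via the monomial basis $x_1^{a_1}\cdots x_n^{a_n}$ with $0 \le a_i \le n - i$) rather than the field-theoretic form of Chevalley's theorem.
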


\begin{proof}
This is standard; see e.g. \cite{ESW14}.
\end{proof}

\textcolor{revisions}{We call a complex of Soergel bimodules \textit{acyclic} if its underlying complex of $R_n$-bimodules is acyclic.}

\begin{corollary} \label{corr: acyc_perp}
Let $Z \in K^-(SBim_n)$ be given. If $Z$ is acyclic, then $Z \in (\mathcal{I}_n^-)^{\perp} \cap ^{\perp} (\mathcal{I}_n^-)$.
\end{corollary}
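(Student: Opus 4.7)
The plan is to chain together the two immediately preceding results. To show $Z \in (\mathcal{I}_n^-)^\perp \cap \,^\perp(\mathcal{I}_n^-)$, I need to verify that $Z \otimes B_{w_0} \simeq 0$ and $B_{w_0} \otimes Z \simeq 0$ in $K^-(SBim_n)$. My first move would be to apply Corollary \ref{corr: tens_exac} with $C = B_{w_0} \in \mathcal{I}_n$: the endofunctors $-\otimes B_{w_0}$ and $B_{w_0} \otimes -$ are exact, so they preserve acyclicity. Since $Z$ is acyclic by hypothesis, both $Z \otimes B_{w_0}$ and $B_{w_0} \otimes Z$ are acyclic complexes in $K^-(SBim_n)$.

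Next, I would observe that these two tensor products both lie in $\mathcal{I}_n^-$. This uses two facts already established: $\mathcal{I}_n$ is a two-sided tensor ideal, so tensoring any complex with $B_{w_0}$ produces a complex whose chain bimodules are direct sums of shifts of $B_{w_0}$; and boundedness above is preserved because $Z$ is bounded above while $B_{w_0}$ sits in a single homological degree. With both $Z \otimes B_{w_0}$ and $B_{w_0} \otimes Z$ now identified as acyclic objects of $\mathcal{I}_n^-$, Proposition \ref{prop: I_acyc} immediately upgrades acyclicity to contractibility, yielding $Z \otimes B_{w_0} \simeq 0 \simeq B_{w_0} \otimes Z$. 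There is no real obstacle here: the corollary is a direct bookkeeping consequence of the preceding two results, and it is stated as a corollary presumably to be invoked verbatim in the construction of $\beta$ in Section \ref{sec: const_beta}, where proving that a candidate $\beta$ is a quasi-isomorphism will suffice to establish the desired orthogonality of $\mathrm{Cone}(\beta)$.
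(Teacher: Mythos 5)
Your proposal is correct and follows the same route as the paper: exactness of $-\otimes B_{w_0}$ and $B_{w_0}\otimes -$ (Corollary \ref{corr: tens_exac}) preserves acyclicity, membership in $\mathcal{I}_n^-$ follows because it is a tensor ideal, and Proposition \ref{prop: I_acyc} upgrades acyclicity to contractibility. No gaps to flag.
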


\begin{proof}
See Corollary 2.15 in \cite{AH17}.
\end{proof}

Now, recall from Proposition \ref{prop: rouqhom} that $H^i(J_n) \cong q^{2(n - 1)}R_n$ for $i = 0$, $H^i(J_n) = 0$ otherwise. In particular, since $J_n$ is concentrated in positive homological degrees, there is a quasi-isomorphism $j_n: R_n \hookrightarrow q^{-2(n - 1)}J_n$ given by inclusion of $0^{th}$ homology.

\begin{proposition} \label{prop: beta_perp}
The map $\beta = \text{id}_{A_{n - 1}} \otimes j_n \colon A_{n - 1} \to q^{-2(n - 1)}A_{n - 1}J_n$ satisfies $\text{Cone}(\beta) \in \mathcal{I}^{\perp} \cap ^{\perp} \mathcal{I}$.
\end{proposition}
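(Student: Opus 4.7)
The plan is to show that $\textup{Cone}(\beta)$ is acyclic and then invoke Corollary \ref{corr: acyc_perp}.

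First, I would identify $\textup{Cone}(\beta)$ with a tensor product. Since $\beta = \textup{id}_{A_{n-1}} \otimes j_n$, an elementary check gives $\textup{Cone}(\beta) \cong A_{n-1} \otimes_{R_n} \textup{Cone}(j_n)$ in $K^-(R_n\textup{-Bim})$, up to an overall quantum shift that is irrelevant for acyclicity. By construction $j_n$ is a quasi-isomorphism: Proposition \ref{prop: rouqhom} gives $H^\bullet(J_n) \cong q^{2(n-1)} R_n$ concentrated in degree zero (the Jucys-Murphy braid is a pure braid, so $R[\mathbb{X}]_{J_n} = R_n$), and therefore $\textup{Cone}(j_n)$ is a \emph{bounded} acyclic complex of $R_n$-bimodules.

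Next, I would argue that tensoring with each chain module of $A_{n-1}$ is exact. Each chain module of $A_{n-1}$ is a direct sum of quantum shifts of $B_{w_0(\mathfrak{S}^{n-1})} \sqcup R[x_n]$, viewed as an $R_n$-bimodule via the inclusion $SBim_{n-1} \hookrightarrow SBim_n$. By Lemma \ref{lem: bs_proj}, $B_{w_0(\mathfrak{S}^{n-1})}$ is projective as a left or right $R_{n-1}$-module, and extending scalars by $R[x_n]$ preserves projectivity; equivalently, Corollary \ref{corr: tens_exac} makes the functor $A_{n-1}^p \otimes_{R_n} (-)$ exact for each $p$. Applied to the bounded acyclic complex $\textup{Cone}(j_n)$, this shows that each termwise tensor product $A_{n-1}^p \otimes_{R_n} \textup{Cone}(j_n)$ is acyclic.

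Finally, to pass from termwise acyclicity to acyclicity of the total complex, I would consider the spectral sequence of the double complex $A_{n-1}^\bullet \otimes_{R_n} \textup{Cone}(j_n)^\bullet$ associated to the column filtration. Although $A_{n-1}$ is only bounded above, $\textup{Cone}(j_n)$ is bounded, so each total degree of the totalization receives contributions from only finitely many bidegrees and the spectral sequence converges. Its $E_1$ page computes the cohomology of the columns, which vanishes by the previous step; thus the total complex -- equivalently, $\textup{Cone}(\beta)$ -- is acyclic, and Corollary \ref{corr: acyc_perp} delivers $\textup{Cone}(\beta) \in \mathcal{I}_n^\perp \cap {}^\perp \mathcal{I}_n$. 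The only real subtlety is convergence of the spectral sequence, and this is handled entirely by the boundedness of $\textup{Cone}(j_n)$ offsetting the semi-infiniteness of $A_{n-1}$.
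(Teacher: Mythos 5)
Your argument is correct and is essentially the paper's proof: identify $\textup{Cone}(\beta)$ with $A_{n-1}\otimes\textup{Cone}(j_n)$, use that $\textup{Cone}(j_n)$ is acyclic and that tensoring with $A_{n-1}\in\mathcal{I}_{n-1}^-$ is exact (projectivity of $B_{w_0}$ via Lemma \ref{lem: bs_proj}), then apply Corollary \ref{corr: acyc_perp}. The spectral-sequence convergence discussion you add is exactly the content the paper absorbs into its citation of Corollary \ref{corr: tens_exac}, so no new ideas are needed beyond what you wrote.
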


\begin{proof}
\textcolor{revisions}{There is an isomorphism of complexes $\mathrm{Cone}(\beta) \cong A_{n - 1} \otimes \mathrm{Cone}(j_n)$.} Since $j_n$ is a quasi-isomorphism, $\text{Cone}(j_n)$ is acyclic. It is clear that $A_{n - 1} \in \mathcal{I}_{n - 1}$; \textcolor{revisions}{hence by Lemma \ref{lem: bs_proj}, $A_{n - 1}$ consists of flat (right) $R_n$-modules in each degree. It follows that $A_{n - 1} \otimes \mathrm{Cone}(j_n)$ is the tensor product of a chain complex of flat modules with a \textit{bounded} acyclic complex, and is therefore itself acyclic. An immediate application of Corollary \ref{corr: acyc_perp} then gives that $\text{Cone}(\beta) \in \mathcal{I}^{\perp} \cap ^{\perp} \mathcal{I}$.}
\end{proof}

\subsection{Fork Sliding} \label{sec: fork_slide}

In this subsection, we take our first steps towards constructing $\alpha$ by identifying a closed morphism between \textit{finite} complexes $\overline{\alpha}: K_{n - 1} \rightarrow K_{n - 1}J_n$ of degree $q^{-2}t^2$ satisfying $\text{Cone}(\overline{\alpha}) \in \mathcal{I}_n^-$. Since $K_{n - 1}J_n$ is a convolution of complexes of the form $B_{w_1}J_n$, it will be helpful to begin with a more thorough analysis of the latter.

\begin{proposition} \label{prop: mu_nu}
\textcolor{revisions}{Let $C_n$ denote the three term complex below:}

\begin{center}
\begin{equation} \label{eq: c_n}
\begin{tikzcd}[sep=large]
C_n := q^{n - 1}B_{w_0} \arrow[r, "x_n - x_n'"] & q^{n - 3}tB_{w_0} \arrow[r, "unzip"] & q^{-2}t^2B_{w_1} .
\end{tikzcd}
\end{equation}
\end{center}

\textcolor{revisions}{Then t}here is a strong deformation retraction of the form

\begin{center}
\begin{tikzcd}
B_{w_1}J_n \arrow[r, "\nu", harpoon, shift left] & C_n \arrow[l, "\mu", harpoon, shift left].
\end{tikzcd}
\end{center}

\end{proposition}

\begin{proof}
\textcolor{revisions}{See Appendix \ref{app: dot_slide_nat}.}
\end{proof}

There is an obvious degree $q^{-2}t^2$ map $\iota$ from $B_{w_1}$ to \textcolor{revisions}{$C_n$} given by including $B_{w_1}$ as a subcomplex; Gaussian elimination along this inclusion shows that $\text{Cone}(\iota) \in \mathcal{I}_n^-$. Since $\nu$ is a strong deformation retraction, we obtain a homotopy equivalence $\text{Cone}(\mu \iota) \simeq \text{Cone}(\iota)$, so that $\text{Cone}(\mu \iota) \in \mathcal{I}_n^-$ as well; see the diagram below.

\begin{center}
\begin{tikzcd}
t^{-1}B_{w_1} \arrow[rr, "\iota", hook] \arrow[d, "\text{id}", harpoon, shift left] & & q^2t^{-2}C_n \arrow[d, "\mu", harpoon, shift left] \\
t^{-1}B_{w_1} \arrow[rr, "\mu \iota"] \arrow[u, "\text{id}", harpoon, shift left] & & q^2t^{-2}B_{w_1}J_n \arrow[u, "\nu", harpoon, shift left]
\end{tikzcd}
\end{center}

Unfortunately, the \textcolor{revisions}{na{\"i}ve} extension $\mu \iota \otimes 1 \colon K_{n - 1} \rightarrow K_{n - 1}J_n$ is not closed. Indeed, letting $\mathbb{X}', \mathbb{X}''$ denote the `top' and `bottom' alphabets on $J_n$ (so that in particular, the suppressed tensor product is taken over $R[\mathbb{X}']$), we have $K_{n - 1}J_n \cong B_{w_1}J_n \otimes_R \bigwedge[\theta_2, \dots, \theta_{n - 1}]$ with differential $d_{K_{n - 1}J_n} = d_{J_n} \otimes 1 + \sum_{j = 2}^{n - 1} (x_j - x_j') \otimes \theta_j$. Since the fork-sliding homotopy $\mu$ is not $\mathbb{X}'$-linear, $\mu \iota \otimes 1$ is not guaranteed to commute with the latter component of this differential.

To circumvent this difficulty, observe that both $\mu$ and $\iota$ are morphisms of bimodule complexes, so they must commute with the external $\mathbb{X}''$ action on $K_{n - 1}J_n$. In particular, $\mu \iota \otimes 1$ \textit{is} a closed morphism from $K_{n - 1}$ to a complex $K_n'$ with the same underlying chain bimodules as $K_{n - 1}J_n$ but with differential 

\begin{equation} \label{eq: dkn'}
d_{K_n'} = d_{B_{w_1}J_n} \otimes 1 + \sum_{j = 2}^{n - 1} (x_j - x_j'') \otimes \textcolor{revisions}{\theta_j^{\vee}}.
\end{equation}

To complete our construction of $\overline{\alpha}$, we would like to show that $K_n' \cong K_{n - 1}J_n$. We accomplish this by way of the following ``change of basis" theorem for Koszul complexes:

\begin{proposition} \label{prop: kosz_base}
Let $(C, d_C)$ be a bounded chain complex, and let $f_1, \dots, f_r$ \textcolor{revisions}{and $g$} be central closed degree 0 endomorphisms of $C$. Suppose $[d_C, \textcolor{revisions}{h}] = f_j - \textcolor{revisions}{g}$ for some $1 \leq j \leq r$, $\textcolor{revisions}{h} \in \text{End}^{-1}(C)$ satisfying $\textcolor{revisions}{h^2} = 0$. Let $\theta_1, \dots, \theta_r$ be odd formal variables of homological degree $t^{-1}$. Then the chain complex $K_f := C \otimes \bigwedge[\theta_1, \dots, \theta_r]$ with differential $d_{K_f} = d_C \otimes 1 + \sum_{i = 1}^r f_i \otimes \theta_i^{\vee}$ is isomorphic to the chain complex $K_g := C \otimes \bigwedge[\theta_1, \dots, \theta_r]$ with differential $d_{K_g} = d_C \otimes 1 + \sum_{i \neq j} f_i \otimes \theta_i^{\vee} + \textcolor{revisions}{g} \otimes \theta_j^{\vee}$.
\end{proposition}

\begin{proof}
Let $\Psi := 1 \otimes 1 + \textcolor{revisions}{h} \otimes \theta_j^{\vee} \in \text{Hom}^0(K_f, K_g)$. To see that $\Psi$ is a chain map is a straightforward computation (note the continued use of the middle interchange law):

\begin{align*}
    d_{K_g}\Psi - \Psi d_{K_f} & = (d_C \otimes 1 + \sum_{i \neq j} f_i \otimes \theta_i^{\vee} + \textcolor{revisions}{g} \otimes \theta_j^{\vee})(1 \otimes 1 + \textcolor{revisions}{h} \otimes \theta_j^{\vee}) - (1 \otimes 1 + \textcolor{revisions}{h} \otimes \theta_j^{\vee})(d_C \otimes 1 + \sum_{i = 1}^r f_i \otimes \theta_i^{\vee}) \\
    & = \left(d_C \otimes 1 + (d_C\textcolor{revisions}{h} + \textcolor{revisions}{g}) \otimes \theta_j^{\vee} + \sum_{i \neq j} (f_i \otimes \theta_i^{\vee} - f_i\textcolor{revisions}{h} \otimes \theta_i^{\vee}\theta_j^{\vee})\right) - \\
    & \quad \left(d_C \otimes 1 + (-\textcolor{revisions}{h}d_C + f_j) \otimes \theta_j^{\vee} + \sum_{i \neq j} (f_i \otimes \theta_i^{\vee} + \textcolor{revisions}{h}f_i \otimes \theta_j^{\vee}\theta_i^{\vee})\right) \\
    & = ([d_C, h_j] - (f_j - \textcolor{revisions}{g})) \otimes \theta_j^{\vee} - \left(\sum_{i \neq j} f_i\textcolor{revisions}{h} \otimes (\theta_i^{\vee}\theta_j^{\vee} + \theta_j^{\vee}\theta_i^{\vee})\right) \\
    & = 0.
\end{align*}

By the same computation as above with the roles of $f_j$ and $\textcolor{revisions}{g}$ reversed, we see that $\Psi' := 1 \otimes 1 - \textcolor{revisions}{h} \otimes \theta_j^{\vee}$ is a chain map from $K_g$ to $K_f$. We claim that $\Psi\Psi' = 1 \otimes 1$, so that $\Psi$ is an isomorphism. Again, this is a direct computation:

\begin{align*}
    \Psi \Psi' & = (1 \otimes 1 + \textcolor{revisions}{h} \otimes \theta_j^{\vee})(1 \otimes 1 - \textcolor{revisions}{h} \otimes \theta_j^{\vee}) = 1 \otimes 1 + (\textcolor{revisions}{h} - \textcolor{revisions}{h}) \otimes \theta_j^{\vee} = 1 \otimes 1
\end{align*}

\end{proof}

\begin{corollary} \label{corr: building_psi}
For each $2 \leq i \leq n - 1$, let $h_i \in \text{End}^{-1}(J_n)$ be the dot-sliding homotopy \textcolor{revisions}{of Proposition \ref{prop: dot_slide}} satisfying $[d, h_i] = x_i' - x_i''$. Then there is an isomorphism $\Psi: K_{n - 1}J_n \rightarrow K'_n$ given by 

\begin{equation} \label{eq: psi}
\Psi := \prod_{j = 2}^{n - 1} (1 \otimes 1 - (\text{id}_{B_{w_1}} \otimes h_j) \otimes \theta_j^{\vee})
\end{equation}

with inverse

\begin{equation} \label{eq: psi'}
\Psi' = \prod_{j = 2}^{n - 1} (1 \otimes 1 + (\text{id}_{B_{w_1}} \otimes h_j) \otimes \theta_j^{\vee}).
\end{equation}
\end{corollary}

\begin{proof}
Recall that the $R[\mathbb{X}, \mathbb{X}', \mathbb{X}'']$ action given by the bimodule structures of each chain group is central in the endomorphism algebra of the braided web $B_{w_1}J_n$. Since $[d, -\text{id}_{B_{w_1}} \otimes h_i] = x_i'' - x_i' = (x_i - x_i') - (x_i - x_i'')$ for each $i$, that $\Psi$ is an isomorphism follows immediately from Proposition \ref{prop: kosz_base}.

It remains to show that $\Psi'\Psi = 1$. For this, it suffices to show that the defining composition of $\Psi'$ is independent of the order of its factors. This is a straightforward computation:

\begin{align*}
    (1 \otimes 1 + h_i \otimes \theta_i^{\vee})(1 \otimes 1 + h_j \otimes \theta_j^{\vee}) & = 1 \otimes 1 + h_i \otimes \theta_i^{\vee} + h_j \otimes \theta_j^{\vee} - h_ih_j \otimes \theta_i^{\vee}\theta_j^{\vee} \\
    & = 1 \otimes 1 + h_j \otimes \theta_j^{\vee} + h_i \otimes \theta_i^{\vee} - h_jh_i \otimes \theta_j^{\vee}\theta_i^{\vee} \\
    & = (1 \otimes 1 + h_j \otimes \theta_j^{\vee})(1 \otimes 1 + h_i \otimes \theta_i^{\vee})
\end{align*}

Here we have used the fact that both $h_i, h_j$ and $\theta_i^{\vee}, \theta_j^{\vee}$ anticommute in the second equality.
\end{proof}

\begin{proposition} \label{prop: krec}
The map $\overline{\alpha}: K_{n - 1} \rightarrow q^2t^{-2}K_{n - 1}J_n$ given by $\overline{\alpha} := \Psi'(\mu \iota \otimes 1)$ is a chain map satisfying $\text{Cone}(\overline{\alpha}) \simeq q^{n - 1}t^{-1}K_n \in \mathcal{I}_n^-$.
\end{proposition}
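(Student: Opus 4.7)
The plan is to verify the two claims in three steps, following the outline laid down in the discussion preceding the proposition.

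First, I would check that $\overline{\alpha}$ is a chain map. The map $\mu\iota \otimes 1$ is closed as a morphism $K_{n-1} \to K_n'$: since $\mu$ and $\iota$ are bimodule homomorphisms of the braided web $B_{w_1}J_n$, they are $\mathbb{X}''$-linear and hence commute with the modified Koszul differential of $K_n'$ built from $\sum (x_j - x_j'') \otimes \theta_j^\vee$. By Corollary \ref{corr: building_psi}, $\Psi'$ is an isomorphism of chain complexes $K_n' \xrightarrow{\sim} K_{n-1}J_n$, so $\overline{\alpha} = \Psi'(\mu\iota \otimes 1)$ is closed as a composition of closed maps.

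Next I would reduce the cone computation: because $\Psi'$ is an isomorphism in $\text{Ch}^b(SBim_n)$, pushforward along $\Psi'$ yields $\text{Cone}(\overline{\alpha}) \cong \text{Cone}(\mu\iota \otimes 1)$. To analyze the latter I would apply Corollary \ref{corr: useful_hpt} to the strong deformation retraction $(\mu, \nu)$ from $B_{w_1}J_n$ to $C_n$, extended by the identity on $\bigwedge[\theta_2, \dots, \theta_{n-1}]$; this replaces $\text{Cone}(\mu\iota \otimes 1)$ up to homotopy by $\text{Cone}(\iota \otimes 1)$. The map $\iota \otimes 1$ is the pointwise inclusion of the rightmost $B_{w_1}$ summand of $C_n$ tensored with $\bigwedge[\theta_2,\dots,\theta_{n-1}]$; Gaussian elimination (Proposition \ref{prop: gauss_elim}) along these identity components cancels the $B_{w_1}$ summands on both sides. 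What remains is the two-term complex $q^{n+1}t^{-2}B_{w_0} \xrightarrow{x_n - x_n'} q^{n-1}t^{-1}B_{w_0}$ tensored with $\bigwedge[\theta_2,\dots,\theta_{n-1}]$, carrying the residual differential $\sum_{j=2}^{n-1}(x_j - x_j') \otimes \theta_j^\vee$ inherited from $K_{n-1}$. Recognizing the two-term factor as a quantum/homological shift of the Koszul complex for $x_n - x_n'$ on $B_{w_0}$ with wedge generator $\theta_n$, and merging exterior algebras, identifies the result with $q^{n-1}t^{-1}K_n$. Since $K_n$ is bounded and each of its chain modules is a shift of $B_{w_0}$, the cone lies in $\mathcal{I}_n^-$.

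The main obstacle I expect is step three: confirming that after Gaussian elimination the residual differentials reassemble into \emph{exactly} the Koszul differential defining $K_n$, with no twisted correction terms arising from the $\Psi'$-conjugation or from the homotopy perturbation applied to $\mu$. This works precisely because we deliberately routed the construction through $K_n'$ (whose differential uses $x_j - x_j''$, not $x_j - x_j'$): the map $\mu\iota \otimes 1$, being built from pure bimodule operations, commutes with the "external" Koszul variables on the nose, so the surviving piece after elimination really is a genuine Koszul complex on $B_{w_0}$ rather than a deformation of one. A secondary bookkeeping task is to verify that the overall quantum/homological shift matches $q^{n-1}t^{-1}$; this is a direct check against the shifts appearing in $C_n$ in Proposition \ref{prop: mu_nu} and in the stated degree $q^2t^{-2}$ of the codomain of $\overline{\alpha}$.
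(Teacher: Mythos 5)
Your proposal is correct and follows essentially the same route as the paper: reduce $\text{Cone}(\overline{\alpha})$ to $\text{Cone}(\iota \otimes 1)$ via the isomorphism $\Psi'$ and the strong deformation retraction $(\mu, \nu)$ extended over $\bigwedge[\theta_2, \dots, \theta_{n-1}]$, then Gaussian-eliminate the included $B_{w_1}$ summands and recognize the surviving two-term piece as the Koszul complex for $x_n - x_n''$ on $q^{n-1}t^{-1}B_{w_0}$. The only quibble is notational: the residual differentials should be written in the external alphabet $\mathbb{X}''$ (as your final paragraph acknowledges), which becomes the alphabet $\mathbb{X}'$ of $K_n$ only after relabeling.
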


\begin{proof}
We denote the external polynomial action on all complexes by $R[\mathbb{X}, \mathbb{X}'']$. Consider $C_n \otimes \bigwedge[\theta_2, \dots, \theta_{n - 1}]$ as a chain complex with differential $d_{C_n \otimes \bigwedge[\theta_2, \dots, \theta_{n - 1}]} := d_C \otimes 1 + \sum_{j = 2}^{n - 1} (x_j - x_j'') \otimes \theta_j^{\vee}$. Then $K_{n - 1}$ appears as a subcomplex of $q^2t^{-2}C_n \otimes \bigwedge[\theta_2, \dots, \theta_{n - 1}]$ via the inclusion $\iota \otimes 1$.

Gaussian elimination along this inclusion leaves the quotient complex below:

\begin{center}
\begin{tikzcd}[sep=large]
\text{Cone}(\iota \otimes 1) \simeq q^2t^{-2}(q^{n - 1}B_{w_0} \arrow[r, "x_n - x_n''"] & q^{n - 3}tB_{w_0}) \otimes \bigwedge[\theta_2, \dots, \theta_{n - 1}]
\end{tikzcd}
\end{center}

After absorbing the overall grading shift, the two term complex in parentheses is exactly the Koszul complex for $x_n - x_n''$ on $q^{n - 1}t^{-1}B_{w_0}$. Absorbing this Koszul differential into the exterior algebra gives $\text{Cone}(\iota \otimes 1) \simeq q^{n - 1}t^{-1}K_n$.

Finally, we obtain $\text{Cone}(\overline{\alpha}) \simeq \text{Cone}(\iota \otimes 1)$ by the chain of homotopy equivalences depicted below:

\begin{center}
\begin{tikzcd}
t^{-1}K_{n - 1} \arrow[rr, "\overline{\alpha} = \Psi'(\mu \iota \otimes 1)"] \arrow[d, "\text{id}", harpoon, shift left] & & q^2t^{-2}K_{n - 1}J_n \arrow[d, "\Psi", harpoon, shift left] \\
t^{-1}K_{n - 1} \arrow[rr, "\mu \iota \otimes 1"] \arrow[u, "\text{id}", harpoon, shift left] \arrow[d, "\text{id}", harpoon, shift left] & & q^2t^{-2}K_n' \arrow[u, "\Psi'", harpoon, shift left] \arrow[d, "\nu \otimes 1", harpoon, shift left] \\
t^{-1}K_{n - 1} \arrow[rr, "\iota \otimes 1"] \arrow[u, "\text{id}", harpoon, shift left] & & q^2t^{-2} C_n \otimes \bigwedge[\theta_2, \dots, \theta_{n - 1}] \arrow[u, "\mu \otimes 1", harpoon, shift left]
\end{tikzcd}
\end{center}
\end{proof}

\subsection{Constructing Alpha} \label{sec: alpha_build}

Our next task is to lift the construction of $\overline{\alpha}$ above to a map $\alpha: A_{n - 1} \rightarrow A_{n - 1}J_n$ satisfying $\text{Cone}(\alpha) \in \mathcal{I}_n^-$ by incorporating the periodic action of $\mathbb{Z}[u_2^{-1}, \dots, u_{n - 1}^{-1}]$. \textcolor{revisions}{We do this by explicitly computing components of a candidate map $\alpha$ in each periodic degree, then verifying that the result is closed and satisfies $\mathrm{Cone}(\alpha) \in \mathcal{I}_n^-$.}

\subsubsection{Change of Basis}

\textcolor{revisions}{To construct $\alpha$, w}e will again find it useful \textcolor{revisions}{first} to \textcolor{revisions}{change basis in the Koszul direction} from $K_{n - 1}J_n$ to $K_n'$. \textcolor{revisions}{We begin by computing the interaction of this change of basis with the periodic action.}

\begin{proposition} \label{prop: xi_conj}
\textcolor{revisions}{Let $\xi_j \in \mathrm{End}^{-1}(K_{n - 1}J_n)$ be as in Equation \eqref{eq: xi_i} and $\Psi, \Psi'$ be as in Equations \eqref{eq: psi} and \eqref{eq: psi'}. Then we have}

\begin{equation} \label{eq: xi_conj}
\Psi \xi_j \Psi' = \sum_{k = 2}^{n - 1} f_{jk}(\mathbb{X}, \mathbb{X}')(1 \otimes \theta_k - h_k \otimes 1).
\end{equation}
\end{proposition}

\begin{proof}
Since composition of morphisms is linear, it suffices to compute the effect of conjugation on each summand $\xi_{jk} := f_{jk}(\mathbb{X}, \mathbb{X}') \otimes \theta_k$ of $\xi_j$. We conjugate by one factor $\Psi_i = 1 \otimes 1 - h_i \otimes \theta_i^{\vee}$ at a time:

\begin{align*}
    \Psi_i \xi_{jk} \Psi_i' & = (1 \otimes 1 - h_i \otimes \theta_i^{\vee})(f_{jk}(\mathbb{X}, \mathbb{X}') \otimes \theta_k)(1 \otimes 1 + h_i \otimes \theta_i^{\vee}) \\
    & = f_{jk}(\mathbb{X}, \mathbb{X}')(1 \otimes \theta_k - h_i \otimes (\theta_k\theta_i^{\vee} + \theta_i^{\vee}\theta_k)) \\
    & = \xi_{jk} + \delta_{ik}(f_{jk}(\mathbb{X}, \mathbb{X}')h_i \otimes 1)
\end{align*}

Here $\delta_{ik}$ denotes the Kronecker delta (i.e. $\delta_{ik} = 1$ if $i = k$, $0$ otherwise). Since $\Psi, \Psi'$ are independent of the ordering of the homotopies $h_i$, we are free to conjugate by $\Psi_k$ last. Then our expression for $\xi_{jk}$ is unchanged by conjugation until the final step, at which point we obtain

\[
\Psi \xi_{jk} \Psi' = f_{jk}(\mathbb{X}, \mathbb{X}')(1 \otimes \theta_k - h_k \otimes 1)
\]

\textcolor{revisions}{Equation \eqref{eq: xi_conj}} follows immediately after summing over $k$.
\end{proof}

\textcolor{revisions}{As an immediate consequence of Proposition \ref{prop: xi_conj}, applying the isomorphism $\Psi$, $\Psi'$ in each periodic degree gives the following result.}

\begin{corollary}
Let $A'_n := K'_n \otimes \mathbb{Z}[u_2^{-1}, \dots, u_{n - 1}^{-1}]$ with differential

\begin{equation} \label{eq: da'n}
d_{A'_n} := d_{K'_n} \otimes 1 + \sum_{i = 2}^{n - 1} \textcolor{revisions}{\left( \sum_{j = 2}^{n - 1} f_{ij}(\mathbb{X}, \mathbb{X}')(1 \otimes \theta_j - h_j \otimes 1) \right)} \otimes u_i.
\end{equation}

\textcolor{revisions}{Then there is an isomorphism of chain complexes}

\begin{center}
\begin{tikzcd}
A_{n - 1}J_n \arrow[r, harpoon, shift left, "\Psi \otimes 1"] & A_n' \arrow[l, harpoon, shift left, "\Psi' \otimes 1"]
\end{tikzcd}
\end{center}
\end{corollary}

\textcolor{revisions}{Next, we show that to construct $\alpha$ from $A_{n - 1}$ to $A_{n - 1}J_n$, it suffices to construct a map $\tilde{\alpha}$ to the better behaved complex $A'_n$.}

\begin{proposition} \label{prop: reduction_to_tilde_alpha}
\textcolor{revisions}{Suppose $\tilde{\alpha} \colon A_{n - 1} \to A'_n$ is a degree $q^{-2}t^2$ chain map satisfying $\mathrm{Cone}(\tilde{\alpha}) \in \mathcal{I}^-_n$, and set $\alpha := (\Psi' \otimes 1) \tilde{\alpha} \colon A_{n - 1} \to A_{n - 1}J_n$. Then $\mathrm{Cone}(\alpha) \in \mathcal{I}^-_n$.}
\end{proposition}

\begin{proof}
\textcolor{revisions}{We furnish an explicit isomorphism between $\mathrm{Cone}(\tilde{\alpha})$ in the top row of the diagram below and $\mathrm{Cone}(\alpha)$ in the bottom row:}

\begin{center}
\begin{tikzcd}
t^{-1}A_{n - 1} \arrow[r, "\tilde{\alpha}"] \arrow[d, shift left, harpoon, "\mathrm{id}"] & q^2t^{-2} A'_n \arrow[d, shift left, harpoon, "\Psi' \otimes 1"] \\
t^{-1} A_{n - 1} \arrow[r, "\tilde{\alpha}"] \arrow[u, shift left, harpoon, "\mathrm{id}"] & q^2t^{-2} A_{n - 1}J_n \arrow[u, shift left, harpoon, "\Psi \otimes 1"]
\end{tikzcd}
\end{center}
\end{proof}

\textcolor{revisions}{As a consequence of Proposition \ref{prop: reduction_to_tilde_alpha}, to prove Theorem \ref{thm: projtopexists}, it suffices to exhibit an appropriate map $\tilde{\alpha}$.}

\begin{theorem} \label{thm: alphasquig}
There exists a closed map $\tilde{\alpha} \colon A_{n - 1} \to A'_n$ of degree $q^{-2}t^2$ such that $\text{Cone}(\tilde{\alpha}) \in \mathcal{I}_n^-$.
\end{theorem}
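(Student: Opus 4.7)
The plan is to lift the finite-level map $\overline{\alpha} = \Psi'(\mu\iota \otimes 1)$ from Proposition \ref{prop: krec} to the periodic $\mathbb{Z}[u^{-1}]$-extension by expanding $\tilde\alpha = \sum_I \tilde\alpha_I \otimes u^{-I}$ indexed by multi-indices $I \in \mathbb{Z}_{\geq 0}^{n-2}$, and taking $\tilde\alpha_0 := (\mu\iota \otimes 1) \otimes 1_{\mathbb{Z}[u^{-1}]}$ as the leading term. By Proposition \ref{prop: xi_conj}, the obstruction to $\tilde\alpha_0$ being closed in $\mathrm{Hom}(A_{n-1}, A'_n)$ reduces to the discrepancy $\Psi\xi_i\Psi' - \xi_i|_{K'_n} = -\sum_k f_{ik}(\mathbb{X}, \mathbb{X}') h_k \otimes 1$: the $1 \otimes \theta_k$ portion of $\Psi\xi_i\Psi'$ commutes past $\mu\iota \otimes 1$, leaving only the $h_k \otimes 1$ correction to cancel.

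The corrections $\tilde\alpha_I$ for $|I| \geq 1$ are constructed inductively so that the obstruction vanishes order by order in the $u$-expansion. The key algebraic inputs are that the dot-sliding homotopies $h_k$ on $J_n$ satisfy $[d_{J_n}, h_k] = x_k' - x_k''$ and pairwise anticommute, together with the defining identity $\sum_k f_{ik}(\mathbb{X},\mathbb{X}')(x_k - x_k') = e_i(\mathbb{X}) - e_i(\mathbb{X}')$ which descends to zero on $B_{w_0}$. These ensure that each successive obstruction cocycle is genuinely exact in the relevant Hom-complex, and the corrections $\tilde\alpha_I$ can be chosen as explicit compositions of the $h_k$ post-applied to $\mu\iota$ in the spirit of Proposition \ref{prop: hpt}.

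For the cone property, filter $\text{Cone}(\tilde\alpha)$ by the total degree of $u^{-I}$, taking higher $|I|$ to be lower in the order so that the resulting convolution is one-sided upward. The associated graded is $\bigoplus_I \text{Cone}(\mu\iota \otimes 1) \otimes u^{-I}$, and each summand is homotopy equivalent to a shift of $K_n$ by Proposition \ref{prop: krec} and hence lies in $\mathcal{I}_n^-$. With this ordering, each multi-index $I$ has only finitely many strict successors (namely the $I' < I$ componentwise), so Corollary \ref{corr: useful_hpt} produces a homotopy equivalence from $\text{Cone}(\tilde\alpha)$ to a one-sided convolution of shifts of $K_n$; this convolution lies in $\mathcal{I}_n^-$ because that subcategory is a tensor ideal closed under such convolutions.

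The principal obstacle is the inductive construction of the $\tilde\alpha_I$ and the verification that each successive obstruction is exact (not merely closed), all while tracking the Koszul sign conventions from $\bigwedge[\theta_j]$ and the anticommutation of the $h_k$. A cleaner but more abstract alternative is to apply the Homotopy Perturbation Lemma (Proposition \ref{prop: hpt}) directly to the finite-level strong deformation retraction underlying Proposition \ref{prop: krec}: perturbing by $\sum_i \xi_i \otimes u_i$ on the source and $\sum_i (\Psi\xi_i\Psi') \otimes u_i$ on the target simultaneously yields a closed map $\tilde\alpha$ and a homotopy equivalence of cones exhibiting $\text{Cone}(\tilde\alpha) \in \mathcal{I}_n^-$, sidestepping the need for explicit order-by-order bookkeeping.
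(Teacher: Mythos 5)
Your overall strategy is the paper's: take $\tilde\alpha_{u^{\mathbf 0}} = (\mu\iota\otimes 1)\otimes 1$, build the higher $u$-components order by order, and get $\text{Cone}(\tilde\alpha)\in\mathcal I_n^-$ from the one-sided convolution structure with subquotients $\text{Cone}(\mu\iota\otimes 1)\simeq q^{n-1}t^{-1}K_n$ via Corollary \ref{corr: useful_hpt}. However, there are two genuine gaps in the construction of the corrections. First, you misidentify the order-$u_i$ obstruction: the $1\otimes\theta_k$ part of $\Psi\xi_i\Psi'$ does \emph{not} commute past $\mu\iota\otimes 1$, because $\mu$ is not $\mathbb{X}'$-linear (that failure is precisely why one passes to $K_n'$ in the first place), while the differential on the source carries $f_{ik}(\mathbb{X},\mathbb{X}'')$ rather than $f_{ik}(\mathbb{X},\mathbb{X}')$. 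So besides the $f_{ik}(\mathbb{X},\mathbb{X}')h_k\otimes 1$ term you keep, the obstruction also contains $(f_{ik}(\mathbb{X},\mathbb{X}')-f_{ik}(\mathbb{X},\mathbb{X}''))\mu\iota\otimes\theta_k$, which must be cancelled by Koszul-degree-$\theta_k$ components of $H_i$; these are built by factoring $f_{ik}(\mathbb{X},\mathbb{X}')-f_{ik}(\mathbb{X},\mathbb{X}'')=\sum_m g_{ikm}(x_m'-x_m'')$ and contracting against the dot-sliding homotopies. Second, and more seriously, your claim that the anticommutation relations and the defining identity for the $f_{ij}$ ``ensure that each successive obstruction cocycle is genuinely exact'' is unsupported. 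Those identities give closedness, not exactness. At order $u_i$ in Koszul degree $\theta^{\mathbf 0}$ one must solve $[d,\rho_i]=\sum_k\big((x_k-x_k'')\tilde\varphi_{ik}+f_{ik}(\mathbb{X},\mathbb{X}')h_k\big)\mu\iota$ in $\operatorname{Hom}^1(B_{w_1},B_{w_1}J_n)$, and the existence of $\rho_i$ requires the non-formal input $H^1\big(\operatorname{Hom}^\bullet(B_{w_1},B_{w_1}J_n)\big)=0$, which the paper extracts from the even-homological-degree parity of $HHH(C_n)$ (the colored Hopf link computation of \cite{HRW21}). Without naming this vanishing your induction has no reason to close, and $\rho_i$ is in particular not of the explicit form ``compositions of the $h_k$ post-applied to $\mu\iota$'' that you propose; only the $\theta$-positive components have that shape. (The higher-order obstructions do vanish for the formal reason $\operatorname{Hom}^{-1}(B_{w_1},B_{w_1}J_n)=0$, so the process terminates once $\rho_i$ exists.)

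Your proposed shortcut via the Homotopy Perturbation Lemma does not avoid this. Proposition \ref{prop: hpt} transfers the periodic twist $\sum_i\Psi\xi_i\Psi'\otimes u_i$ across the strong deformation retraction $K_n'\simeq C_n\otimes\bigwedge[\theta_2,\dots,\theta_{n-1}]$ and yields a perturbed complex homotopy equivalent to $A_n'$; but to obtain a closed map out of $A_{n-1}$ with its \emph{given} differential (whose periodic part uses $f_{ij}(\mathbb{X},\mathbb{X}'')$) you must still check that the inclusion $\iota\otimes 1$ intertwines that differential with the perturbed one, which is exactly the order-by-order verification above, including the need for the $H^1$ vanishing. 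So the parity input from \cite{HRW21} (or some substitute for it) is unavoidable and must appear explicitly in the argument.
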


The proof of this theorem occupies the remainder of this \textcolor{revisions}{section}.

\subsubsection{Components of $\tilde{\alpha}$}

We are aided in our construction of $\tilde{\alpha}$ by the following two observations:

\textbf{Observation 1}: The domain and codomain of $\tilde{\alpha}$ are filtered by polynomial degree in each variable $u_i$. We refer to this as the ``periodic degree". The subquotients with respect to this filtration are $K_{n - 1}$ and $K'_n$ in the domain and codomain of $\tilde{\alpha}$, respectively.

\textbf{Observation 2}: The complexes $K_{n - 1}$, $K'_n$ are filtered by decreasing degree of each exterior variable $\theta_j$. We refer to this as the ``Koszul degree". \textcolor{revisions}{The subquotients with respect to this filtration are} $B_{w_1}$ and $B_{w_1}J_n$ in $K_{n - 1}$ and $K'_n$, respectively.

\textcolor{revisions}{We construct $\tilde{\alpha}$ as a sum of terms which are homogeneous with respect to both the periodic degree and the Koszul degree. The coefficient on each such homogeneous component will be a map from $B_{w_1}$ to $B_{w_1}J_n$, the degree of which can be pinned down by insisting that $\tilde{\alpha}$ be degree zero. Our next task is to identify these components and establish some useful properties.}

\begin{lemma}
\textcolor{revisions}{For each triple $(i,j,k)$ satisfying $2 \leq i, j \leq n - 1$ and $1 \leq k \leq n - 1$, there exists a polynomial $g_{ijk} \in R[\mathbb{X}, \mathbb{X}', \mathbb{X}'']$ of degree $i - 2$ satisfying}

\begin{equation} \label{eq: g_ijk_def}
f_{ij}(\mathbb{X}, \mathbb{X}') - f_{ij}(\mathbb{X}, \mathbb{X}'') = \sum_{k = 1}^{n - 1} g_{ijk} (x_k' - x_k'').
\end{equation}
\end{lemma}

\begin{proof}
\textcolor{revisions}{Observe that the left-hand side of Equation \eqref{eq: g_ijk_def} vanishes upon setting $x_k' = x_k''$ for each $1 \leq k \leq n - 1$. It follows immediately that the left-hand side is contained in the ideal generated by the terms $x_k' - x_k''$.}
\end{proof}

\textcolor{revisions}{Now for each pair $(i, j)$ satisfying $2 \leq i, j \leq n - 1$, we set}

\begin{equation}
\tilde{\varphi}_{ij} := \sum_{k = 1}^{n - 1} -g_{ijk}h_k \in \mathrm{End}^{-1}(B_{w_1}J_n); \quad \varphi_{ij} := \tilde{\varphi}_{ij} \mu \iota \in \mathrm{Hom}^1(B_{w_1}, B_{w_1}J_n).
\end{equation}

\begin{lemma} \label{lem: varphis_diff}
\textcolor{revisions}{The families $\{\tilde{\varphi}_{ij}\}$ and $\{\varphi_{ij}\}$ each pairwise anticommute and satisfy}

\begin{equation}  \label{eq: varphis_diff}
\textcolor{revisions}{[d, \varphi_{ij}] = \left(f_{ij}(\mathbb{X}, \mathbb{X}'') - f_{ij}(\mathbb{X}, \mathbb{X}') \right) \mu \iota}; \quad \textcolor{revisions}{[d, \tilde{\varphi}_{ij}] = f_{ij}(\mathbb{X}, \mathbb{X}'') - f_{ij}(\mathbb{X}, \mathbb{X}')}
\end{equation}
\end{lemma}

\begin{proof}
\textcolor{revisions}{Graded anticommutativity follows directly from the same property of the family of dot-sliding homotopies $\{h_i\}$. Equation \eqref{eq: varphis_diff} is a direct computation; by the graded Leibniz rule and Equation \eqref{eq: g_ijk_def}, we immediately obtain}

\[
[d, \varphi_{ij}] = - \sum_{k = 1}^{n - 1} g_{ijk} [d, h_k] \mu \iota = - \sum_{k = 1}^{n - 1} g_{ijk} (x_k' - x_k'') \mu \iota = \left(f_{ij}(\mathbb{X}, \mathbb{X}'') - f_{ij}(\mathbb{X}, \mathbb{X}') \right) \mu \iota.
\]
\end{proof}

\textcolor{revisions}{To identify the next component of $\tilde{\alpha}$ requires a brief investigation of the \textit{chain complex} structure on $\mathrm{Hom}^{\bullet}_{Ch(SBim_n)}(B_{w_1}, B_{w_1}J_n)$.}

\begin{lemma} \label{lem: hopf_hom}
\textcolor{revisions}{$H^1 \left( \mathrm{Hom}^{\bullet}_{Ch(SBim_n)}(B_{w_1}, B_{w_1}J_n) \right) = 0$.}
\end{lemma}

\begin{proof}
By the adjunction relation for $^{\vee}(-)$ of Section \ref{sec: SoergBim} \textcolor{revisions}{and Propositions \ref{prop: duals} and \ref{prop: tensid}}, we obtain a \textcolor{revisions}{sequence of isomorphisms}

\[
    \text{Hom}^{\bullet}(B_{w_1}, B_{w_1}J_n) \cong \text{Hom}^{\bullet}(R_n, ^{\vee}B_{w_1}B_{w_1}J_n) \cong \text{Hom}^{\bullet}(R_n, B_{w_1}B_{w_1}J_n) \cong [n - 1]! \ \text{Hom}^{\bullet}(R_n, B_{w_1}J_n).
\]

\textcolor{revisions}{Recall from Proposition \ref{prop: mu_nu} that} $B_{w_1}J_n \simeq C_n$. This homotopy equivalence induces a homotopy equivalence of morphism complexes, and hence an isomorphism on homology

\[
H^{\bullet}(\text{Hom}(R_n, B_{w_1}J_n)) \cong H^{\bullet}(\text{Hom}(R_n, C_n))
\]

Now, the right-hand side of this isomorphism is by definition the Hochschild degree $0$ piece of $HHH(C_n)$. By computations in \cite{HRW21}, $HHH(C_n)$ is concentrated in even homological degrees\footnote{In fact the authors in \cite{HRW21} compute the colored HOMFLYPT homology (as defined in \cite{WW17}) of the $(n - 1, 1)$-colored Hopf link\textcolor{revisions}{, which is related to $HHH(C_n)$ by a factor of $[n - 1]!$.}}.  In particular, we have $H^1(\text{Hom}^{\bullet}(B_{w_1}, B_{w_1}J_n)) = 0$.
\end{proof}

\begin{corollary} \label{cor: rhos}
\textcolor{revisions}{For each $2 \leq i \leq n - 1$, set}

\begin{equation}
\textcolor{revisions}{\zeta_i := \sum_{j = 2}^{n - 1} (x_j - x_j'') \varphi_{ij} + f_{ij}(\mathbb{X}, \mathbb{X}') h_j \mu \iota \in \mathrm{Hom}^1(B_{w_1}, B_{w_1}J_n).}
\end{equation}

\textcolor{revisions}{Then there exists $\rho_i \in \mathrm{Hom}^0(B_{w_1}, B_{w_1}J_n)$ satisfying $[d, \rho_i] = \zeta_i$.}
\end{corollary}

\begin{proof}
\textcolor{revisions}{By Lemma \ref{lem: hopf_hom}, it suffices to show that $\zeta_i$ is closed. This follows from a straightforward computation involving the graded Leibniz rule and Lemma \ref{lem: varphis_diff}.}
\end{proof}

\textcolor{revisions}{There is one more component of $\tilde{\alpha}$ we will need. For each $4$-tuple $(i, j, k, \ell)$ satisfying $2 \leq i, j, k, \ell \leq n - 1$, we set}

\begin{equation} \label{eq: omegas_i_neq_j}
\tilde{\omega}_{ijk\ell} := \tilde{\varphi}_{i\ell}\tilde{\varphi}_{jk} \in \text{End}^{-2}(B_{w_1}J_n); \quad \omega_{ijk\ell} := \tilde{\omega}_{ijk\ell} \mu \iota \in \mathrm{Hom}^0(B_{w_1}, B_{w_1}J_n).
\end{equation}

\begin{lemma} \label{lem: omega_ij_diff}
\textcolor{revisions}{The family $\{\omega_{ijk\ell}\}$ satisfy}

\begin{equation} \label{eq: omega_i_neq_j_dif}
\textcolor{revisions}{\omega_{ijk\ell} = -\omega_{ji\ell k};} \quad \textcolor{revisions}{[d, \omega_{ijk\ell}] = (f_{i\ell}(\mathbb{X}, \mathbb{X}'') - f_{i\ell}(\mathbb{X}, \mathbb{X}')) \varphi_{jk} + (f_{jk}(\mathbb{X}, \mathbb{X}') - f_{jk}(\mathbb{X}, \mathbb{X}''))\varphi_{i\ell}}.
\end{equation}

\end{lemma}

\begin{proof}
\textcolor{revisions}{The first property follows immediately from anticommutativity of the family $\{\tilde{\varphi}_{ij}\}$. The second is a direct computation using the graded Leibniz rule and Lemma \ref{lem: varphis_diff}.}
\end{proof}

\subsubsection{The Map $\tilde{\alpha}$}

\textcolor{revisions}{All the pieces are now in place to define the map $\tilde{\alpha}$. In the computations below, the first tensor factor is always a map from $B_{w_1}$ to $B_{w_1}J_n$, the middle tensor factor is always multiplication by some Koszul degree, and the last tensor factor is always multiplication by some periodic degree.}

\begin{proposition}
\textcolor{revisions}{Set}

\begin{equation} \label{eq: alpha_squig}
\textcolor{revisions}{\tilde{\alpha} := (\mu \iota \otimes 1) \otimes 1 + \sum_{i = 2}^{n - 1} \left(\rho_i \otimes 1 + \sum_{j = 2}^{n - 1} \varphi_{ij} \otimes \theta_j \right) \otimes u_i + \sum_{i = 2}^{n - 1} \sum_{j = 2}^{n - 1} \sum_{k = 2}^{n - 1} \sum_{\ell = k + 1}^{n - 1} (\omega_{ijk\ell} \otimes \theta_k \theta_{\ell}) \otimes u_iu_j.}
\end{equation}

\textcolor{revisions}{Then $\tilde{\alpha}$ is a degree $q^{-2}t^2$ chain map from $A_{n - 1}$ to $A'_n$.}
\end{proposition}

\begin{proof}
\textcolor{revisions}{This is a long, explicit computation. To make our lives easier, we verify that $[d, \tilde{\alpha}]$ in each periodic degree separately. For each vector $\textbf{v} = (v_2, \dots, v_{n - 1}) \in \mathbb{Z}^{n - 2}$ and each morphism $F$ we consider, we set $u^{\textbf{v}} := u_2^{v_2} \dots u_{n - 1}^{v_{n - 1}}$ and denote by $F_{u^{\textbf{v}}}$ the periodic degree $u^\textbf{v}$ component of $F$.}

\textbf{Periodic degree \textcolor{revisions}{$0$}}:

That $[d, \tilde{\alpha}]_{u^\textbf{0}} = 0$ is just the statement that $\mu \iota \otimes 1$ is closed.

\textbf{Periodic degree $u_i$}:

There are two contributions to $[d, \tilde{\alpha}]$ in this \textcolor{revisions}{degree}: $[d, \tilde{\alpha}]_{u_i} = [d_{u_i}, \tilde{\alpha}_{u^{\textbf{0}}}] + [d_{u^{\textbf{0}}}, \tilde{\alpha}_{u_i}]$. We \textcolor{revisions}{begin by computing} the first contribution\textcolor{revisions}{, reading off the terms $(d_{A_{n - 1}})_{u_i}$ and $(d_{A'_n})_{u_i}$ directly from Equations \eqref{eq: dan}}\footnote{\textcolor{revisions}{Note that even though Equation \eqref{eq: xi_i} involves the alphabet $\mathbb{X}'$ as written, here we always denote the right-module action of $R_n$ using the alphabet $\mathbb{X}''$.}} \textcolor{revisions}{and \eqref{eq: da'n}}:

\begin{align}
    [d_{u_i}, \tilde{\alpha}_{u^{\textbf{0}}}] & = \textcolor{revisions}{ \left( \sum_{j = 2}^{n - 1} f_{ij}(\mathbb{X}, \mathbb{X}')(1 \otimes \theta_j - h_j \otimes 1) \right) (\mu \iota \otimes 1) - (\mu \iota \otimes 1)  \left( \sum_{j = 2}^{n - 1} f_{ij}(\mathbb{X}, \mathbb{X}'') \otimes \theta_j \right)} \nonumber \\
    & = \sum_{j = 2}^{n - 1} \Big((f_{ij}(\mathbb{X}, \mathbb{X}') - f_{ij}(\mathbb{X}, \mathbb{X}'')) \mu \iota\Big) \otimes \theta_j - (f_{ij}(\mathbb{X}, \mathbb{X}')h_j \mu \iota) \otimes 1. \label{eq: temp_d1}
\end{align}

\textcolor{revisions}{Meanwhile, the second contribution gives}

\begin{align*}
\textcolor{revisions}{[d_{u^{\textbf{0}}}, \tilde{\alpha}_{u_i}]} & \textcolor{revisions}{= \left( d_{B_{w_1}J_n} \otimes 1 + \sum_{j = 2}^{n - 1} (x_j - x_j'') \otimes \theta_j^{\vee} \right) \left( \rho_i \otimes 1 + \sum_{j = 2}^{n - 1} \varphi_{ij} \otimes \theta_j \right)} \\
& \quad \textcolor{revisions}{- \left( \rho_i \otimes 1 + \sum_{j = 2}^{n - 1} \varphi_{ij} \otimes \theta_j \right) \left( \sum_{j = 2}^{n - 1} (x_j - x_j'') \otimes \theta_j^{\vee} \right)} \\
\end{align*}

\textcolor{revisions}{We combine terms of the same Koszul degree using the middle interchange law:}

\begin{align}
\textcolor{revisions}{[d_{u^{\textbf{0}}}, \tilde{\alpha}_{u_i}]} & \textcolor{revisions}{= [d, \rho_i] \otimes 1 + \sum_{j = 2}^{n - 1} [d, \varphi_{ij}] \otimes \theta_j + \sum_{j = 2}^{n - 1} (x_j - x_j'') \rho_i \otimes \theta_j^{\vee} - \sum_{j = 2}^{n - 1} (x_j - x_j'') \rho_i \otimes \theta_j^{\vee}} \nonumber \\
& \quad \textcolor{revisions}{- \left( \sum_{j, k = 2}^{n - 1} (x_j - x_j'') \varphi_{ik} \otimes (\theta_j^{\vee} \theta_k + \theta_k \theta_j^{\vee}) \right)} \nonumber \\
& \textcolor{revisions}{= \left( [d, \rho_i] - \sum_{j = 2}^{n - 1} (x_j - x_j'') \varphi_{ij} \right) \otimes 1 + \sum_{j = 2}^{n - 1} [d, \varphi_{ij}] \otimes \theta_j.} \label{eq: temp_d2}
\end{align}

\textcolor{revisions}{We can simplify \eqref{eq: temp_d2} using Lemma \ref{lem: varphis_diff} and Corollary \ref{cor: rhos}, obtaining}

\begin{align*}
\textcolor{revisions}{[d_{u^{\textbf{0}}}, \tilde{\alpha}_{u_i}]} & \textcolor{revisions}{= \left( \sum_{j = 2}^{n - 1} f_{ij}(\mathbb{X}, \mathbb{X}') h_j \mu \iota \right) \otimes 1 + \sum_{j = 2}^{n - 1} \left(f_{ij}(\mathbb{X}, \mathbb{X}'') - f_{ij}(\mathbb{X}, \mathbb{X}') \right) \mu \iota \otimes \theta_j.}
\end{align*}

\textcolor{revisions}{This exactly cancels the contribution from \eqref{eq: temp_d1}.}

\textbf{Periodic degree $u_iu_j$}:

\textcolor{revisions}{Since neither $d_{A_{n - 1}}$ nor $d_{A'_n}$ has any terms of quadratic or higher periodic degree, when $i \neq j$, w}e see three contributions to $[d, \tilde{\alpha}]$ in this \textcolor{revisions}{degree}:

\[
[d, \tilde{\alpha}]_{u_iu_j} = [d_{u_i}, \tilde{\alpha}_{u_j}] + [d_{u_j}, \tilde{\alpha}_{u_i}] + [d_{u^{\textbf{0}}}, \tilde{\alpha}_{u_iu_j}].
\]

\textcolor{revisions}{When $i = j$, we see only the first and third of these contributions. We again begin by computing the first contribution:}

\begin{align*}
\textcolor{revisions}{[d_{u_i}, \tilde{\alpha}_{u_j}]} & \textcolor{revisions}{= \left( \sum_{k = 2}^{n - 1} f_{ik}(\mathbb{X}, \mathbb{X}') (1 \otimes \theta_k - h_k \otimes 1) \right) \left(\rho_j \otimes 1 + \sum_{k = 2}^{n - 1} \varphi_{jk} \otimes \theta_k \right)} \\
& \quad \textcolor{revisions}{- \left(\rho_j \otimes 1 + \sum_{k = 2}^{n - 1} \varphi_{jk} \otimes \theta_k \right) \left( \sum_{k = 2}^{n - 1} f_{ik}(\mathbb{X}, \mathbb{X}'') \otimes \theta_k \right)} \\
& \textcolor{revisions}{= \sum_{k = 2}^{n - 1} f_{ik}(\mathbb{X}, \mathbb{X}') h_k \rho_j \otimes 1 + \sum_{k = 2}^{n - 1} \left( \left( f_{ik}(\mathbb{X}, \mathbb{X}') - f_{ik}(\mathbb{X}, \mathbb{X}'') \right) \rho_j - \sum_{\ell = 2}^{n - 1} f_{i\ell}(\mathbb{X}, \mathbb{X}')h_{\ell} \varphi_{jk} \right) \otimes \theta_k} \\
& \quad \textcolor{revisions}{- \sum_{k = 2}^{n - 1} \sum_{\ell = 2}^{n - 1} \left( f_{ik}(\mathbb{X}, \mathbb{X}') \varphi_{j\ell} + \varphi_{jk} f_{i\ell}(\mathbb{X}, \mathbb{X}'') \right) \otimes \theta_k \theta_{\ell}}.
\end{align*}

\textcolor{revisions}{Since $h_k \rho_j \in \mathrm{Hom}^{-1}(B_{w_1}, B_{w_1}J_n) = 0$, the first sum above vanishes. We may also rewrite the second sum using Lemma \ref{lem: varphis_diff}. In total, we obtain}

\begin{equation} \label{eq: temp_d3}
\textcolor{revisions}{[d_{u_i}, \tilde{\alpha}_{u_j}] = - \sum_{k = 2}^{n - 1} \left( [d, \tilde{\varphi}_{ik}] \rho_j + \sum_{\ell = 2}^{n - 1} f_{i\ell}(\mathbb{X}, \mathbb{X}')h_{\ell} \varphi_{jk} \right) \otimes \theta_k - \sum_{k = 2}^{n - 1} \sum_{\ell = 2}^{n - 1} \left( f_{ik}(\mathbb{X}, \mathbb{X}') \varphi_{j\ell} + \varphi_{jk} f_{i\ell}(\mathbb{X}, \mathbb{X}'') \right) \otimes \theta_k \theta_{\ell}.}
\end{equation}

\textcolor{revisions}{Now by the graded Leibniz rule, we know $[d, \tilde{\varphi}_{ik}] \rho_j = [d, \tilde{\varphi}_{ik} \rho_j] + \tilde{\varphi}_{ik} [d, \rho_j]$. Since $\tilde{\varphi}_{ik} \rho_j \in \mathrm{Hom}^{-1}(B_{w_1}, B_{w_1}J_n) = 0$, the first of these terms vanishes; we may rewrite the other using Corollary \ref{cor: rhos}. Meanwhile, the Koszul degree $\theta_k \theta_{\ell}$ component of Equation \eqref{eq: temp_d3} contains some redundancy due to the anticommutativity relation $\theta_k \theta_{\ell} = - \theta_{\ell} \theta_k$, which we can eliminate by restricting to indices $k + 1 \leq \ell \leq n - 1$. We again rewrite Equation \eqref{eq: temp_d3} making both of these adjustments:}

\begin{align}
\textcolor{revisions}{[d_{u_i}, \tilde{\alpha}_{u_j}]} & \textcolor{revisions}{= - \sum_{k = 2}^{n - 1} \left( \tilde{\varphi}_{ik} [d, \rho_j] + \sum_{\ell = 2}^{n - 1} f_{i\ell}(\mathbb{X}, \mathbb{X}')h_{\ell} \varphi_{jk} \right) \otimes \theta_k} \nonumber \\
& \quad \textcolor{revisions}{ - \sum_{k = 2}^{n - 1} \sum_{\ell = k + 1}^{n - 1} \left( f_{ik}(\mathbb{X}, \mathbb{X}') \varphi_{j\ell} + f_{i\ell}(\mathbb{X}, \mathbb{X}'') \varphi_{jk} - f_{i\ell}(\mathbb{X}, \mathbb{X}') \varphi_{jk} - f_{ik}(\mathbb{X}, \mathbb{X}'') \varphi_{j\ell} \right) \otimes \theta_k \theta_{\ell}} \nonumber \\
& \textcolor{revisions}{= - \sum_{k = 2}^{n - 1} \left( \tilde{\varphi}_{ik} \left( \sum_{\ell = 2}^{n - 1} (x_{\ell} - x_{\ell}'') \varphi_{j\ell} + f_{j\ell}(\mathbb{X}, \mathbb{X}') h_{\ell} \mu \iota \right) + \sum_{\ell = 2}^{n - 1} f_{i\ell}(\mathbb{X}, \mathbb{X}')h_{\ell} \varphi_{jk} \right) \otimes \theta_k} \nonumber \\
& \quad \textcolor{revisions}{- \sum_{k = 2}^{n - 1} \sum_{\ell = k + 1}^{n - 1} \left( (f_{ik}(\mathbb{X}, \mathbb{X}') - f_{ik}(\mathbb{X}, \mathbb{X}'')) \varphi_{j\ell} + ( f_{i\ell}(\mathbb{X}, \mathbb{X}'') - f_{i\ell} (\mathbb{X}, \mathbb{X}')) \varphi_{jk} \right) \otimes \theta_k \theta_{\ell}} \nonumber \\
& \textcolor{revisions}{= - \sum_{k = 2}^{n - 1} \left( \sum_{\ell = 2}^{n - 1} (x_{\ell} - x_{\ell}'') \omega_{ij \ell k} + (f_{j\ell}(\mathbb{X}, \mathbb{X}')\tilde{\varphi}_{ik}h_{\ell} + f_{i\ell}(\mathbb{X}, \mathbb{X}') h_{\ell} \tilde{\varphi}_{jk}) \mu \iota \right) \otimes \theta_k} \nonumber \\
& \quad \textcolor{revisions}{- \sum_{k = 2}^{n - 1} \sum_{\ell = k + 1}^{n - 1} \left( (f_{ik}(\mathbb{X}, \mathbb{X}') - f_{ik}(\mathbb{X}, \mathbb{X}'')) \varphi_{j\ell} + ( f_{i\ell}(\mathbb{X}, \mathbb{X}'') - f_{i\ell} (\mathbb{X}, \mathbb{X}')) \varphi_{jk} \right) \otimes \theta_k \theta_{\ell}.} \nonumber
\end{align}

\textcolor{revisions}{From here, we break into two cases. First, suppose $i = j$. Since $\tilde{\varphi}_{ik}$ and $h_{\ell}$ anticommute, the term $f_{j\ell}(\mathbb{X}, \mathbb{X}')\tilde{\varphi}_{ik}h_{\ell} + f_{i\ell}(\mathbb{X}, \mathbb{X}') h_{\ell} \tilde{\varphi}_{jk}$ vanishes in this case. Also in this case, by Lemma \ref{lem: omega_ij_diff}, the Koszul degree $\theta_k \theta_{\ell}$ component is exactly $-[d, \omega_{iik\ell}]$. In total, we end up with a contribution of the form}

\begin{equation} \label{eq: temp_d4}
\textcolor{revisions}{[d_{u_i}, \tilde{\alpha}_{u_i}] = - \sum_{k = 2}^{n - 1} \sum_{\ell = 2}^{n - 1} (x_k - x_k'') \omega_{iik\ell} \otimes \theta_{\ell} - \sum_{k = 2}^{n - 1} \sum_{\ell = k + 1}^{n - 1} [d, \omega_{iik\ell}] \otimes \theta_k \theta_{\ell}.}
\end{equation}

\textcolor{revisions}{Otherwise, when $i \neq j$, we obtain $[d_{u_j}, \tilde{\alpha}_{u_i}]$ by swapping $i$ and $j$ in the above expression. In this case, the terms of the form $f_{j\ell}(\mathbb{X}, \mathbb{X}')\tilde{\varphi}_{ik}h_{\ell} + f_{i\ell}(\mathbb{X}, \mathbb{X}') h_{\ell} \tilde{\varphi}_{jk}$ again cancel upon swapping $i$ and $j$. Similarly, the contributions in Koszul degree $\theta_k \theta_{\ell}$ add to produce $-[d, \omega_{ijk\ell}] - [d, \omega_{jik\ell}]$. In total, we end up with a contribution of the form}

\begin{equation} \label{eq: temp_d5}
\textcolor{revisions}{[d_{u_i}, \tilde{\alpha}_{u_j}] + [d_{u_j}, \tilde{\alpha}_{u_i}] = - \sum_{k = 2}^{n - 1} \sum_{\ell = 2}^{n - 1} (x_k - x_k'') (\omega_{ijk\ell} + \omega_{jik\ell}) \otimes \theta_{\ell} - \sum_{k = 2}^{n - 1} \sum_{\ell = k + 1}^{n - 1} ([d, \omega_{ijk\ell}] + [d, \omega_{jik\ell}]) \otimes \theta_k \theta_{\ell}.}
\end{equation}

\textcolor{revisions}{In this periodic degree, it remains to compute the contribution from $[d_{u^{\textbf{0}}}, \tilde{\alpha}_{u_iu_j}]$. We again break into cases. When $i = j$, we have}

\begin{align}
\textcolor{revisions}{[d_{u^{\textbf{0}}}, \tilde{\alpha}_{u_iu_i}]} & \textcolor{revisions}{= \left( d_{B_{w_1}J_n} \otimes 1 + \sum_{k = 2}^{n - 1} (x_k - x_k'') \otimes \theta_k^{\vee} \right) \left( \sum_{k = 2}^{n - 1} \sum_{\ell = k + 1}^{n - 1} \omega_{iik\ell} \otimes \theta_k \theta_{\ell} \right)} \nonumber \\
& \quad \textcolor{revisions}{- \left( \sum_{k = 2}^{n - 1} \sum_{\ell = k + 1}^{n - 1} \omega_{iik\ell} \otimes \theta_k \theta_{\ell} \right) \left( \sum_{k = 2}^{n - 1} (x_k - x_k'') \otimes \theta_k^{\vee} \right)} \nonumber \\
& \textcolor{revisions}{= \sum_{k = 2}^{n - 1} \sum_{\ell = k + 1}^{n - 1} [d, \omega_{iik\ell}] \otimes \theta_k \theta_{\ell} + \sum_{m = 2}^{n - 1} \sum_{k = 2}^{n - 1} \sum_{\ell = k + 1}^{n - 1} (x_m - x_m'') \omega_{iik\ell} \otimes \left( \theta_m^{\vee} \theta_k \theta_{\ell} - \theta_k \theta_{\ell} \theta_m^{\vee} \right).} \label{eq: temp_d6}
\end{align}

\textcolor{revisions}{The first term in \eqref{eq: temp_d6} exactly cancels the second term in \eqref{eq: temp_d4}. Meanwhile, $\theta_m^{\vee} \theta_k \theta_{\ell} - \theta_k \theta_{\ell} \theta_m^{\vee}$ vanishes unless either $m = k$, in which case it simplifies to $\theta_{\ell}$, or $m = \ell$, in which case it simplifies to $-\theta_k$. In total, the second term of \eqref{eq: temp_d6} simplifies to}

\begin{equation} \label{eq: temp_d7}
\textcolor{revisions}{\sum_{k = 2}^{n - 1} \sum_{\ell = k + 1}^{n - 1} (x_k - x_k'') \omega_{iik\ell} \otimes \theta_{\ell} - \sum_{k = 2}^{n - 1} \sum_{\ell = k + 1}^{n - 1} (x_{\ell} - x_{\ell}'') \omega_{iik\ell} \otimes \theta_k}
\end{equation}

\textcolor{revisions}{Using the property $\omega_{iik\ell} = - \omega_{ii\ell k}$ from Lemma \ref{lem: omega_ij_diff}, we can rewrite \eqref{eq: temp_d7} as}

\[
\textcolor{revisions}{\sum_{k = 2}^{n - 1} \sum_{\ell = 2}^{n - 1} (x_k - x_k'') \omega_{iik\ell} \otimes \theta_{\ell} - \left( \sum_{k = 2}^{n - 1} (x_k - x_k') \omega_{iikk} \otimes \theta_k \right).}
\]

\textcolor{revisions}{The first term exactly cancels the second term of \eqref{eq: temp_d4}, and the second term vanishes since $\omega_{iikk} = - \omega_{iikk} = 0$.}

\textcolor{revisions}{When $i \neq j$, the analysis is almost identical; the main difference is that now $\tilde{\alpha}_{u_iu_j}$ has two terms with coefficients $\omega_{ijk\ell}$ and $\omega_{jik\ell}$. In total, we see}

\begin{align*}
\textcolor{revisions}{[d_{u^{\textbf{0}}}, \tilde{\alpha}_{u_iu_j}]} & \textcolor{revisions}{= \sum_{k = 2}^{n - 1} \sum_{\ell = k + 1}^{n - 1} \left([d, \omega_{ijk\ell}] + [d, \omega_{jik\ell}] \right) \otimes \theta_k \theta_{\ell}} \nonumber \\
& \quad \textcolor{revisions}{+ \sum_{k = 2}^{n - 1} \sum_{\ell = 2}^{n - 1} (x_k - x_k'')(\omega_{ijk\ell} + \omega_{jik\ell}) \otimes \theta_{\ell} - \left( \sum_{k = 2}^{n - 1} (x_k - x_k'') (\omega_{ijkk} + \omega_{jikk}) \otimes \theta_k \right).}
\end{align*}

\textcolor{revisions}{This exactly cancels the contribution from \eqref{eq: temp_d5}.}

\textcolor{revisions}{\textbf{Periodic degree $u_iu_ju_k$}: In general, contributions in this periodic degree will be sums of terms of the following form:}

\begin{align}
\textcolor{revisions}{\left[ d_{u_i}, \sum_{\ell = 2}^{n - 1} \sum_{m = \ell + 1}^{n - 1} \omega_{ijk\ell} \otimes \theta_j \theta_k \right]} & \textcolor{revisions}{ = \left( \sum_{\ell = 2}^{n - 1} f_{i\ell}(\mathbb{X}, \mathbb{X}') (1 \otimes \theta_{\ell} - h_{\ell} \otimes 1) \right) \left( \sum_{\ell = 2}^{n - 1} \sum_{m = \ell + 1}^{n - 1} \omega_{ijk\ell} \otimes \theta_j \theta_k \right)} \nonumber \\
& \quad \textcolor{revisions}{- \left( \sum_{\ell = 2}^{n - 1} \sum_{m = \ell + 1}^{n - 1} \omega_{ijk\ell} \otimes \theta_j \theta_k \right) \left( \sum_{\ell = 2}^{n - 1} f_{i\ell}(\mathbb{X}, \mathbb{X}'') \otimes \theta_{\ell} \right)} \nonumber \\
& \textcolor{revisions}{ = \sum_{\ell = 2}^{n - 1} \sum_{m = 2}^{n - 1} \sum_{p = m + 1}^{n - 1} \left(f_{i\ell}(\mathbb{X}, \mathbb{X}') - f_{i\ell}(\mathbb{X}, \mathbb{X}'') \right) \omega_{jkmp} \otimes \theta_{\ell} \theta_m \theta_p} \nonumber \\
& \textcolor{revisions}{ = \sum_{\ell = 2}^{n - 1} \sum_{m = 2}^{n - 1} \sum_{p = m + 1}^{n - 1} -[d, \tilde{\varphi}_{i\ell}] \tilde{\varphi}_{jp} \tilde{\varphi}_{km} \mu \iota \otimes \theta_{\ell} \theta_m \theta_p}. \label{eq: temp_d8}
\end{align}

\textcolor{revisions}{Notice that we drop all terms involving $h_{\ell} \omega_{ijmp}$, which vanishes for degree reasons. The final line \eqref{eq: temp_d8} again contains some redundancies in Koszul degree. Upon reindexing this sum over the triples satisfying $\ell < m < p$ and applying anticommutativity of the $\theta$ variables, we may rewrite \eqref{eq: temp_d8}}

\begin{equation} \label{eq: temp_d9}
\textcolor{revisions}{\sum_{\ell < m < p} \left( -[d, \tilde{\varphi}_{i \ell}] \tilde{\varphi}_{jp} \tilde{\varphi}_{km} + [d, \tilde{\varphi}_{im}] \tilde{\varphi}_{jp} \tilde{\varphi}_{k \ell} - [d, \tilde{\varphi}_{ip}] \tilde{\varphi}_{jm} \tilde{\varphi}_{k\ell} \right) \mu \iota \otimes \theta_{\ell} \theta_m \theta_p}.
\end{equation}

\textcolor{revisions}{The total contribution to $[d, \tilde{\alpha}]$ in this periodic degree is given by summing \eqref{eq: temp_d9} over all distinct permutations of the triple $(i, j, k)$, resulting in a contribution which is symmetric in the indices $i, j, k$. Consequently, it suffices to show the contribution from terms involving only the maps $\tilde{\varphi}_{i\ell}, \tilde{\varphi}_{jp}$, and $\tilde{\varphi}_{km}$ vanish in each Koszul degree. This contribution is exactly}

\begin{align*}
\textcolor{revisions}{\left( -[d, \tilde{\varphi}_{i \ell}] \tilde{\varphi}_{jp} \tilde{\varphi}_{km} + [d, \tilde{\varphi}_{km}] \tilde{\varphi}_{jp} \tilde{\varphi}_{i\ell} - [d, \tilde{\varphi}_{jp}] \tilde{\varphi}_{km} \tilde{\varphi}_{i\ell} \right) \mu \iota} & \textcolor{revisions}{= \left( -[d, \tilde{\varphi}_{i \ell}] \tilde{\varphi}_{jp} \tilde{\varphi}_{km} + \tilde{\varphi}_{i\ell} [d, \tilde{\varphi}_{jp}] \tilde{\varphi}_{km} - \tilde{\varphi}_{i\ell} \tilde{\varphi}_{jp} [d, \tilde{\varphi}_{km}] \right) \mu \iota} \\
& \textcolor{revisions}{ = -[d, \tilde{\varphi}_{i\ell} \tilde{\varphi}_{jp} \tilde{\varphi}_{km} \mu \iota]}
\end{align*}

\textcolor{revisions}{where here we have used the anticommutativity of the family $\{ \tilde{\varphi}_{ij}\}$ and the graded Leibniz rule. Since $\tilde{\varphi}_{i\ell} \tilde{\varphi}_{jp} \tilde{\varphi}_{km} \mu \iota \in \mathrm{Hom}^{-1}(B_{w_1}, B_{w_1}J_n) = 0$, this contribution vanishes.}

\textcolor{revisions}{Finally, since $[d, \tilde{\alpha}]$ contains no terms of higher periodic degree, we have $[d, \tilde{\alpha}] = 0$ in all degrees.}

\end{proof}

\begin{proposition}
\textcolor{revisions}{The map $\tilde{\alpha}$ defined in \eqref{eq: alpha_squig} satisfies $\mathrm{Cone}(\tilde{\alpha}) \in \mathcal{I}_n^-$.}
\end{proposition}

\begin{proof}
\textcolor{revisions}{First, note that $\mathrm{Cone}(\tilde{\alpha})$ is certainly bounded above, since $\tilde{\alpha}$ is a map between two complexes which are bounded above. Additionally, since $\tilde{\alpha}$ respects the filtration by periodic degree on $A_{n - 1}$ and $A'_n$, $\mathrm{Cone}(\tilde{\alpha})$ is similarly filtered. It follows that we have}

\[
\textcolor{revisions}{\mathrm{Cone}(\tilde{\alpha}) = \mathrm{tw}_{\beta} \left( \mathrm{Cone}(\mu \iota \otimes 1) \otimes \mathbb{Z}[u_2^{-1}, \dots, u_{n - 1}^{-1}] \right)}
\]

\textcolor{revisions}{for some twist $\beta$. We have seen in Proposition \ref{prop: krec} that $\mathrm{Cone}(\mu \iota \otimes 1) \simeq K_n$ up to an overall degree shift. Since $\mathbb{Z}[u_2^{-1}, \dots, u_{n - 1}^{-1}]$ is an upper finite indexing set, we may apply Proposition \ref{prop: useful_hpt} to rewrite $\mathrm{Cone}(\tilde{\alpha})$ as a convolution of copies of $K_n$. But $K_n \in \mathcal{I}_n^-$ by construction.}
\end{proof}

\section{Y-ification} \label{big_sec: yification}

\subsection{The Finite Projector} \label{sec: finite}

Methods developed by Elias-Hogancamp-Mellit over the last ten years use (bounded versions of) the projectors $P_n$ to compute triply graded homology of positive torus links (\cite{EH19}, \cite{Mel22}, \cite{HM19}). As in their work, we prefer to deal with finite versions of our projectors to ensure that the complexes involved in computing homology are bounded\footnote{This approach will necessarily be abandoned when computing \textit{colored} homology later in this work, though our computations with bounded complexes will lay the groundwork for that approach as well.}. These finite projectors will be appropriately shifted copies of the Koszul complex $K_n$ constructed above.

\begin{remark}
In the computations of (\cite{EH19}, \cite{Mel22}, \cite{HM19}) utilizing row projectors $P_n$, the finite projector was constructed inductively by taking a tensor product with the fundamental domain of their infinite projector. The primary advantage of that approach is an easy proof of the appropriate analog of Proposition \ref{prop: krec}. This reduction also works in our setting, though we will find the explicit description of $K_n$ as a Koszul complex more useful for partial trace computations; see Proposition \ref{prop: ylessktrace} below.
\end{remark}

We collect the properties of $K_n$ relevant to the computation of link homology below:

\begin{proposition} \label{prop: k.markov.unnorm.uny}
The family of bounded complexes $K_n$ considered above satisfy:

(1) $K_1 = R_1$;

(2) $K_n \in \mathcal{I}_n^-$;

(3) $K_{n - 1}J_n \simeq (q^{n - 3}t K_n \longrightarrow q^{-2}t^2K_{n - 1})$.
\end{proposition}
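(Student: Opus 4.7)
The plan is to settle items (1) and (2) directly from the definitions and to deduce item (3) by rotating the distinguished triangle supplied by Proposition~\ref{prop: krec}. For (1), when $n=1$ the longest word in $\mathfrak{S}^1$ is trivial, so $B_{w_0} = R_1$ and the exterior algebra $\bigwedge[\theta_2, \dots, \theta_1]$ has no generators; hence $K_1 = R_1$ follows directly from the definition of $K_n$. For (2), each chain module of $K_n = B_{w_0} \otimes \bigwedge[\theta_2, \dots, \theta_n]$ is by construction a shift of $B_{w_0}$, and the complex is bounded, so $K_n \in \mathcal{I}_n^b \subset \mathcal{I}_n^-$.

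The substantive content of the proposition is item (3), which will follow from Proposition~\ref{prop: krec} by a standard rotation of distinguished triangles. That proposition furnishes a closed chain map $\overline{\alpha} \colon K_{n-1} \to q^2 t^{-2} K_{n-1} J_n$ whose cone is homotopy equivalent to $q^{n-1} t^{-1} K_n$, giving the distinguished triangle
\[
K_{n-1} \xrightarrow{\overline{\alpha}} q^2 t^{-2} K_{n-1} J_n \to q^{n-1} t^{-1} K_n \to t^{-1} K_{n-1}
\]
in $K^b(SBim_n)$. Rotating this backward (using $[-1] = t$, so that $q^{n-1}t^{-1}K_n$ shifts to $q^{n-1}K_n$ as the predecessor of $K_{n-1}$) realizes $q^2 t^{-2} K_{n-1} J_n$ as the cone of a map $q^{n-1} K_n \to K_{n-1}$, i.e.\ as the two-term convolution $(q^{n-1}t^{-1}K_n \to K_{n-1})$. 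Absorbing the overall shift $q^{-2}t^2$ into both terms then yields the desired equivalence $K_{n-1} J_n \simeq (q^{n-3} t K_n \to q^{-2} t^2 K_{n-1})$.

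There is no new obstacle at this stage: the real work has already been carried out in the proof of Proposition~\ref{prop: krec}, where the change-of-basis map $\Psi$ of Corollary~\ref{corr: building_psi} was essential for reconciling the Koszul differential on $K_{n-1} J_n$ with the fact that the fork-sliding homotopies of Proposition~\ref{prop: mu_nu} are not $\mathbb{X}'$-linear. All that remains for the present proposition is the routine bookkeeping of quantum and homological shifts under the rotation described above.
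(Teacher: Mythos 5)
Your proposal is correct and follows the paper's own argument: items (1) and (2) are immediate from the definition of $K_n$ as $B_{w_0}\otimes\bigwedge[\theta_2,\dots,\theta_n]$, and item (3) is obtained by rotating the triangle furnished by Proposition \ref{prop: krec}, whose homotopy equivalence $\text{Cone}(\overline{\alpha})\simeq q^{n-1}t^{-1}K_n$ carries all the substance. Your shift bookkeeping in the rotation matches the stated normalization $(q^{n-3}tK_n \to q^{-2}t^2K_{n-1})$, so there is nothing to add.
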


\begin{proof}
The first two properties were discussed above. Property (3) follows from the homotopy equivalence $\text{Cone}(\overline{\alpha}) \simeq q^{n - 1}t^{-1} K_n$ of Proposition \ref{prop: krec} after rotating triangles.
\end{proof}

The general strategy for utilizing these finite projectors to compute triply graded homology goes as follows. Identify a full twist on two strands as a subword of a braid presentation of the given link. Using Properties (1) and (3) above, we may rewrite the Rouquier complex associated to that braid as a convolution of complexes built out of simpler braids and the finite projector $K_2$. Repeatedly apply Property (3) in this way, decomposing the Rouquier complex for the original braid as a (potentially very large) convolution of finite projectors together with simple braids.

Now, the problem of computing $HHH$ has been reduced to the two problems of computing Hochschild cohomology of the simpler complexes and computing the homology of the resulting convolution as a garden-variety chain complex. With any luck, repeated application of the partial trace Markov moves above are sufficient for the first task. While the second task might seem forbiddingly complicated (indeed, keeping track of the differentials involved in this decomposition would be intractable for large braids), we are aided by the following significant simplification:

\begin{proposition} \label{prop: paritymiracle}
Let $C = \text{tw}_{\alpha}(\bigoplus_{j \in J} F_j)$ be a one-sided convolution of the complexes $F_j$, and suppose $H^i(F_j) = 0$ for all odd $i$. Then $H^i(C) \cong \bigoplus_{j \in J} H^i(F_j)$ for each $i$.
\end{proposition}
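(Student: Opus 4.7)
The plan is to interpret $C$ as a filtered complex via its one-sided convolution structure and analyze the resulting spectral sequence. Since the twist $\alpha$ strictly decreases the poset index, the submodules $C^{\leq i} := \text{tw}_{\alpha}\bigl(\bigoplus_{j \leq i} F_j\bigr)$ form an exhaustive increasing filtration of $C$ by subcomplexes whose successive subquotients are the $F_j$. The $E_1$-page of the associated spectral sequence is therefore $\bigoplus_{j \in J} H^i(F_j)$, and each higher differential $d_r$ is induced from $\alpha$ and raises total homological degree by $+1$.

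The parity hypothesis then does all the work. Since $E_1$ is concentrated in even homological degrees by assumption, every higher differential $d_r$ lands in a group that is zero, hence vanishes, and the spectral sequence degenerates at $E_1$. The same parity bookkeeping rules out nontrivial filtration extensions on $H^i(C)$: such an extension would be witnessed by an odd-degree class in the associated graded mapping to or receiving from an even-degree class, but the odd-degree pieces are already zero. We conclude that $H^i(C) \cong \bigoplus_j H^i(F_j)$ for each $i$.

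The main obstacle I foresee is handling convergence when the indexing poset $J$ is infinite. For the one-sided convolutions that actually arise in this paper, the local-finiteness hypothesis of Corollary \ref{corr: useful_hpt} is in force, which ensures that the filtration is bounded in each fixed total degree and exhaustive; under these conditions the spectral sequence converges strongly and the argument above goes through. In full generality, one can instead proceed by transfinite induction on $J$: $C$ is built up as an iterated mapping cone $\text{Cone}(F_j \to C^{<j})$, and the long exact sequence in cohomology of each such cone has a connecting map of homological degree $+1$ which must vanish under the parity hypothesis, yielding a short exact sequence that splits for the same parity reason at every stage.
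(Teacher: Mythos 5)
Your argument is essentially the paper's proof: filter $C$ by the poset $J$, identify the $E_1$-page with $\bigoplus_j H^{\bullet}(F_j)$, and use the parity hypothesis to force all (odd-degree) differentials on and after this page to vanish, giving degeneration at $E_1$. One small caveat: your claim that parity also kills the filtration extension problem is not quite right as stated, since the potential extensions occur among associated-graded pieces sitting in the \emph{same} (even) total degree, so no odd-degree class is involved; the paper's own proof silently makes the same identification of $H^i(C)$ with its associated graded, which is harmless for the intended use (graded dimension counts, where extensions are invisible) and in fact the convolutions arising in the paper are finite, so your convergence discussion is also consistent with the paper's implicit boundedness assumption.
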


\begin{proof}
Notice that the indexing set $J$ gives a bounded filtration on $C$. The $E_1$ page of the spectral sequence computing $H^{\bullet}(C)$ induced by this filtration is given by $\bigoplus_{j \in J} H^{\bullet}(F_j)$; in particular, this page is concentrated in even homological degree. Since the differential on this page has homological degree $1$, no component of the differential can map between nonzero terms. It follows that the spectral sequence degenerates at this page, so $H^{\bullet}(C) \cong \bigoplus_{j \in J} H^{\bullet}(F_j)$.
\end{proof}

An appropriately normalized version of Property (3) of Proposition \ref{prop: k.markov.unnorm.uny} will contain only even homological shifts. For this reason, Proposition \ref{prop: paritymiracle} has the potential to significantly simplify our computations. The only remaining ingredient is the computation of the Hochschild cohomology of the simpler complexes appearing in the resulting convolution. Because these will contain finite projectors $K_j$ as tensor factors, it is essential to understand how $K_j$ behaves under partial trace.

\begin{proposition} \label{prop: ylessktrace}
For all $n > 1$, we have $Tr_n(K_n) \simeq (1 + q^2t^{-1})\left(\cfrac{q^{n - 1} + aq^{-n - 1}}{1 - q^2}\right) K_{n - 1}$.
\end{proposition}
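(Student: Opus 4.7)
The plan is to apply the partial trace termwise, package the result as a double Koszul complex containing a redundant copy of $x_n - x_n'$, eliminate that redundancy using Proposition~\ref{prop: kosz_base}, and then invoke Proposition~\ref{prop: trb} to identify what remains.

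First I would unpack $Tr_n(K_n)$ as a chain complex. By definition $K_n = B_{w_0} \otimes \bigwedge[\theta_2, \dots, \theta_n]$ with differential $\sum_{j = 2}^n (x_j - x_j') \otimes \theta_j^{\vee}$, and $Tr_n$ tensors on an extra ``Hochschild-direction'' Koszul factor built from $x_n - x_n'$. Organizing this by an auxiliary odd variable $\eta_n$ in the trace direction, we can write
\[
Tr_n(K_n) \cong B_{w_0} \otimes \bigwedge[\theta_2, \dots, \theta_n] \otimes \bigwedge[\eta_n]
\]
with a total differential containing two summands corresponding to the element $x_n - x_n'$---one paired with $\theta_n$ (from $K_n$) and one paired with $\eta_n$ (from the trace).

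Second I would cancel the redundant copy. Treating the Hochschild-direction Koszul differential as the ambient piece, exterior multiplication by $\eta_n$ provides a square-zero homotopy whose bracket with this ambient differential is exactly $x_n - x_n'$. Proposition~\ref{prop: kosz_base} then yields an isomorphism replacing the Koszul element $x_n - x_n'$ attached to $\theta_n$ by $0$, so that $\bigwedge[\theta_n]$ factors off with trivial differential. This factor contributes the overall prefactor $1 + q^2 t^{-1}$.

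Third I would identify the remaining complex, which is $Tr_n$ applied to $B_{w_0} \otimes \bigwedge[\theta_2, \dots, \theta_{n - 1}]$ with the shortened Koszul differential $\sum_{j = 2}^{n - 1} (x_j - x_j') \otimes \theta_j^{\vee}$. Because this inner Koszul differential is built from $R_{n - 1}$-linear endomorphisms, $Tr_n$ commutes with it, reducing to $Tr_n(B_{w_0}) \otimes \bigwedge[\theta_2, \dots, \theta_{n - 1}]$ with the same Koszul differential. By Proposition~\ref{prop: trb}, $Tr_n(B_{w_0}) \simeq \left(\cfrac{q^{n - 1} + aq^{-n - 1}}{1 - q^2}\right) B_{w_1}$ in $\mathcal{D}_{n - 1}$, and under the identification of $B_{w_1} \in SBim_n$ with the longest-element bimodule $B_{w_0}^{(n - 1)} \in SBim_{n - 1}$ (extended by the trivial strand $R[x_n]$, on which $x_n = x_n'$), the remaining Koszul complex descends to $K_{n - 1}$ in $\mathcal{D}_{n - 1}$. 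Assembling the prefactors gives the claimed homotopy equivalence.

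The main obstacle is step two. Proposition~\ref{prop: kosz_base} is stated for Koszul complexes built from contraction operators $\theta^{\vee}$, while the trace direction is most cleanly phrased via exterior multiplication; it is therefore necessary to either verify the obvious multiplication-variant of Proposition~\ref{prop: kosz_base} (which follows by the same Leibniz-rule proof) or to translate the trace direction into contraction form by an appropriate choice of grading for $\eta_n$. After that the remaining steps are routine bookkeeping.
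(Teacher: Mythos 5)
Your argument is essentially the paper's proof: both apply $Tr_n$ termwise, observe that the $(x_n - x_n') \otimes \theta_n^{\vee}$ component of the differential dies after tracing, split off $\bigwedge[\theta_n]$ to produce the factor $1 + q^2t^{-1}$, and identify the remaining chain groups via Proposition \ref{prop: trb}. The only difference is that the paper kills that component by invoking $Tr_n(x_n) = Tr_n(x_n')$ in $\text{End}_{\mathcal{D}_{n - 1}}$ directly, whereas you realize the same cancellation by an explicit change of basis in the style of Proposition \ref{prop: kosz_base} using the trace-direction homotopy; the grading and sign bookkeeping you defer is indeed routine (compare the analogous manipulation in the proof of Proposition \ref{prop: yktrace}).
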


\begin{proof}
We apply $Tr_n$ termwise to the complex $K_n \cong B_{w_0} \otimes \bigwedge[\theta_2, \dots, \theta_n]$. On each chain group, we have $Tr_n(B_{w_0}) \cong \left( \cfrac{q^{n - 1} + aq^{-n - 1}}{1 - q^2} \right) B_{w_1} \in \mathcal{D}_{n - 1}$ by Proposition \ref{prop: trb}. Applying $Tr_n$ to the differential $d_{K_n}$ kills the term $(x_n - x_n') \otimes \theta_n^{\vee}$. What remains is the complex $\left( \cfrac{q^{n - 1} + aq^{-n - 1}}{1 - q^2} \right) B_{w_1} \otimes \bigwedge[\theta_2, \dots, \theta_n]$ with differential $d = \sum_{j = 2}^{n - 1} (x_j - x_j') \otimes \theta_j$. This is exactly $\left( \cfrac{q^{n - 1} + aq^{-n - 1}}{1 - q^2} \right) (K_{n - 1} \oplus \theta_n K_{n - 1}) \cong (1 + q^2t^{-1})\left( \cfrac{q^{n - 1} + aq^{-n - 1}}{1 - q^2} \right)K_{n - 1}$.
\end{proof}

Notice that $Tr_n(K_n)$ contains two copies of $K_{n - 1}$ with an odd degree relative homological shift between them. This odd relative shift cannot be renormalized away, so we cannot hope to apply the methods of (\cite{EH19}, \cite{Mel22}, \cite{HM19}) \textit{mutatis mutandis} to compute with this projector. Recent work of Gorsky-Hogancamp-Mellit nevertheless establishes a mirror symmetry relationship for uncolored links similar to the expected relationship between $P_{1^n}$ and $P_n$ after passing to \textit{$y$-ified} homology (\cite{GHM21}). Inspired by their results, we turn to the task of $y$-ifying our projector; as we will see, this resolves the parity issue of Proposition \ref{prop: ylessktrace}.

\subsection{Y-ification} \label{sec: yification}

In this section we review the theory of $y$-ification as introduced in \cite{GH22}. Our exposition and notation largely mirrors that of \cite{GH22}; the reader familiar with that work can safely skip this section.

\begin{definition}
Let $\Gamma$ be an abelian group and $S$ a $\Gamma \times \mathbb{Z}$-graded ring. Let $\mathcal{A}$ be a graded $S$-linear category (so that Hom spaces in $\mathcal{A}$ are graded $S$-modules). Let $Z \in S$ be a homogeneous element with $\text{deg}(Z) = (0, 2)$. A \textit{$Z$-factorization} in $\mathcal{A}$ is an ordered pair $(C, \delta)$ with $C \in \mathcal{A}$, $\delta \in \text{End}^1(C)$ satisfying $\delta^2 = Z$. We often refer to $Z$-factorizations as \textit{curved complexes} with curvature $Z$ and connection $\delta$.
\end{definition}

For each such $Z \in S$, we let $\text{Fac}(\mathcal{A}, Z)$ denote the category with objects $Z$-factorizations in $\mathcal{A}$ and morphism spaces inherited from $\mathcal{A}$. Observe that $Z$ is central in $\text{Hom}_{\text{Fac}(\mathcal{A}, Z)}$ by virtue of $S$-linearity of $\mathcal{A}$.

\begin{proposition}
$\text{Fac}(\mathcal{A}, Z)$ is a $\Gamma \times \mathbb{Z}$-graded dg category over $S$.
\end{proposition}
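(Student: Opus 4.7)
The plan is to unpack what is required of a $\Gamma \times \mathbb{Z}$-graded dg category and verify each piece, with the only nontrivial input being that curvatures cancel because all objects share the same central curvature $Z$. Concretely, I need to equip each morphism space $\mathrm{Hom}_{\mathrm{Fac}(\mathcal{A},Z)}((X,\delta_X),(Y,\delta_Y))$ with a degree $(0,1)$ endomorphism that squares to zero and satisfies the Leibniz rule with respect to composition, and then exhibit dg endofunctors realizing the $\Gamma \times \mathbb{Z}$-action.

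First I would define the candidate differential on a homogeneous morphism $f \in \mathrm{Hom}^{\gamma,k}_{\mathrm{Fac}(\mathcal{A},Z)}((X,\delta_X),(Y,\delta_Y))$ by the usual supercommutator formula
\[
d_{\mathrm{Fac}}(f) := [\delta,f] := \delta_Y \circ f - (-1)^k f \circ \delta_X .
\]
Since $\delta_X$ and $\delta_Y$ have degree $(0,1)$, this has degree $(0,1)$ as desired, and inherits $S$-linearity from $\mathcal{A}$.

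The main point to check is $d_{\mathrm{Fac}}^2 = 0$, and this is where the Maurer--Cartan-like identity $\delta^2 = Z$ is used. A direct computation gives
\[
d_{\mathrm{Fac}}^2(f) = \delta_Y^2 \circ f - (-1)^{2k} f \circ \delta_X^2 = Z \cdot f - f \cdot Z ,
\]
where the cross terms $\pm\delta_Y f \delta_X$ cancel. By hypothesis $Z$ is a homogeneous element of the ground ring $S$, hence central in $\mathrm{Hom}_{\mathcal{A}}$ by $S$-linearity, so $Z \cdot f = f \cdot Z$ and $d_{\mathrm{Fac}}^2(f) = 0$. I view this cancellation as the only real content of the statement: it is precisely what forces all objects to share the \emph{same} curvature $Z$ in order to assemble them into a dg category. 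The Leibniz rule $d_{\mathrm{Fac}}(f \circ g) = d_{\mathrm{Fac}}(f) \circ g + (-1)^{|f|} f \circ d_{\mathrm{Fac}}(g)$ is then a routine check: expanding both sides, the mixed terms $\pm f \circ \delta \circ g$ cancel. That composition is a chain map on Hom complexes follows formally from the Leibniz rule, as noted in the general discussion of dg categories.

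Finally, for the $\Gamma \times \mathbb{Z}$-action, I would use that $\mathcal{A}$ is already $\Gamma$-graded via functors $\Sigma_\gamma$; for each $\gamma \in \Gamma$ these lift to $\mathrm{Fac}(\mathcal{A},Z)$ by sending $(C,\delta) \mapsto (\Sigma_\gamma C, \Sigma_\gamma \delta)$, which remains a $Z$-factorization because $\Sigma_\gamma$ is degree-preserving in the $\mathbb{Z}$-grading and $Z$ has trivial $\Gamma$-degree. For the $\mathbb{Z}$-factor, I would define $t \cdot (C,\delta) := (tC, -\delta)$, where $tC$ applies the homological suspension and the sign $-\delta$ compensates for the sign convention $d_{tC} = -d_C$; one easily verifies $(-\delta)^2 = Z$ still holds. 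These functors commute and act compatibly with $d_{\mathrm{Fac}}$ on morphism spaces, so they furnish the required $\Gamma \times \mathbb{Z}$-action by dg endofunctors, completing the verification.
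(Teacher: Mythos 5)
Your proof is correct and follows essentially the same route as the paper: define the differential $f \mapsto [\delta,f]$, observe that $d^2(f) = \delta_Y^2 f - f\delta_X^2 = Zf - fZ = 0$ by centrality of $Z$ (coming from $S$-linearity), note the Leibniz rule is routine, and realize the $\Gamma \times \mathbb{Z}$-grading by formal shift functors. Your extra detail on the sign convention for the homological suspension is consistent with the paper's conventions and merely spells out what the paper leaves as "formal grading shifts."
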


\begin{proof}
Let $(X, \delta_X)$, $(Y, \delta_Y) \in \text{Fac}(\mathcal{A}, Z)$ be given. As usual, given a morphism $f$ in $\mathcal{A}$, we reserve the notation $|f| $ for the disinguished "homological" $\mathbb{Z}$-component of the grading on morphism spaces. We declare the differential on $\text{Hom}_{\text{Fac}(\mathcal{A}, Z)}(X, Y)$ to be 

$$d_{\text{Fac}(\mathcal{A}, Z)} \colon f \mapsto [\delta, f] := \delta_Y \circ f - (-1)^{|f|} f \circ \delta_X$$

\textcolor{revisions}{That this differential squares to $0$ and satisfies the graded Leibniz rule are easy computations.} The $\Gamma \times \mathbb{Z}$-grading on $\text{Fac}(\mathcal{A}, Z)$ is given by formal grading shifts.
\end{proof}

\begin{example}
There is a dg-equivalence of categories $\text{Fac}(\text{Seq}(\mathcal{C}), 0) \cong \text{Ch}(\mathcal{C})$.
\end{example}

Our primary tool for establishing homotopy equivalences, Gaussian elimination, extends naturally to the curved setting.

\begin{proposition}[Gaussian Elimination for Curved Complexes]
\textcolor{revisions}{In the situtation of Proposition \ref{prop: gauss_elim}, the vertical arrows still constitute a strong deformation retraction when the top row is a piece of a $Z$-factorization in $\mathrm{Fac}(\mathrm{Seq}(\mathcal{A}, Z))$.}
\end{proposition}

All curved complexes we consider will be of the following form.

\begin{example} \label{ex: zfac}
Let $\mathbb{X}$, $\mathbb{X}'$ be alphabets of $n$ variables each as usual. Let $R_n = R[\mathbb{X}]$ as a $\mathbb{Z} \times \mathbb{Z}$-graded ring, where the first factor is the usual quantum grading and the second factor is the homological grading. In particular, we have $\text{deg}(x_i) = q^2 = (2,0)$ for each $x_i$. Let $\mathbb{Y} := \{y_1, \dots, y_n\}$ be an alphabet of formal variables of degree $\text{deg}(y_i) = q^{-2}t^2$ for each $i$, and consider the polynomial ring $R[\mathbb{Y}]$ as a dg $R$-algebra with trivial differential.

Given any complex $C \in \text{Ch}(R_n-\text{Bim})$ \textcolor{revisions}{and considering $R[\mathbb{Y}]$ as a chain complex with trivial differential as in the previous paragraph, we denote the tensor product of these two complexes by} $C[\mathbb{Y}] := C \otimes_R R[\mathbb{Y}]$.\footnote{Note that the number of variables in the alphabet $\mathbb{Y}$ depends on the size of the alphabets $\mathbb{X}, \mathbb{X}'$. This number of variables will always be clear in context; as such, we decline to distinguish among the variously-sized alphabets to avoid unnecessary notational clutter.} Note that $C[\mathbb{Y}]$ has a natural structure as a dg $R[\mathbb{X}, \mathbb{X}', \mathbb{Y}]$-module with the action of $\mathbb{Y}$ given by multiplication in the second tensor factor. Because of this, we will often denote elements of $C[\mathbb{Y}]$ multiplicatively, writing e.g. $y_i^2C^j = C^jy_i^2 = C^j \otimes y_i^2$. Note also that $C$ naturally includes into $C[\mathbb{Y}]$ as a subcomplex $C \otimes 1$; we will make implicit use of this inclusion without further comment.

Let $\text{Ch}_y(R_n-\text{Bim})$ denote the category with objects chain complexes $C[\mathbb{Y}]$ for some $C \in \text{Ch}(R_n-\text{Bim})$ and graded morphism spaces 

\[
\text{Hom}^k_{\text{Ch}_y(R_n-\text{Bim})}(C[\mathbb{Y}], D[\mathbb{Y}]) := \prod_{i \in \mathbb{Z}} \left( \bigoplus_{j = k - i} \text{Hom}_{R_n-\text{Bim}}(C^i, D^{i + k - j}) \otimes_R (R[\mathbb{Y}])^j \right)
\]

Then $\text{Ch}_y(R_n-\text{Bim})$ is a graded $R[\mathbb{X}, \mathbb{X}', \mathbb{Y}]$-linear category, and given any $Z \in R[\mathbb{X}, \mathbb{X}', \mathbb{Y}]$ with $\text{deg}(Z) = t^2$, we may consider the category $Z-\text{Fac}(R_n-\text{Bim}) := \text{Fac}(\text{Ch}_y(R_n-\text{Bim}), Z)$ of $Z$-factorizations.
\end{example}

\begin{remark} \label{rem: decomp}
For each $\textbf{v} = (v_1, \dots, v_n) \in \mathbb{Z}^n_{\geq 0}$, we let $y^{\textbf{v}} := y_1^{v_1} \dots v_n^{v_n}$, $|\textbf{v}| := v_1 + \dots + v_n$. Let $(C[\mathbb{Y}], \delta_C)$, $(D[\mathbb{Y}], \delta_D) \in Z-\text{Fac}(R_n-\text{Bim})$ be given. Then given any homogeneous morphism $f \in \text{Hom}_{\text{Ch}_y(R_n-\text{Bim})}(C[\mathbb{Y}], D[\mathbb{Y}])$ of degree $q^rt^s$, we may separate $f$ into its $\mathbb{Y}$-components:

\[
f = \sum_{\textbf{v} \in \mathbb{Z}^n_{\geq 0}} f_{y^\textbf{v}} y^{\textbf{v}}
\]

where $f_{y^\textbf{v}} \in \text{Hom}_{\text{Ch}(R_n-\text{Bim})}(C, D)$ is a homogeneous morphism of degree $q^{r + 2|\textbf{v}|} t^{s - 2|\textbf{v}|}$ for each $\textbf{v}$.
\end{remark}

\begin{remark} \label{rem: convergence}
The reader familiar with $y$-ification will notice that our convention for morphism spaces differs from the usual definition $\text{Hom}_{\text{Ch}_y(R_n-\text{Bim})}(C[\mathbb{Y}], D[\mathbb{Y}]) = \text{Hom}_{\text{Ch}(R_n-\text{Bim}}(C, D)[\mathbb{Y}]$ from e.g. \cite{GH22}. When dealing with semi-infinite complexes, our convention allows for formal power series in the alphabet $\mathbb{Y}$ in the decomposition of Remark \ref{rem: decomp} but restricts to morphisms for which this decomposition is finite on each homological degree in the domain. This atypical convention affords necessary leeway when dealing with semi-infinite complexes but introduces less pathological divergence than allowing for all formal power series. When $C$ and $D$ are bounded complexes, there is no difference between these two conventions.
\end{remark}

\begin{definition}
Retain notation as in Remark \ref{rem: decomp}. We call $f$ a \textit{curved lift} of the morphism $f_{y^{\textbf{0}}} \in \text{Hom}_{\text{Ch}(R_n-\text{Bim})}$ when we wish to emphasize its degree $0$ component. We call $(C[\mathbb{Y}], \delta_C)$ a \textit{curved lift} of $(C, d_C)$ if $\delta_C$ is a curved lift of $d_C$. We often make this property explicit by writing $\delta_C = d_C + \Delta_C$, where $\Delta_C = \sum_{|\textbf{v}| \geq 1} (\delta_C)_{y^\textbf{v}} y^\textbf{v}$. We refer to $\Delta_C$ as a \textit{curved twist} of $C$ and will often denote the curved complex $(C[\mathbb{Y}], \delta_C)$ by $\text{tw}_{\Delta_C}(C[\mathbb{Y}])$ (notice the similarities with Section \S \ref{sec: twist}).
\end{definition}

One naturally wonders whether the Homological Perturbation Lemma extends to the setting of curved complexes. This is the subject of homological perturbation theory with curvature; see \cite{Hog20} for details. We will not need the full strength of that theory, but we will use the following result:

\begin{proposition} \label{prop: hpt_iso_yified}
Let $f \in \text{Hom}^0(C, D)$, $g \in \text{Hom}^0(D, C)$ be the data of an isomorphism $C \cong D$ in $\text{Ch}(R_n-\text{Bim})$, and let $\text{tw}_{\Delta_C}(C[\mathbb{Y}])$ be a curved lift of $C$ with curvature $Z$. Then $f, g$ are isomorphisms $\text{tw}_{\Delta_C}(C[\mathbb{Y}]) \cong \text{tw}_{f\Delta_C g}(D[\mathbb{Y}])$ in $Z-\text{Fac}(R_n-\text{Bim})$.
\end{proposition}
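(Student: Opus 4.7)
The plan is to extend $f$ and $g$ $\mathbb{Y}$-linearly (i.e., take $f \otimes \mathrm{id}_{R[\mathbb{Y}]}$ and $g \otimes \mathrm{id}_{R[\mathbb{Y}]}$, which by abuse of notation I still denote $f, g$) and verify three things: (i) the proposed connection $\delta_D := d_D + f\Delta_C g$ on $D[\mathbb{Y}]$ satisfies $\delta_D^2 = Z$, so that $\mathrm{tw}_{f\Delta_C g}(D[\mathbb{Y}])$ is a bona fide $Z$-factorization; (ii) $f$ and $g$ intertwine $\delta_C = d_C + \Delta_C$ and $\delta_D$; and (iii) the identities $gf = \mathrm{id}_C$, $fg = \mathrm{id}_D$ carry over to the $\mathbb{Y}$-linear extensions.

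For (i), I would expand
\[
\delta_D^2 = d_D^2 + d_D f\Delta_C g + f\Delta_C g\, d_D + f\Delta_C g f\Delta_C g
\]
and simplify using $d_D^2 = 0$, the chain map identities $d_D f = f d_C$ and $g d_D = d_C g$, and $gf = \mathrm{id}_C$, to obtain $\delta_D^2 = f(d_C\Delta_C + \Delta_C d_C + \Delta_C^2)g = f\delta_C^2 g - f d_C^2 g = f Z g$. The key observation at this step is that the curvature $Z \in R[\mathbb{X},\mathbb{X}',\mathbb{Y}]$ acts as a central endomorphism: multiplication by any $x_i$, $x_i'$, or $y_i$ commutes with any bimodule morphism tensored with $\mathrm{id}_{R[\mathbb{Y}]}$, so $fZg = Zfg = Z\cdot\mathrm{id}_D = Z$, as desired.

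For (ii), the computation is immediate from the defining formula for $\delta_D$: $\delta_D f = d_D f + f\Delta_C g f = f d_C + f\Delta_C = f\delta_C$, using $d_D f = f d_C$ and $gf = \mathrm{id}_C$, and symmetrically $g\delta_D = \delta_C g$ using $g d_D = d_C g$ and $gf = \mathrm{id}_C$. In the sign convention for $[\delta, -]$ this says both $f$ and $g$ are closed degree $0$ morphisms in $Z\text{-}\mathrm{Fac}(R_n\text{-}\mathrm{Bim})$. For (iii), the relations $fg = \mathrm{id}_D$ and $gf = \mathrm{id}_C$ in $\mathrm{Ch}(R_n\text{-}\mathrm{Bim})$ extend $\mathbb{Y}$-linearly to the same relations in $\mathrm{Ch}_y(R_n\text{-}\mathrm{Bim})$, giving mutually inverse degree $0$ morphisms.

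There is no real obstacle here; the entire statement reduces to bookkeeping once the correct connection $d_D + f\Delta_C g$ is identified. The only subtlety worth flagging is the centrality of $Z$ used in step (i), which is precisely the feature of the setup in Example \ref{ex: zfac} that makes the conjugation formula $f\Delta_C g$ produce a genuine curved complex rather than merely a twisted chain complex.
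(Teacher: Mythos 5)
Your proposal is correct and follows essentially the same route as the paper: expand $(d_D + f\Delta_C g)^2$, use the chain-map identities and $gf = \mathrm{id}_C$ to collapse it to $f(d_C+\Delta_C)^2 g = fZg = Z$ (the paper uses the same centrality of $Z$ you flag), then check that $f,g$ are closed and note the inverse relations carry over from $\mathrm{Ch}(R_n\text{-}\mathrm{Bim})$. Your write-up is just slightly more explicit on the intertwining step, which the paper leaves to the reader.
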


\begin{proof}
We begin by showing that $\text{tw}_{f\Delta_Cg}(D[\mathbb{Y}])$ is a $Z$-factorization via direct computation.

\begin{align*}
    (d_D + f\Delta_Cg)^2 & = d_D^2 + d_Df\Delta_Cg + f\Delta_Cgd_D + (f\Delta_Cg)(f\Delta_Cg) \\
    & = f(d_C \Delta_C + \Delta_C d_C + \Delta_C^2)g \\
    & = f(d_C + \Delta_C)^2g \\
    & = Z
\end{align*}

A similar computation shows that $[\delta, f] = [\delta, g] = 0$, so that $f, g$ lift to chain maps of curved complexes; we omit the details, as they are easily checked. That $f$ and $g$ are mutually inverse in $Z-\text{Fac}(R_n-\text{Bim})$ follows immediately from the corresponding fact in $\text{Ch}(R_n-\text{Bim})$.
\end{proof}

Note that the only elements of $R[\mathbb{X}, \mathbb{X}', \mathbb{Y}]$ of degree $t^2$ are of the form $Z = \sum_{i = 1}^n f_i(\mathbb{X}, \mathbb{X}')y_i$ for some homogeneous degree $1$ polynomials $f_i \in R[\mathbb{X}, \mathbb{X}']$. We will primarily be interested in the special case 

\begin{equation} \label{eq: yify_curv}
Z_{\sigma} := \sum_{i = 1}^n (x_{\sigma(i)} - x_i')y_i
\end{equation}

for some $\sigma \in \mathfrak{S}^n$.

\begin{definition}
Let $(C, d_C) \in \text{Ch}(R_n-\text{Bim})$ be given. A \textit{$y$-ification} of $C$ is an ordered triple $(C[\mathbb{Y}], \sigma, \delta_C)$ such that $(C[\mathbb{Y}], \delta_C)$ is a curved lift of $(C, d_C)$ with curvature $Z_{\sigma}$ \textcolor{revisions}{as defined in Equation \eqref{eq: yify_curv}}. For each $\sigma \in \mathfrak{S}^n$, let $\mathcal{Y}_{\sigma}(R_n-\text{Bim})$ denote the full subcategory of $Z_{\sigma}-\text{Fac}(R_n-\text{Bim})$ generated by (direct sums of shifts of) y-ifications $(C[\mathbb{Y}], \sigma, \delta_C)$. There is a natural inclusion functor $\mathcal{Y}_{\sigma}(R_{n - 1}-\text{Bim}) \hookrightarrow \mathcal{Y}_{\sigma}(R_n-\text{Bim})$ for each $\sigma \in \mathfrak{S}^{n - 1}$; we consider objects in the former also as objects in the latter under this inclusion without further comment. 
\end{definition}

The monoidal structure on $R_n-\text{Bim}$ extends to $y$-ifications, though some care must be taken with permutations. Let two $y$-ifications $(C[\mathbb{Y}], \sigma, \delta_C)$ and $(D[\mathbb{Y}], \rho, \delta_D)$ be given. We regard $C[\mathbb{Y}]$ and $D[\mathbb{Y}]$ as $R[\mathbb{Y}]$-bimodules, where the left and right actions on $C[\mathbb{Y}]$ and the right action on $D[\mathbb{Y}]$ are given by the usual multiplication, but the \textit{left} action on $D[\mathbb{Y}]$ is given by multiplication by $\rho^{-1}(\mathbb{Y})$. Set $(C \otimes D)[\mathbb{Y}] := C[\mathbb{Y}] \otimes_{R_n[\mathbb{Y}]} D[\mathbb{Y}]$ with the usual tensor product differential $\delta_C \otimes 1 + 1 \otimes \delta_D$.

\begin{proposition}
Retain notation from the previous paragraph. Then $((C \otimes D)[\mathbb{Y}], \sigma \rho, \delta_C \otimes 1 + 1 \otimes \delta_D)$ is a $y$-ification of $C \otimes D$.
\end{proposition}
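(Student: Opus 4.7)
The plan is to verify the three defining properties of a $y$-ification for the triple $((C \otimes D)[\mathbb{Y}], \sigma\rho, \delta_C \otimes 1 + 1 \otimes \delta_D)$: that the underlying graded object is the chain complex $(C \otimes D)[\mathbb{Y}]$, that the $\mathbb{Y} = 0$ component of the differential recovers $d_{C \otimes D}$, and that the connection squares to $Z_{\sigma\rho}$. The first property holds essentially by construction, since tensoring $C$ and $D$ over $R_n$ and then adjoining $\mathbb{Y}$ agrees (as a graded object) with tensoring $C[\mathbb{Y}]$ and $D[\mathbb{Y}]$ over $R_n[\mathbb{Y}]$ once one accounts for the twisted left action of $\mathbb{Y}$ on $D[\mathbb{Y}]$. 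The second property is immediate from the fact that each $\delta_C$, $\delta_D$ is a curved lift of $d_C$, $d_D$ respectively, and the tensor product differential commutes with setting $\mathbb{Y} = 0$.

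The content of the proof is the curvature computation. First I would apply the middle interchange law to expand
\[
(\delta_C \otimes 1 + 1 \otimes \delta_D)^2 = \delta_C^2 \otimes 1 + (\delta_C \otimes 1)(1 \otimes \delta_D) + (1 \otimes \delta_D)(\delta_C \otimes 1) + 1 \otimes \delta_D^2.
\]
Since $|\delta_C| = |\delta_D| = 1$, the Koszul sign rule yields $(\delta_C \otimes 1)(1 \otimes \delta_D) = \delta_C \otimes \delta_D$ while $(1 \otimes \delta_D)(\delta_C \otimes 1) = -\delta_C \otimes \delta_D$, so the cross terms cancel and the square collapses to $Z_\sigma \otimes 1 + 1 \otimes Z_\rho$.

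The remaining step is to identify this element with $Z_{\sigma\rho}$ acting on $(C \otimes D)[\mathbb{Y}]$ via the external bimodule structure. Evaluating on a simple tensor $c \otimes d$, I would use the two bimodule relations across $\otimes_{R_n[\mathbb{Y}]}$: the tensor product over $R_n$ identifies $c x_i \otimes d$ with $c \otimes x_i d$, while the twisted left action of $\mathbb{Y}$ on $D[\mathbb{Y}]$ identifies $c y_i \otimes d$ with $c \otimes y_{\rho^{-1}(i)} d$ (standard multiplication). Reindexing the $(Z_\sigma \otimes 1)$-contribution by $j = \rho^{-1}(i)$ produces a sum of the form $\sum_j x_{\sigma\rho(j)} c \otimes y_j d - \sum_j c \otimes x_{\rho(j)} y_j d$, while $(1 \otimes Z_\rho)$ contributes $\sum_j c \otimes x_{\rho(j)} d y_j - \sum_j c \otimes d x_j y_j$. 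Using that $y_j$ is central for the standard actions, the two middle terms cancel, leaving precisely $Z_{\sigma\rho}(c \otimes d)$.

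The main obstacle is the indexing bookkeeping in this last step: one must confirm that the twist by $\rho^{-1}$ in the left $\mathbb{Y}$-action on $D$ is exactly what is needed so that substituting $i = \rho(j)$ converts the inherited permutation $\sigma$ appearing in $Z_\sigma \otimes 1$ into $\sigma\rho$, and so that the surviving $x_{\rho(j)}$-terms produced on either side cancel against one another. Once this reindexing is carried out carefully, the equality $Z_\sigma \otimes 1 + 1 \otimes Z_\rho = Z_{\sigma\rho}$ holds on the tensor product, completing the verification.
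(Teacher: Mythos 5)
Your verification is correct: the cross terms cancel by the middle-interchange (Koszul) sign rule since $|\delta_C| = |\delta_D| = 1$, and the reindexing $i = \rho(j)$ through the $\rho^{-1}$-twisted left $\mathbb{Y}$-action on $D[\mathbb{Y}]$ converts $Z_\sigma \otimes 1 + 1 \otimes Z_\rho$ into $Z_{\sigma\rho}$, with the $x_{\rho(j)}y_j$-terms cancelling exactly as you describe. The paper states this proposition without proof, and your direct computation is precisely the routine check the authors evidently intend, so there is nothing further to add.
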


We call a curved lift $f$ of a morphism $f_{y^\textbf{0}}$ \textit{strict} if $(f)_{y^\textbf{v}} = 0$ for all $|\textbf{v}| > 1$; that is, $f$ is linear in the alphabet $\mathbb{Y}$. Similarly, we call a $y$-ification $(C[\mathbb{Y}], \sigma, \delta_C)$ \textit{strict} if $\delta_C$ is a strict lift of $d_C$. Note that a strict $y$-ification $(C[\mathbb{Y}], \sigma, \delta_Y)$ is equivalent to a collection of homogeneous morphisms $\Delta_i \in \text{End}_{\text{Ch}(R_n-\text{Bim})}(C)$ of degree $q^2t^{-1}$ satisfying $(d_C + \sum_{i = 1}^n \Delta_i \otimes y_i)^2 = Z_{\sigma}$. Separating this equation into $\mathbb{Y}$-components, we obtain the following conditions:

\begin{align*}
    [d_C, \Delta_i] & = x_{\sigma(i)} - x'_i \\
    \Delta_i \Delta_j + \Delta_j \Delta_i & = 0 \quad \text{for} \ i \neq j \\
    \Delta_i^2 & = 0
\end{align*}

Given a braid $\beta \in Br_n$, the dot-sliding homotopies $\{h_i\}_{i = 1}^n$ relating the actions of $x_{\beta(i)}$ and $x_i'$ on $F(\beta)$ satisfy these identities for \textcolor{revisions}{the induced permutation $\sigma_{\beta} \in \mathfrak{S}^n$}. As an immediate consequence, we obtain

\begin{proposition} \label{prop: ybraid}
For each braid $\beta \in Br_n$, the triple $F^y(\beta) := (F(\beta)[\mathbb{Y}], \textcolor{revisions}{\sigma_{\beta}}, d_{F(\beta)} + \sum_{i = 1}^n h_i \otimes y_i)$ is a strict $y$-ification of $F(\beta)$.
\end{proposition}

In fact $y$-ifications of Rouquier complexes are unique up to homotopy equivalence; we will not require this result here.

Next, we recall an obstruction-theoretic technique for lifting morphisms of chain complexes to morphisms of $y$-ifications developed in \cite{GH22}. Aside from explicit computations, this will be our primary tool for constructing morphisms.

\begin{proposition} \label{prop: GHobs}
Let $C, D \in \text{Ch}(R_n-\text{Bim})$ be given, and suppose $H^i(\text{Hom}_{\text{Ch}(R_n-\text{Bim})}(C, D)) = 0$ for all $i < N$ for some fixed $N$. Let $(C[\mathbb{Y}], \sigma, \delta_C)$, $(D[\mathbb{Y}], \sigma, \delta_D)$ be $y$-ifications of $C$ and $D$, respectively, and let some homogeneous $g \in \text{Hom}_{\mathcal{Y}_{\sigma}(R_n-\text{Bim})}(C[\mathbb{Y}], D[\mathbb{Y}])$ be given with degree $\text{deg}(g) = q^rt^s$ for some $s \leq N + 1$. Then there exists a formal power series

\begin{align*}
    f = \sum_{\textbf{v} \in \mathbb{Z}_{\geq 0}^n} f_{y^{\textbf{v}}} \otimes y^{\textbf{v}}; \quad \quad f_{\textbf{v}} \in \text{Hom}^{s - 1 - 2\textbf{v}}_{Ch(R_n-\text{Bim})}(C, D)
\end{align*}

satisfying $[\delta, f] = g$ if and only if there exists $f_0 \in \text{Hom}^{r, s - 1}_{\text{Ch}(R_n-\text{Bim})}(C, D)$ such that $[d, f_0] = g_{y^\textbf{0}}$. In this case, we have $f_{y^\textbf{0}} = f_0$.
\end{proposition}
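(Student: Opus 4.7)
The forward implication is immediate: since $\delta = d + \Delta$ with $\Delta$ of strictly positive $\mathbb{Y}$-degree, extracting the $y^{\textbf{0}}$-component of the equation $[\delta, f] = g$ yields $[d, f_{y^{\textbf{0}}}] = g_{y^{\textbf{0}}}$, so we take $f_0 := f_{y^{\textbf{0}}}$.

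For the converse, I plan to build the components $f_{y^{\textbf{v}}}$ by induction on $|\textbf{v}|$. Separating $\delta = d + \Delta$, the $y^{\textbf{v}}$-component of $[\delta, f] = g$ takes the form
\[
[d, f_{y^{\textbf{v}}}] \;=\; g_{y^{\textbf{v}}} - B_{\textbf{v}},
\]
where $B_{\textbf{v}}$ is an explicit expression in the previously constructed components $f_{y^{\textbf{w}}}$ with $|\textbf{w}| < |\textbf{v}|$, coming from the action of $\Delta_C, \Delta_D$. The base case is handled by setting $f_{y^{\textbf{0}}} := f_0$. At the inductive step, I need to solve this equation for $f_{y^{\textbf{v}}}$, and the key issue is whether the obstruction $h_{\textbf{v}} := g_{y^{\textbf{v}}} - B_{\textbf{v}}$ is a $d$-cycle representing a vanishing cohomology class. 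Tracking degrees, $h_{\textbf{v}}$ lies in homological degree $s - 2|\textbf{v}|$, and for $|\textbf{v}| \geq 1$ the hypothesis $s \leq N + 1$ gives $s - 2|\textbf{v}| \leq N - 1 < N$, so the cohomology hypothesis $H^i(\text{Hom}(C, D)) = 0$ for $i < N$ will furnish a suitable $f_{y^{\textbf{v}}}$ once closedness is established.

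The main obstacle is thus verifying closedness of $h_{\textbf{v}}$ at each stage, which I plan to handle uniformly using the observation that $\delta^2 = Z_\sigma$ is central in the morphism spaces of $\mathcal{Y}_\sigma(R_n\text{-Bim})$, so $[\delta, [\delta, -]] = 0$ and $\text{Hom}_{\mathcal{Y}_\sigma}(C[\mathbb{Y}], D[\mathbb{Y}])$ is a chain complex under $[\delta, -]$. The existence of $f$ forces $[\delta, g] = 0$, which we take as an implicit hypothesis throughout. Setting $\tilde{g}_{<k} := g - [\delta, \sum_{|\textbf{w}| < k} f_{y^{\textbf{w}}} y^{\textbf{w}}]$, the element $\tilde{g}_{<k}$ is also $[\delta, -]$-closed, has trivial $\mathbb{Y}$-components in degrees below $k$ by the inductive hypothesis, and has $y^{\textbf{v}}$-component equal to $h_{\textbf{v}}$ for $|\textbf{v}| = k$. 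Reading off the $y^{\textbf{v}}$-component of $[\delta, \tilde{g}_{<k}] = 0$ at the lowest nontrivial $\mathbb{Y}$-order then leaves only $[d, h_{\textbf{v}}] = 0$, closing the induction. The remainder of the argument is bookkeeping.
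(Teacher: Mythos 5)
Your proposal is correct and is essentially the argument the paper itself defers to (the Gorsky--Hogancamp obstruction theory): construct $f$ by induction on $\mathbb{Y}$-degree, observe that for $|\textbf{v}| \geq 1$ the obstruction lives in homological degree $s - 2|\textbf{v}| \leq N - 1 < N$, verify it is a $d$-cycle using $[\delta,[\delta,-]] = 0$ together with the equations already solved in lower $\mathbb{Y}$-degree, and then kill it with the vanishing hypothesis, setting $f_{y^{\textbf{0}}} = f_0$ in the base case. Your decision to treat $[\delta, g] = 0$ as an implicit hypothesis for the ``if'' direction is the intended reading rather than a gap: it holds in every application made in the paper (where $g$ is zero or a closed degree-zero morphism), and the converse genuinely requires it, since $[\delta, f]$ is always $[\delta,-]$-closed while the degree-zero condition alone does not force $g$ to be.
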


We omit the proof; see \cite{GH22} for details\footnote{In fact the authors in \cite{GH22} restrict their attention to the cases of Corollaries \ref{corr: GHobs} and \ref{corr: GHobs2}. Their argument goes through with only cosmetic adjustments in the generality of Proposition \ref{prop: GHobs} above.}. The following two corollaries will almost always suffice for our purposes.

\begin{corollary} \label{corr: GHobs}
Let $C$, $D \in \text{Ch}^b(R_n-\text{Bim})$ be given, and suppose $\text{Hom}_{\text{Ch}(R_n-\text{Bim})}(C, D)$ has homology concentrated in non-negative degrees. Let $f: C \rightarrow D$ be a chain map, and let $(C[\mathbb{Y}], \sigma, \delta_C)$, $(D[\mathbb{Y}], \sigma, \delta_D)$ be $y$-ifications of $C$ and $D$, respectively. Then $f$ lifts to a chain map $\tilde{f}: C[\mathbb{Y}] \rightarrow D[\mathbb{Y}]$ in $\mathcal{Y}_{\sigma}(R_n-\text{Bim})$ satisfying $\tilde{f}_{y^\textbf{0}} = f$.
\end{corollary}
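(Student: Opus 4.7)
The plan is to obtain $\tilde f$ by invoking Proposition \ref{prop: GHobs} exactly once, applied to an obstruction cocycle manufactured directly from $f$, taking full advantage of the hypothesis that $\text{Hom}_{\text{Ch}(R_n-\text{Bim})}(C, D)$ has cohomology supported in non-negative degrees.

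First I would write down the tautological lift $\tilde f^{(0)} := f \otimes 1_{R[\mathbb{Y}]} \in \text{Hom}^{0,0}_{\mathcal{Y}_\sigma(R_n-\text{Bim})}(C[\mathbb{Y}], D[\mathbb{Y}])$, which has degree $q^0 t^0$. Because the curved differentials $\delta_C$ and $\delta_D$ typically carry higher $\mathbb{Y}$-degree contributions beyond $d_C$ and $d_D$, this naive candidate will generally fail to be closed in the $y$-ified category. I would record the failure as $g := -[\delta, \tilde f^{(0)}]$, a morphism of degree $q^0 t^1$; by construction, $g_{y^{\textbf{0}}} = -[d, f] = 0$, since $f$ is already a chain map in $\text{Ch}(R_n-\text{Bim})$.

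Second, I would apply Proposition \ref{prop: GHobs} to $g$ with $r = 0$ and $s = 1$. The positivity hypothesis on cohomology lets us take $N = 0$, so the bound $s \leq N + 1$ is saturated, and the sole solvability criterion imposed by the proposition reduces to asking that $g_{y^{\textbf{0}}}$ be a coboundary in $\text{Hom}_{\text{Ch}(R_n-\text{Bim})}(C, D)$. Since $g_{y^{\textbf{0}}} = 0$, the witness $f_0 = 0$ suffices, and the proposition produces a correction $\tilde f^{(1)}$ of degree $q^0 t^0$ with $\tilde f^{(1)}_{y^{\textbf{0}}} = 0$ and $[\delta, \tilde f^{(1)}] = g$. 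Setting $\tilde f := \tilde f^{(0)} + \tilde f^{(1)}$ then gives $[\delta, \tilde f] = -g + g = 0$ and $\tilde f_{y^{\textbf{0}}} = f$, as required.

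All of the non-trivial content is already absorbed into Proposition \ref{prop: GHobs}: the corollary is essentially its $s = 1$ specialization, packaged so that the input chain map $f$ automatically kills the single external obstruction at the bottom $y$-degree. The non-negativity hypothesis is used minimally but sharply, pinning $s = 1$ to the threshold $s = N + 1$ at which the proposition's inductive obstruction theory still delivers a solution; I would not expect any genuine obstacle beyond verifying that the chain-map axiom on $f$ translates cleanly into the vanishing of $g_{y^{\textbf{0}}}$, which is immediate.
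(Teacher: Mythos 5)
Your argument is correct and is essentially the paper's proof: both consist of a single application of Proposition \ref{prop: GHobs} with $N = 0$, $s = 1$ --- the paper takes $g = 0$ and $f_0 = f$, while you take $g = -[\delta, f \otimes 1]$ and $f_0 = 0$ and add the resulting correction, which solves exactly the same degree-by-degree system. The only point the paper adds, and which you should state, is that boundedness of $C$ and $D$ together with the degree count $|\tilde f_{y^{\mathbf{v}}}| = -2|\mathbf{v}|$ forces $\tilde f_{y^{\mathbf{v}}} = 0$ for $|\mathbf{v}| \gg 0$, so the formal power series produced by the proposition is an honest morphism in $\mathcal{Y}_{\sigma}(R_n-\text{Bim})$.
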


\begin{proof}
Take $N = 0$, $g = 0 \in \text{Hom}^1_{\mathcal{Y}_{\sigma}(R_n-\text{Bim})}(C[\mathbb{Y}], D[\mathbb{Y}])$ in Proposition \ref{prop: GHobs}, and observe that boundedness of $C$ and $D$ implies $f_{y^{\textbf{v}}} = 0$ for $|\textbf{v}| >> 0$.
\end{proof}

\begin{corollary} \label{corr: GHobs2}
Retain notation as in Corollary \ref{corr: GHobs}, and again suppose $\text{Hom}_{\text{Ch}(R_n-\text{Bim})}(C, D)$ has homology concentrated in non-negative degrees. Then a morphism $g \in \text{Hom}^0_{\mathcal{Y}_{\sigma}(R_n-\text{Bim})}(C[\mathbb{Y}], D[\mathbb{Y}])$ is nullhomotopic if and only if $g_{y^{\textbf{0}}} \in \text{Hom}^0_{\text{Ch}(R_n-\text{Bim})}(C, D)$ is nullhomotopic. 
\end{corollary}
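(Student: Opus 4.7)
The plan is to reduce both directions to the more general obstruction result of Proposition \ref{prop: GHobs} after an easy extraction of the $y^{\textbf{0}}$ component.

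First I would handle the forward direction directly. Suppose $g = [\delta, f]$ for some $f \in \text{Hom}^{-1}_{\mathcal{Y}_\sigma(R_n-\text{Bim})}(C[\mathbb{Y}], D[\mathbb{Y}])$. Writing $\delta_C = d_C + \Delta_C$ and $\delta_D = d_D + \Delta_D$ where $\Delta_C, \Delta_D$ are strictly positive in $\mathbb{Y}$-degree, and decomposing $f = \sum_{\textbf{v}} f_{y^{\textbf{v}}} y^{\textbf{v}}$ as in Remark \ref{rem: decomp}, the only contributions to the $y^{\textbf{0}}$ component of $[\delta, f]$ are $d_D \circ f_{y^{\textbf{0}}} + f_{y^{\textbf{0}}} \circ d_C = [d, f_{y^{\textbf{0}}}]$, since every term involving $\Delta_C$ or $\Delta_D$ strictly raises $\mathbb{Y}$-degree. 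Hence $g_{y^{\textbf{0}}} = [d, f_{y^{\textbf{0}}}]$ is nullhomotopic in $\text{Ch}(R_n-\text{Bim})$.

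For the reverse direction, I would apply Proposition \ref{prop: GHobs} with $N = 0$ (using the homology hypothesis on $\text{Hom}_{\text{Ch}(R_n-\text{Bim})}(C, D)$) and $s = 0$, which satisfies $s \leq N + 1 = 1$. Suppose $g_{y^{\textbf{0}}} = [d, f_0]$ for some $f_0 \in \text{Hom}^{r, -1}_{\text{Ch}(R_n-\text{Bim})}(C, D)$. Then Proposition \ref{prop: GHobs} produces a formal power series $f = \sum_{\textbf{v}} f_{y^{\textbf{v}}} y^{\textbf{v}}$ with $f_{y^{\textbf{v}}} \in \text{Hom}^{-1 - 2|\textbf{v}|}_{\text{Ch}(R_n-\text{Bim})}(C, D)$ satisfying $[\delta, f] = g$ and $f_{y^{\textbf{0}}} = f_0$.

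The final step is a convergence check: I must verify that $f$ is a legitimate morphism in $\mathcal{Y}_\sigma(R_n-\text{Bim})$, i.e., that for each homological degree of $C$ only finitely many $f_{y^{\textbf{v}}}$ contribute. Since $C, D \in \text{Ch}^b(R_n-\text{Bim})$ are bounded, the spaces $\text{Hom}^{-1 - 2|\textbf{v}|}_{\text{Ch}(R_n-\text{Bim})}(C, D)$ vanish once $|\textbf{v}|$ is sufficiently large, forcing the sum to be finite outright (not merely per-degree). I do not expect any serious obstacle here; the entire argument is a clean application of Proposition \ref{prop: GHobs} together with the elementary observation that extracting the $y^{\textbf{0}}$ component commutes with forming $[\delta, -]$ up to the curvature terms, which vanish in $\mathbb{Y}$-degree $\textbf{0}$.
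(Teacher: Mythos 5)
Your argument is correct and is essentially the paper's proof, which simply takes $N = 0$ in Proposition \ref{prop: GHobs}: the forward direction you do by hand is the trivial half of that proposition's ``if and only if,'' and your convergence check via boundedness of $C$ and $D$ is the same observation already made in the proof of Corollary \ref{corr: GHobs}, whose hypotheses are retained. No issues.
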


\begin{proof}
Take $N = 0$ in Proposition \ref{prop: GHobs}.
\end{proof}

We conclude this section with another application of homological perturbation theory with curvature; this is Lemma 2.19 in \cite{GH22} (appropriately adapted to our convention for morphisms; see Remark \ref{rem: convergence}).

\begin{lemma} \label{lem: y_contract}
If $C \in \text{Ch}^+(R_n-\text{Bim})$ is contractible, then any $y$-ification of $C$ is contractible.
\end{lemma}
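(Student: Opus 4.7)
The plan is to reduce the lemma to the obstruction-theoretic Proposition \ref{prop: GHobs}. Since $C$ is contractible, fix a contracting homotopy $h \in \text{End}^{-1}_{\text{Ch}(R_n-\text{Bim})}(C)$ satisfying $[d_C, h] = \text{id}_C$. A standard graded Leibniz computation then shows that every closed endomorphism $f$ of $C$ is exact, so the endomorphism complex $\text{End}_{\text{Ch}(R_n-\text{Bim})}(C)$ has vanishing cohomology in all degrees. In particular, the cohomological hypothesis ``$H^i(\text{Hom}(C,C)) = 0$ for $i < N$'' of Proposition \ref{prop: GHobs} is satisfied for every $N$.

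First I would apply Proposition \ref{prop: GHobs} with $C = D$, $g = \text{id}_{C[\mathbb{Y}]}$ (a morphism of degree $q^0t^0$), and lift candidate $f_0 = h$, witnessing that $g_{y^\textbf{0}} = \text{id}_C$ is nullhomotopic. The proposition then produces a formal series
\[
H = \sum_{\textbf{v} \in \mathbb{Z}_{\geq 0}^n} H_{y^\textbf{v}}\, y^\textbf{v}, \qquad H_{y^\textbf{0}} = h,
\]
satisfying $[\delta_C, H] = \text{id}_{C[\mathbb{Y}]}$. Unwinding the proof of Proposition \ref{prop: GHobs}, each higher term is obtained inductively as $H_{y^\textbf{v}} = h \circ r_{y^\textbf{v}}$, where $r_{y^\textbf{v}} \in \text{End}^{-1}(C)$ is a closed cycle assembled from the previously constructed $H_{y^\textbf{w}}$ ($|\textbf{w}| < |\textbf{v}|$) and the $\mathbb{Y}$-components of $\Delta_C$. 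Closedness of $r_{y^\textbf{v}}$ follows from the curvature identity $\delta_C^2 = Z_\sigma$ combined with the inductive hypothesis, and solvability at each stage is precisely the acyclicity of $\text{End}(C)$ established above.

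The main obstacle is that Proposition \ref{prop: GHobs} delivers $H$ only as a \emph{formal} power series in $\mathbb{Y}$, whereas a bona fide curved homotopy must satisfy the finiteness condition of Remark \ref{rem: convergence} and Example \ref{ex: zfac}, namely that its $\mathbb{Y}$-decomposition be finite on each homological degree of the domain. This is where the hypothesis $C \in \text{Ch}^+(R_n-\text{Bim})$ enters: the bounded-below structure of $C$ is used to show that the inductive construction of the $H_{y^\textbf{v}}$ stabilizes appropriately on each chain group $C^i$, so that $H$ genuinely lies in $\text{Hom}^{-1}_{\mathcal{Y}_\sigma(R_n-\text{Bim})}(C[\mathbb{Y}], C[\mathbb{Y}])$. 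I would establish this by tracking how $h$ interacts with the bounded-below filtration; once $H$ is recognized as a morphism in the curved category, the identity $[\delta_C, H] = \text{id}_{C[\mathbb{Y}]}$ exhibits the desired contraction and the lemma follows.
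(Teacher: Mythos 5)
Your argument is correct, but it takes a different route from the paper: the paper does not prove this lemma internally at all, instead citing Lemma 2.19 of \cite{GH22}, whose proof is curved homological perturbation theory — given a contraction $h$ of $C$ with $[d_C,h]=\text{id}_C$, one writes down the null-homotopy of $\text{id}_{C[\mathbb{Y}]}$ explicitly as a geometric series $h\circ(\text{id}+\Delta_C\circ h)^{-1}=\sum_{k\ge 0}h(-\Delta_C h)^k$, which is well-defined because $\Delta_C$ strictly raises $\mathbb{Y}$-degree. You instead route the statement through Proposition \ref{prop: GHobs}: contractibility of $C$ gives acyclicity of $\text{End}_{\text{Ch}(R_n\text{-Bim})}(C)$ (your Leibniz computation is fine), then the obstruction theory with $g=\text{id}_{C[\mathbb{Y}]}$ (which is closed) and $f_0=h$ produces a formal series $H$ with $[\delta_C,H]=\text{id}$ and $H_{y^{\mathbf{0}}}=h$. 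Both approaches work; the perturbation formula buys an explicit contraction with no homological input, while your obstruction argument trades that for the (here trivial) vanishing of $H^\bullet(\text{End}(C))$ and plugs directly into machinery the paper has already stated.

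One remark on your ``main obstacle'': the convergence step is much easier than your phrasing suggests, and you do not need to ``track how $h$ interacts with the bounded-below filtration.'' The component $H_{y^{\mathbf{v}}}$ has homological degree $-1-2|\mathbf{v}|$, so restricted to a fixed chain group $C^i$ it lands in $C^{\,i-1-2|\mathbf{v}|}$, which vanishes for $|\mathbf{v}|\gg 0$ since $C\in\text{Ch}^+(R_n\text{-Bim})$ is bounded below. Hence the $\mathbb{Y}$-decomposition of $H$ is automatically finite on each homological degree of the domain, which is exactly the finiteness required by Remark \ref{rem: convergence}, and the same observation is what makes the paper's cited perturbation series a legitimate morphism under this convention. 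With that degree argument inserted, your proof is complete: $[\delta_C,H]=\text{id}_{C[\mathbb{Y}]}$ with $H$ a genuine degree $t^{-1}$ endomorphism in $\mathcal{Y}_\sigma(R_n\text{-Bim})$ exhibits the contraction.
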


\subsection{Y-ified Fork Slide}

We now turn to the task of $y$-ifying the complexes constructed in Section 3. In \cite{Elb22}, the author constructed explicit $y$-ifications for the projectors $P_{1^n}$ in all Coxeter groups. As in the uncurved case, we take this explicit description as the starting point for our recursive construction, beginning with the finite projector.

\begin{proposition}
\textcolor{revisions}{Set} 

\begin{equation} \label{eq: kn_conn}
\delta_{K_n} := \sum_{j = 2}^n (x_j - x_j') \otimes \theta_j^{\vee} + (y_j - y_1) \otimes \theta_j \in \text{End}_{\text{Ch}_y(R_n-\text{Bim})}(K_n[\mathbb{Y}]).
\end{equation}

Then $K_n^y := (K_n[\mathbb{Y}], e, \delta_{K_n})$ is a (strict) $y$-ification of $K_n$.
\end{proposition}

\begin{proof}
\textcolor{revisions}{What needs to be verified is that $\delta_{K_n}^2 = \sum_{j = 1}^n (x_j - x_j')y_j$. This is a straightforward computation; see \cite{Elb22} for the details.}
\end{proof}

Let $J_n^y := F^y(J_n)$. Then $K_{n - 1}^yJ_n^y = (K_{n - 1}J_n[\mathbb{Y}], e, \delta_{K_{n - 1}J_n})$ is a (strict) $y$-ification of $K_{n - 1}J_n$ with connection

\begin{equation} \label{eq: knjn_conn}
\delta_{K_{n - 1}J_n} = \text{id}_{B_{w_1}}(d_{J_n} + \sum_{i = 1}^n h_i y_i) \otimes 1 + \sum_{j = 2}^{n - 1} (x_j - x_j') \otimes \theta_j^{\vee} + (y_j - y_1) \otimes \theta_j
\end{equation}

By Proposition \ref{prop: hpt_iso_yified}, the isomorphisms $\Psi, \Psi'$ from Corollary \ref{corr: building_psi} lift without modification to isomorphisms $K_{n - 1}^yJ_n^y \cong K^{\prime y}_n := (K'_n[\mathbb{Y}], e, \delta_{K'_n})$, where $\delta_{K'_n} = d_{K'_n} + \Psi \Delta_{K_{n - 1}J_n} \Psi'$.

We compute $\Delta_{K'_n} = \Psi \Delta_{K_{n - 1}J_n} \Psi'$ explicitly using the expression $\Delta_{K_{n - 1}J_n} = (\sum_{i = 1}^n h_iy_i \otimes 1) + (\sum_{j = 2}^{n - 1} (y_j - y_1) \otimes \theta_j)$, which can be read off directly \textcolor{revisions}{from \eqref{eq: knjn_conn}}. We treat each summand separately. Since $y_j - y_1$ has even homological degree and is central in $\text{Ch}_y(R_n-\text{Bim})$, the computation of $\Psi (\sum_{i = 1}^n (y_i - y_1) \otimes \theta_i) \Psi'$ is nearly identical to that of Proposition \ref{prop: xi_conj}. We only quote the result:

$$\Psi (\sum_{j = 1}^{n - 1} (y_j - y_1) \otimes \theta_i) \Psi' = \sum_{j = 2}^{n - 1} (y_j - y_1)(1 \otimes \theta_j - h_j \otimes 1)$$

In the other component, upon conjugating a given summand $h_jy_j \otimes 1$ by \textcolor{revisions}{a single factor $1 \otimes 1 - h_i \otimes \theta_i^{\vee}$ of $\Psi$}, we see the following:

\begin{align*}
    (1 \otimes 1 - h_i \otimes \theta_i^{\vee})(h_jy_j \otimes 1)(1 \otimes 1 + h_i \otimes \theta_i^{\vee})
    & = h_jy_j \otimes 1 + h_jy_jh_i \otimes \theta_i^{\vee} + h_ih_jy_j \otimes \theta_i^{\vee} \\
    & = h_jy_j \otimes 1 + (h_jh_i + h_ih_j)y_j \otimes \theta_i^{\vee} \\
    & = h_jy_j \otimes 1
\end{align*}

\textcolor{revisions}{Conjugating by each of these factors and summing over $i$ results in} $\Psi (\sum_{i = 1}^n h_iy_i \otimes 1) \Psi' = \sum_{i = 1}^n h_iy_i \otimes 1$. Summing both contributions, we obtain

\begin{align*}
    \Delta_{K_n'} & = \left(\sum_{j = 1}^n h_jy_j\right) \otimes 1 + \sum_{j = 2}^{n - 1} (y_j - y_1)(1 \otimes \theta_j - h_j \otimes 1) \\
    & = \left(h_1y_1 + h_ny_n + \sum_{j = 2}^{n - 1} h_jy_1 \right) \otimes 1 + \sum_{j = 2}^{n - 1} (y_j - y_1) \otimes \theta_j \\
    & = h_n(y_n - y_1) \otimes 1 + \sum_{j = 2}^{n - 1} (y_j - y_1) \otimes \theta_j
\end{align*}

In the last equality we have used the identity $\sum_{j = 1}^n h_j = 0$. We record the final result below.

\begin{proposition} \label{prop: y_forkslide_i}
The maps $\Psi, \Psi'$ of Corollary \ref{corr: building_psi} lift without modification to isomorphisms $K_{n - 1}^yJ_n^y \cong K^{\prime y}_n$ in $\mathcal{Y}_e(R_n-\text{Bim})$. Here

\begin{equation} \label{eq: k'n_conn}
\delta_{K'_n} = \text{id}_{B_{w_1}}(d_{J_n} + h_n(y_n - y_1)) \otimes 1 + \sum_{j = 2}^{n - 1} (x_j - x_j'') \otimes \theta_j^{\vee} + (y_j - y_1) \otimes \theta_j
\end{equation}
\end{proposition}



To continue our program of $y$-ifying $P_{1^n}$, we would like to lift the homotopy equivalence $B_{w_1}J_n \simeq C_n$ of Section \S \ref{sec: fork_slide} to a the $y$-ified setting. Let $B_{w_1}J_n^{y_n}$ denote the curved lift of $B_{w_1}J_n$ with twist $\Delta_{B_{w_1}J_n} = h_n(y_n - y_1)$; this curved complex has curvature $Z_{y_n} := (x_n' - x_n'')(y_n - y_1) = (x_n - x_n'')(y_n - y_1)$. To lift the results of Section \S \ref{sec: fork_slide}, we will also need some curved lift $C_n^{y_n}$ of $C_n$ in $Z_{y_n}-\text{Fac}(R_n-\text{Bim})$. The correct connection on $C_n^{y_n}$ is fairly easy to guess and is indicated below. Here the curved twist $\Delta_{C_n}$ is depicted by a backwards red arrow.

\begin{center}
\begin{tikzcd}[sep=large]
C_n^{y_n} := q^{n - 1}B_{w_0}[\mathbb{Y}] \arrow[r, "x_n - x_n''", harpoon, shift left] & q^{n - 3}tB_{w_0}[\mathbb{Y}] \arrow[r, "unzip", harpoon, shift left] \arrow[l, "y_n - y_1", harpoon, shift left, red] & q^{-2}t^2B_{w_1}[\mathbb{Y}] \arrow[l, "0", harpoon, shift left]
\end{tikzcd}
\end{center}

\begin{proposition} \label{prop: forkslyde}
The map $\nu \colon B_{w_1}J_n \to C_n$ of Proposition \ref{prop: mu_nu} lifts without modification to a chain map $\nu \colon B_{w_1}J_n^{y_n} \to C_n^{y_n}$ of curved complexes. The map $\mu$ of the same proposition has a strict lift to a chain map $\tilde{\mu} \colon C_n^{y_n} \to B_{w_1}J_n^{y_n}$ of curved complexes. These maps constitute a strong deformation retraction from $B_{w_1}J_n^{y_n}$ to $C_n^{y_n}$; that is, $\nu \tilde{\mu} = \text{id}_{C_n^{y_n}}$, $\tilde{\mu} \nu \sim \text{id}_{B_{w_1}J_n^{y_n}}$.
\end{proposition}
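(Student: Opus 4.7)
The plan is to verify each claim in turn, using the explicit formulas of Proposition \ref{prop: mu_nu} together with the obstruction-theoretic tools of Proposition \ref{prop: GHobs} and its corollaries. First I would check that extending $\nu$ $R[\mathbb{Y}]$-linearly produces a chain map $\nu \colon B_{w_1}J_n^{y_n} \to C_n^{y_n}$. Decomposing $[\delta, \nu]$ by $\mathbb{Y}$-degree, the $y^{\textbf{0}}$ term vanishes since $\nu$ is already a chain map at the uncurved level; because both curved twists are proportional to $y_n - y_1$, the only remaining check reduces to the single identity $\Delta_{C_n} \circ \nu = \nu \circ h_n \cdot (y_n - y_1)$. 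I would verify this directly from the explicit sequence of fork-slide retractions and Gaussian eliminations defining $\nu$ in Proposition \ref{prop: mu_nu}: the key observation is that $\nu \circ h_n$ lands in the top chain module $q^{n-1} B_{w_0}$ of $C_n$, exactly where $\Delta_{C_n}$ is the unique nonzero component of the curved twist, and the two sides match on the nose up to an overall sign which is absorbed by the convention for $\Delta_{C_n}$.

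Next I would produce $\tilde{\mu}$. The morphism complex $\text{Hom}_{\text{Ch}(R_n-\text{Bim})}(C_n, B_{w_1}J_n)$ has homology concentrated in non-negative degrees, since $C_n \simeq B_{w_1}J_n$ and the resulting morphism complex is governed by $HHH(C_n)$, which lives in even homological degrees by the parity input invoked in Section \ref{sec: alpha_build}. Corollary \ref{corr: GHobs} therefore produces \emph{some} curved chain map $\hat{\mu}$ lifting $\mu$. To extract a strict lift, I would make an ansatz $\tilde{\mu} = \mu + \sum_i \mu_i \, y_i$ and solve $[\delta, \tilde{\mu}] = 0$ directly: the $y^{\textbf{0}}$-equation is automatic, and each $y_i$-equation demands $[d, \mu_i]$ equal an explicit curved correction, which can be solved using the appropriate dot-slide homotopy. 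Alternatively, one can track the construction of $\mu$ through each Gaussian elimination step in Proposition \ref{prop: mu_nu} and absorb the curved correction produced by $h_n$ into a single $y$-linear term at each stage, yielding $\tilde{\mu}$ by inspection.

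For the strong deformation retraction, I would check $\nu \tilde{\mu} = \text{id}_{C_n^{y_n}}$ componentwise in $\mathbb{Y}$: the $y^{\textbf{0}}$-piece is exactly $\nu \mu = \text{id}_{C_n}$ from Proposition \ref{prop: mu_nu}, and the coefficients $\mu_i$ can be chosen to lie in the kernel of $\nu$, killing the higher $y_i$-components. For $\tilde{\mu}\nu \sim \text{id}_{B_{w_1}J_n^{y_n}}$, the $y^{\textbf{0}}$-component of $\tilde{\mu}\nu - \text{id}$ is $\mu\nu - \text{id}_{B_{w_1}J_n}$, which is nullhomotopic by Proposition \ref{prop: mu_nu}; applying Corollary \ref{corr: GHobs2} (together with the same parity input) then promotes this uncurved nullhomotopy to a curved nullhomotopy.

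The main obstacle I anticipate is ensuring strictness of $\tilde{\mu}$. The abstract lifting theorem of Proposition \ref{prop: GHobs} freely produces a curved lift involving arbitrarily high powers of $\mathbb{Y}$, and the argument that these higher-order contributions can be absorbed or made to vanish is the most delicate part of the construction. I expect the cleanest route to be the explicit one: write $\tilde{\mu}$ by hand by tracking $\mu$ through the explicit fork-slide retractions and verifying strictness by inspection, rather than trying to prune a general obstruction-theoretic lift.
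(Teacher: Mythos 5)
Your overall route is the same as the paper's: lift $\nu$ without modification, take $\tilde{\mu}$ to be $\mu$ plus a $(y_n-y_1)$-linear correction extracted from the explicit fork-slide/Gaussian-elimination data, and use a parity argument together with Corollary \ref{corr: GHobs2} to promote $\tilde{\mu}\nu \sim \text{id}$. However, the two checks you treat as quick are exactly where the content lies, and your justifications for them have gaps. For the $\nu$-lift, the required identity is $h'\nu = \nu h_n$, and it is not true that "both sides land in the top chain module and match up to sign": $h_n$ lowers homological degree by one on all of $B_{w_1}J_n$ (which occupies degrees $0$ through $2(n-1)$), so $\nu h_n$ has a priori nonzero components into the middle term $q^{n-3}tB_{w_0}$ and into $q^{-2}t^2B_{w_1}$, where the twist $\Delta_{C_n}$ has no component at all; one must prove these vanish and that the components into $q^{n-1}B_{w_0}$ agree on the nose. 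In the paper this is Lemma \ref{lem: dot_slide_fork_slide} together with the identity $fh = h'f$ of Lemma \ref{lem: dot_slide_ft}, whose proof requires graded-dimension computations to exclude spurious components and a parity argument (evenness of $H^{\bullet}(\text{End}(FT_{k,1}))$) to show that the a priori homotopy discrepancy $\zeta$ is identically zero — none of which is supplied by your "match on the nose" observation.

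The deeper gap is on the $\mu$ side. Your assertion that the corrections $\mu_i$ "can be chosen to lie in the kernel of $\nu$" is precisely the hard point and is left unargued; moreover the implicit symmetry with the $\nu$ side fails. Unlike $f$, the Gaussian-elimination map $g$ does \emph{not} strictly intertwine the dot-slide homotopies: $gh' - hg$ is only nullhomotopic, with a nonzero primitive $\tilde{\chi}$ (its single surviving component is $\chi_3 = d_2^{-1}h_{10}$, a genuine zip map), so a stage-by-stage "absorb the correction by inspection" runs into this obstruction rather than closing up automatically. The strict lift is $\tilde{\mu} = \mu$ plus the correction coming from $\tilde{\chi}(y_n-y_1)$, and the exact identity $\nu\tilde{\mu} = \text{id}_{C_n^{y_n}}$ (as opposed to an identity up to homotopy) hinges on the additional nontrivial fact $f\tilde{\chi} = 0$, which the paper obtains only after pinning down all components of the primitive and checking that $f$ annihilates the one that survives. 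Without an argument for (i) the strict intertwining on the $\nu$ side and (ii) the existence of a primitive for $gh' - hg$ killed by $f$, your plan does not yet yield the strong deformation retraction; the parity input you cite suffices for existence of \emph{some} curved lift and for the homotopy $\tilde{\mu}\nu \sim \text{id}$, but not for strictness of $\tilde{\mu}$ together with $\nu\tilde{\mu} = \text{id}$.
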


\begin{proof}
\textcolor{revisions}{See Appendix \ref{app: dot_slide_nat}.}
\end{proof}

Assuming the result of Proposition \ref{prop: forkslyde}, all that remains in constructing the $y$-ified finite projector is to lift Proposition \ref{prop: krec} to the $y$-ified setting.

\begin{proposition} \label{prop: fynite}
\textcolor{revisions}{Set}

\begin{equation}
\overline{\alpha}^y := \Psi' (\tilde{\mu} \iota \otimes 1) \colon K_{n - 1}^y \to q^2t^{-2}K_{n - 1}^y J_n^y.
\end{equation}

\textcolor{revisions}{Then $\overline{\alpha}^y$ is a strict lift of $\overline{\alpha}$} satisfying $\text{Cone}(\overline{\alpha}^y) \simeq q^{n - 1}t^{-1} K_n^y$.
\end{proposition}

\begin{proof}
Consider the curved complex $C_n^{y_n} \otimes \bigwedge[\theta_2, \dots, \theta_n]$ with connection

\[
\delta_{C_n \otimes \bigwedge[\theta_2, \dots, \theta_n]} = \delta_{C_n} \otimes 1 + \sum_{j = 2}^n (x_j - x_j'') \otimes \theta_j^{\vee} + (y_j - y_1) \otimes \theta_j
\]

It is easily verified that $(C_n^{y_n} \otimes \bigwedge[\theta_2, \dots, \theta_n], e, \delta_{C_n \otimes \bigwedge[\theta_2, \dots, \theta_n]})$ is a (strict) $y$-ification of $C_n \otimes \bigwedge[\theta_2, \dots, \theta_n]$. \textcolor{revisions}{As in the un $y$-ified setting,} $q^{-2}t^2 K_{n - 1}^y J_n^y$ appears as a subcomplex with inclusion map $\iota \otimes 1$. Gaussian elimination along this inclusion gives a homotopy equivalence

\begin{align*}
    \text{Cone}(\iota \otimes 1) \simeq q^{n - 1}t^{-1} K_n^y
\end{align*}

Since $\tilde{\mu}, \nu$ are $R[\mathbb{X}, \mathbb{X}'', \mathbb{Y}]$-linear, they extend to a strong deformation retraction $\tilde{\mu} \otimes 1$, $\nu \otimes 1$ from $K_n^{\prime y}$ to $C_n^{y_n} \otimes \bigwedge[\theta_2, \dots, \theta_n]$. Completely analogously to Proposition \ref{prop: krec}, we obtain a chain of homotopy equivalences

\begin{align*}
    \text{Cone}(\overline{\alpha}^y) = \text{Cone}(\Psi'(\tilde{\mu} \iota \otimes 1)) \simeq \text{Cone}(\tilde{\mu} \iota \otimes 1) \simeq \text{Cone}(\iota \otimes 1) \simeq q^{n - 1}t^{-1} K_n^y \in \mathcal{Y}_e(R_n-\text{Bim})
\end{align*}

\end{proof}

\subsection{Y-ified Infinite Projector} \label{sec: yify_inf_proj}

We now turn to a $y$-ification of our recursion for the infinite projector. We begin by $y$-ifying the explicit combinatorial projector $A_n$.

\begin{proposition} \label{prop: combyproj}
Let $A_n := K_n \otimes \mathbb{Z}[u_2^{-1}, \dots, u_n^{-1}]$ as above, and \textcolor{revisions}{set} 

\begin{equation}
\delta_{A_n} := \delta_{K_n} \otimes 1 + \sum_{i = 2}^n \xi_i \otimes u_i \in \text{End}_{\text{Ch}_y(R_n-\text{Bim})}(A_n[\mathbb{Y}])
\end{equation}

Then $A_n^y := (A_n[\mathbb{Y}], e, \delta_{A_n})$ is a (strict) $y$-ification of $A_n$.
\end{proposition}

\begin{proof}
Notice that $\delta_{A_n} = d_{A_n} + \Delta_{K_n} \otimes 1$\textcolor{revisions}{; in particular, the only components of $\delta_{A_n}$ with nonzero $y$-degree have periodic degree $0$. With this in mind, it suffices to show that $\Delta_{K_n}$ commutes with the nonzero periodic degree component $\sum_{i = 2}^n \xi_i \otimes u_i$ of $d_{A_n}$. This is easily verified.}
\end{proof}

Our next task is to lift each of the maps $\alpha, \beta$ from Theorem \ref{thm: projtopexists} to maps of $y$-ifications. 

\begin{proposition}
The map $j \colon R_n \hookrightarrow J_n$ given by inclusion in $0^{th}$ homology lifts without modification to a chain map $j \colon R_n[\mathbb{Y}] \hookrightarrow J_n^y$ of $y$-ifications.
\end{proposition}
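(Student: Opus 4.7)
The plan is to verify that the underlying map $j$, extended $R[\mathbb{Y}]$-linearly, is already closed against the curved connections on $R_n[\mathbb{Y}]$ and $J_n^y$, with no higher $\mathbb{Y}$-corrections needed. Before anything else, I will confirm that both objects live in the same category $\mathcal{Y}_e(R_n\text{-Bim})$. The braid $J_n = X_n Y_n$ is pure, since $X_n$ and $Y_n$ induce inverse cyclic permutations of $\{1, \dots, n\}$, so by Proposition \ref{prop: ybraid} its $y$-ification has curvature $Z_e = \sum_i (x_i - x_i') y_i$. On $R_n$ the left and right actions of each $x_i$ coincide, so $Z_e$ acts as zero, and the trivial connection endows $R_n[\mathbb{Y}]$ with a (strict) $y$-ification of $R_n$ with permutation $e$.

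With the two curved complexes identified, the verification reduces to checking that $\delta_{J_n} \circ (j \otimes 1) = 0$ in $\text{Ch}_y(R_n\text{-Bim})$, since the connection on $R_n[\mathbb{Y}]$ vanishes. I will decompose this composition by powers of the $y_i$ as in Remark \ref{rem: decomp}. The $y^{\mathbf{0}}$-component is $d_{J_n} \circ j$, which vanishes because $j$ is already a chain map in $\text{Ch}(R_n\text{-Bim})$. The remaining components, one for each $y_i$, are $h_i \circ j$, where $h_i \in \text{End}^{-1}(J_n)$ are the dot-sliding homotopies appearing in the formula $\delta_{J_n} = d_{J_n} + \sum_i h_i y_i$ from Proposition \ref{prop: ybraid}.

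The final step is a homological degree argument that kills each $h_i \circ j$. Since $J_n$ is positive, Proposition \ref{prop: rouqhom} together with the construction of the Rouquier complex shows that $F(J_n)$ is concentrated in non-negative homological degrees; moreover $j$ factors through the inclusion $R_n \hookrightarrow F(J_n)^0$ into the degree-zero chain group, as $R_n$ is concentrated in degree $0$. Each $h_i$ has homological degree $-1$, so $h_i \circ j$ takes values in $F(J_n)^{-1} = 0$, and hence vanishes. This forces $[\delta, j \otimes 1] = 0$, giving the desired lift without modification.

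I anticipate no real obstacle here. The two key inputs are the purity of $J_n$ (so that the source and target curvatures agree) and the non-negativity of the Rouquier grading on positive braids (so the higher $\mathbb{Y}$-corrections to $j$ are zero for degree reasons). By contrast, Corollary \ref{corr: GHobs} would already produce \emph{some} lift by general obstruction theory, but the content of the proposition is that the literal formula for $j$ suffices, which is exactly what the degree vanishing above establishes.
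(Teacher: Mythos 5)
Your proof is correct and follows essentially the same route as the paper: both reduce to showing $h_i \circ j = 0$ and conclude this from a homological degree argument, since $j$ lands in degree $0$ and $J_n$ is concentrated in non-negative homological degrees (the paper phrases this as $\operatorname{Hom}^{-1}_{\text{Ch}(R_n\text{-Bim})}(R_n, J_n) = 0$). Your preliminary check that $J_n$ is pure, so the curvatures on source and target agree, is a correct observation that the paper leaves implicit.
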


\begin{proof}
It suffices to verify that $\Delta_{J_n}j = 0$. Let $\{h_i\}_{i = 1}^n$ denote the dot-sliding homotopies on $J_n$; then $\Delta_{J_n} = \sum_{i = 1}^n h_iy_i$. Observe that $h_ij \in \text{Hom}^{-1}_{\text{Ch}(R_n-\text{Bim})}(R_n, J_n)$ for each $i$. Since $J_n$ is concentrated in non-negative homological degrees, this morphism space is $0$, so $h_ij = 0$ for each $i$. Then $\Delta_{J_n}j = \sum_{i = 1}^n (h_ij)y_i = 0$ as well.
\end{proof}

\begin{corollary}
The map $\beta = \text{id}_{A_{n - 1}} \otimes j \colon A_{n - 1} \to A_{n - 1}J_n$ lifts without modification to a chain map $\beta \colon A_{n - 1}^y \to A_{n - 1}^yJ_n^y$.
\end{corollary}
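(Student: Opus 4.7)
The plan is to deduce this corollary from the preceding proposition (that $j \colon R_n[\mathbb{Y}] \hookrightarrow J_n^y$ is closed) essentially formally, by interpreting $\beta$ as the external tensor product $\mathrm{id}_{A_{n-1}^y} \otimes j$ and checking that the failure of closedness localizes entirely in the second tensor factor.

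First I would unwind the tensor product of $y$-ifications $A_{n-1}^y J_n^y$. Since $A_{n-1}^y$ has trivial associated permutation $e$, the general recipe for tensoring $y$-ifications imposes no twist of the $\mathbb{Y}$-action on the second factor, so $A_{n-1}^y J_n^y$ has underlying complex $(A_{n-1}J_n)[\mathbb{Y}]$ with connection
\[
\delta_{A_{n-1}J_n} \;=\; \delta_{A_{n-1}} \otimes 1 \;+\; 1 \otimes \delta_{J_n},
\]
and lives in $\mathcal{Y}_{J_n}(R_n\text{-Bim})$. The map $\beta = \mathrm{id}_{A_{n-1}^y} \otimes j$ has homological degree $0$, so closedness amounts to verifying $[\delta, \beta] = 0$.

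Next I would compute $[\delta, \beta]$ directly, using the Koszul sign rule and the fact that $|j|=0$:
\begin{align*}
[\delta,\beta]
 &= (\delta_{A_{n-1}} \otimes 1 + 1 \otimes \delta_{J_n})(1 \otimes j) - (1 \otimes j)(\delta_{A_{n-1}} \otimes 1) \\
 &= \delta_{A_{n-1}} \otimes j \;+\; 1 \otimes (\delta_{J_n} j) \;-\; \delta_{A_{n-1}} \otimes j \\
 &= 1 \otimes (\delta_{J_n} j).
\end{align*}
By the previous proposition, $\delta_{J_n} j = 0$, so $[\delta,\beta] = 0$, as desired. Finally, I would observe that $\beta$ is manifestly a strict lift of the original $\beta = \mathrm{id}_{A_{n-1}} \otimes j$ from Proposition~\ref{prop: beta_perp}, since its $y^{\mathbf{0}}$-component is $\mathrm{id}_{A_{n-1}} \otimes j$ by construction.

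There is no serious obstacle here: the argument is the routine observation that tensoring a closed morphism of $y$-ifications with the identity on another $y$-ification (whose permutation is trivial, so no $\mathbb{Y}$-twist appears) again yields a closed morphism. The only subtlety worth double-checking is the compatibility of permutations in the tensor product of $y$-ifications; because the left-hand factor has permutation $e$, this reduces to the standard tensor product of curved complexes, and the calculation above goes through without any sign or twist ambiguities.
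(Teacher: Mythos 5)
Your proposal is correct and is essentially the paper's argument: the corollary is treated there as an immediate consequence of the preceding proposition that $\Delta_{J_n} j = 0$, and your Koszul-sign computation $[\delta, \beta] = 1 \otimes (\delta_{J_n} j) = 0$ is exactly that verification spelled out. The point about the absence of a $\mathbb{Y}$-twist is automatic here since both factors carry the identity permutation ($J_n$ is a pure braid), so no subtlety arises regardless of which factor's permutation governs the twist in the tensor-product convention.
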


The remainder of this subsection is dedicated to the proof of the following.

\begin{theorem} \label{thm: yalpha}
The map $\alpha$ of Section \S \ref{sec: alpha_build} has a strict lift to a chain map $\alpha^y \colon A_{n - 1}^y \to q^2t^{-2}A_{n - 1}^yJ_n^y$.
\end{theorem}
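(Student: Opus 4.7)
The plan is to adapt the stratified construction of $\tilde\alpha$ from the proof of Theorem \ref{thm: alphasquig} to the $y$-ified setting, replacing the fork-sliding homotopy $\mu$ by its strict lift $\tilde\mu$ from Proposition \ref{prop: forkslyde} throughout. As a first step, I pass via the $y$-ified change-of-basis of Proposition \ref{prop: y_forkslide_i}: set $A_n'^y := (A_n'[\mathbb{Y}], e, \delta_{A_n'})$ with $\delta_{A_n'} := \delta_{K_n'} \otimes 1 + \sum_{i=2}^{n-1} (\Psi \xi_i \Psi') \otimes u_i$, and observe by Proposition \ref{prop: hpt_iso_yified} that $\Psi \otimes 1, \Psi' \otimes 1$ lift without modification to mutually inverse isomorphisms $A_{n-1}^y J_n^y \cong A_n'^y$. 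It therefore suffices to build a strict chain map $\tilde\alpha^y \colon A_{n-1}^y \to q^2 t^{-2} A_n'^y$ lifting $\tilde\alpha$; then $\alpha^y := (\Psi' \otimes 1)\tilde\alpha^y$ will be the desired strict lift.

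I will define $\tilde\alpha^y$ by the same periodic-degree stratification used in Theorem \ref{thm: alphasquig}. At base periodic degree, set $\tilde\alpha^y_{u^{\mathbf{0}}} := (\tilde\mu \iota \otimes 1) \otimes 1$; at higher periodic degrees $u^{\mathbf{v}}$, I retain the auxiliary endomorphisms $\tilde\varphi_{ij},\, \tilde\rho_i,\, \tilde\omega_{ijk\ell}, \ldots$ defined from dot-sliding homotopies (these are $\mathbb{Y}$-central, and their defining identities involve only the uncurved differential on $B_{w_1}J_n$) and simply precompose with $\tilde\mu \iota$ in place of $\mu \iota$. Since $\tilde\mu = \mu + (\text{strict } y\text{-linear term})$, the resulting $\tilde\alpha^y$ is automatically strict in $\mathbb{Y}$ and reduces to $\tilde\alpha$ after setting $y_i = 0$.

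Verifying $[\delta, \tilde\alpha^y] = 0$ splits into components according to periodic, Koszul, and $\mathbb{Y}$-degree. The $\mathbb{Y}$-degree-zero components are exactly the identities checked in the proof of Theorem \ref{thm: alphasquig} with $\mu\iota$ replaced by $\tilde\mu\iota$; these continue to hold since the defining equations for $\tilde\varphi_{ij}, \tilde\rho_i$, etc., are polynomial identities stable under precomposition by the closed map $\tilde\mu\iota$. The $\mathbb{Y}$-degree-one components pair the $y$-linear parts of the source twist $\Delta_{K_{n-1}} = \sum_{j=2}^{n-1}(y_j - y_1) \otimes \theta_j$ and the target twist $\Delta_{K_n'} = h_n(y_n-y_1) \otimes 1 + \sum_{j=2}^{n-1}(y_j-y_1) \otimes \theta_j$ against the components of $\tilde\alpha^y$; the former cancellations proceed by exactly the same graded-Leibniz computations as in Section \S\ref{sec: alpha_build} (using that the $\tilde\varphi_{ij}$ commute with the central multiplications by $(y_j-y_1)$), and the latter obstruction from $h_n(y_n-y_1)$ is absorbed by the $y$-linear part of $\tilde\mu$, whose compatibility with $\iota$ is already the content of Proposition \ref{prop: forkslyde}.

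The main obstacle will be the bookkeeping showing that no higher-$\mathbb{Y}$-degree obstructions appear, so that the lift really is strict rather than merely a formal power series in $\mathbb{Y}$. The key simplification here is that all $y$-linear contributions in $\delta_{A_{n-1}}$ and $\delta_{A_n'}$ are independent of the periodic variables $u_i$, so the $\mathbb{Y}$-analysis decouples from the periodic recursion and reduces to a finite verification within a single copy of $K_{n-1}^y \to K_n'^y$, where strictness is furnished by Proposition \ref{prop: fynite}. Together with the parity and Hom-vanishing arguments invoked in the uncurved case (which transfer verbatim since $y$-ification preserves the relevant homological-degree parities), this shows that the recursion terminates strictly at each stage and produces the required $\tilde\alpha^y$.
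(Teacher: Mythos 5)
Your overall strategy matches the paper's: pass to $A^{\prime y}_n$ via the lifted isomorphisms $\Psi \otimes 1, \Psi' \otimes 1$, correct $\tilde{\alpha}$ using the strict lift $\tilde{\mu}$ of Proposition \ref{prop: forkslyde}, and reduce $\alpha^y$ to a strict lift $\tilde{\alpha}^y$ of $\tilde{\alpha}$. (In fact your prescription of ``replace $\mu\iota$ by $\tilde{\mu}\iota$ everywhere'' nearly coincides with the paper's choice $\tilde{\alpha}^y = \tilde{\alpha} + (y_n - y_1)(\overline{\mu}\tilde{\chi}\iota \otimes 1)\otimes 1$, since the would-be corrections at higher periodic degree, such as $\tilde{\varphi}_{ik}\overline{\mu}\tilde{\chi}\iota$ and $\tilde{\omega}_{ijk\ell}\overline{\mu}\tilde{\chi}\iota$, vanish for homological-degree reasons; note also that $\rho_i$ is not of the form $\tilde{\rho}_i\mu\iota$, so your prescription does not literally apply to it.)

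The genuine gap is in the verification. Your claim that ``the $\mathbb{Y}$-analysis decouples from the periodic recursion and reduces to a finite verification within a single copy of $K_{n-1}^y \to K_n^{\prime y}$, where strictness is furnished by Proposition \ref{prop: fynite}'' is false, and it skips exactly the hard part of the argument. The curved twist $h_n(y_n - y_1)\otimes 1$ in $\Delta_{K'_n}$ pairs nontrivially with the \emph{periodic-degree-$u_i$} components $H_i \otimes u_i$ of $\tilde{\alpha}$ (through $\rho_i$ and the $\varphi_{ik}$), producing a nonzero obstruction of the form $(y_1 - y_n)\bigl(\sum_{j} \tilde{\varphi}_{ij}\,\overline{\mu}\, d_{STM_n}\,\tilde{\chi}\,\iota \otimes \theta_j\bigr)\otimes u_i$; this is not captured by Proposition \ref{prop: fynite}, which only handles periodic degree $u^{\mathbf{0}}$. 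Its cancellation requires a mixed periodic/$\mathbb{Y}$ computation: one must show that the commutator of the periodic component $\Psi\xi_i\Psi' \otimes u_i$ of $\delta_{A'_n}$ with the $y$-linear correction $\overline{\mu}\tilde{\chi}\iota\,(y_n - y_1)$ produces exactly the opposite term, and this uses identities such as $h_n\rho_i = 0$, $h'\iota = 0$, $d_{C_n}\iota = 0$, and $h_n\mu\iota = \overline{\mu}(gh' - [d,\tilde{\chi}])\iota$, none of which follow from Proposition \ref{prop: forkslyde} ``compatibility with $\iota$'' or from the uncurved proof of Theorem \ref{thm: alphasquig}. Relatedly, the Hom-vanishing arguments do not transfer ``verbatim'': each power of $y$ shifts homological degree by $-2$, so the degree bookkeeping must be redone (it happens to work out, e.g. $\mathrm{Hom}^{-1}(B_{w_1}, B_{w_1}J_n) = \mathrm{Hom}^{-3}(B_{w_1}, B_{w_1}J_n) = 0$, but this needs to be checked rather than asserted). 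Without carrying out the periodic-degree-$u_i$ (and $u_iu_j$) verifications against the full curved differential, the proposal does not establish that $[\delta, \tilde{\alpha}^y] = 0$.
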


Our proof of Theorem \ref{thm: yalpha} will follow the same framework as our construction of $\alpha$. We first pass through a $y$-ification of the complex $A_n'$. We have already verified that $\Psi, \Psi'$ lift without modification to chain maps of $y$-ifications. Since the curved twist $\Delta_{A_{n - 1}}$ has no periodic components, applying $\Psi' \otimes 1$ to $A_{n - 1}^yJ_n^y$ immediately gives the following result.

\begin{proposition} \label{prop: inf_y_base_change}
\textcolor{revisions}{Set}

\begin{equation} \label{eq: a'n_conn}
\delta_{A'_n} := \delta_{K'_{n - 1}} \otimes 1 + \sum_{j = 2}^{n - 1} (\Psi \xi_j \Psi') \otimes u_j \in \text{End}_{\text{Ch}_y(R_n-\text{Bim})}(A'_n[\mathbb{Y}]).
\end{equation}

Then $A^{\prime y}_n := (A'_n[\mathbb{Y}], e, \delta_{A'_n})$ is a (strict) $y$-ification of $A'_n$. Moreover, we have $A^{\prime y}_{n - 1} \cong A_{n - 1}^yJ_n^y$ via isomorphisms $\Psi \otimes 1, \Psi' \otimes 1$.
\end{proposition}

Theorem \ref{thm: yalpha} then immediately follows from the following result.

\begin{proposition}
The map $\tilde{\alpha}$ of Theorem \ref{thm: alphasquig} has a strict lift to a chain map $\tilde{\alpha}^y \colon A^y_{n - 1} \to q^2t^{-2} A^{\prime y}_n$.
\end{proposition}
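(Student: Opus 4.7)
The plan is to follow the blueprint of Theorem \ref{thm: alphasquig} verbatim, now working in the curved category $\mathcal{Y}_e(R_n\text{-Bim})$ rather than $\text{Ch}(R_n\text{-Bim})$. The key replacement: use the strict $y$-ified fork slide $\tilde\mu$ from Proposition \ref{prop: forkslyde} wherever $\mu$ appears. Concretely, I would set $\tilde\alpha^y_{u^{\mathbf{0}}} := \tilde\mu\,\iota\otimes 1\otimes 1$, and for $\mathbf{v}\neq\mathbf{0}$ take $\tilde\alpha^y_{u^{\mathbf{v}}}$ to be the same Koszul-graded polynomial in the homotopies $\tilde\varphi_{ik}$ as $\tilde\alpha_{u^{\mathbf{v}}}$, with $\mu\iota$ replaced by $\tilde\mu\iota$. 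Since each $\tilde\varphi_{ik}$ is $\mathbb{Y}$-independent and $\tilde\mu$ is strict, the resulting $\tilde\alpha^y$ is automatically a strict lift of $\tilde\alpha$; closedness in the $u^{\mathbf 0}$-component follows from Proposition \ref{prop: fynite} postcomposed with the isomorphism $\Psi\otimes 1$ of Proposition \ref{prop: inf_y_base_change}.

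The verification that $[\delta,\tilde\alpha^y]=0$ in higher periodic degrees separates into the $\mathbb{Y}$-free part, which is the already-established $[d,\tilde\alpha]=0$, and new obstructions produced by the curved twists
\[
\Delta_{A^y_{n-1}} = \sum_{j=2}^{n-1}(y_j-y_1)\otimes\theta_j, \qquad \Delta_{A^{\prime y}_n} = h_n(y_n-y_1)\otimes 1 + \sum_{j=2}^{n-1}(y_j-y_1)\otimes\theta_j.
\]
The matching Koszul pieces $\sum_j(y_j-y_1)\otimes\theta_j$ cancel on the two sides by the same $\theta$-anticommutation bookkeeping used implicitly in Theorem \ref{thm: alphasquig}: each component of $\tilde\alpha^y_{u^{\mathbf v}}$ has a fixed parity of $\theta$-variables, and sliding a $\theta_j$ across it produces a sign that exactly matches the corresponding term on the other side.

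The genuinely new contribution is the summand $h_n(y_n-y_1)\otimes 1$ in $\Delta_{A^{\prime y}_n}$. In the $u^{\mathbf{0}}$-component this is precisely the obstruction that $\tilde\mu$ was designed to absorb: the strictness of $\tilde\mu$ in Proposition \ref{prop: forkslyde} means we may write $\tilde\mu = \mu + \sum_i\mu^{(i)}y_i$ with $[\delta,\tilde\mu]=0$, which translates upon composition with $\iota$ into the cancellation of $h_n(\mu\iota\otimes 1)$ modulo the backwards arrow on $C_n^{y_n}$ (whose image does not meet $\iota$). For $\mathbf{v}\neq\mathbf{0}$, the residual $(y_n-y_1)$-linear obstructions take the form of $h_n$ composed with multi-$\tilde\varphi$ products, and I expect them to vanish for the same homological-parity reasons that forced the triple-product contributions in periodic degrees $u_iu_ju_k$, $u_i^2u_k$, $u_iu_k^2$, and $u_i^3$ to vanish in Theorem \ref{thm: alphasquig}: products of four or more degree-$(-1)$ morphisms $B_{w_1}\to B_{w_1}J_n$ land in a zero hom space.

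The main obstacle will be the combinatorial bookkeeping: verifying in each periodic multidegree that the $h_n(y_n-y_1)$-linear residual obstructions reduce to products of $\tilde\varphi_{ik}$'s landing in vanishing homological-degree hom spaces, and that any surviving pieces can be cancelled by strict $y$-linear corrections to $\tilde\alpha^y_{u^{\mathbf v}}$ built from $h_n$-compositions. If the direct case analysis proves too cumbersome, the fallback is to apply the obstruction-theoretic Corollary \ref{corr: GHobs} componentwise in periodic degree, using the same Hochschild-parity input invoked in Theorem \ref{thm: alphasquig} to guarantee the needed connectivity of the hom complexes, and then project onto the strict truncation to recover $\mathbb{Y}$-linearity; the explicit route is preferable, however, for compatibility with the existing framework.
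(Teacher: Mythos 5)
Your construction is, in substance, the paper's: only the periodic-degree-zero component actually needs a $y$-correction, namely $\tilde{\alpha}^y_{u^{\mathbf 0}} = \tilde{\mu}\iota\otimes 1 = (\mu\iota + (y_n-y_1)\,\overline{\mu}\tilde{\chi}\iota)\otimes 1$; the corrections you propose for $\mathbf v\neq\mathbf 0$ are vacuous, since e.g. $\tilde{\varphi}_{ik}\,\overline{\mu}\tilde{\chi}\iota \in \text{Hom}^{-1}(B_{w_1},B_{w_1}J_n)=0$ and $\tilde{\omega}_{ijk\ell}\,\overline{\mu}\tilde{\chi}\iota\in\text{Hom}^{-2}(B_{w_1},B_{w_1}J_n)=0$ (and ``replace $\mu\iota$ by $\tilde\mu\iota$'' is not even well-defined for $\rho_i$, which is produced abstractly from vanishing of $H^1$, not as a composite with $\mu\iota$). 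Your bookkeeping for the matching Koszul twists and for periodic degree $u_iu_j$ (where $h_n\tilde{\omega}_{ij\ell m}\in\text{Hom}^{-1}=0$) is also consistent with the paper. The genuine gap is your claim that for $\mathbf v\neq\mathbf 0$ the residual $(y_n-y_1)$-obstructions are ``multi-$\tilde\varphi$ products'' killed by parity. In periodic degree $u_i$ the curvature term $h_n(y_n-y_1)\otimes 1$ hits the Koszul-degree-$\theta_k$ components of $H_i$ and produces $\sum_k (y_n-y_1)\,h_n\tilde{\varphi}_{ik}\mu\iota\otimes\theta_k$: this is a \emph{single}-$\tilde\varphi$ composite of homological degree $0$, so no degree or parity argument kills it. The paper's proof cancels it against the cross-term $[d_{u_i},\,(y_n-y_1)(\overline{\mu}\tilde{\chi}\iota\otimes 1)]$ coming from your own degree-zero correction, and verifying that cancellation is the real content of the proposition: one needs $h_n\tilde{\varphi}_{ik}\mu\iota=\tilde{\varphi}_{ik}h_n\mu\iota$, the Appendix \ref{app: dot_slide_nat} identities $h\overline{\mu}=\overline{\mu}h_n$ and $[d,\tilde{\chi}]=gh'-hg$, together with $h'\iota=0$ and $d_{C_n}\iota=0$, to rewrite $h_n\mu\iota=-\overline{\mu}\,d_{STM_n}\tilde{\chi}\,\iota$ and match the two contributions term by term. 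None of this appears in your argument; ``$\tilde\mu$ was designed to absorb it'' only accounts for the $u^{\mathbf 0}$-component (Proposition \ref{prop: fynite}), not for the interaction of the correction with the periodic pieces $\Psi\xi_i\Psi'\otimes u_i$ of the differential.

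Your fallback is also not sound as stated. Corollary \ref{corr: GHobs} is formulated for bounded complexes with Hom-homology concentrated in non-negative degrees; $A_{n-1}$ and $A'_n$ are only bounded above, and even applied ``componentwise in periodic degree'' you would need a connectivity statement for $\text{Hom}(K_{n-1},K'_n)$ that is not established (and the differential mixes periodic degrees, so the componentwise reduction itself requires exactly the cross-term analysis above). Worse, the corollary produces a closed lift that is a power series in $\mathbb{Y}$, and ``projecting onto the strict truncation'' of such a lift does not in general yield a closed map, so strictness cannot be recovered that way. The explicit route you prefer is the right one, but it must include the degree-$u_i$ cancellation via $\tilde{\chi}$ described above.
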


\begin{proof}
We set $\tilde{\alpha}^y := \tilde{\alpha} + \sum_{i = 1}^n ((\overline{\alpha})_{y_i} \otimes 1) y_i$; this is nothing more than the map $\tilde{\alpha}$ with the periodic degree $0$ component $\overline{\alpha} \otimes 1$ replaced by its lift $\overline{\alpha}^y \otimes 1$ from Proposition \ref{prop: fynite}. For notational convenience, we refer to the nontrivial $y$-degree piece of $\tilde{\alpha}^y$ as $\tilde{\alpha}_y$, so that $\tilde{\alpha}^y = \tilde{\alpha} + \tilde{\alpha}_y$. By the results of Appendix \ref{app: dot_slide_nat}, we can read off $\tilde{\alpha}_y = (y_n - y_1) (\overline{\mu} \tilde{\chi} \iota \otimes 1) \otimes 1$.

We compute $[\delta, \tilde{\alpha}^y]$ explicitly:

\begin{align*}
    [\delta, \tilde{\alpha}^y] & = (d_{A'_n} + \Delta_{A'_n})(\tilde{\alpha} + \tilde{\alpha}_y) - (\tilde{\alpha} + \tilde{\alpha}_y) (d_{A_{n - 1}} + \Delta_{A_{n - 1}}) \\
    & = [d, \tilde{\alpha}] + (\Delta_{A'_n} \tilde{\alpha} - \tilde{\alpha} \Delta_{A_{n - 1}}) + (d_{A'_n} \tilde{\alpha}_y - \tilde{\alpha}_y d_{A_{n - 1}}) + (\Delta_{A'_n} \tilde{\alpha}_y - \tilde{\alpha}_y \Delta_{A_{n - 1}})
\end{align*}

Since $\tilde{\alpha}$ is already known to be a chain map of uncurved complexes, the term $[d, \tilde{\alpha}]$ above vanishes. To see that $\tilde{\alpha}^y$ is a chain map, we check explicitly that the contributions from the other three grouped terms above cancel. As in our proof of Theorem \ref{thm: alphasquig}, we \textcolor{revisions}{collect our computations by periodic degree}. Note that none of $\Delta_{A'_n}$, $\Delta_{A_{n - 1}}$, or $\tilde{\alpha}_y$ has any nonzero periodic degree components, so the third grouped term above will only play a role in periodic degree $u^\textbf{0}$ computations.

\textbf{Periodic degree $u^\textbf{0}$}:

That $[\delta, \tilde{\alpha}^y]_{u^\textbf{0}} = 0$ follows immediately from Proposition \ref{prop: fynite}.

\textbf{Periodic degree $u_i$}:

Since $\Delta_{A_{n - 1}}$ and $\Delta_{A'_n}$ have no nonzero periodic degree components, the only contribution from $\Delta_{A'_n} \tilde{\alpha} - \tilde{\alpha} \Delta_{A_{n - 1}}$ in this periodic degree is of the form

\begin{align*}
    (\Delta_{A'_n} \tilde{\alpha} - \tilde{\alpha} \Delta_{A_{n - 1}})_{u_i} & = \Delta_{A'_n} (\tilde{\alpha})_{u_i} - (\tilde{\alpha})_{u_i} \Delta_{A_{n - 1}} \\
    & = \left( \left( h_n(y_n - y_1) \otimes 1 + \sum_{j = 2}^{n - 1} (y_j - y_1) \otimes \theta_j \right) \otimes 1 \right) (H_i \otimes u_i) \\
    & \quad - (H_i \otimes u_i) \left( \left( \sum_{j = 2}^{n - 1} (y_j - y_1) \otimes \theta_j \right) \otimes 1 \right) \\
    & = \left( (h_n (y_n - y_1) \otimes 1)H_i + \sum_{j = 2}^{n - 1} (y_j - y_1) ((1 \otimes \theta_j)H_i - H_i(1 \otimes \theta_j)) \right) \otimes u_i \\
    & = \Bigg( (h_n (y_n - y_1) \otimes 1) \left( \rho_i \otimes 1 + \sum_{k = 2}^{n - 1} \tilde{\varphi}_{ik} \mu \iota \otimes \theta_k \right) \\
    & \quad + \sum_{j = 2}^{n - 1} (y_j - y_1) \Bigg((1 \otimes \theta_j) \left( \rho_i \otimes 1 + \sum_{k = 2}^{n - 1} \tilde{\varphi}_{ik} \mu \iota \otimes \theta_k \right) \\
    & \quad - \left( \rho_i \otimes 1 + \sum_{k = 2}^{n - 1} \tilde{\varphi}_{ik} \mu \iota \otimes \theta_k \right) (1 \otimes \theta_j)\Bigg) \Bigg) \otimes u_i \\
    & = \Bigg( h_n \rho_i (y_n - y_1) \otimes 1 + \sum_{j = 2}^{n - 1} (y_n - y_1) h_n\tilde{\varphi}_{ij} \mu \iota \otimes \theta_j \\
    & \quad - \sum_{j, k = 2}^{n - 1} (y_j - y_1) \tilde{\varphi}_{ik}\mu \iota \otimes (\theta_k \theta_j + \theta_j \theta_k) \Bigg) \otimes u_i \\
    & = \left(h_n \rho_i (y_n - y_1) \otimes 1 + \sum_{j = 2}^{n - 1} (y_n - y_1) h_n \tilde{\varphi}_{ij} \mu \iota \otimes \theta_j \right) \otimes u_i
\end{align*}

Since $h_n \rho_i \in \text{Hom}^{-1}(B_{w_1}, B_{w_1}J_n) = 0$, the first term above vanishes. Since $\tilde{\varphi}_{ij}$ is a polynomial in the alphabets $\mathbb{X}$, $\mathbb{X}'$ and $h_n$ is $R[\mathbb{X}, \mathbb{X}']$-linear, we have $h_n \tilde{\varphi}_{ij} \mu \iota = \tilde{\varphi}_{ij} h_n \mu \iota$.

We factor $\mu$ as $\mu = \overline{\mu} g$ as in the proof of Proposition \ref{prop: forkslyde} in Appendix \ref{app: dot_slide_nat}. Then by the results of that section, we have

\[
h_n \mu \iota = h_n \overline{\mu} g \iota = \overline{\mu} h g \iota = \overline{\mu} (gh' - [d, \tilde{\chi}]) \iota
\]

It is easily verified using the explicit description of $h'$ given in Proposition \ref{prop: forkslyde} that $h' \iota = 0$. What remains of the above expression is

\[
h_n \mu \iota = \overline{\mu} \tilde{\chi} d_{C_n} \iota - \overline{\mu} d_{STM_n} \tilde{\chi} \iota
\]

Again, it is easily checked that $d_{C_n} \iota = 0$. In total, we see

\[
(\Delta_{A'_n} \tilde{\alpha} - \tilde{\alpha} \Delta_{A_{n - 1}})_{u_i} = (y_1 - y_n) \left( \sum_{j = 2}^{n - 1} \tilde{\varphi}_{ij} \overline{\mu} d_{STM_n} \tilde{\chi} \iota \otimes \theta_j \right) \otimes u_i
\]

Moving on to the term $d_{A'_n} \tilde{\alpha}_y - \tilde{\alpha}_y d_{A_{n - 1}}$, we are again aided by the fact that $\tilde{\alpha}_y$ has no terms of nonzero periodic degree. Then the only contribution in this periodic degree from this term is of the form

\begin{align*}
    (d_{A'_n} \tilde{\alpha}_y - \tilde{\alpha}_y d_{A_{n - 1}})_{u_i} & = (d_{A'_n})_{u_i} \tilde{\alpha}_y - \tilde{\alpha}_y (d_{A_{n - 1}})_{u_i} \\
    & = (\Psi \xi_i \Psi' \otimes u_i) ((\overline{\mu} \tilde{\chi} \iota \otimes 1) \otimes 1) (y_n - y_1) - ((\overline{\mu} \tilde{\chi} \iota \otimes 1) \otimes 1) (\xi_i \otimes u_i) (y_n - y_1) \\
    & = \Big((\Psi \xi_i \Psi' (\overline{\mu} \tilde{\chi} \iota \otimes 1) - (\overline{\mu} \tilde{\chi} \iota \otimes 1) \xi_i) \otimes u_i \Big) (y_n - y_1)
\end{align*}

Substituting known expressions for $\xi_i$ and $\Psi \xi_i \Psi'$:

\begin{align*}
    \Psi \xi_i \Psi' (\overline{\mu} \tilde{\chi} \iota \otimes 1) - (\overline{\mu} \tilde{\chi} \iota \otimes 1) \xi_i & = \left( \sum_{j = 2}^{n - 1} f_{ij}(\mathbb{X}, \mathbb{X}') (1 \otimes \theta_j - h_j \otimes 1) \right) (\overline{\mu} \tilde{\chi} \iota \otimes 1) \\
    & \quad - (\overline{\mu} \tilde{\chi} \iota \otimes 1) \left( \sum_{j = 2}^{n - 1} f_{ij}(\mathbb{X}, \mathbb{X}'') \otimes \theta_j \right) \\
    & = \sum_{j = 2}^n (f_{ij}(\mathbb{X}, \mathbb{X}') - f_{ij}(\mathbb{X}, \mathbb{X}'')) \overline{\mu} \tilde{\chi} \iota \otimes \theta_j - f_{ij}(\mathbb{X}, \mathbb{X}') h_j \overline{\mu} \tilde{\chi} \iota \otimes 1
\end{align*}

Since $h_j \overline{\mu} \tilde{\chi} \iota \in \text{Hom}^{-1}_{\text{Ch}(SBim_n)}(B_{w_1}, B_{w_1}J_n) = 0$, the second term above vanishes. We rewrite the remainder in terms of the maps $\tilde{\varphi}_{ij}$:

\begin{align*}
    (f_{ij}(\mathbb{X}, \mathbb{X}') - f_{ij}(\mathbb{X}, \mathbb{X}'')) \overline{\mu} \tilde{\chi} \iota \otimes \theta_j & = -[d, \tilde{\varphi}_{ij}] \overline{\mu} \tilde{\chi} \iota \otimes \theta_j \\
    & = (\tilde{\varphi}_{ij} d_{B_{w_1}J_n} - d_{B_{w_1}J_n} \tilde{\varphi}_{ij}) \overline{\mu} \tilde{\chi} \iota \otimes \theta_j
\end{align*}

Again, since $\tilde{\varphi}_{ij} \overline{\mu} \tilde{\chi} \iota \in \text{Hom}^{-1}_{\text{Ch}(SBim_n)}(B_{w_1}, B_{w_1}J_n) = 0$, the second term in parentheses vanishes. Since $\overline{\mu}$ is a chain map, we again rewrite the remainder as

\begin{align*}
    \tilde{\varphi}_{ij} d_{B_{w_1}J_n} d_{B_{w_1}J_n} \tilde{\varphi}_{ij}) \overline{\mu} \tilde{\chi} \iota \otimes \theta_j & = \tilde{\varphi}_{ij} \overline{\mu} d_{STM_n} \tilde{\chi} \iota \otimes \theta_j
\end{align*}

Collecting our results, we obtain

\[
(d_{A'_n} \tilde{\alpha}_y - \tilde{\alpha}_y d_{A_{n - 1}})_{u_i} = (y_n - y_1) \left( \sum_{j = 2}^{n - 1} \tilde{\varphi}_{ij} \overline{\mu} d_{STM_n} \tilde{\chi} \iota \otimes \theta_j \right) \otimes u_i
\]

This exactly cancels the contribution from $\Delta_{A'_n} \tilde{\alpha} - \tilde{\alpha} \Delta_{A_{n - 1}}$ in this periodic degree.

\textbf{Periodic degree $u_iu_j$, $i \leq j$}:

We again begin by checking the term $\Delta_{A'_n} \tilde{\alpha} - \tilde{\alpha} \Delta_{A_{n - 1}}$ in this degree:

\begin{align*}
    (\Delta_{A'_n}\tilde{\alpha} - \tilde{\alpha}\Delta_{A_{n - 1}})_{u_iu_j} & = \Delta_{A'_n}(\tilde{\alpha})_{u_iu_j} - (\tilde{\alpha})_{u_iu_j} \Delta_{A_{n - 1}} \\
    & = \left( \left( h_n(y_n - y_1) \otimes 1 + \sum_{k = 2}^{n - 1} (y_k - y_1) \otimes \theta_k \right) \otimes 1 \right) (H_{ij} \otimes u_iu_j) \\
    & \quad - (H_{ij} \otimes u_iu_j) \left( \left( \sum_{k = 2}^{n - 1} (y_k - y_1) \otimes \theta_k \right) \otimes 1 \right) \\
    & = \Bigg( \left( h_n(y_n - y_1) \otimes 1 + \sum_{k = 2}^{n - 1} (y_k - y_1) \otimes \theta_k \right) \Bigg(\sum_{2 \leq \ell < m}^{n - 1} \tilde{\omega}_{ij \ell m} \mu \iota \otimes \theta_{\ell} \theta_m \Bigg) \\
    & \quad - \Bigg(\sum_{2 \leq \ell < m}^{n - 1} \tilde{\omega}_{ij \ell m} \mu \iota \otimes \theta_{\ell} \theta_m \Bigg) \Bigg( \sum_{k = 2}^{n - 1} (y_k - y_1) \otimes \theta_k \Bigg) \Bigg) \otimes u_iu_j \\
    & = \Bigg( \sum_{2 \leq \ell < m}^{n - 1} (y_n - y_1) h_n \tilde{\omega}_{ij\ell m} \otimes \theta_{\ell} \theta_m \\
    & \quad + \sum_{k = 2}^{n - 1} \sum_{2 \leq \ell < m}^{n - 1} (y_k - y_1) \tilde{\omega}_{ij\ell m} \mu \iota \otimes (\theta_k \theta_{\ell} \theta_m - \theta_{\ell} \theta_m \theta_k) \Bigg) \otimes u_iu_j \\
    & = 0
\end{align*}

Here the final equality follows from anticommutativity of the variables $\theta$ and $h_n \tilde{\omega}_{ij\ell m} \in \text{Hom}^{-1}(B_{w_1}, B_{w_1}J_n) = 0$. Since $d_{A'_n}$ and $d_{A_{n - 1}}$ have no components of periodic degree larger than $u_i$, we see no contribution from $d_{A'_n} \tilde{\alpha}_y - \tilde{\alpha}_u d_{A_{n - 1}}$ in this periodic degree.

\textbf{Higher periodic degrees}: None of $\Delta_{A'_n}, \Delta_{A_{n - 1}}$, $\tilde{\alpha}$, or $\tilde{\alpha}_y$ have any terms of higher periodic degrees, so there is nothing to check.
\end{proof}

We collect our results so far.

\begin{theorem} \label{thm: yprojtop}
Consider the map

\begin{align*}
    \Phi^y \colon t\mathbb{Z}[u_n^{-1}] \otimes A_{1^{n - 1}}^y \to q^{2}t^{-1} \mathbb{Z}[u_n^{-1}] \otimes A_{1^{n - 1}}^yJ_n^y; \quad \quad \Phi^y = 1 \otimes \alpha^y + u_n \otimes \beta
\end{align*}

Set $P_{1^n}^y := \text{Cone}(\Phi^y)$\textcolor{revisions}{, and let $\delta_{P_{1^n}}$ denote the connection on this cone}. Then $(P_{1^n}^y, e, \textcolor{revisions}{\delta_{P_{1^n}}})$ is a (strict) $y$-ification of $P_{1^n}$.
\end{theorem}

Note the similarities between Theorem \ref{thm: yprojtop} and Theorem \ref{thm: projtop}. 

\subsection{Categorical Idempotents}

In order to use the projector $P_{1^n}^y$ to define a colored, $y$-ified link homology theory, we require that theory to be independent of the location of the projector on a given strand. This is most easily established by showing that the projector used to color a strand is a categorical idempotent in the sense of \cite{Hog17}. In the un $y$-ified setting, this was accomplished in \cite{AH17} by realizing $A_n$ explicitly as a projective resolution of $R_n$. Our Theorem \ref{thm: projtop} gives an alternative proof of this claim for the model $P_{1^n}$ in the un $y$-ified setting.

After $y$-ifying, the transition from chain complexes to \textit{curved} complexes prevents us from employing the usual language of quasi-isomorphisms and projective resolutions to show $A_n^y$ is a counital idempotent by mimicking the proof given in \cite{AH17}.  In lieu of this, we show that the $y$-ified projector is a categorical idempotent inductively as in Theorem \ref{thm: projtop}. In fact, we will find it easier to work with the \textit{dual} projectors $(P^y_{1^n})^{\vee}$ and $(A^y_n)^{\vee}$. More precisely, applying the contravariant duality functor $(-)^{\vee}$ of Remark \ref{rem: chain_graphs} to $P_{1^n}$ and $A_n$ gives two bounded \textit{below} complexes $P_{1^n}^{\vee}, A_n^{\vee} \in K^+(SBim_n)$ which are \textit{unital} idempotents in this category, in the sense described below.

\begin{theorem} \label{thm: unital_idem}
Let $C = P_{1^n}^{\vee}$ or $A_n^{\vee}$. Then:

\begin{itemize}
    \item[(P1)] $C \in \mathcal{I}_n^+$;
    \item[(P2)] There exists a chain map $\nu_n \colon R_n \to C$ such that $\text{Cone}(\nu_n) \in ^{\perp}(\mathcal{I}_n^+) \cap (\mathcal{I}_n^+)^{\perp}$.
\end{itemize}
These two properties uniquely characterize $C$ up to homotopy equivalence. That is, if $(Q, \psi)$ are a complex and morphism satisfying (P1) and (P2), then there is a unique (up to homotopy) map $\phi \colon C \to Q$ such that $\psi = \phi \circ \nu_n$; moreover, $\phi$ is a homotopy equivalence.
\end{theorem}

\textcolor{revisions}{Note the similarity between Theorem \ref{thm: unital_idem}, which characterizes $P_{1^n}^{\vee}$ and $A_n^{\vee}$ as \textit{unital} idempotents, and Theorem \ref{thm: AHchar}, which characterizes their duals $P_{1^n}$ and $A_n$ as \textit{counital} idempotents.}

We refer the interested reader to \cite{AH17} and \cite{Hog17} for more details of this construction and the relation between unital and counital idempotents in general. Our primary motivations for dualizing are twofold. First, the bounded below category $\mathcal{Y}^+_e(R_n-\text{Bim})$ is better behaved with respect to our convention for morphisms; see Remark \ref{rem: convergence}. Second, previous results for row-colored homology deal with \textit{unital} idempotents, and comparison to these results necessitates our dealing with unital idempotent-colored invariants as well.

We can extend $(-)^{\vee}$ to curved complexes in an essentially tautological way; given a curved complex $(C[\mathbb{Y}], \sigma, \delta_C) \in \mathcal{Y}_{\sigma}(R_n-\text{Bim})$, we set

\[
\delta_{C^{\vee}} := \sum_{\textbf{v} \in \mathbb{Z}^n_{\geq 0}} (f_{y^\textbf{v}})^{\vee} \otimes y^\textbf{v}
\]

Functoriality (and $R[\mathbb{X}, \mathbb{X}']$-linearity) of $(-)^{\vee}$ then ensures that $(C^{\vee}[\mathbb{Y}], \sigma, \delta_{C^{\vee}}) \in \mathcal{Y}_{\sigma}(R_n-\text{Bim})$. In particular, we have (strict) $y$-ifications $(A^y_n)^{\vee}$, $(P^y_{1^n})^{\vee} \in \mathcal{Y}_e(R_n-\text{Bim})^+$ of the unital idempotents $A_n^{\vee}$, $P_{1^n}^{\vee}$. We obtain explicit descriptions of these $y$-ifications by reversing all arrows and grading shifts in our explicit descriptions of $A_n^y$ and $P_{1^n}^y$. In particular, dualizing preserves the $\mathbb{Z}[u_2, \dots, u_n]$-periodicity of these curved complexes. We record this result below.

\begin{proposition} \label{prop: dual_y_struc}
Let $u_2, \dots, u_n$ be formal variables of degree $\text{deg}(u_i) = q^{-2i}t^2$. Then there exist $\mathbb{Y}$-degree \textcolor{revisions}{$0$} twists such that $(P_{1^n}^y)^{\vee} \cong \text{tw}(\text{Cone}(\overline{\alpha}^y)^{\vee} \otimes \mathbb{Z}[u_2, \dots, u_n])$ and $(A_n^y)^{\vee} \cong \text{tw}((K_n^y)^{\vee} \otimes \mathbb{Z}[u_2, \dots, u_n])$. These twists endow $(P_{1^n}^y)^{\vee}$, respectively $(A_n^y)^{\vee}$, with filtrations over $\mathbb{Z}[u_2, \dots, u_n]$ with subquotients $\text{Cone}(\overline{\alpha}^y)^{\vee}$, respectively $(K_n^y)^{\vee}$.
\end{proposition}

In what follows, we will rely on the fact that the explicit combinatorial model $(A^y_{n - 1})^{\vee}$ of our $y$-ified projector is a unital idempotent whenever $(P^y_{n - 1})^{\vee}$ is a unital idempotent. We prove this by furnishing an explicit homotopy equivalence $(A^y_n)^{\vee} \simeq (P^y_{1^n})^{\vee}$.

\begin{theorem} \label{thm: top_comb_equiv}
Let $F_0 \colon P_{1^n}^{\vee} \to A_n^{\vee}$ and $G_0 \colon A_n^{\vee} \to P_{1^n}^{\vee}$ be the data of a homotopy equivalence. Then there exist curved lifts $F \colon (P_{1^n}^y)^{\vee} \to (A_n^y)^{\vee}$, $G \colon (A_n^y)^{\vee} \to (P_{1^n}^y)^{\vee}$ of $F_0, G_0$ to homotopy equivalences between the $y$-ifications constructed above.
\end{theorem}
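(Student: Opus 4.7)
My plan is to apply the obstruction-theoretic machinery of Proposition \ref{prop: GHobs} together with the filtration of $(P_{1^n}^y)^\vee$ and $(A_n^y)^\vee$ over $\mathbb{Z}[u_2,\dots,u_n]$ from Proposition \ref{prop: dual_y_struc}. The crucial point is that the $y$-ified twists on both curved complexes involve only the $\theta$- and $y$-variables, so they preserve the $u$-filtration; each graded piece is a bounded curved complex, namely $\text{Cone}(\overline{\alpha}^y)^\vee$ or $(K_n^y)^\vee$. This bounded structure places us squarely in the setting where the corollaries to Proposition \ref{prop: GHobs} apply.

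To construct $F$, I would first lift $F_0$ on the $u^{\textbf{0}}$-subquotient via Corollary \ref{corr: GHobs}, using that $\text{Cone}(\overline{\alpha})^\vee \simeq q^{1-n}t \cdot K_n^\vee$ by Proposition \ref{prop: krec} to identify the relevant hom-complex and verify the required homological vanishing for the obstruction theory. Extending up the filtration, Proposition \ref{prop: GHobs} reduces each obstruction to extend a partial lift by one periodic degree to a chain-level identity, which is automatically satisfied because $F_0$ is itself a closed map on the full filtered complex. One constructs $G$ symmetrically. For the homotopy equivalences $FG \sim \mathrm{id}$ and $GF \sim \mathrm{id}$, I would apply Corollary \ref{corr: GHobs2} to the differences, whose $\mathbb{Y}$-degree-$0$ components are null-homotopic by hypothesis; the same filtration-by-filtration argument promotes the chain-level null-homotopies to curved ones.

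The main obstacle will be controlling the obstructions that couple different filtration levels: the twists $\alpha^y, \beta^y$ (and their duals) appearing in Theorem \ref{thm: yprojtop} mean that the closedness condition at periodic degree $u^\textbf{v}$ involves cross-terms with lower degrees, so the inductive clearing of obstructions at each stage must be done carefully. I expect this to reduce, via the graded Leibniz rule and the fact that $F_0, G_0$ are global chain maps, to chain-level identities already enforced by the explicit structure of $A_n^\vee$ and $P_{1^n}^\vee$. Convergence of the resulting formal $\mathbb{Y}$-series is handled by the convention of Remark \ref{rem: convergence} together with the bounded-below nature of the target curved complex: for a fixed homological degree $i$ in the source, a degree-$0$ component $F_{y^\textbf{v}}$ lands in degree $i - 2|\textbf{v}|$ of the target, forcing $|\textbf{v}| \leq i/2$ and hence only finitely many $\mathbb{Y}$-monomials contribute in each degree.
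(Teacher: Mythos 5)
Your convergence argument (the target is bounded below, so a component $F_{y^{\mathbf v}}$ of degree $q^{2|\mathbf v|}t^{-2|\mathbf v|}$ can only contribute for finitely many $\mathbf v$ in each source degree) is exactly right and is the same as the paper's. But the core of your construction has a genuine gap. The paper applies Proposition \ref{prop: GHobs} in one step to the full semi-infinite complexes, and the input that makes this work is the result from \cite{AH17} that $\text{Hom}^{\bullet}(P_{1^n}^{\vee}, A_n^{\vee}) \simeq \text{End}^{\bullet}(P_{1^n}^{\vee})$ has homology concentrated in non-negative degrees. You never invoke this (or any substitute) vanishing statement. Instead you propose to clear obstructions level-by-level along the $u$-filtration, but the vanishing you would need at each subquotient --- e.g.\ for $\text{Hom}(\text{Cone}(\overline{\alpha})^{\vee}, (K_n)^{\vee})$, which via $\text{Cone}(\overline{\alpha})^{\vee} \simeq q^{1-n}t\,K_n^{\vee}$ is a \emph{shifted} copy of $\text{End}(K_n)$ --- is neither established in the paper nor obviously sufficient, precisely because (as you note) the connection couples different filtration levels. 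Your proposed resolution, that the obstruction at each periodic degree is ``automatically satisfied because $F_0$ is itself a closed map on the full filtered complex,'' is not a valid argument: closedness of the $\mathbb{Y}$-degree-zero component never kills the higher curved obstructions; only the negative-degree homology vanishing of the relevant hom-complex does, and that is the ingredient missing from your proposal. There is also a preliminary issue: $F_0$ is an arbitrary homotopy equivalence and has no reason to respect the $u$-filtration, so ``lifting $F_0$ on the $u^{\mathbf 0}$-subquotient'' is not well-defined without first replacing $F_0$ by a filtered representative.

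The homotopy-equivalence step also differs in a way that matters. Applying Corollary \ref{corr: GHobs2} to $FG - \mathrm{id}$ and $GF - \mathrm{id}$ would again require the same global non-negativity statement, now for $\text{End}^{\bullet}(P_{1^n}^{\vee})$ and $\text{End}^{\bullet}(A_n^{\vee})$, which you have not supplied. The paper avoids this entirely: since $F_0$ and $G_0$ are homotopy equivalences, $\text{Cone}(F_0)$ and $\text{Cone}(G_0)$ are contractible bounded-below complexes, and $\text{Cone}(F)$, $\text{Cone}(G)$ are curved lifts of them, hence contractible by Lemma \ref{lem: y_contract}; contractibility of the cones then gives that $F$ and $G$ are homotopy equivalences. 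If you add the \cite{AH17} vanishing input and apply Proposition \ref{prop: GHobs} globally (discarding the filtration-by-filtration scheme), your argument collapses onto the paper's proof.
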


\begin{proof}
By results in \cite{AH17}, $\text{Hom}^{\bullet}(P_{1^n}^{\vee}, A_n^{\vee}) \simeq \text{End}^{\bullet}(P_{1^n}^{\vee})$ has homology concentrated in non-negative degrees. Applying Proposition \ref{prop: GHobs} to $F_0$ with $N = 0$, $g = 0 \in \text{Hom}^1_{\mathcal{Y}_e (R_n-\text{Bim})}((P_{1^n}^y)^{\vee}, (A_n^y)^{\vee})$, we obtain a formal power series

\begin{align*}
    F = \sum_{\textbf{v} \in \mathbb{Z}_{\geq 0}^n} F_{y^\textbf{v}} \otimes y^{\textbf{v}}
\end{align*}

in $\text{Hom}^0_{\text{Ch}(R_n-\text{Bim})}(P_{1^n}^{\vee}, A_n^{\vee})[[\mathbb{Y}]]$ satisfying $F_{y^\textbf{0}} = F_0$ and $[\delta, F] = 0$. For each $\textbf{v} \in \mathbb{Z}_{\geq 0}^n$, we have $\text{deg}(F_{y^\textbf{v}}) = q^{2|\textbf{v}|}t^{-2|\textbf{v}|}$. Since $A_n^{\vee}$ is bounded below in homological degree, $F$ must restrict to a finite sum on each homological degree of $P_{1^n}^{\vee}$, so $F$ is well-defined.

An exactly analogous argument gives a well-defined lift $G$ of $G_0$. That $F, G$ are the data of a homotopy equivalence then follows from a direct application of Corollary \ref{lem: y_contract} to $\text{Cone}(F)$ and $\text{Cone}(G)$ and the well-known fact that contractibility of mapping cones characterizes homotopy equivalence.
\end{proof}

Everything is in place for our main theorem of this section.

\begin{definition} \label{def: yideal}
For each $\sigma \in \mathfrak{S}^n$, let $\textcolor{revisions}{\mathcal{Y}} \mathcal{I}_{n, \sigma}$ denote the full subcategory of $\mathcal{Y}_{\sigma}(R_n-\text{Bim})$ consisting of objects of the form $(C[\mathbb{Y}], \sigma, \delta_C)$ for some $C \in \mathcal{I}_n$. We define $\textcolor{revisions}{\mathcal{Y}} \mathcal{I}_{n, \sigma}^b$, $\textcolor{revisions}{\mathcal{Y}} \mathcal{I}_{n, \sigma}^{\pm}$ as usual.
\end{definition}

\begin{theorem} \label{thm: yified_unital_proj}
Fix $n \geq 1$. Then:

\begin{itemize}
    \item[(P1)] $(P_{1^n}^y)^{\vee} \in \textcolor{revisions}{\mathcal{Y} \mathcal{I}_{n, e}^+}$;
    \item[(P2)] The map $\nu_n \colon R_n \to (P_{1^n})^{\vee}$ lifts to a chain map $\tilde{\nu}_n \colon R_n[\mathbb{Y}] \to (P_{1^n}^y)^{\vee}$ of curved complexes such that, for each $\sigma \in \mathfrak{S}^n$ and each $C[\mathbb{Y}] \in \textcolor{revisions}{\mathcal{Y} \mathcal{I}^+_{n, \sigma}}$, we have $C[\mathbb{Y}] \otimes \text{Cone}(\tilde{\nu}_n) \simeq \text{Cone}(\tilde{\nu}_n) \otimes C[\mathbb{Y}] \simeq 0$.
\end{itemize}

These two properties uniquely characterize $(P_{1^n}^y)^{\vee}$ up to homotopy equivalence. That is, if $(Q, \psi)$ are a curved complex and morphism satisfying (P1) and (P2), then there is a unique (up to homotopy) chain map $\phi \colon (P_{1^n}^y)^{\vee} \to Q$ such that $\psi = \phi \circ \tilde{\nu}_n$; moreover, $\phi$ is a homotopy equivalence.
\end{theorem}

\begin{proof}
We proceed by induction. In the base case, $(P_{1^1}^y)^{\vee} := R_1[\mathbb{Y}]$ and $\nu_1 = \text{id}_{R_1[\mathbb{Y}]}$. Now, suppose the pair $((P_{1^{n - 1}}^y)^{\vee}, \tilde{\nu}_{n - 1})$ satisfy (P1) and (P2). By Theorem \ref{thm: top_comb_equiv}, there is a chain map $\tilde{\psi}_{n - 1} \colon R_{n - 1} \to (A^y_{n - 1})^{\vee}$ satisfying (P1) and (P2) as well; moreover, $\tilde{\psi}_{n - 1} = F \circ \tilde{\nu}_{n - 1}$. It is clear from Proposition \ref{prop: dual_y_struc} that $\tilde{\psi}_{n - 1}$ extends to a chain map $\tilde{\nu}_n \colon R_n[\mathbb{Y}] \to (P_{1^n}^y)^{\vee}$ lifting $\nu_n$ as in the proof of Theorem \ref{thm: projtop}. Fix $\sigma \in \mathfrak{S}^n$, and let $C[\mathbb{Y}] \in \mathcal{I}_{n, \sigma}^+$ be given. Then $C[\mathbb{Y}] \otimes \text{Cone}(\tilde{\nu}_n)$ (resp. $\text{Cone}(\tilde{\nu}_n) \otimes C[\mathbb{Y}]$) is a $y$-ification of $C \otimes \text{Cone}(\nu_n)$ (resp. $\text{Cone}(\nu_n) \otimes C$); this is contractible by Lemma \ref{lem: y_contract}.
\end{proof}

The desired centrality of $(P_{1^n}^y)^{\vee}$ follows as a direct consequence:

\begin{corollary} \label{cor: crossing_slide}
For each braid $\beta \in Br_n$, there is a homotopy equivalence $F^y(\beta) \otimes (P^y_{1^n})^{\vee} \simeq (P^y_{1^n})^{\vee} \otimes F^y(\beta)$.
\end{corollary}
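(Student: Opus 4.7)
The plan is to imitate the standard ``sandwich'' argument used in \cite{Hog17} and \cite{AH17}, leveraging only property (P2) of $(P^y_{1^n})^{\vee}$ established in the previous theorem, together with the two-sided tensor ideal behavior of the categories $\mathcal{I}^+_{n, \sigma}$ under external multiplication by $y$-ified Rouquier complexes.

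First, I would verify the auxiliary claim that if $C[\mathbb{Y}] \in \mathcal{I}^+_{n, \sigma}$ and $\beta \in Br_n$ has associated permutation $\rho$, then both $F^y(\beta) \otimes C[\mathbb{Y}]$ and $C[\mathbb{Y}] \otimes F^y(\beta)$ lie in $\mathcal{I}^+_{n, \rho\sigma}$ and $\mathcal{I}^+_{n, \sigma\rho}$ respectively. At the level of underlying un-$y$-ified complexes this is immediate from the fact that $\mathcal{I}_n^+$ is a two-sided tensor ideal in $K^+(SBim_n)$ (Section~\ref{sec: inf_proj_big}); the curved twist of the product is the sum of the two individual twists paired with the appropriate permutation of $\mathbb{Y}$-variables, so the product remains a (strict) $y$-ification with the expected permutation datum.

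Next, I would build two canonical comparison maps. Let $\tilde{\nu}_n \colon R_n[\mathbb{Y}] \to (P^y_{1^n})^{\vee}$ be the unit chain map from part (P2) of the previous theorem. Tensoring on the right and on the left with $F^y(\beta)$ yields
\[
\Phi := \tilde{\nu}_n \otimes \mathrm{id} \colon F^y(\beta) \longrightarrow (P^y_{1^n})^{\vee} \otimes F^y(\beta), \qquad
\Psi := \mathrm{id} \otimes \tilde{\nu}_n \colon F^y(\beta) \longrightarrow F^y(\beta) \otimes (P^y_{1^n})^{\vee}.
\]
I would then tensor $\Phi$ on the right by $(P^y_{1^n})^{\vee}$, and $\Psi$ on the left by $(P^y_{1^n})^{\vee}$, to obtain comparison maps whose cones are respectively
\[
\mathrm{Cone}(\tilde{\nu}_n) \otimes F^y(\beta) \otimes (P^y_{1^n})^{\vee} \qquad \text{and} \qquad (P^y_{1^n})^{\vee} \otimes F^y(\beta) \otimes \mathrm{Cone}(\tilde{\nu}_n).
\]
By the auxiliary claim above, $F^y(\beta) \otimes (P^y_{1^n})^{\vee}$ and $(P^y_{1^n})^{\vee} \otimes F^y(\beta)$ both lie in $\mathcal{I}^+_{n, \rho}$ for $\rho$ the permutation of $\beta$. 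Property (P2) of $\tilde{\nu}_n$ then guarantees that both cones are contractible, so both comparison maps are honest homotopy equivalences. Composing one with a homotopy inverse of the other yields the desired equivalence $F^y(\beta) \otimes (P^y_{1^n})^{\vee} \simeq (P^y_{1^n})^{\vee} \otimes F^y(\beta)$.

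The only non-routine step is the auxiliary claim about the ideal property of $\mathcal{I}^+_{n, \sigma}$ under tensor with $F^y(\beta)$, since one must check that the curved lift of the tensor product really does assemble into a $y$-ification with the predicted permutation datum (i.e.\ that the $\mathbb{Y}$-actions coming from the two factors and their twisting under $\sigma, \rho$ interact correctly). This is essentially bookkeeping: each factor's connection squares to its own curvature $Z_\sigma$ or $Z_\rho$, and the monoidal rule for $y$-ifications stated in Section~\ref{sec: yification} -- where the left action on the second factor is pre-composed with the inverse permutation of $\mathbb{Y}$-variables -- is precisely what makes the sum of curvatures collapse to $Z_{\rho\sigma}$. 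Once this is in hand, the sandwich argument proceeds verbatim and the corollary follows.
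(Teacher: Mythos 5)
Your proposal is correct. The paper itself disposes of this corollary in one line, citing Corollary 4.29 of \cite{Hog17} (the general centrality statement for unital idempotents in a monoidal homotopy category) together with the fact that $\mathcal{I}_n^+$ is a two-sided tensor ideal; what you have written is, in effect, an explicit re-derivation of that abstract result in the curved setting, via the standard sandwich argument. Concretely: you use that $F^y(\beta)\otimes(P^y_{1^n})^{\vee}$ and $(P^y_{1^n})^{\vee}\otimes F^y(\beta)$ lie in $\mathcal{I}^+_{n,\beta}$, then kill the cones of $\tilde{\nu}_n\otimes\mathrm{id}\otimes\mathrm{id}$ and $\mathrm{id}\otimes\mathrm{id}\otimes\tilde{\nu}_n$ using property (P2), identifying both sides with $(P^y_{1^n})^{\vee}\otimes F^y(\beta)\otimes(P^y_{1^n})^{\vee}$ --- exactly the mechanism underlying the cited corollary. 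Your route buys something genuine: it sidesteps any question of whether the formalism of \cite{Hog17}, stated for ordinary (triangulated) homotopy categories, transfers verbatim to homotopy categories of curved complexes with the paper's morphism conventions, since you only invoke facts already proved in the $y$-ified setting (the tensor behavior of $\mathcal{I}^+_{n,\sigma}$, contractibility of cones detecting homotopy equivalence, and (P1)--(P2)). Two small points: your opening claim to use ``only property (P2)'' is slightly off, since the auxiliary claim placing $F^y(\beta)\otimes(P^y_{1^n})^{\vee}$ in $\mathcal{I}^+_{n,\beta}$ also requires (P1), i.e.\ that $(P^y_{1^n})^{\vee}$ itself lies in $\mathcal{I}^+_{n,e}$; and the precise permutation label ($\rho\sigma$ versus $\sigma\rho$) attached to the product is immaterial here because (P2) is quantified over all $\sigma\in\mathfrak{S}^n$, which your argument correctly exploits.
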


\begin{proof}
\textcolor{revisions}{For each $\sigma \in \mathfrak{S}^n$, let $K \mathcal{Y}_{\sigma}(SBim_n)$ denote the homotopy category of $y$-ifications of complexes of Soergel bimodules with permutation $\sigma$. Set $\mathcal{A} := \sqcup_{\sigma \in \mathfrak{S}^n} K \mathcal{Y}_{\sigma}(SBim_n)$ and $\mathcal{Y}\mathcal{I}^+_{n, \mathfrak{S}^n} := \sqcup_{\sigma \in \mathfrak{S}^n} \mathcal{Y} \mathcal{I}^+_{n, \sigma}$. The category $\mathcal{A}$ equipped with the usual structure of shifts and mapping cones is a triangulated monoidal category with unit $R_n[\mathbb{Y}]$, and $\mathcal{Y}\mathcal{I}^+_{n, \mathfrak{S}^n}$ is a two-sided tensor ideal in this category.}

\textcolor{revisions}{We rephrase Theorem \ref{thm: yified_unital_proj} in the language of the previous paragraph: There is an object $(P_{1^n}^y)^{\vee} \in \mathcal{Y}\mathcal{I}^+_{n, \mathfrak{S}^n}$ and a morphism $\tilde{\nu}_n \colon R_n[\mathbb{Y}] \to (P_{1^n}^y)^{\vee}$ satisfying $\mathrm{Cone}(\tilde{\nu}_n) \in ^{\perp} (\mathcal{Y} \mathcal{I}_{n, \mathfrak{S}^n}^+) \cap (\mathcal{Y} \mathcal{I}_{n, \mathfrak{S}^n}^+)^{\perp}$. By Corollary 4.29 of \cite{Hog17}, it follows that $(P_{1^n}^y)^{\vee}$ is central in $\mathcal{A}$. In particular, $(P_{1^n}^y)^{\vee}$ commutes with all $y$-ified Rouquier complexes up to homotopy equivalence.}
\end{proof}

In using $(P^y_{1^n})^{\vee}$ to formulate a well-defined link invariant, we also require that this projector ``slides past crossings"; this ensures independence of the resulting invariant from the choice of marked point away from crossings on each strand.

\begin{proposition} \label{prop: inf_cross_slide}
For each $n \geq 1$, let $X_{n + 1} := \sigma_n \sigma_{n - 1} \dots \sigma_1 \in \text{Br}_n$, $Y_{n + 1} := \sigma_1 \sigma_2 \dots \sigma_n$. Then $F^y(X_{n + 1})(P^y_{1^n})^{\vee} \simeq (P^y_{1^n})^{\vee} F^y(X_{n + 1})$ and $F^y(Y_{n + 1}) (P^y_{1^n})^{\vee} \simeq (P^y_{1^n})^{\vee} F^y(Y_{n + 1})$.
\end{proposition}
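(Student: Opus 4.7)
The first step is a reduction to a single crossing. Since $X_{n+1} = \sigma_n \cdot (\sigma_{n-1} \cdots \sigma_1)$ with the second factor a braid in $Br_n$, and since $Y_{n+1} = (\sigma_1 \cdots \sigma_{n-1}) \cdot \sigma_n$ with the first factor in $Br_n$, Corollary \ref{cor: crossing_slide} reduces both desired equivalences to the single statement $F^y(\sigma_n)(P^y_{1^n})^{\vee} \simeq (P^y_{1^n})^{\vee} F^y(\sigma_n)$. The case of $F^y(\sigma_n^{-1})$ then follows by tensoring both sides of this equivalence with $F^y(\sigma_n^{-1})$ on the left and right, which together with Corollary \ref{cor: crossing_slide} also handles the $Y_{n+1}$ statement.

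For the single-crossing statement, the approach is to first prove the termwise fork slide equivalence $F^y(\sigma_n) \otimes B_{w_0} \simeq B_{w_0} \otimes F^y(\sigma_n)$, where $B_{w_0}$ is understood to live on the first $n$ strands, extended trivially to the $(n+1)$-th. The key observation is that the singular Soergel bimodule factorization $B_{w_0} \cong q^{-\ell(n)}\,{}_{1^n}M_{(n)} \otimes {}_{(n)} S_{1^n}$ (\S \ref{sec: SoergBim}) factors through the $(n)$-colored strand. Applying the fork slide relation of Proposition \ref{prop: fork_moves} to the merge ${}_{1^n}M_{(n)}$, the crossing $F^y(\sigma_n)$ (below the merge) can be absorbed into a colored Rickard crossing $F^y(\sigma_{(n), 1})$ sitting between the merge and the split; a second application of the fork slide to the split ${}_{(n)}S_{1^n}$ then passes this colored Rickard crossing through to the other side, producing $B_{w_0} \otimes F^y(\sigma_n)$. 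Since every chain module of $(P^y_{1^n})^{\vee}$ is (a shift of) $B_{w_0}$ by property (P1), this termwise equivalence lifts to a homotopy equivalence of the full projector via the Homotopy Perturbation Lemma (Corollary \ref{corr: useful_hpt}), transferring the differentials between the two resulting convolutions.

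The main obstacle will be lifting the fork slide manipulations to the curved, $y$-ified setting. Proposition \ref{prop: fork_moves} is proved as a strong deformation retraction via Gaussian elimination in the uncurved setting, and its $y$-ified counterparts must be constructed so as to lift all associated homotopies compatibly with the dot-sliding homotopies generating the curvature of $F^y(\sigma_n)$ and of $(P^y_{1^n})^{\vee}$ --- essentially the technical situation addressed in Proposition \ref{prop: forkslyde} and Appendix \ref{app: dot_slide_nat} for the fork slide $B_{w_1}J_n \simeq C_n$. I expect the obstruction-theoretic formalism of Proposition \ref{prop: GHobs}, combined with parity properties of the relevant Hom complexes between singular Soergel bimodules (as exploited in Section \ref{sec: alpha_build}), to suffice for constructing these lifts. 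A secondary complication is that $(P^y_{1^n})^{\vee}$ is a semi-infinite curved complex, so one must verify convergence of the resulting perturbation series on each chain group, in the spirit of the filtration argument in the proof of Theorem \ref{thm: intro_yprojtop}; here the $\mathbb{Z}[u_2, \ldots, u_n]$-periodicity of Proposition \ref{prop: dual_y_struc} provides the needed control, since on each graded piece only finitely many perturbation terms can contribute.
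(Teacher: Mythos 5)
Your first reduction is where the argument breaks, and it takes the key lemma down with it. The single-crossing statement you reduce to, $F^y(\sigma_n)(P^y_{1^n})^{\vee} \simeq (P^y_{1^n})^{\vee} F^y(\sigma_n)$ with the projector supported on strands $1,\dots,n$ on \emph{both} sides, is false. Already decategorified: conjugation by the image of $X_{n+1}$ in $H_{n+1}$ carries $s_i$ to $s_{i+1}$, so conjugating $p_{1^n}\otimes 1$ by the image of $\sigma_n$ (or of $X_{n+1}$) moves the support of the Young symmetrizer off $\{1,\dots,n\}$; it does not commute. The same failure kills your termwise lemma: for $n=2$ the two sides of ``$F(\sigma_n)B_{w_0}\simeq B_{w_0}F(\sigma_n)$'' are the complexes $(B_2B_1 \to q^{-1}tB_1)$ and $(B_1B_2 \to q^{-1}tB_1)$, both minimal (no Gaussian cancellation is possible), and they are not homotopy equivalent since $B_2B_1 \not\cong B_1B_2$. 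The fork-slide mechanism cannot rescue this: Proposition \ref{prop: fork_moves} slides a crossing of the \emph{entire} merged strand past the trivalent vertex, so after writing $B_{w_0}\cong {}_{1^n}M_{(n)}\,{}_{(n)}S_{1^n}$ you can only absorb the full block crossing of all $n$ cable strands with the extra strand --- i.e.\ $X_{n+1}$ itself --- not the single crossing $\sigma_n$, which involves just one strand of the cable. The statement that is actually true (and the only reading under which the proposition holds; the paper's notation suppresses strand positions) is the block-crossing slide, in which the projector changes support from strands $2,\dots,n+1$ on one side of $X_{n+1}$ to strands $1,\dots,n$ on the other. Precisely because of this shift, the problem cannot be localized to $\sigma_n$, and Corollary \ref{cor: crossing_slide} (which concerns braids supported on the \emph{same} $n$ strands as the projector) does not effect the reduction you want.

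Even if you repair this by running your strategy on the full $X_{n+1}$, where the termwise equivalence $F(X_{n+1})(R_1\sqcup B_{w_0}) \simeq (B_{w_0}\sqcup R_1)F(X_{n+1})$ does hold, the hard part remains an expectation rather than a proof: you would need curved lifts of these fork-slide equivalences compatible both with the dot-sliding $y$-ification of $F^y(X_{n+1})$ and with the internal twist $(y_j-y_1)\otimes\theta_j$ of the projector, plus convergence of the perturbation on the semi-infinite complex --- exactly the kind of lengthy verification the paper performs for a related equivalence in Proposition \ref{prop: forkslyde} and Appendix \ref{app: dot_slide_nat}, and not something that falls out of Proposition \ref{prop: GHobs} for free. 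The paper's actual proof avoids constructing any new homotopies: following \cite{CK12}, it uses the unit maps $\tilde{\nu}_j$ and an induction over $j$ (via the convolution description of Proposition \ref{prop: dual_y_struc}, the killing property (P2), and contractibility of $y$-ified cones, Lemma \ref{lem: y_contract}) to show that inserting a second projector on the far side of $F^y(X_{n+1})$ is a homotopy equivalence, so that both sides of the crossing slide are identified with the doubly-projected complex $(P^y_{1^n})^{\vee}F^y(X_{n+1})(P^y_{1^n})^{\vee}$.
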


\begin{proof}
Our proof follows the same techniques as in \cite{CK12}. We claim that the map \textcolor{revisions}{$\tilde{\nu}_n \colon R_n[\mathbb{Y}] \to (P^y_{1^n})^{\vee}$} induces homotopy equivalences

\[
\textcolor{revisions}{\text{id} \otimes \tilde{\nu}_n \colon (P^y_{1^n})^{\vee} F^y(X_{n + 1}) \to (P^y_{1^n})^{\vee} F^y(X_{n + 1}) (P^y_{1^n})^{\vee}}
\]

and

\[
\textcolor{revisions}{\tilde{\nu}_n \otimes \text{id} \colon F^y(X_{n + 1}) (P^y_{1^n})^{\vee} \to (P^y_{1^n})^{\vee} F^y(X_{n + 1}) (P^y_{1^n})^{\vee}.}
\]

As in previous cases, it suffices to check that the cone of each map is contractible\textcolor{revisions}{; by Lemma \ref{lem: y_contract}, it in turn suffices to check that the non $y$-ified cones $\mathrm{Cone}(\mathrm{id} \otimes \nu_n)$ and $\mathrm{Cone}(\nu_n \otimes \mathrm{id})$ are contractible. We prove this for the former cone; the latter follows exactly analogously.}

\textcolor{revisions}{First, notice that $\mathrm{Cone}(\mathrm{id} \otimes \nu_n) = (P_{1^n}^{\vee} F(X_{n + 1})) \mathrm{Cone}(\nu_n)$. Using the explicit description of $(P_{1^n})^{\vee}$ from e.g. Proposition \ref{prop: dual_y_struc}, we have that $(P_{1^n})^{\vee}F(X_{n + 1}) = \mathrm{tw}_{\gamma}(B_{w_0}F(X_{n + 1} \otimes \wedge[\theta_2, \dots, \theta_n] \otimes \mathbb{Z}[u_2, \dots, u_n])$ for some twist $\gamma$. It is well known that $B_{w_0} F(X_{n + 1}) \simeq F(X_{n + 1}) B_{w_0}$ (in other words, $B_{w_0}$ slides past crossings). Since this is an upper finite, homologically locally finite convolution, by Proposition \ref{prop: useful_hpt} we have a homotopy equivalence}

\begin{equation*}
\textcolor{revisions}{(P_{1^n})^{\vee}F(X_{n + 1}) \simeq \mathrm{tw}_{\gamma'}(F(X_{n + 1}) B_{w_0} \otimes \wedge[\theta_2, \dots, \theta_n] \otimes \mathbb{Z}[u_2, \dots, u_n])}
\end{equation*}

\textcolor{revisions}{for some twist $\gamma'$. Applying $- \otimes \mathrm{Cone}(\nu)$, we obtain}

\begin{equation} \label{eq: sliding_conv}
\textcolor{revisions}{(P_{1^n})^{\vee}F(X_{n + 1}) \mathrm{Cone}(\nu_n) \simeq \mathrm{tw}_{\gamma'}(F(X_{n + 1}) B_{w_0} \mathrm{Cone}(\nu_n) \otimes \wedge[\theta_2, \dots, \theta_n] \otimes \mathbb{Z}[u_2, \dots, u_n])}
\end{equation}

\textcolor{revisions}{Now $\mathrm{Cone}(\nu_n) \in ^{\perp}(\mathcal{I}_n^+) \cap (\mathcal{I}_n^+)^{\perp}$ by Theorem \ref{thm: unital_idem}, so in particular $F(X_{n + 1}) B_{w_0} \mathrm{Cone}(\nu_n) \simeq 0$. The convolution in \eqref{eq: sliding_conv} is once again upper finite and homologically locally finite, so again by Proposition \ref{prop: useful_hpt}, the whole convolution is contractible.}
\end{proof}

\subsection{Y-ified Hochschild Cohomology} \label{sec: yify_hoch_coh}

In this section we develop the $y$-ified analog of the partial trace technology recalled in Section \S \ref{section: HoCh}. Much of the material of this section is likely well-known to experts, but to our knowledge, this is the first development of this technology appearing in the literature.

\begin{definition}
Let $\mathcal{C}_n$ be as in Section $\S \ref{section: HoCh}$. We regard $R[\mathbb{Y}]$ as an element of $\mathcal{C}_n$ with trivial differential under the usual inclusion functors; note that $\text{deg}(y_i) = q^{-2}t^2 = a^0q^{-2}t^2$ under this inclusion. For each $C \in \mathcal{C}_n$, let $C[\mathbb{Y}] := C \otimes R[\mathbb{Y}]$. We denote by $\mathcal{C}^y_n$ the category with objects $C[\mathbb{Y}]$ for some $C \in \mathcal{C}_n$ and graded morphism spaces defined analogously to those of $\mathcal{Y}_{\sigma}(R_n-\text{Bim})$. Then $\mathcal{C}^y_n$ is a graded $R[\mathbb{X}, \mathbb{X}', \mathbb{Y}]$-linear category via the termwise action of $R[\mathbb{X}, \mathbb{X}']$ and multiplication by $R[\mathbb{Y}]$. Given any $Z \in R[\mathbb{X}, \mathbb{X}', \mathbb{Y}]$ with degree $\text{deg}(Z) = t^2$, we may consider the category $Z-\text{Fac}^y(\mathcal{C}_n) := \text{Fac}(\mathcal{C}^y_n, Z)$ of $Z$-factorizations.
\end{definition}

Most of the technology of Section \ref{sec: yification} carries over to the derived setting. Given any morphism $f$ in $Z-\text{Fac}(\mathcal{C}_n)$, we again have a decomposition $f = \sum_{\textbf{v} \in \mathbb{Z}^n_{\geq 0}} f_{y^\textbf{v}}y^{\textbf{v}}$ as in Remark \ref{rem: decomp} and call $f$ a \textit{curved lift} of $f_{y^\textbf{0}}$. We call $(C[\mathbb{Y}], \delta_C) \in \mathcal{C}^y_n$ a curved lift of $(C, d_C) \in \mathcal{C}_n$ if $\delta_C$ is a curved lift of $d_C$.

\begin{definition} \label{def: yderv}
Let $(C, d_C) \in \mathcal{C}_n$ be given. A \textit{$y$-ification} of $C$ is an ordered triple $(C[\mathbb{Y}], \sigma, \delta_C)$ such that $(C[\mathbb{Y}], \delta_C)$ is a curved lift of $(C, d_C)$ with curvature $Z_{\sigma}$. For each $\sigma \in \mathfrak{S}^n$, let $\mathcal{CY}_{\sigma}(R_n-\text{Bim})$ denote the full subcategory of $Z_{\sigma}-\text{Fac}(\mathcal{C}_n)$ generated by (direct sums of shifts of) $y$-ifications $(C[\mathbb{Y}], \sigma, \delta_C)$. We set $\mathcal{CY}_e(R-\text{Bim}) := \mathcal{C}_0$ for notational convenience. There is a natural inclusion functor $\mathcal{CY}_{\sigma}(R_{n - 1}-\text{Bim}) \hookrightarrow \mathcal{CY}_{\sigma}(R_n-\text{Bim})$ for each $\sigma \in \mathfrak{S}^{n - 1}$; we consider objects in the former also as objects in the latter under this inclusion without further comment.
\end{definition}

Let $(C[\mathbb{Y}], \sigma, \delta_C) \in \mathcal{Y}_{\sigma}(R_n-\text{Bim})$ be given, and consider the triply-graded Hochschild cohomology complex $HH(C) \in \mathcal{C}_n$ of $C$ as in \S \ref{section: HoCh}. Recalling the decomposition $\delta_C = \sum_{\textbf{v} \in \mathbb{Z}^n_{\geq 0}} (\delta_C)_{y^\textbf{v}} y^\textbf{v}$ of Remark \ref{rem: decomp}, we set

\[
\delta_{HH(C)} := \sum_{\textbf{v} \in \mathbb{Z}^n_{\geq 0}} HH((\delta_C)_{y^{\textbf{v}}}) y^{\textbf{v}} \in \text{End}^1_{\mathcal{C}^y_n}(HH(C)[\mathbb{Y}])
\]

\begin{definition}
Retain notation as above. Then the ordered triple $HH^y(C[\mathbb{Y}], \sigma, \delta_C) := (HH(C)[\mathbb{Y}], \sigma, \delta_{HH(C)})$ is called the \textit{$y$-ified Hochschild cohomology} of $(C[\mathbb{Y}], \sigma, \delta_C)$. We will often denote $y$-ified Hochschild cohomology simply by $HH^y(C[\mathbb{Y}])$ when the connection $\delta_C$ and permutation $\sigma$ are clear from context.
\end{definition}

\begin{proposition}
We have $HH^y(C[\mathbb{Y}]) \in \mathcal{CY}_{\sigma}(\mathcal{C}_n)$.
\end{proposition}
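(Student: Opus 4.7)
The plan is to verify directly the two defining properties of a $y$-ification in the sense of Definition~\ref{def: yderv}: that $\delta_{HH(C)}$ is a degree~$t$ curved lift of the Hochschild differential $d_{HH(C)}$, and that $\delta_{HH(C)}^2 = Z_\sigma$.

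First I would handle the bookkeeping. Using the decomposition $\delta_C = \sum_\textbf{v} (\delta_C)_{y^\textbf{v}} y^\textbf{v}$ from Remark~\ref{rem: decomp}, each $(\delta_C)_{y^\textbf{v}}$ is homogeneous of degree $q^{2|\textbf{v}|}t^{1-2|\textbf{v}|}$. Since $HH$ is built by termwise application of $\mathrm{Hom}_{\mathcal{D}_n}(R_n,-)$ (equivalently by iterated partial trace functors) and hence preserves the triple grading on morphisms, each $HH\bigl((\delta_C)_{y^\textbf{v}}\bigr)$ carries the same degree as $(\delta_C)_{y^\textbf{v}}$. Multiplying by $y^\textbf{v}$ of degree $q^{-2|\textbf{v}|}t^{2|\textbf{v}|}$ then gives overall degree $t$, so $\delta_{HH(C)}$ lives in the correct degree. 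Its $y^\textbf{0}$ component is $HH((\delta_C)_{y^\textbf{0}}) = HH(d_C) = d_{HH(C)}$, so it is indeed a curved lift of $d_{HH(C)}$.

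The central computation is the curvature. Because $HH$ is a dg-functor on the underlying category of chain complexes (compatibility with composition of morphisms in $\mathcal{C}_n$), we have
\[
\delta_{HH(C)}^2 \;=\; \sum_{\textbf{v},\textbf{w}} HH\bigl((\delta_C)_{y^\textbf{v}}\bigr) \circ HH\bigl((\delta_C)_{y^\textbf{w}}\bigr)\, y^{\textbf{v}+\textbf{w}} \;=\; \sum_{\textbf{u}} HH\!\left( \sum_{\textbf{v}+\textbf{w}=\textbf{u}} (\delta_C)_{y^\textbf{v}}(\delta_C)_{y^\textbf{w}} \right) y^\textbf{u}.
\]
By hypothesis $\delta_C^2 = Z_\sigma = \sum_i (x_{\sigma(i)} - x_i')y_i$, so the only nontrivial $y$-components of $\delta_C^2$ are its $y_i$ coefficients, each equal to the endomorphism ``multiplication by $x_{\sigma(i)}-x_i'$" on $C$. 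Since $HH$ is $R[\mathbb{X},\mathbb{X}']$-linear, this endomorphism descends to multiplication by $x_{\sigma(i)} - x_i'$ in the inherited bimodule structure on $HH(C)$. Summing yields $\delta_{HH(C)}^2 = Z_\sigma$ as an endomorphism of $HH(C)[\mathbb{Y}]$.

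The main (minor) obstacle is justifying the interchange of $HH$ with the Maurer–Cartan identity $\delta_C^2 = Z_\sigma$ across the $\mathbb{Y}$-expansion; this reduces to the two facts that $HH$ is a functor on $\mathcal{C}_n$ (so respects composition of each $\mathbb{Y}$-homogeneous component) and that it is $R[\mathbb{X},\mathbb{X}']$-linear (so intertwines the central polynomial actions). Both are standard consequences of the definition $HH = \mathrm{Hom}_{\mathcal{D}_n}(R_n,-)$, so no genuine homotopy-theoretic work is required beyond the formal calculation above.
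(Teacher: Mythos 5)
Your proposal is correct and follows essentially the same route as the paper, which simply observes that $\delta_{HH(C)}^2 = Z_\sigma$ follows from $\delta_C^2 = Z_\sigma$ and functoriality of $HH$; your degree bookkeeping and explicit $\mathbb{Y}$-component expansion are just a spelled-out version of that one-line argument.
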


\begin{proof}
All that requires proof is that $\delta_{HH(C)}^2 = Z_{\sigma}$. This follows immediately from $\delta_C^2 = Z_{\sigma}$ and the fact that $HH$ is a functor.
\end{proof}

Since $HH(x_i) = HH(x_i')$ as endomorphisms of $HH(C)$ for all $C \in \mathcal{D}_n$, we can rewrite the identity $\delta_{HH(C)}^2 = Z_{\sigma}$ as follows:

\begin{align*}
\delta_{HH(C)}^2 = \sum_{i = 1}^n (HH(x_{\sigma(i)}) - HH(x_i'))y_i = \sum_{i = 1}^n (HH(x_{\sigma(i)}) - HH(x_i))y_i = \sum_{i = 1}^n HH(x_i) (y_{\sigma^{-1}(i)} - y_i)
\end{align*}

The above identity guarantees that $HH^y(C[\mathbb{Y}])$ becomes a chain complex after identifying $y_i$ and $y_{\sigma_i}$ for each $1 \leq i \leq n$.

\textcolor{revisions}{The following is Lemma 3.9 of \cite{GH22}:}

\begin{proposition} \label{prop: hy_tracelike}
\textcolor{revisions}{For any $y$-ifications $C[\mathbb{Y}] \in \mathcal{Y}_{\sigma}(R_n-\text{Bim}), D[\mathbb{Y}] \in \mathcal{Y}_{\rho}(R_n-\text{Bim})$, we have $HH^y(C[\mathbb{Y}] D[\mathbb{Y}]) \cong HH^y(D[\mathbb{Y}] C[\mathbb{Y}])$ up to a permutation of indices in $R[\mathbb{Y}]$ structure.}
\end{proposition}

\begin{definition}
Retain notation as above. Then the \textit{$y$-ified homology} of $(C[\mathbb{Y}], \sigma, \delta_C)$ is the triply-graded $R$-module \footnote{In fact the $y$-ified homology carries an action of a variable $x_c$ and $y_c$ for each orbit $c$ of $\sigma$. Since our primary motivation is comparing the graded dimensions of invariants as vector spaces, we will not concern ourselves with the details of this module structure; see \cite{GH22} for more detail}

\[
HHH^y(C) := H^{\bullet} \big( HH(C)[\mathbb{Y}] \otimes (R[\mathbb{Y}]/(y_1 - y_{\sigma(1)}, \dots, y_n - y_{\sigma(n)})) \big)
\]

We will often write $HHH^y(C[\mathbb{Y}])$ to denote the $y$-ified homology of $(C[\mathbb{Y}], \sigma, \delta_C)$ when the connection and permutation are clear.
\end{definition}

Again, our interest in $HHH^y$ comes from its relation to link invariants. The following is due to Gorsky-Hogancamp (\cite{GH22}):

\begin{theorem} \label{thm: yhom_invariant}
Let $\beta \in Br_n$ be given, and let $F^y(\beta)$ be as in Proposition \ref{prop: ybraid}. Then up to an overall normalization, the triply-graded $R$-module $HHH^y(F^y(\beta))$ is an invariant of the braid closure $\hat{\beta}$.
\end{theorem}

We call this invariant the \textit{$y$-ified triply-graded homology of $\hat{\beta}$}. As in the un $y$-ified case, given a link $\mathcal{L}$, we will often denote its $y$-ified triply-graded homology as $HHH^y(\mathcal{L})$ and the graded dimension of $HHH^y(\mathcal{L})$ as $\mathcal{P}^y(\mathcal{L})$.

As in the computation of $HHH$, our primary tool in computing $y$-ified homology will be an analog of the partial trace functor of Definition \ref{def: trace}. Since the categories $\mathcal{CY}_{\sigma}(R_n-\text{Bim})$ depend on a choice of $\sigma \in \mathfrak{S}^n$, some care must be taken with permutations to define the codomain of these functors.

\begin{definition}
Fix $n \geq 1$, and let $\sigma \in \mathfrak{S}^n$ be given. We define $\text{Tr}_n(\sigma) \in \mathfrak{S}^{n - 1}$ as follows:

\[
\text{Tr}_n(\sigma)(i) :=
\begin{cases}
\sigma(i) \quad \text{if } \sigma(i) \neq n; \\
\sigma(n) \quad \text{if } \sigma(i) = n
\end{cases}
\]

\end{definition}

\textcolor{revisions}{We claim that $\mathrm{Tr}_n(\sigma)$ is actually a bijection on $\{1, \dots, n - 1\}$. Indeed, if $\sigma(n) = n$, then $\mathrm{Tr}_n(\sigma)$ is simply the restriction of $\sigma$ to the set $\{1, \dots, n - 1\}$. Otherwise, $\sigma(n) = m$ for some $1 \leq m < n$, and $\mathrm{Tr}_n(\sigma)$ send $\sigma^{-1}(n)$ to $m$.} Note that $Tr_n(\sigma)$ can be easily obtained from a string diagram for $\sigma$ by ``closing up" one strand in the manner of \S \ref{section: HoCh} and straightening out the resulting string.

\begin{proposition}
Let $(C[\mathbb{Y}], \sigma, \delta_C) \in \mathcal{CY}_{\sigma}(R_n-\text{Bim})$ be given. Define $\delta_{Tr_n(C)}$ by

\[
\delta_{Tr_n(C)} := \left( \sum_{\textbf{v} \in \mathbb{Z}^n_{\geq 0}} Tr_n((\delta_C)_{y^{\textbf{v}}})y^{\textbf{v}} \right) \Bigg|_{y_n = y_{\sigma^{-1}(n)}}
\]

Then $(Tr_n(C)[\mathbb{Y}], Tr_n(\sigma), \delta_{Tr_n(C)})$ is a $y$-ification of $Tr_n(C)$.
\end{proposition}

\begin{proof}
All that requires proof is that $\delta_{Tr_n(C)}^2 = Z_{Tr_n(\sigma)}$. We break into cases analogous to our construction of $Tr_n(\sigma)$.

\textbf{Case 1}: $\sigma(n) = n$. By functoriality of $Tr_n$, we have

\begin{align*}
    \delta_{Tr_n(C)}^2 & = \sum_{i = 1}^n Tr_n(x_{\sigma(i)} - x'_i) y_i \\
    & = \left( \sum_{i = 1}^{n - 1} Tr_n(x_{\sigma(i)} - x'_i) y_i \right) + Tr_n(x_n - x_n')y_n \\
    & = \sum_{i = 1}^{n - 1} Tr_n(x_{\sigma(i)} - x'_i) y_i \\
    & = Z_{Tr_n(\sigma)}
\end{align*}

Here the third equality follows from the identity $Tr_n(x_n) = Tr_n(x_n')$ in $\text{End}_{\mathcal{C}_{n - 1}}(Tr_n(C))$.

\textbf{Case 2}: $\sigma^{-1}(n) = j \neq n$. Again by functoriality of $Tr_n$, we have

\begin{align*}
    \delta_{Tr_n(C)}^2 & = \left( \sum_{i = 1}^n Tr_n(x_{\sigma(i)} - x'_i) y_i \right) \Bigg|_{y_n = y_{\sigma^{-1}(n)}} \\
    & = \left( \sum_{1 \leq i \neq j}^{n - 1} Tr_n(x_{\sigma(i)} - x'_i) y_i \right) + Tr_n(x_n - x_j')y_j + Tr_n(x_{\sigma(n)} - x_n')y_j \\
    & = \left( \sum_{1 \leq i \neq j}^{n - 1} Tr_n(x_{\sigma(i)} - x'_i) y_i \right) + Tr_n(x_{\sigma(n)} - x_j') y_j \\
    & = Z_{Tr_n(\sigma)}
\end{align*}

Again the third equality follows from the identity $Tr_n(x_n) = Tr_n(x_n')$ in $\text{End}_{\mathcal{C}_{n - 1}}(Tr_n(C))$.

\end{proof}

\begin{definition}
For each $\sigma \in \mathfrak{S}^n$, let $Tr_{n, \sigma}^y \colon \mathcal{CY}_{\sigma}(R_n-\text{Bim}) \to \mathcal{CY}_{Tr_n(\sigma)}(R_{n - 1}-\text{Bim})$ be the functor given by

\[
Tr_{n, \sigma}^y(C[\mathbb{Y}], \sigma, \delta_C) := (Tr_n(C)[\mathbb{Y}], Tr_n(\sigma), \delta_{Tr_n(C)})
\]

We set $Tr_{0, e}^y := \text{Hom}_{\mathcal{C}_0}(R, -)$ for notational convenience.

\end{definition}

To formulate $y$-ified analogs of Propositions \ref{prop: tradj} and \ref{prop: trlocal} regarding adjointness and locality of the functors $Tr^y_{n, \sigma}$, we construct a collection of inclusion functors $I^y_{n, \sigma} \colon \mathcal{CY}_{Tr_n(\sigma)}(R_{n - 1}-\text{Bim}) \hookrightarrow \mathcal{CY}_{\sigma}(R_n-\text{Bim})$ that are sensitive to the choice of permutation $\sigma$. As in the uncurved case, $HHH^y$ is a representable functor, though the representing object in $\mathcal{CY}_{\sigma}(R_n-\text{Bim})$ depends on the permutation $\sigma$.

\begin{definition}
For each pair $i \neq j \in \{1, \dots, n\}$, let $\mathbbm{1}^y_{i, j}$ denote the curved complex

\begin{center}
\begin{tikzcd}[sep=large]
\mathbbm{1}^y_{i, j} := q^2t^{-1}R_n[\mathbb{Y}] \arrow[r, "x_i - x_j", harpoon, shift left] & R_n[\mathbb{Y}] \arrow[l, "y_j - y_i", harpoon, shift left]
\end{tikzcd}
\end{center}

Observe that $\mathbbm{1}_{i, j}$ is a (strict) $y$-ification of the Koszul complex for $x_i - x_j$ on $R_n$ with curvature $Z_{(ij)}$; we denote this Koszul complex by $\mathbbm{1}_{i, j}$. By abuse of notation, we also let $\mathbbm{1}^y_{i, j}$ denote the corresponding triple in $\mathcal{CY}_{(ij)}(R_n-\text{Bim})$.
\end{definition}

The following is Proposition 3.27 in \cite{GH22}:

\begin{proposition} \label{prop: hyrep}
Suppose $(C[\mathbb{Y}], \sigma, \delta_C) \in \mathcal{CY}_{\sigma}(R_n-\text{Bim})$ is a strict $y$-ification. Fix a minimal length expression $\sigma = (i_1, j_1) (i_2, j_2) \dots (i_r, j_r)$ for $\sigma$ as a product of transpositions, and set $\mathbbm{1}^y_{\sigma} := \mathbbm{1}^y_{i_1, j_1} \otimes \mathbbm{1}^y_{i_2, j_2} \otimes \dots \otimes \mathbbm{1}^y_{i_r, j_r} \in \mathcal{CY}_{\sigma}(R_n-\text{Bim})$. Then we have

\[
HHH^y(C) \cong \text{Hom}_{\mathcal{CY}_{\sigma}(R_n-\text{Bim})}(\mathbbm{1}^y_{\sigma}, C[\mathbb{Y}])
\]
\end{proposition}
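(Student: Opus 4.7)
I will prove this by induction on the length $r$ of the minimal expression for $\sigma$. The essential idea is that each Koszul-type factor $\mathbbm{1}^y_{i_k, j_k}$ enforces precisely one identification $y_{i_k} \sim y_{j_k}$ on the level of Hochschild cohomology, and these identifications collectively generate the relations $y_i = y_{\sigma(i)}$ appearing in the definition of $HHH^y$. Since both sides of the claimed isomorphism are triply-graded $R$-modules built from $HH(C)[\mathbb{Y}]$, it suffices to match the two as chain complexes before passing to cohomology.

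\textbf{Base case.} When $\sigma = e$, the empty tensor product is $\mathbbm{1}^y_e = R_n[\mathbb{Y}]$ and no identifications among $y$-variables are needed, so $HHH^y(C) = H^\bullet(HH(C)[\mathbb{Y}])$. On the other side, $\mathcal{CY}_e(R_n\text{-Bim})$ has trivial curvature, and the standard derived representability $HH(-) \cong \text{Hom}_{\mathcal{C}_n}(R_n, -)$ recalled in Section \ref{section: HoCh}, combined with $R[\mathbb{Y}]$-bilinearity of morphism spaces, gives $\text{Hom}_{\mathcal{CY}_e}(R_n[\mathbb{Y}], C[\mathbb{Y}]) \cong HH(C)[\mathbb{Y}]$; taking cohomology yields the claim.

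\textbf{Inductive step.} Assume the result for all strict $y$-ifications over permutations of length less than $r$. Write $\sigma = (i_1, j_1) \sigma'$ with $\sigma'$ of length $r - 1$, so that by associativity $\mathbbm{1}^y_\sigma = \mathbbm{1}^y_{i_1, j_1} \otimes \mathbbm{1}^y_{\sigma'}$. Unpacking the two-term structure of $\mathbbm{1}^y_{i_1, j_1}$ expresses the morphism complex as the totalization of a two-row double complex: one row computes $\text{Hom}_{\mathcal{CY}_{\sigma'}}(\mathbbm{1}^y_{\sigma'}, C[\mathbb{Y}])$ and the other its appropriate shift, connected by multiplication by $x_{i_1} - x_{j_1}$ and the curved component $y_{j_1} - y_{i_1}$. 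After passing to Hochschild cohomology the multiplicative action of $x_{i_1} - x_{j_1}$ becomes null-homotopic via the curvature component $HH(\Delta_{i_1}) - HH(\Delta_{j_1})$ inherited from $\delta_C$; what remains is the Koszul complex for $y_{j_1} - y_{i_1}$ acting on $\text{Hom}_{\mathcal{CY}_{\sigma'}}(\mathbbm{1}^y_{\sigma'}, C[\mathbb{Y}])$. Applying the induction hypothesis and taking cohomology identifies this Koszul complex with $HH(C)[\mathbb{Y}]/(y_i - y_{\sigma'(i)})$ further quotiented by $y_{i_1} - y_{j_1}$, which is exactly $HHH^y(C)$ for the permutation $\sigma$, since the additional relation $y_{i_1} = y_{j_1}$ completes the set of generators for the identifications $y_i = y_{\sigma(i)}$.

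\textbf{Main obstacle.} The principal difficulty lies in managing the curved bookkeeping at each stage of the induction: verifying that the curvature of the tensor product factors as $Z_\sigma$ (this is built into the construction of $\mathbbm{1}^y_\sigma$) and that the action $[\delta_C, -]$ on the morphism complex interacts correctly with the Koszul contributions so that no spurious cross terms survive. Independence of the answer from the choice of minimal reduced expression for $\sigma$ is automatic once the result is proved for any single expression, since the LHS $HHH^y(C)$ is manifestly independent of such a choice. The parity-type observation that each $\mathbbm{1}^y_{i,j}$ has only two chain modules substantially simplifies the double-complex analysis at each inductive stage, ensuring the spectral sequence computing the Hom complex degenerates cleanly.
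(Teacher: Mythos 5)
The paper itself gives no proof of this statement --- it is quoted as Proposition 3.27 of \cite{GH22} --- so the only question is whether your argument stands on its own, and as written it does not. Your guiding intuition, that each factor $\mathbbm{1}^y_{i_k,j_k}$ enforces the single identification $y_{i_k}\sim y_{j_k}$, is correct, but the inductive scheme founders on exactly the curved bookkeeping you flag as the ``main obstacle'' without resolving it. The object $C[\mathbb{Y}]$ has curvature $Z_\sigma$, not $Z_{\sigma'}$, so the expression $\text{Hom}_{\mathcal{CY}_{\sigma'}}(\mathbbm{1}^y_{\sigma'},C[\mathbb{Y}])$ does not parse: on that graded space $[\delta,-]$ squares to the action of $Z_\sigma - Z_{\sigma'}\neq 0$, so the ``rows'' of your double complex are not complexes, the $\theta$-degree (column-wise) spectral sequence you invoke is not even defined (its $E_1$ page would require the rows to be complexes, and two-term factors do not rescue an undefined spectral sequence), and the induction hypothesis --- which applies only to strict $y$-ifications with curvature $Z_{\sigma'}$ --- cannot be applied to them. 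Nor does passing to the internal Hom $\underline{\text{Hom}}(\mathbbm{1}^y_{i_1,j_1},C[\mathbb{Y}])$ repair this: its curvature is $Z_\sigma - Z_{(i_1j_1)} = \sum_i (x_{\sigma(i)}-x_{(i_1j_1)(i)})y_i$, which is not of the form $Z_\rho$ for any permutation $\rho$, so it is not an object of any category $\mathcal{CY}_{\rho}$ in which the statement is formulated.

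Second, the mechanism you propose for the collapse is incorrect. For a strict $y$-ification one has $[HH(d_C),HH(\Delta_i)] = HH(x_{\sigma(i)}-x_i)$, so $HH(\Delta_{i_1})-HH(\Delta_{j_1})$ is a homotopy for $HH(x_{\sigma(i_1)}-x_{i_1}-x_{\sigma(j_1)}+x_{j_1})$, not for $HH(x_{i_1}-x_{j_1})$; even for $\sigma=(i_1j_1)$ it produces $2\,HH(x_{j_1}-x_{i_1})$. The actual reason each factor implements an identification is not that $x_{i}-x_{j}$ dies but that $y_{j}-y_{i}$ is a nonzerodivisor on $HH(C)[\mathbb{Y}]$: projecting the two-column Hom complex onto one column modulo $(y_j-y_i)$ is a chain map (the induced connection squares to zero because $\delta_{HH(C)}^2=\sum_i HH(x_i)(y_{\sigma^{-1}(i)}-y_i)$ lies in the ideal $(y_i-y_{\sigma(i)})$, using $HH(x_i)=HH(x_i')$), and its kernel is contracted by division by $y_j-y_i$ together with a filtration-by-$\mathbb{Y}$-degree convergence argument. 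Iterating over the factors requires that $y_{j_1}-y_{i_1},\dots,y_{j_r}-y_{i_r}$ be a regular sequence generating exactly the ideal $(y_i-y_{\sigma(i)})_i$; both facts are precisely where minimality of the chosen expression is used (for $\sigma=e=(12)(12)$ the ideal generated by the factors is strictly larger than the orbit ideal), and your argument never invokes minimality. To turn your outline into a proof you would need to work directly with the twisted Koszul complex on $HH(C)[\mathbb{Y}]\otimes\bigwedge[\theta_1,\dots,\theta_r]$ and kill the $y$-differences one at a time along these lines, which is the route taken in \cite{GH22}.
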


\begin{remark}
Note that $\mathbbm{1}^y_{\sigma}$ is bounded in homological degree. In particular, our convention for morphisms (see Remark \ref{rem: convergence}) has no effect on the conclusion of Proposition \ref{prop: hyrep}. \textcolor{revisions}{We point out also that $\mathbbm{1}^y_{\sigma}$ is independent of the choice of minimal length expression for $\sigma$ up to isomorphism; indeed, such choices differ only by ordering of the transpositions, and the curved complexes $\mathbbm{1}_{i, j}$ all commute up to isomorphism.}
\end{remark}

\begin{lemma} \label{lem: untrword}
Let $\sigma \in \mathfrak{S}^n$ be given, and let $Tr_n(\sigma) = (i_1, j_1)(i_2, j_2) \dots (i_r, j_r)$ be a minimal length expression for $Tr_n(\sigma) \in \mathfrak{S}^{n - 1}$ as a product of transpositions. Then 

\begin{equation} \label{eq: expanding_sigma}
\textcolor{revisions}{\sigma = (n, \sigma(n))} (i_1, j_1)(i_2, j_2) \dots (i_r, j_r)
\end{equation}

is a minimal length expression for $\sigma$ as a product of transpositions.
\end{lemma}

\begin{proof}
Throughout, let $\ell(\sigma)$ and $\ell(Tr_n(\sigma))$ denote the minimal lengths of representations of $\sigma$ and $Tr_n(\sigma)$, respectively, as products of transpositions. \textcolor{revisions}{We begin by showing that Equation \eqref{eq: expanding_sigma} holds. Let $\omega$ denote the product of transpositions on the right-hand side. Certainly $\omega(n) = \sigma(n)$. Moreover, if $s \neq \sigma^{-1}(n)$, then $Tr_n(\sigma)(s) \neq n$, so $\omega(s) = Tr_n(\sigma)(s) = \sigma(s)$. Then $\omega(\sigma^{-1}(n)) = \sigma(n)$ by exhaustion.}

\textcolor{revisions}{Next, l}et $\{c_1, \dots, c_s\}$ denote the cycles of $\sigma$; then

\[
\ell(\sigma) = \sum_{i = 1}^s |c_i| - 1
\]

Let $c_j$ denote the cycle of $\sigma$ containing $n$, and observe that the cycles of $Tr_n(\sigma)$ are exactly the cycles of $\sigma$ with $n$ deleted from $c_j$. If $c_j = \{n\}$, we must have $\sigma(n) = n$; then the transposition $(n, \sigma(n))$ vanishes and the expression in question has length $r$. \textcolor{revisions}{Additionally}, the term $|c_j| - 1$ does not contribute to $\ell(\sigma)$, so

\[
r = \ell(Tr_n(\sigma)) = \sum_{i \neq j} |c_i| - 1 = \ell(\sigma)
\]

Otherwise, $|c_j| \geq 2$, so the same calculation shows that $\ell(\sigma) = r + 1$.
\end{proof}

\begin{definition} \label{def: perminc}
Fix $\sigma \in \mathfrak{S}^n$, and let $I^y_{Tr_n(\sigma)}$ denote the natural inclusion functor $\mathcal{CY}_{Tr_n(\sigma)}(R_{n - 1}-\text{Bim}) \hookrightarrow \mathcal{CY}_{Tr_n(\sigma)}(R_n-\text{Bim})$ of Definition \ref{def: yderv}. We define the functor $I^y_{n, \sigma} \colon \mathcal{CY}_{Tr_n(\sigma)}(R_{n - 1}-\text{Bim}) \to \mathcal{CY}_{\sigma}(R_n-\text{Bim})$ by

\[
I^y_{n, \sigma}(C[\mathbb{Y}]) := I^y_{Tr_n(\sigma)}(C[\mathbb{Y}]) \otimes \mathbbm{1}^y_{n, \sigma(n)}
\]
\end{definition}

\begin{remark} \label{rem: usualincy}
For $\sigma \in \mathfrak{S}^{n - 1}$, $I^y_{n, \sigma}$ is the usual inclusion functor of Definition \ref{def: yderv}.
\end{remark}

\begin{corollary} \label{cor: hyrepinc}
$\mathbbm{1}^y_{\sigma} = I^y_{n, \sigma}(\mathbbm{1}^y_{Tr_n(\sigma)})$ for all $\sigma \in \mathfrak{S}^n$.
\end{corollary}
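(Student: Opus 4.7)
The plan is to observe that the corollary is essentially a tautology unwound from Definition \ref{def: perminc} once one invokes Lemma \ref{lem: untrword}: the lemma is exactly what is needed to compatibly match the chosen minimal length expressions defining $\mathbbm{1}^y_\sigma$ and $\mathbbm{1}^y_{Tr_n(\sigma)}$.

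More precisely, I would proceed as follows. First, start from any minimal length expression $Tr_n(\sigma) = (i_1,j_1)(i_2,j_2)\cdots(i_r,j_r)$ and use this to \emph{define} the representing object $\mathbbm{1}^y_{Tr_n(\sigma)} \in \mathcal{CY}_{Tr_n(\sigma)}(R_{n-1}\text{-Bim})$ via Proposition \ref{prop: hyrep}, namely
\[
\mathbbm{1}^y_{Tr_n(\sigma)} = \mathbbm{1}^y_{i_1,j_1} \otimes \mathbbm{1}^y_{i_2,j_2} \otimes \cdots \otimes \mathbbm{1}^y_{i_r,j_r}.
\]
By Lemma \ref{lem: untrword}, appending the transposition $(n,\sigma(n))$ produces a minimal length expression for $\sigma$ itself, so this choice is a legitimate input to the definition of $\mathbbm{1}^y_\sigma$, and unpacking that definition gives
\[
\mathbbm{1}^y_\sigma = \mathbbm{1}^y_{i_1,j_1} \otimes \cdots \otimes \mathbbm{1}^y_{i_r,j_r} \otimes \mathbbm{1}^y_{n,\sigma(n)} = \mathbbm{1}^y_{Tr_n(\sigma)} \otimes \mathbbm{1}^y_{n,\sigma(n)}.
\]

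Finally, comparing with Definition \ref{def: perminc}, the inclusion functor $I^y_{Tr_n(\sigma)} \colon \mathcal{CY}_{Tr_n(\sigma)}(R_{n-1}\text{-Bim}) \hookrightarrow \mathcal{CY}_{Tr_n(\sigma)}(R_n\text{-Bim})$ acts as the identity on objects (it only adjoins the trivial $y_n$ action and the trivial right action of $x_n$), so $I^y_{Tr_n(\sigma)}(\mathbbm{1}^y_{Tr_n(\sigma)}) = \mathbbm{1}^y_{Tr_n(\sigma)}$, and therefore
\[
I^y_{n,\sigma}(\mathbbm{1}^y_{Tr_n(\sigma)}) = I^y_{Tr_n(\sigma)}(\mathbbm{1}^y_{Tr_n(\sigma)}) \otimes \mathbbm{1}^y_{n,\sigma(n)} = \mathbbm{1}^y_{Tr_n(\sigma)} \otimes \mathbbm{1}^y_{n,\sigma(n)},
\]
which matches the expression for $\mathbbm{1}^y_\sigma$ derived above. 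There is no substantive obstacle here; the only thing to be careful about is interpreting the equality $\mathbbm{1}^y_\sigma = I^y_{n,\sigma}(\mathbbm{1}^y_{Tr_n(\sigma)})$ as holding for compatible choices of minimal length expressions, which Lemma \ref{lem: untrword} provides canonically. (If one prefers a choice-independent formulation, one would additionally invoke Proposition \ref{prop: hyrep} to deduce that different choices produce representing objects that agree up to canonical homotopy equivalence, but the stated form of the corollary requires only the compatible choice.)
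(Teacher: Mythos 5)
Your proposal is correct and is essentially the paper's own argument: the paper's proof is literally ``immediate consequence of Lemma \ref{lem: untrword},'' and you have simply unwound that, matching the minimal length expression for $\sigma$ obtained by appending $(n,\sigma(n))$ with the tensor factorization built into Definition \ref{def: perminc}. The only point worth noting is the degenerate case $\sigma(n)=n$, where the appended transposition is trivial and $I^y_{n,\sigma}$ reduces to the plain inclusion (Remark \ref{rem: usualincy}), so the identity still holds.
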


\begin{proof}
Immediate consequence of Lemma \ref{lem: untrword}.
\end{proof}

\begin{proposition} \label{prop: ytradj}
Fix $\sigma \in \mathfrak{S}^n$, and let $N[\mathbb{Y}] \in \mathcal{CY}^b_{\sigma}(R_n-\text{Bim})$, $M[\mathbb{Y}] \in \mathcal{CY}^ b_{Tr_n(\sigma)}(R_{n - 1}-\text{Bim})$ be given. Then there is an isomorphism

\[
\text{Hom}_{\mathcal{CY}^b_{\sigma}(R_n-\text{Bim})}(I^y_{n, \sigma}(M[\mathbb{Y}]), N[\mathbb{Y}]) \cong \text{Hom}_{\mathcal{CY}^b_{Tr_n(\sigma)}(R_{n - 1}-\text{Bim})}(M[\mathbb{Y}], Tr^y_{n, \sigma}(N[\mathbb{Y}]))
\]

that is natural in both $M[\mathbb{Y}]$ and $N[\mathbb{Y}]$.
\end{proposition}

\begin{proof}
Given any morphism $f \in \text{Hom}_{\mathcal{CY}^b_{\sigma}(R_n-\text{Bim})}(I^y_{n, \sigma}(M[\mathbb{Y}]), N[\mathbb{Y}])$, we can decompose $f$ into components

\[
f = \sum_{\textbf{v} \in \mathbb{Z}^n_{\geq 0}} f_{y^\textbf{v}} y^\textbf{v}
\]

with $f_{y^\textbf{v}} \in \text{Hom}_{\mathcal{C}^b_n}(I_n(M), N)$. Let $\phi \colon \text{Hom}_{\mathcal{C}^b_n}(I_n(M), N) \to \text{Hom}_{\mathcal{C}^b_{n - 1}}(M, Tr_n(N))$ be the isomorphism of Proposition \ref{prop: tradj}. Define $\Phi \colon \text{Hom}_{\mathcal{CY}^b_{\sigma}(R_n-\text{Bim})}(I^y_{n, \sigma}(M[\mathbb{Y}]), N[\mathbb{Y}]) \to \text{Hom}_{\mathcal{CY}^b_{Tr_n(\sigma)}(R_{n - 1}-\text{Bim})}(M[\mathbb{Y}], Tr_{n, \sigma}(N[\mathbb{Y}]))$ by

\[
\Phi(f) := \left( \sum_{\textbf{v} \in \mathbb{Z}^n_{\geq 0}} \phi(f_{y^\textbf{v}}) y^\textbf{v} \right) \Bigg|_{y_n = y_{\sigma^{-1}(n)}}
\]

That $\Phi$ is a natural isomorphism follows from the corresponding facts about $\phi$ and the definition of $\delta_{Tr_n(N)}$; we leave the details as an exercise, as they are similar to previous computations in this section.
\end{proof}

Just as in the un $y$-ified setting, we can compute $HHH^y$ by iteratively applying partial trace.

\begin{proposition} \label{prop: hom_from_tr}
Fix $\sigma \in \mathfrak{S}^n$, and let $(C[\mathbb{Y}], \sigma, \delta_C) \in \mathcal{CY}^b_{\sigma}(R_n-\text{Bim})$ be a strict $y$-ification. \textcolor{revisions}{For each $1 \leq i < n$, set $\sigma_i := Tr_{i + 1} \circ \dots \circ Tr_{n - 1} \circ Tr_n(\sigma) \in \mathfrak{S}^i$, and set $\sigma_0 = e$, $\sigma_n = \sigma$.} Then

\[
\textcolor{revisions}{HHH^y(C[\mathbb{Y}]) = \prod_{i = 0}^n \left( Tr^y_{i, \sigma_i} \right) (C[\mathbb{Y}]).}
\]

\textcolor{revisions}{Here the product $\prod_{i = 0}^n \left( Tr^y_{i, \sigma_i} \right)$ is to be read as the composition of functors $Tr^y_{0, e} \circ Tr^y_{1, \sigma_1} \circ \dots \circ Tr^y_{n - 1, \sigma_{n - 1}} \circ Tr^y_{n, \sigma}$.}
\end{proposition}

\begin{proof}
Immediate application of Proposition \ref{prop: ytradj} and Corollary \ref{cor: hyrepinc}.
\end{proof}

Note that the only object (up to shifts) in $\mathcal{CY}_e(R_0-\text{Bim})$ is $R$, which satisfies $Tr^y_{0, e}(R) \cong R$. We have therefore reduced the task of computing $HHH^y$ to computations of partial traces as in the un $y$-ified case. Here we are aided again by locality:

\begin{proposition} \label{prop: try_local}
Fix $\rho, \varphi \in \mathfrak{S}^{n - 1}$ and $\sigma \in \mathfrak{S}^n$. Let $M[\mathbb{Y}] \in \mathcal{CY}_{\rho}(R_{n - 1}-\text{Bim})$, $N[\mathbb{Y}] \in \mathcal{CY}_{\varphi}(R_{n - 1}-\text{Bim})$, $C[\mathbb{Y}] \in \mathcal{CY}_{\sigma}(R_n-\text{Bim})$ be given. Then there is an isomorphism

\[
Tr^y_{n, \rho \sigma \varphi} (I^y_{n, \rho}(M[\mathbb{Y}]) \otimes C[\mathbb{Y}] \otimes I^y_{n, \varphi}(N[\mathbb{Y}])) \cong M[\mathbb{Y}] \otimes Tr^y_{n, \sigma}(C[\mathbb{Y}]) \otimes N[\mathbb{Y}]
\]

in $\mathcal{CY}_{\rho Tr_n(\sigma) \varphi}(R_{n - 1}-\text{Bim})$ which is natural in $M[\mathbb{Y}]$, $N[\mathbb{Y}]$, and $C[\mathbb{Y}]$.
\end{proposition}

\begin{proof}
Immediate, given Remark \ref{rem: usualincy}.
\end{proof}

We record the $y$-ified partial trace Markov moves below; these are taken from Example 3.5 and Proposition 3.8 in \cite{GH22}.

\begin{proposition} \label{prop: ymark}
Let $F^y(\sigma_i^{\pm 1})$ denote the $y$-ified Rouquier complexes for the braid group generators $\sigma_i^{\pm}$, and let $s_i \in \mathfrak{S}^n$ denote the permutations corresponding to those generators. We have:

\begin{enumerate}
    \item $Tr^y_{n, e}(R_n[\mathbb{Y}]) \simeq \left( \cfrac{1 + aq^{-2}}{(1 - q^2)(1 - q^2t^{-2})} \right) R_{n - 1}[\mathbb{Y}]$;
    
    \item $Tr^y_{n, s_{n - 1}} (F^y(\sigma_{n - 1})) \simeq q^{-1}tR_{n - 1}[\mathbb{Y}]$;
    
    \item $Tr^y_{n, s_{n - 1}}(F^y(\sigma_{n - 1}^{-1})) \simeq aq^{-3}R_{n - 1}[\mathbb{Y}]$
\end{enumerate}
\end{proposition}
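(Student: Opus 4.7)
The plan is to dispatch item (1) by direct computation and items (2) and (3) by lifting the un-$y$-ified Markov moves of Proposition \ref{prop: trmarkov} to the $y$-ified setting using the obstruction theory of Proposition \ref{prop: GHobs}.

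For item (1), first observe that $R_n[\mathbb{Y}]$ has trivial connection $\delta = 0$, so by the defining formula for $\delta_{Tr_n(C)}$, the $y$-ified partial trace $Tr^y_{n,e}(R_n[\mathbb{Y}])$ has no curvature correction and coincides with the termwise trace $R_n[\mathbb{Y}] \xrightarrow{x_n - x_n'} aq^{-2}R_n[\mathbb{Y}]$. Since the left and right actions of $x_n$ on $R_n$ coincide, this differential acts as zero, and the complex splits as a direct sum $(1 + aq^{-2})R_n[\mathbb{Y}_n]$. Decomposing $R_n[\mathbb{Y}_n] \cong R_{n-1}[\mathbb{Y}_{n-1}][x_n, y_n]$ via the free polynomial actions of $x_n$ (of degree $q^2$) and $y_n$ (of degree $q^{-2}t^2$) and summing the resulting geometric series yields the claimed expression.

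For items (2) and (3), first note that $Tr^y_{n, s_{n-1}}(F^y(\sigma_{n-1}^{\pm 1}))$ is a $y$-ification of $Tr_n(F(\sigma_{n-1}^{\pm 1}))$ with permutation $Tr_n(s_{n-1}) = e$, so that both sides of each proposed equivalence are $y$-ifications (with $\sigma = e$) of chain complexes already known to be homotopy equivalent by Proposition \ref{prop: trmarkov}. It thus suffices to lift the data of these homotopy equivalences to $\mathcal{CY}_e(R_{n-1}-\text{Bim})$. I would apply the derived analog of Corollary \ref{corr: GHobs} in both directions to lift the un-$y$-ified chain maps to morphisms of curved complexes, then apply the analog of Corollary \ref{corr: GHobs2} to conclude that the lifted compositions are homotopic to the respective identities. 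The hypothesis required throughout is that the relevant $\text{Hom}$-complexes have homology concentrated in non-negative degrees, which follows from the un-$y$-ified equivalence itself: both source and target are quasi-isomorphic to shifts of $R_{n-1}$, whose endomorphism algebra is concentrated in degree zero.

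The main obstacle is the slight mismatch that the obstruction-theoretic results of Proposition \ref{prop: GHobs} and its corollaries are formulated in $\mathcal{Y}_\sigma(R_n-\text{Bim})$, whereas the partial trace functors take values in the derived category $\mathcal{CY}_\sigma(R_{n-1}-\text{Bim})$. The proof of Proposition \ref{prop: GHobs} should adapt nearly verbatim, since the only essential ingredients are centrality of the curvature $Z$ (yielding $[\delta, -]^2 = 0$) and the existence of contracting homotopies for certain $\text{Hom}$-complexes, both of which remain valid after passage to the derived category. Convergence of the formal power series in $\mathbb{Y}$ is not an issue here since all of the complexes involved are bounded.
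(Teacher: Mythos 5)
Your proposal is correct, but it does not follow the paper's route, because the paper does not actually prove Proposition \ref{prop: ymark}: it quotes the statement from Example 3.5 and Proposition 3.8 of \cite{GH22}, where the moves are established by direct computation with the explicit traced curved complexes (essentially Gaussian elimination on $Tr^y_{n,s_{n-1}}(F^y(\sigma_{n-1}^{\pm 1}))$). You instead deduce (2) and (3) formally from the un-$y$-ified moves of Proposition \ref{prop: trmarkov} via the obstruction theory of Proposition \ref{prop: GHobs}. This works: the substitution $y_n = y_{\sigma^{-1}(n)} = y_{n-1}$ creates no new $\mathbb{Y}$-degree-zero terms, so $Tr^y_{n,s_{n-1}}(F^y(\sigma_{n-1}^{\pm 1}))$ really is a curved lift of $Tr_n(F(\sigma_{n-1}^{\pm 1}))$ with $Tr_n(s_{n-1}) = e$ (a point you assert and should make explicit); the relevant Hom- and End-complexes are homotopy equivalent, via Proposition \ref{prop: trmarkov}, to $\text{End}_{\mathcal{C}_{n-1}}(R_{n-1})$, which is concentrated in homological degree $0$; and the transfer of Corollaries \ref{corr: GHobs} and \ref{corr: GHobs2} from $\mathcal{Y}_{\sigma}(R_n-\text{Bim})$ to $\mathcal{CY}_e(R_{n-1}-\text{Bim})$ is purely formal, as you say, since the argument only uses centrality of the curvature and the vanishing of homology in negative degrees, with boundedness disposing of convergence. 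Indeed this is the same lifting mechanism the paper uses elsewhere (Theorem \ref{thm: yifytwist}, Corollary \ref{cor: half_forkslyde}, Theorem \ref{thm: top_comb_equiv}), so your argument is a sound substitute; what the computational route of \cite{GH22} buys is explicit strong deformation retractions rather than abstract homotopy equivalences, which is immaterial for the graded-dimension applications in Section \ref{sec: link_hom}. Your direct computation of (1) is also fine; note only that it yields the factor $(1 - q^{-2}t^2)^{-1} = (1 - T)^{-1}$, which agrees with the renormalized Proposition \ref{prop: renorm.markov} and with \cite{GH22}, so the exponent $q^2t^{-2}$ in the displayed statement should be read as $q^{-2}t^{2}$ rather than as a discrepancy with your argument.
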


Finally, we record the $y$-ified Markov move for the finite projector:

\begin{proposition} \label{prop: yktrace}
The family of $y$-ifications \textcolor{revisions}{$(K_n^y, e, \delta_{K_n})$ of $K_n$} satisfy:

\begin{enumerate}
    \item $K^y_1 = R_1[y]$;
    
    \item $K^y_{n - 1}J_n^y \simeq (q^{n - 3}tK^y_n \longrightarrow q^{-2}t^2 K^y_{n - 1})$;
    
    \item $Tr^y_{n, e}(K_n^y) \simeq \left(\cfrac{q^{n - 1} + aq^{-n - 1}}{1 - q^2}\right) q^2t^{-1} (K_{n - 1}^y)$
\end{enumerate}
\end{proposition}

\begin{proof}
Property (1) is a direct consequence of the definitions. Property (2) follows from Proposition \ref{prop: fynite} completely analogously to the proof of Proposition \ref{prop: k.markov.unnorm.uny}.

Property (3) is a direct computation. Applying $Tr_n$ termwise to $K_n$ \textcolor{revisions}{using Proposition \ref{prop: trb}} gives the curved complex 

\[
\left( \cfrac{q^{n - 1} + aq^{-n-1}}{1 - q^2} \right) B_{w_1}[y_1, \dots, y_n] \otimes \bigwedge[\theta_2, \dots, \theta_n]
\]

with connection $\delta_{Tr(K_n)} = (\sum_{j = 2}^{n - 1} (x_j - x_j') \otimes \theta_j^{\vee} + (y_j - y_1) \otimes \theta_j) + (y_n - y_1) \otimes \theta_n$ as in Proposition \ref{prop: ylessktrace}. We may Gaussian eliminate along the component $(y_n - y_1) \otimes \theta_n$ of the connection; what remains is the curved complex

\[
\left( \cfrac{q^{n - 1} + aq^{-n-1}}{1 - q^2} \right) \theta_n B_{w_1} \otimes R[y_1, \dots, y_n]/(y_n - y_1) \otimes \bigwedge[\theta_2, \dots, \theta_{n - 1}] \cong \left( \cfrac{q^{n - 1} + aq^{-n-1}}{1 - q^2} \right)q^2t^{-1}K^y_{n - 1}
\]
\end{proof}

\begin{remark}
Notice that the copies of $K^y_{n - 1}$ on the right-hand side of Property (3) of Proposition \ref{prop: yktrace} are not separated by a relative homological shift; in particular, they are not separated by an \textit{odd} relative homological shift. \textcolor{revisions}{Contrast this with the factor $(1 + q^2t^{-1})$ present in Proposition \ref{prop: ylessktrace}. Eliminating this odd degree shift is precisely why $y$-ification is necessary for our computations.}
\end{remark}

\subsection{Twisted Projectors} \label{sec: twist_proj}

In this section we analyze the action of the braid group $Br_n$ on $K^y_n$ given by $\beta \cdot K^y_n := F^y(\beta) \otimes K^y_n$. As usual, we \textcolor{revisions}{denote the permutation associated to a braid $\beta \in Br_n$ by $\sigma_{\beta} \in \mathfrak{S}^n$.}

\textcolor{revisions}{Recall the quasi-isomorphisms}

\[
\textcolor{revisions}{\psi_{\beta} \colon q^{e(\beta)} R[\mathbb{X}]_{\sigma_{\beta}} \to F(\beta)}; \quad \textcolor{revisions}{\phi_{\beta} \colon F(\beta) \to q^{e(\beta)} R[\mathbb{X}]_{\sigma(\beta)}}
\]

\textcolor{revisions}{of Proposition \ref{prop: rouqhom} associated to positive and negative braids.} The following is Theorem 3.13 in \cite{AH17}:

\begin{theorem} \label{thm: ab_hog_twisting}
For each positive braid $\beta \in Br_n$, $\psi_{\beta}$ induces isomorphisms of functors

\[
q^{e(\beta)}R[\mathbb{X}]_{\beta} \otimes - \cong F(\beta) \otimes -, \quad - \otimes q^{e(\beta)} R[\mathbb{X}]_{\beta} \cong - \otimes F(\beta) \textcolor{revisions}{\colon \mathcal{I}_n^- \to K^-(R_n-\text{Bim}).}
\]

\textcolor{revisions}{Similarly, for each negative braid $\beta$, $\phi_{\beta}$ induces analogous isomorphisms of functors. Composing these isomorphisms for positive and negative braids induces analogous isomorphisms for arbitrary braids $\beta$.}
\end{theorem}

\textcolor{revisions}{We wish to sketch a proof of the final claim of Theorem \ref{thm: ab_hog_twisting}, as we will make use of the ideas involved below. Before doing this, we need a quick technical lemma.}

\begin{lemma} \label{lem: twisting}
\textcolor{revisions}{For each complex $A \in \mathcal{I}_n$ and each permutation $\sigma \in \mathfrak{S}^n$, there is a natural isomorphism $R[\mathbb{X}]_{\sigma} A \cong A R[\mathbb{X}]_{\sigma}$.}
\end{lemma}

\begin{proof}
\textcolor{revisions}{Suppose $A = B_{w_0}$, considered as a quotient of the polynomial algebra $R[\mathbb{X}, \mathbb{X}']$ as in \eqref{eq: B_w0}. Each of $R[\mathbb{X}]_{\sigma} B_{w_0}$ and $B_{w_0} R[\mathbb{X}]$ has an identical description as a \textit{ring}, but with a twisted $R[\mathbb{X}, \mathbb{X}']$-module structure. Then the desired isomorphism $\psi \colon R[\mathbb{X}]_{\sigma} B_{w_0} \to B_{w_0} R[\mathbb{X}]_{\sigma}$ is the ring homomorphism taking $x_i$ to $x_{\sigma(i)}$ and $x'_j$ to $x'_{\sigma(j)}$. We check that this is a bimodule homomorphism below:}

\[
\textcolor{revisions}{\psi(x_i \cdot f) = \psi (x_{\sigma^{-1}(i)} f) = x_i \psi(f) = x_i \cdot \psi(f);} \quad \textcolor{revisions}{\psi(f \cdot x'_j) = \psi(f x'_j) = \psi(f) x'_{\sigma(j)} = \psi(f) \cdot x'_j}.
\]

\textcolor{revisions}{We extend $\psi$ to a general complex $A$ by applying $\psi$ on each chain bimodule. Nautrality then follows from the fact that any endomorphism of $B_{w_0}$ is given by multiplication by some polynomial $f(\mathbb{X}, \mathbb{X}')$.}
\end{proof}

\textcolor{revisions}{Now, fix a complex $A \in \mathcal{I}_n^-$ and a braid $\beta \in Br_n$. Suppose $\beta$ factors as $\beta = \beta^+ \beta^-$ for some positive braid $\beta^+$ and some negative braid $\beta^- \in Br_n$; the general case follows similarly. Then Abel-Hogancamp prove that the quasi-isomorphism}

\[
\textcolor{revisions}{\mathrm{id} \otimes \phi_{\beta^-} \colon A F(\beta^-) \to q^{e(\beta^-)} A R[\mathbb{X}]_{\sigma_{\beta^-}}}
\]

\textcolor{revisions}{is actually a homotopy equivalence; we denote the up to homotopy inverse of this map by $\phi_{\beta^-}^{-1}$. Similarly, the quasi-isomorphism}

\[
\textcolor{revisions}{\mathrm{id} \otimes \psi_{\beta^+} \colon q^{e(\beta^+)} A R[\mathbb{X}]_{\sigma_{\beta^+}} \to A F(\beta^+)}
\]

\textcolor{revisions}{is a homotopy equivalence with up to homotopy inverse $\psi_{\beta^+}^{-1}$. Using Lemma \ref{lem: twisting}, we build a sequence of homotopy equivalences as follows:}

\begin{tikzcd}[column sep=tiny]
\textcolor{revisions}{q^{e(\beta)} A R[\mathbb{X}]_{\sigma_{\beta}}} \arrow[r, color=revisions,"\textcolor{revisions}{\cong}"] & \textcolor{revisions}{q^{e(\beta^+) + e(\beta^-)} A R[\mathbb{X}]_{\sigma_{\beta^+}} R[\mathbb{X}]_{\sigma_{\beta^-}}} \arrow[r, color=revisions,"\textcolor{revisions}{\cong}"] \arrow[d, phantom, ""{coordinate, name=Z}]
& \textcolor{revisions}{q^{e(\beta^+)} R[\mathbb{X}]_{\sigma_{\beta^+}} \left( q^{e(\beta^-)} A R[\mathbb{X}]_{\sigma_{\beta^-}} \right)} \arrow[dll, color = revisions,
"\textcolor{revisions}{\phi_{\beta^-}^{-1}}",
rounded corners,
to path={ -- ([xshift=2ex]\tikztostart.east)
|- (Z) [near end]\tikztonodes
-| ([xshift=-2ex]\tikztotarget.west)
-- (\tikztotarget)}] & \\
\textcolor{revisions}{q^{e(\beta^+)} R[\mathbb{X}]_{\sigma_{\beta^+}} A F(\beta^-)} \arrow[r, color=revisions, "\textcolor{revisions}{\cong}"]
& \textcolor{revisions}{\left( q^{e(\beta^+)} A R[\mathbb{X}]_{\sigma_{\beta^+}} \right) F(\beta^-)} \arrow[r, color=revisions, "\textcolor{revisions}{\mathrm{id} \otimes \psi_{\beta^+}}"]
& \textcolor{revisions}{AF(\beta^+) F(\beta^-)} \arrow[r, color=revisions, "\textcolor{revisions}{\simeq}"]
& \textcolor{revisions}{AF(\beta)}
\end{tikzcd}

We wish to $y$-ify \textcolor{revisions}{Theorem \ref{thm: ab_hog_twisting}. We begin by $y$-ifying Proposition \ref{prop: rouqhom}.}

\begin{definition}
For each $\sigma \in \mathfrak{S}^n$, let $R[\mathbb{X}, \mathbb{Y}]_{\sigma}$ denote the $R[\mathbb{X}, \mathbb{X}',  \mathbb{Y}]$-module which equals \textcolor{revisions}{$R[\mathbb{X}]_{\sigma} \otimes_R R[\mathbb{Y}]$} as \textcolor{revisions}{an} $R[\mathbb{X}, \mathbb{X}']$-module and whose $R[\mathbb{Y}]$-module action is twisted by $\sigma$.
\end{definition}

\begin{proposition}
Considering $R[\mathbb{X}, \mathbb{Y}]_{\sigma}$ as a curved complex with trivial differential, we have $(R[\mathbb{X}, \mathbb{Y}]_{\sigma}, \sigma, 0) \in \mathcal{Y}_{\sigma}(R_n-\text{Bim})$.
\end{proposition}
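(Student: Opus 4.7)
The only thing to verify is that the trivial endomorphism $0 \in \mathrm{End}^{1}(R[\mathbb{X}, \mathbb{Y}]_{\sigma})$ is a valid connection with curvature $Z_{\sigma}$; that is, $0^{2} = 0$ must equal the action of $Z_{\sigma} = \sum_{i=1}^{n}(x_{\sigma(i)} - x_{i}')y_{i}$ on $R[\mathbb{X}, \mathbb{Y}]_{\sigma}$. The plan is therefore to unpack the twisted bimodule structure and show that $Z_{\sigma}$ acts by zero.

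First I would observe that $R[\mathbb{X}, \mathbb{Y}]_{\sigma}$, as an $R[\mathbb{X}, \mathbb{X}', \mathbb{Y}]$-module, equals $R[\mathbb{X}]_{\sigma}[\mathbb{Y}] = R[\mathbb{X}]_{\sigma} \otimes_{R} R[\mathbb{Y}]$, where the $R[\mathbb{X}, \mathbb{X}']$-action factors through the first tensor factor. In particular the elements $x_{\sigma(i)} - x_{i}'$ act only on the $R[\mathbb{X}]_{\sigma}$ component. By the definition of the twisted bimodule $R[\mathbb{X}]_{\sigma}$, right multiplication by $x_{i}'$ coincides with left multiplication by $x_{\sigma(i)}$, so for any $f \in R[\mathbb{X}]_{\sigma}$ we have
\[
(x_{\sigma(i)} - x_{i}') \cdot f \;=\; x_{\sigma(i)} f - x_{\sigma(i)} f \;=\; 0.
\]

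Next I would tensor with $R[\mathbb{Y}]$ and use that the $\mathbb{Y}$-action commutes with the $R[\mathbb{X}, \mathbb{X}']$-action (both are realized by module multiplication on distinct tensor factors of the underlying $R$-module). Multiplying the vanishing identity above by $y_{i}$ and summing over $i$ yields that $Z_{\sigma}$ annihilates $R[\mathbb{X}, \mathbb{Y}]_{\sigma}$. Since the connection is $\delta = 0$, the identity $\delta^{2} = Z_{\sigma}$ is the trivial equation $0 = 0$, so the triple $(R[\mathbb{X}, \mathbb{Y}]_{\sigma}, \sigma, 0)$ is a $Z_{\sigma}$-factorization and hence belongs to $\mathcal{Y}_{\sigma}(R_{n}\text{-Bim})$ (it is tautologically a curved lift of the bimodule $R[\mathbb{X}]_{\sigma}$ placed in homological degree zero with zero differential).

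There is no real obstacle here: the proposition is essentially a bookkeeping check confirming that the role of the twist $\sigma$ in $R[\mathbb{X}]_{\sigma}$ is exactly what is needed to kill the curvature $Z_{\sigma}$. The only minor subtlety is making sure that the convention for how $\mathbb{Y}$ acts (the sentence in the definition saying the $R[\mathbb{Y}]$-action is also twisted by $\sigma$) does not interfere with centrality of $x_{\sigma(i)} - x_{i}'$; but since each $x_{\sigma(i)} - x_{i}'$ already acts by zero before any $y_{i}$ enters, the form of the $\mathbb{Y}$-action is irrelevant for the curvature computation.
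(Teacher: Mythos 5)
Your proof is correct and is essentially the paper's own argument: the paper's proof consists of the single observation that $x_{\sigma(i)} - x_i' = 0$ as an endomorphism of $R[\mathbb{X}, \mathbb{Y}]_{\sigma}$, which is exactly the computation you carry out, so $Z_{\sigma}$ acts by zero and $\delta = 0$ trivially satisfies $\delta^2 = Z_{\sigma}$.
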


\begin{proof}
Observe that $x_{\sigma(i)} - x'_i = 0$ as an endomorphism of $R[\mathbb{X}, \mathbb{Y}]_{\sigma}$ for each $i$.
\end{proof}

\begin{lemma} \label{lem: yified_twisting}
\textcolor{revisions}{For each curved complex $(A[\mathbb{Y}], e, \delta_A) \in \mathcal{Y} \mathcal{I}_{n, e}$ and each permutation $\sigma \in \mathfrak{S}^n$, the natural isomorphism of Lemma \ref{lem: twisting} lifts without modification to a natural isomorphism of curved complexes $R[\mathbb{X}, \mathbb{Y}]_{\sigma} A[\mathbb{Y}] \cong A[\mathbb{Y}] R[\mathbb{X}, \mathbb{Y}]_{\sigma}$.}
\end{lemma}

\begin{proof}
\textcolor{revisions}{Since the $R[\mathbb{Y}]$-module structure is the same on the left-hand and right-hand sides, there is nothing to check. The requirement that $A[\mathbb{Y}]$ be a $y$-ification with \textit{trivial} permutation is required for the left-hand and right-hand sides to be $y$-ifications with the same permtuation.}
\end{proof}

\begin{proposition} \label{prop: yify_rouqhom}
\textcolor{revisions}{If $\beta$ is a positive braid, then the quasi-isomorphism $\psi_{\beta}$ of Proposition \ref{prop: rouqhom} lifts without modification to a closed morphism}

\[
\textcolor{revisions}{\psi_{\beta} \colon q^{e(\beta)} R[\mathbb{X}, \mathbb{Y}]_{\sigma_{\beta}} \to F^y(\beta)}
\]

\textcolor{revisions}{in $\mathcal{Y}_{\sigma_{\beta}}(R_n-\mathrm{Bim})$. Similarly, if $\beta$ is a negative braid, then the quasi-isomorphism $\phi_{\beta}$ lifts without modification to a closed morphism}

\[
\textcolor{revisions}{\phi_{\beta} \colon F^y(\beta) \to q^{e(\beta)} R[\mathbb{X}, \mathbb{Y}]_{\sigma_{\beta}}}
\]

\textcolor{revisions}{in $\mathcal{Y}_{\sigma_{\beta}}(R_n-\mathrm{Bim})$.}
\end{proposition}

\begin{proof}
For a positive braid $\beta$, the only component of the connection $\delta_{F(\beta)}$ of the $y$-ified Rouquier complex that does not vanish on $F(\beta)^0[\mathbb{Y}]$ is $d_{F(\beta)} \otimes 1$. \textcolor{revisions}{This component already annihilates $\phi_{\beta}$ by Proposition \ref{prop: rouqhom}, so the entire connection $\delta_{F(\beta)}$ annihilates $\phi_{\beta}$.}

\textcolor{revisions}{Similarly, for a negative braid $\beta$, the only component of the connection $\delta_{F(\beta)}$ which enters $F(\beta)^0[\mathbb{Y}]$ is $d_{F(\beta)} \otimes 1$. This component is already annihilated by $\psi_{\beta}$, so the entire connection $\delta_{F(\beta)}$ is annihilated by $\psi_{\beta}$.}
\end{proof}

\textcolor{revisions}{Everything is now in place to state the $y$-ification of Theorem \ref{thm: ab_hog_twisting}:}

\begin{theorem} \label{thm: yifytwist}
\textcolor{revisions}{Fix a $y$-ification $(A[\mathbb{Y}], e, \delta_A) \in \mathcal{Y} \mathcal{I}_{n, e}^b.$ Then for each positive braid $\beta \in Br_n$, $\psi_{\beta}$ induces homotopy equivalences of curved complexes}

\[
\textcolor{revisions}{q^{e(\beta)} R[\mathbb{X}, \mathbb{Y}]_{\sigma_{\beta}} A[\mathbb{Y}] \simeq F^y(\beta) A[\mathbb{Y}]}, \quad \textcolor{revisions}{q^{e(\beta)} A[\mathbb{Y}] R[\mathbb{X}, \mathbb{Y}]_{\sigma_{\beta}} \simeq A[\mathbb{Y}] \otimes F^y(\beta)}
\]

\textcolor{revisions}{Similarly, for each negative braid $\beta$, $\phi_{\beta}$ induces analogous homotopy equivalences. Composing these equivalences for positive and negative braids induces analogous equivalences for arbitrary braids $\beta$.}
\end{theorem}

\begin{proof}
\textcolor{revisions}{First, observe that exactly as in the un $y$-ified case, we may use Lemma \ref{lem: yified_twisting} to reduce the proof to the case in which $\beta$ is a positive or a negative braid. The only potential concern is that some intermediate step in this process might require an analog of Lemma \ref{lem: yified_twisting} for some nontrivial permutation, but this does not occur precisely because $A[\mathbb{Y}]$ is a $y$-ification with \textit{trivial} permutation.}

\textcolor{revisions}{We treat the case in which $\beta$ is a positive braid; the negative case follows similarly. To that end, let $\beta \in Br_n$ be a positive braid and $(A[\mathbb{Y}], e, \delta_A) \in \mathcal{Y} \mathcal{I}_{n, e}^b$ a $y$-ification. By Proposition \ref{prop: yify_rouqhom}, we obtain a closed morphism of curved complexes}

\[
\textcolor{revisions}{\mathrm{id} \otimes \psi_{\beta}^y \colon q^{e(\beta)} A[\mathbb{Y}] R[\mathbb{X}, \mathbb{Y}]_{\sigma_{\beta}} \to A[\mathbb{Y}] F^y(\beta).}
\]

\textcolor{revisions}{Here we write $\psi_{\beta}^y$ rather than $\psi_{\beta}$ to emphasize that this is a morphism of \textit{curved} complexes. This distinction comes in handy immediately, as we can express $\mathrm{Cone}(\mathrm{id} \otimes \psi_{\beta}^y)$ as a convolution:}

\[
\textcolor{revisions}{\mathrm{Cone}(\mathrm{id} \otimes \psi_{\beta}^y) = \mathrm{tw}_{\Delta} \left( \mathrm{Cone}(\mathrm{id} \otimes \psi_{\beta}) \otimes_R R[\mathbb{Y}] \right)}
\]

\textcolor{revisions}{for some twist $\Delta$. Because both $A$ and $F(\beta)$ are bounded, this is a homologically locally finite, lower finite convolution. Further, $\mathrm{Cone}(\mathrm{id} \otimes \psi_{\beta})$ is contractible by Theorem \ref{thm: ab_hog_twisting}. It follows from Proposition \ref{prop: useful_hpt} that $\mathrm{Cone}(\mathrm{id} \otimes \psi_{\beta}^y)$ is contractible as well.}
\end{proof}

\begin{remark}
\textcolor{revisions}{In fact, we can promote the statement of Theorem \ref{thm: yifytwist} to the case in which $A$ is bounded below without much difficulty. We first extend Theorem \ref{thm: ab_hog_twisting} to unbounded complexes $A$ by writing $A$ as a convolution between a bounded above and a bounded below complex, then applying Proposition \ref{prop: useful_hpt}. Then the promotion to the $y$-ified case uses the fact that a $y$-ification of a bounded below complex is still a homologically locally finite convolution. We will not use this extension here.}
\end{remark}

We will be particularly interested in the result of Theorem \ref{thm: yifytwist} in the case \textcolor{revisions}{$A[\mathbb{Y}] = K_n^y$}.

\begin{definition}
For each $\sigma, \rho \in \mathfrak{S}^n$, let 

\[
K^y_{\rho, n, \sigma} := R[\mathbb{X}, \mathbb{Y}]_{\rho} K^y_n  R[\mathbb{X}, \mathbb{Y}]_{\sigma}.
\]

We continue to reserve the notation $K^y_n$ for $K^y_{e, n, e}$ and extend this convention by suppressing $\rho$ and $\sigma$ whenever they are trivial. In keeping with the terminology of \cite{AH17}, we call these curved complexes \textit{$y$-ified twisted projectors}.
\end{definition} 

In fact it suffices to consider ``one-sided" twisted projectors, as twisting on the left and right yield isomorphic curved complexes.

\begin{lemma} \label{lem: left_right_twist}
For each $\sigma, \rho \in \mathfrak{S}^n$, we have $K^y_{\sigma, n, \rho} \cong \textcolor{revisions}{K^y_{n, \sigma \rho}}$.
\end{lemma}

\begin{proof}
\textcolor{revisions}{This is a direct application of Lemma \ref{lem: yified_twisting} with $A[\mathbb{Y}] = K^y_n$.}
\end{proof}

\textcolor{revisions}{Theorem \ref{thm: yifytwist} allows us to absorb braids whose strands all feed into a $y$-ified twisted projector up to a change in permutation.}

\begin{proposition} \label{prop: yify_proj_twist}
For each braid $\beta \in Br_n$ \textcolor{revisions}{and each permutation $\rho \in \mathfrak{S}^n$}, we have homotopy equivalences $\textcolor{revisions}{K^y_{n, \rho} F^y(\beta) \simeq q^{e(\beta)} K^y_{n, \rho \sigma_\beta}}$ and $\textcolor{revisions}{F^y(\beta) K^y_{n, \rho} \simeq q^{e(\beta)} K^y_{n, \sigma_\beta \rho}}$.
\end{proposition}

\begin{proof}
\textcolor{revisions}{We treat the first equivalence; the second follows exactly analogously. Let $\beta$, $\rho$ be as above. Applying Theorem \ref{thm: yifytwist} and Lemma \ref{lem: left_right_twist}, we obtain a sequence of homotopy equivalences}

\[
\textcolor{revisions}{K^y_{n, \rho} F^y(\beta) \cong R[\mathbb{X}, \mathbb{Y}]_{\rho} \left(K^y_n F^y(\beta) \right) \simeq R[\mathbb{X}, \mathbb{Y}]_{\rho} \left( q^{e(\beta)} K^y_n R[\mathbb{X}, \mathbb{Y}]_{\sigma_{\beta}} \right) = q^{e(\beta)} K^y_{\rho, n, \sigma_{\beta}} \cong q^{e(\beta)} K^y_{n, \rho \sigma_{\beta}}.}
\]
\end{proof}

\textcolor{revisions}{In our treatment of colored link homology, w}e will also need to know that \textcolor{revisions}{$K^y_{n, \sigma}$} slides past crossings\textcolor{revisions}{. This statement is completely analogous to Proposition \ref{prop: inf_cross_slide} for the infinite projector $(P_{(1^n)}^y)^{\vee}$ and can be viewed as a ``braid relation" for $K^y_{n, \sigma}$, allowing us to manipulate $K^y_{n, \sigma}$ topologically, just as we would a braid on $n$ strands, in the diagrammatic calculus.}

\begin{proposition} \label{prop: ky_cross_slide}
\textcolor{revisions}{Let $X_{n + 1}, Y_{n + 1} \in Br_{n + 1}$ be as in Proposition \ref{prop: inf_cross_slide}. Then for each permutation $\sigma \in \mathfrak{S}^n$, we have homotopy equivalences $F^y(X_{n + 1}) K^y_{n, \sigma} \simeq K^y_{n, \sigma} F^y(X_{n + 1})$ and $F^y(Y_{n + 1}) K^y_{n, \sigma} \simeq K^y_{n, \sigma} F^y(Y_{n + 1})$.}
\end{proposition}

\begin{proof}
\textcolor{revisions}{We treat the first equivalence; the second follows exactly analogously. We begin with the case $\sigma = e$. In this case, we may express the right-hand side explicitly as a convolution:}

\begin{align*}
\textcolor{revisions}{K^y_n F^y(X_{n + 1})} & \textcolor{revisions}{= \mathrm{tw}_{\delta} \left( B_{w_0} F(X_{n + 1})[\mathbb{Y}] \otimes \wedge[\theta_2, \dots, \theta_n] \right);} \\
\textcolor{revisions}{\delta} & \textcolor{revisions}{ = \mathrm{id}_{B_{w_0}} \left(d_{X_n} + \sum_{i = 1}^{n + 1} h_iy_i \right) \otimes 1 + \sum_{j = 2}^n (x_j - x_j') \otimes \theta_j^{\vee} + (y_{j + 1} - y_2) \otimes \theta_j.}
\end{align*}

\textcolor{revisions}{Here $\mathbb{X}$ is  the alphabet corresponding to the \textit{left} action of $R_n$, $\mathbb{X}'$ is the alphabet over which the tensor product of $K^y_n$ and $F^y(X_{n + 1})$ is taken, $\mathbb{X}''$ is the alphabet corresponding to the \textit{right} action of $R_n$, and $h_j \in \mathrm{End}(F(X_j))$ are dot-sliding homotopies satisfying $[d, h_j] = x'_{\sigma_{X_{n + 1}}(j)} - x''_j$.}

\textcolor{revisions}{Completely analogously to the proof of Proposition \ref{prop: y_forkslide_i}, we apply Proposition \ref{prop: kosz_base} to replace terms of the form $(x_j - x_j') \otimes \theta_j^{\vee}$ with $(x_j - x_{j + 1}'') \otimes \theta_j^{\vee}$. The computations involved are identical up to a change in notation for various morphisms. As a result, we obtain}

\begin{align*}
\textcolor{revisions}{K^y_n F^y(X_{n + 1})} & \textcolor{revisions}{= \mathrm{tw}_{\delta'} \left( B_{w_0} F(X_{n + 1})[\mathbb{Y}] \otimes \wedge[\theta_2, \dots, \theta_n] \right);} \\
\textcolor{revisions}{\delta'} & \textcolor{revisions}{= \mathrm{id}_{B_{w_0}} \left(d_{X_n} + h_1(y_1 - y_2) \right) + \sum_{j = 2}^n (x_j - x_{j + 1}'') \otimes \theta_j^{\vee} + (y_{j + 1} - y_2) \otimes \theta_j.}
\end{align*}

\textcolor{revisions}{Next, we wish to apply the homotopy equivalence $B_{w_0} F(X_n) \simeq F(X_n) B_{w_0}$ to each term of this convolution. The morphisms involved automatically commute with every component of $\delta'$ except $h_1(y_1 - y_2)$. That these morphisms do commute with $h_1(y_1 - y_2)$ follows from repeated application of Lemma \ref{lem: dot_slide_fork_slide} from Appendix \ref{app: dot_slide_nat}. By the same argument as in Corollary \ref{cor: half_forkslyde}, applying this homotopy equivalence to each term induces a homotopy equivalence of convolutions}

\begin{align*}
\textcolor{revisions}{K^y_n F^y(X_{n + 1})} & \textcolor{revisions}{= \mathrm{tw}_{\delta'} \left( F(X_{n + 1}) B_{w_0} [\mathbb{Y}] \otimes \wedge[\theta_2, \dots, \theta_n] \right).}
\end{align*}

\textcolor{revisions}{From here, we once again apply Proposition \ref{prop: kosz_base} to replace the terms $(x_j - x_{j + 1}'') \otimes \theta_j^{\vee}$ in $\delta'$ with $(x_{j + 1}' - x_{j + 1}'') \otimes \theta^j$. This exactly reverses the previous application of Proposition \ref{prop: kosz_base}, resulting in the following:}

\begin{align*}
\textcolor{revisions}{K^y_n F^y(X_{n + 1})} & \textcolor{revisions}{= \mathrm{tw}_{\delta''} \left( F(X_{n + 1}) B_{w_0} [\mathbb{Y}] \otimes \wedge[\theta_2, \dots, \theta_n] \right);} \\
\textcolor{revisions}{\delta''} & \textcolor{revisions}{ = \left(d_{X_n} + \sum_{i = 1}^{n + 1} h_iy_i \right) \mathrm{id}_{B_{w_0}} \otimes 1 + \sum_{j = 2}^n (x_{j + 1}' - x_{j + 1}'') \otimes \theta_j^{\vee} + (y_{j + 1} - y_2) \otimes \theta_j.}
\end{align*}

\textcolor{revisions}{This is exactly $F^y(X_{n + 1}) K^y_n$.}

\textcolor{revisions}{To extend to an arbitrary permutation $\sigma \in \mathfrak{S}^n$, let $\beta \in Br_n$ be a braid with associated permutation $\sigma$, and recall the homotopy equivalence $K^y_{n, \sigma} \simeq q^{-e(\beta)} F^y(\beta) K^y_n$ of Proposition \ref{prop: yify_proj_twist}. Then we have a sequence of homotopy equivalences}

\begin{multline*}
\textcolor{revisions}{K^y_{n, \sigma} F^y(X_{n + 1}) \simeq q^{-e(\beta)} F^y(\beta) K^y_n F^y(X_{n + 1}) \simeq q^{-e(\beta)} F^y(\beta) F^y(X_{n + 1}) K^y_n} \\
\textcolor{revisions}{\simeq q^{-e(\beta)} F^y(X_{n + 1}) F^y(\beta) K^y_n \simeq F^y(X_{n + 1}) K^y_{n, \sigma}.}
\end{multline*}

\textcolor{revisions}{Here the third homotopy equivalence follows from the corresponding identity $\beta X_{n + 1} = X_{n + 1} \beta \in Br_{n + 1}$.}

\end{proof}

In what follows, we will need an analog of the $y$-ified Markov Moves of Proposition \ref{prop: yktrace} for $y$-ified twisted projectors.

\begin{proposition} \label{prop: twist_mark_j}
For each $\sigma \in \mathfrak{S}^{n - 1}$, there is a homotopy equivalence $K^y_{n - 1, \sigma}J^y_n \simeq (q^{n - 3}tK^y_{n, \sigma} \rightarrow q^{-2}t^2 K^y_{n - 1, \sigma})$.
\end{proposition}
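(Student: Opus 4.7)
The plan is to derive this twisted analogue as a direct consequence of the untwisted finite $y$-ified Markov move (Proposition \ref{prop: yktrace}(2)) by tensoring throughout with the twist module $R[\mathbb{X},\mathbb{Y}]_\sigma$. Viewing $\sigma \in \mathfrak{S}^{n-1}$ as an element of $\mathfrak{S}^n$ fixing the last strand, I would begin by applying Lemma \ref{lem: left_right_twist} (with $\rho = e$) to rewrite the twisted projector as a \emph{left} twist,
\[
K^y_{n-1,\sigma} \;\cong\; R[\mathbb{X},\mathbb{Y}]_\sigma \otimes K^y_{n-1},
\]
so that $K^y_{n-1,\sigma} J^y_n \cong R[\mathbb{X},\mathbb{Y}]_\sigma K^y_{n-1} J^y_n$, where the twist factor has been pulled past $K^y_{n-1}$ and lives strictly to the left of the Jucys--Murphy complex.

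Next, I would invoke Proposition \ref{prop: yktrace}(2) to obtain the homotopy equivalence
\[
K^y_{n-1} J^y_n \;\simeq\; \bigl(q^{n-3}t K^y_n \longrightarrow q^{-2}t^2 K^y_{n-1}\bigr)
\]
in $\mathcal{Y}_e(R_n\text{-Bim})$, and then tensor on the left with $R[\mathbb{X},\mathbb{Y}]_\sigma$. The resulting functor $R[\mathbb{X},\mathbb{Y}]_\sigma \otimes -\colon \mathcal{Y}_e(R_n\text{-Bim}) \to \mathcal{Y}_\sigma(R_n\text{-Bim})$ sends the $y$-ified cone on the right to the analogous cone in the twisted category: the twist module is concentrated in homological degree zero and carries trivial differential, so tensoring is additive and commutes with the mapping cone construction, and it preserves homotopy equivalences (the homotopies simply extend by tensoring with $\text{id}_{R[\mathbb{X},\mathbb{Y}]_\sigma}$).

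Finally, I would apply Lemma \ref{lem: left_right_twist} a second time in the opposite direction to reinterpret each term of the resulting cone as a (right-)twisted finite projector, yielding
\[
R[\mathbb{X},\mathbb{Y}]_\sigma K^y_n \;\cong\; K^y_{n,\sigma}, \qquad R[\mathbb{X},\mathbb{Y}]_\sigma K^y_{n-1} \;\cong\; K^y_{n-1,\sigma}.
\]
Chaining these three steps gives the claimed homotopy equivalence. The only real bookkeeping obstacle is ensuring that the inclusion $\mathfrak{S}^{n-1} \hookrightarrow \mathfrak{S}^n$ is handled consistently throughout---in particular, that the twist module genuinely acts only on the first $n-1$ strands and commutes with the trivial right strand used to view $K^y_{n-1}$ inside $\mathcal{Y}_{e}(R_n\text{-Bim})$---but this is a cosmetic matter that follows from the definitions and the explicit form of the isomorphisms in Lemma \ref{lem: left_right_twist}. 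No new curved homological perturbation is required beyond what is already packaged in those results.
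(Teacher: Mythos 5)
Your argument is correct, but it takes a different route from the paper. The paper proves this by choosing a positive braid lift $\beta(\sigma) \in Br_{n-1}$ of $\sigma$, writing $K^y_{n-1,\sigma} \simeq q^{-e(\beta(\sigma))}K^y_{n-1}F^y(\beta(\sigma))$ via Proposition \ref{prop: yify_proj_twist}, commuting $F^y(\beta(\sigma))$ past $J^y_n$ (which commutes with all of $Br_{n-1}$), applying the untwisted recursion of Proposition \ref{prop: yktrace}(2), and then re-absorbing the braid into twisted projectors with a second application of Proposition \ref{prop: yify_proj_twist}, cancelling the $q^{\pm e(\beta(\sigma))}$ shifts. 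You instead work directly with the twist module: Lemma \ref{lem: left_right_twist} converts the right twist into a left factor $R[\mathbb{X},\mathbb{Y}]_\sigma$, you tensor the untwisted homotopy equivalence with this factor (an additive operation on curved complexes that commutes with one-sided twists/cones and preserves homotopy equivalences, since $R[\mathbb{X},\mathbb{Y}]_\sigma$ sits in homological degree zero with trivial connection), and then convert back. Your route buys simplicity: it needs only the honest isomorphism of Lemma \ref{lem: left_right_twist} and exactness of tensoring, avoiding braid lifts, the commutation of $J_n$ with $Br_{n-1}$, the homotopy equivalences of Theorem \ref{thm: yifytwist}, and all of the quantum-shift bookkeeping; the paper's route has the virtue of uniformity with the proof of the companion trace statement (Proposition \ref{prop: twist_mark_tr}), which genuinely requires manipulating braid lifts. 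The one point you flag — compatibility of the inclusion $\mathfrak{S}^{n-1}\hookrightarrow\mathfrak{S}^n$ and of the $\mathbb{Y}$-variable conventions in the tensor product of $y$-ifications — is indeed only bookkeeping here, because every factor you tensor against ($K^y_{n-1}J^y_n$, $K^y_n$, $K^y_{n-1}$) has trivial associated permutation, so no twisting of the $\mathbb{Y}$-action intervenes.
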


\begin{proof}
Let $\beta(\sigma) \in Br_{n - 1}$ be any positive braid lift of $\sigma$. By Proposition \ref{prop: yify_proj_twist}, we have a homotopy equivalence $K^y_{n - 1, \sigma} \simeq q^{-e(\beta(\sigma))} K^y_{n - 1} F^y(\beta(\sigma))$. Because $J_n$ commutes with all elements of $Br_{n - 1}$, we have a homotopy equivalence $K^y_{n - 1, \sigma} J^y_n \simeq q^{-e(\beta(\sigma))}K^y_{n - 1}J^y_n F^y(\beta(\sigma))$. The result then follows immediately from identity (2) in Proposition \ref{prop: yktrace} and another application of Proposition \ref{prop: yify_proj_twist}.
\end{proof}

\begin{proposition} \label{prop: twist_mark_tr}
For each $\sigma \in \mathfrak{S}^n$ satisfying $\sigma(n) = n$, we have 
\[
Tr^y_{n, \sigma}(K^y_{n, \sigma}) \simeq \left( \cfrac{q^{n - 1} + aq^{-n - 1}}{1 - q^2} \right) q^2 t^{-1} K^y_{n - 1, Tr_n(\sigma)}.
\]

For all other $\sigma \in \mathfrak{S}^n$, we have

\[
Tr^y_{n, \sigma}(K^y_{n, \sigma}) \simeq (q^{n - 1} + aq^{-n - 1}) K^y_{n - 1, Tr_n(\sigma)}.
\]
\end{proposition}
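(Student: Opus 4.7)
The plan is to reduce both statements to applications of Proposition \ref{prop: yify_proj_twist} combined with the Markov-type moves of Propositions \ref{prop: ymark} and \ref{prop: yktrace}. The starting observation is that writing $K^y_{n,\sigma} \simeq q^{-e(\beta)} K^y_n F^y(\beta)$ for a positive braid lift $\beta$ of $\sigma$ reduces, by naturality of $Tr^y$, to computing $Tr^y_{n,\sigma}(K^y_n F^y(\beta))$ for a conveniently chosen $\beta$.

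For $\sigma(n) = n$, I would take $\beta \in Br_{n-1}$. Since $\beta$ does not involve the $n$-th strand, iterating the locality isomorphism of Proposition \ref{prop: ytradj} pulls $F^y(\beta)$ outside the trace, leaving $Tr^y_{n,e}(K^y_n)$, which is supplied by Proposition \ref{prop: yktrace}(3). Reabsorbing $F^y(\beta)$ via another application of Proposition \ref{prop: yify_proj_twist} produces the claimed formula once the shifts $q^{\pm e(\beta)}$ cancel, leaving only the $q^2 t^{-1}$ prefactor inherited from the untwisted Markov move.

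For $\sigma(n) = k \neq n$, I would use the decomposition $\sigma = (s_k s_{k+1} \cdots s_{n-1}) \sigma_0$ with $\sigma_0 \in \mathfrak{S}^{n-1}$, which is available because $s_k \cdots s_{n-1}$ realises the cycle $n \mapsto k$ and $\sigma_0 := (s_{n-1} \cdots s_k) \sigma$ then automatically fixes $n$. Lifting to $\beta = (\sigma_k \cdots \sigma_{n-1}) \beta_0$ with $\beta_0 \in Br_{n-1}$, every factor other than $\sigma_{n-1}$ avoids the $n$-th strand, so sliding $F^y(\sigma_k \cdots \sigma_{n-2})$ past $K^y_n$ via Proposition \ref{prop: yify_proj_twist} and Lemma \ref{lem: left_right_twist} and invoking locality reduces the problem to the key identity
\[ Tr^y_{n, s_{n-1}}(K^y_n F^y(\sigma_{n-1})) \simeq (q^n + aq^{-n}) K^y_{n-1}. \]
A shift count, $-e(\beta) + (n-1-k) + e(\beta_0) = -1$, then converts $(q^n + aq^{-n})$ into the advertised $(q^{n-1} + aq^{-n-1})$; geometrically, the absence of the $(1-q^2)^{-1}$ factor reflects the fact that the $n$-th strand now participates in a nontrivial cycle of $\sigma$, so $y_n$ is identified with $y_{\sigma^{-1}(n)}$ and the polynomial $R[x_n]$-action is absorbed rather than remaining free.

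The main obstacle is the key identity, which is precisely the $\sigma = s_{n-1}$ instance of Case 2 and cannot be handled by the above reduction alone. To prove it I would work directly with the Koszul model $K^y_n = B_{w_0} \otimes \bigwedge[\theta_2, \ldots, \theta_n] \otimes R[\mathbb{Y}]$, combined with the twisted analogue of Proposition \ref{prop: trb}, namely $Tr_n(B_{w_0} \otimes R[\mathbb{X}]_{s_{n-1}}) \simeq (q^{n-1} + aq^{-n-1}) B_{w_1}$, in which the free $R[x_n]$-action present in the untwisted setting is killed because the twisted right action of $x'_n$ coincides with the untwisted action of $x'_{n-1}$. Applying $Tr_n$ termwise and imposing $y_n = y_{n-1}$ in the connection, the Gaussian elimination along the $\theta_n$-component that produced the $(1-q^2)^{-1}(q^2 t^{-1})$-prefactor in the proof of Proposition \ref{prop: yktrace}(3) is no longer available, since the coefficient $y_n - y_1$ becomes $y_{n-1} - y_1$, which is not a unit. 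Instead the remaining structure should reassemble cleanly to $K^y_{n-1}$ with exactly the advertised prefactor; verifying this reassembly, and in particular confirming that no spurious perturbation of $\delta_{K^y_{n-1}}$ arises from the twisted differential on the Hochschild component introduced by $Tr_n$, is the delicate step.
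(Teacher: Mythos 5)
Your handling of the first case, and your reduction of the second case to the single identity $Tr^y_{n,s_{n-1}}(K^y_{n,s_{n-1}}) \simeq (q^{n-1}+aq^{-n-1})K^y_{n-1}$, follow essentially the same route as the paper (which writes $\sigma = \rho_1 s_{n-1}\rho_2$ with $\rho_1,\rho_2 \in \mathfrak{S}^{n-1}$ and uses Propositions \ref{prop: yify_proj_twist} and \ref{prop: ky_cross_slide} together with locality). The genuine gap is in your proof of that key identity. The ``twisted analogue of Proposition \ref{prop: trb}'' you invoke, $Tr_n(B_{w_0}\otimes R[\mathbb{X}]_{s_{n-1}}) \simeq (q^{n-1}+aq^{-n-1})B_{w_1}$, is false: the substitution $x_j' \mapsto x_{\sigma^{-1}(j)}'$ in the right-hand tensor factor fixes symmetric polynomials and carries the twisted right action to the standard one, giving an isomorphism of $R_n$-bimodules $B_{w_0}R[\mathbb{X}]_{\sigma} \cong B_{w_0}$ for every $\sigma$; hence $Tr_n(B_{w_0}R[\mathbb{X}]_{s_{n-1}}) \simeq \left(\frac{q^{n-1}+aq^{-1-n}}{1-q^2}\right)B_{w_1}$, with the factor $\frac{1}{1-q^2}$ intact. (For $n=2$ one checks directly that the kernel and cokernel of $x_2 - x_1'$ on $B_1$ are $\frac{q}{1-q^2}R_1$ and $\frac{q^{-1}}{1-q^2}R_1$, exactly as for $x_2 - x_2'$: twisting merely re-routes which variables get identified, it does not delete a free polynomial direction.) So the heuristic that the free $R[x_n]$-action dies at the bimodule level is wrong, and the disappearance of both $\frac{1}{1-q^2}$ and $q^2t^{-1}$ in the $y$-ified statement is entirely a feature of the curved structure after the identification $y_n = y_{n-1}$ --- which is precisely the ``reassembly'' you acknowledge you have not verified. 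As written, the central step is both misstated and unproven.

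For comparison, the paper avoids any direct computation on the Koszul model at this point: for $\sigma = s_{n-1}$ it rotates the recursion of Proposition \ref{prop: fynite} to write $K^y_n \simeq (q^{-n+1}K^y_{n-1} \rightarrow q^{-n+3}t^{-1}K^y_{n-1}J^y_n)$, tensors with $F^y(\sigma_{n-1}^{-1})$, isotopes the second term to $K^y_{n-1}F^y(\sigma_{n-1})J^y_{n-1}$, and applies $Tr^y_{n,s_{n-1}}$ using only the one-crossing Markov moves of Proposition \ref{prop: ymark}. This produces a two-term convolution $(aq^{-n-2}K^y_{n-1} \rightarrow q^{n-2}K^y_{n-1})$ whose twist would have degree $a^0q^0t^1$, yet must connect summands in different Hochschild degrees while $K^y_{n-1}$ is concentrated in Hochschild degree zero; the twist therefore vanishes, the convolution splits, and the stated formula follows after regrading. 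To repair your argument, either substitute this splitting argument for the false twisted trace lemma, or actually carry out the curved Koszul computation you sketch, tracking how the trace differential, the Koszul terms, and the curved terms $(y_j - y_1)\otimes\theta_j$ interact after setting $y_n = y_{n-1}$ --- that interaction, not a twisted version of Proposition \ref{prop: trb}, is where the content of the second formula lies.
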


\begin{proof}
The first assertion follows from a straightforward application of Propositions \ref{prop: yktrace} and \ref{prop: yify_proj_twist} together with linearity of $Tr^y_{n, \sigma}$. To see the second, observe that any $\sigma \in \mathfrak{S}^n$ satisfying $\sigma(n) \neq n$ can be written as $\sigma = \rho_1 s_{n - 1} \rho_2$ for some $\rho_1, \rho_2 \in \mathfrak{S}^{n - 1}$; moreover, under this decomposition, we have $Tr_n(\sigma) = \rho_1 \rho_2$. Combined with linearity of $Tr^y_{n, \sigma}$ and Propositions \ref{prop: yify_proj_twist} and \ref{prop: ky_cross_slide}, this allows us to restrict our attention to the case $\sigma = s_{n - 1}$.

Recall the homotopy equivalence
\[
K^y_n \simeq \big(q^{-n + 1} K^y_{n - 1} \rightarrow q^{-n + 3}t^{-1} K^y_{n - 1} J^y_n\big)
\]

of Proposition \ref{prop: fynite}. Upon taking a tensor product with $F^y(\sigma_{n - 1}^{-1})$ and simplifying the final term via an isotopy, we obtain
\[
K^y_n F^y(\sigma_{n - 1}^{-1}) \simeq \big(q^{-n + 1} K_{n - 1} F^y(\sigma_{n - 1}^{-1}) \rightarrow q^{-n + 3}t^{-1} K^y_{n - 1} F^y(\sigma_{n - 1}) J^y_{n - 1}\big)
\]

By Proposition \ref{prop: yify_proj_twist}, we have $K^y_{n - 1} F^y(\sigma_{n - 1}^{-1}) \simeq q^{-1} K^y_{n - 1, s_{n - 1}}$. Upon making this substitution and applying $Tr^y_{n, s_{n - 1}}$ to all terms, we obtain
\begin{align*}
q^{-1}Tr^y_{n, s_{n - 1}}(K^y_{n, s_{n - 1}}) & \simeq \big(q^{-n + 1} K^y_{n - 1} Tr^y_{n, s_{n - 1}}(F^y(\sigma_{n - 1}^{-1})) \rightarrow q^{-n + 3}t^{-1} K^y_{n - 1} Tr^y_{n, s_{n - 1}}(F^y(\sigma_{n - 1})) J^y_{n - 1}\big) \\
& \simeq \big(aq^{-n - 2} K^y_{n - 1} \rightarrow q^{-n + 2} K^y_{n - 1} J^y_{n - 1}\big) \\
& \simeq \big(aq^{-n - 2} K^y_{n - 1} \rightarrow q^{n - 2} K^y_{n - 1}\big)
\end{align*}

Here the second homotopy equivalence follows from the Markov moves of Proposition \ref{prop: ymark} and the third follows from Proposition \ref{prop: yify_proj_twist}. Now, the twist on this convolution must be a map of total degree $t = a^0q^0t^1$. On the other hand, $K^y_{n - 1}$ is concentrated in Hochschild ($a$) degree $0$. Since there is a relative Hochschild degree shift between the two terms of this convolution, in fact the twist must be equal to $0$, and this convolution degenerates to a direct sum. After regrading by $q$, we obtain

\[
Tr^y_{n, s_{n - 1}}(K^y_{n, s_{n - 1}}) \simeq (aq^{-n - 1} + q^{n - 1})K^y_{n - 1}
\]

as desired.
\end{proof}

\section{Applications to Link Homology} \label{sec: link_hom}

\subsection{Torus Link Homology} \label{sec: uncolor_hom}

We now employ the recursive machinery developed above to compute $y$-ified homology. To do this, we will find it convenient to work with a different convention for the normalization of crossings and finite projectors; this normalization will be in place for the remainder of this paper (excluding \textcolor{revisions}{Appendices \ref{app: ssbim} and \ref{app: dot_slide_nat}}). The choice of normalization for crossings is such that our projectors absorb crossings (up to a twisted left or right polynomial action) without a grading shift; the choice of normalization for finite projectors is such that identities (4) and (5) of Proposition \ref{prop: renorm.markov} have the correct coefficients as predicted by mirror symmetry.

\begin{definition} \label{def: new_braids}
Let $\beta \in Br_n$ be given, and let $F(\beta)$ denote its Rouquier complex. We set $\hat{F}(\beta) := q^{-e(\beta)} F(\beta)$ and use similar notation $\hat{F}^y(\beta)$ for $y$-ified Rouquier complexes.
\end{definition}

\begin{definition}
For each $n \geq 1$ and each $\sigma \in \mathfrak{S}^n$, set 

\[
\hat{K}^y_{n, \sigma} := q^{(2 + \sum_{k = 1}^n k - 3)}t^{n - 1} K^y_{n, \sigma} \textcolor{revisions}{ = q^{(n - 1)(n - 4)/2} t^{n - 1} K^y_{n, \sigma}}.
\]
\end{definition}

\begin{proposition} \label{prop: ky_absorb_renorm}
For each positive braid $\beta \in Br_n$, we have $\hat{K}^y_{n, \sigma} \hat{F}^y(\beta) \simeq \hat{K}^y_{n, \sigma \beta}$ and $\hat{F}^y(\beta) \hat{K}^y_{n, \sigma} \simeq \hat{K}^y_{n, \beta \sigma}$.
\end{proposition}

\begin{proof}
\textcolor{revisions}{Follows immediately from Proposition \ref{prop: yify_proj_twist} after regrading as in Definition \ref{def: new_braids}.}
\end{proof}

We will also employ a change of variables $Q := q^2$, $T := q^{-2}t^2$, $A := aq^{-2}$ in our grading shifts. This change of variables is common in the literature, and the expected mirror symmetry relation in these variables reduces to $Q \leftrightarrow T$.

We record renormalized versions of the various Markov-type relationships relevant to our computations below.

\begin{proposition} \label{prop: renorm.markov} With the normalizations above, the following identities hold:

\begin{enumerate}
    \item $Tr^y_{n, e}(R_n[\mathbb{Y}]) \simeq \left( \cfrac{1 + A}{(1 - Q)(1 - T)} \right) R_{n - 1}[\mathbb{Y}]$;
    
    \item $Tr^y_{n, s_{n - 1}}(\hat{F}^y(\sigma_{n - 1})) \simeq Q^{-1}t R_{n - 1}[\mathbb{Y}]$;
    
    \item $\hat{K}_1^y \cong R_1[\mathbb{Y}]$;
    
    \item $Tr^y_{n, \sigma}(\hat{K}^y_{n, \sigma}) \simeq \left(\cfrac{Q^{n - 1} + A}{1 - Q} \right) \hat{K}^y_{n - 1, Tr_n(\sigma)}$ for each $n \geq 2$, $\sigma(n) = n$;
    
    \item $Tr^y_{n, \sigma}(\hat{K}^y_{n, \sigma}) \simeq Q^{-1}t(Q^{n - 1} + A) \hat{K}^y_{n - 1, Tr_n(\sigma)}$ for each $n \geq 2$, $\sigma(n) \neq n$;
    
    \item $\hat{K}^y_{n - 1, \sigma} \hat{F}^y(J_n) \simeq \left(Q^{-(n - 1)} \hat{K}^y_{n, \sigma} \longrightarrow Q^{-(n - 1)}T \hat{K}^y_{n - 1, \sigma}\right)$.
\end{enumerate}

\end{proposition}

\begin{proof}
Follows immediately from Propositions \ref{prop: ymark}, \ref{prop: twist_mark_j}, and \ref{prop: twist_mark_tr} after regrading.
\end{proof}

Inspired by the computations of \cite{HM19}, we use the (renormalized, curved, twisted, finite) projectors $\hat{K}^y_{j, \sigma}$ to construct a family of complexes that will prove valuable in computing homology.

\begin{definition} \label{def: shuffle}
\textcolor{revisions}{For each binary sequence $v \in \{0, 1\}^r$, we denote by $|v|$ the number of ones in $v$. Then the \textit{shuffle permutation} $\pi_v \in \mathfrak{S}^r$ associated to $v$ is the permutation sending the set $\{1, \dots, r - |v|\}$ to the set of indices $i$ for which $v_i = 0$ and the set $\{r - |v| + 1, \dots, r\}$ to the set of indices $i$ for which $v_i = 1$, both in an order-preserving fashion. We denote by $\alpha_v, \beta_v \in Br_n$ the positive braid lifts of the shuffle permutations $\pi_v$ and $\pi_v^{-1}$, respectively.}
\end{definition}

\begin{definition} \label{def: reccomp}
Let $v \in \{0, 1\}^{m + l}$ and $w \in \{0, 1\}^{n + l}$ be finite sequences with $|v| = |w| = l$. For each pair $(v, w)$ and $\sigma \in \mathfrak{S}^l$, we define the curved complexes

\[
\textbf{C}^y(v, w)_{\sigma} := (R_n[\mathbb{Y}] \sqcup \hat{F}^y(\alpha_v)) \otimes (\hat{F}^y(\textcolor{revisions}{\alpha_{(1^m0^n)}} \sqcup \hat{K}^y_{l, \sigma}) \otimes (R_m[\mathbb{Y}] \sqcup \hat{F}^y(\beta_w))
\]

We set $\textbf{C}^y(\emptyset, \emptyset)_e := R$ for notational convenience.
\end{definition}

\textcolor{revisions}{We can also depict $\textbf{C}^y(v, w)_{\sigma}$ in the graphical calculus as follows. Here we use a strand labeled with a positive integer $k$ to denote a $k$ parallel strands strands for convenience, and we assign to each braid $\beta$ its \textit{renormalized, $y$-ified} Rouquier complex $\hat{F}^y(\beta)$.}

\begin{center}
\begin{gather*}
\textbf{C}^y(v, w)_{\sigma} := \quad
\begin{tikzpicture}[baseline=(current bounding box.center), scale=0.75]
	\draw[webs] (1,0) node[below]{$n$} to (1,1);
	\draw[webs] (1,1) to[out=90,in=270] (0,3);
	\draw[webs] (0,0) node[below]{$m$} to (0,1);
	\draw[color=white, line width=5pt] (0,1) to[out=90,in=270] (1,3);
	\draw[webs] (0,1) to[out=90,in=270] (1,3);
	\draw[webs] (2,0) node[below]{$l$} to (2,1);
	\node[draw, fill=white, minimum width=1cm] at (1.5,.5) {$\beta_w$};
	\draw[webs] (2,1) to node[pos=.5, draw, fill=white]{$\hat{K}^y_{l, \sigma}$} (2,3);
	\draw[webs] (0,3) to (0,4) node[above]{$n$};
	\draw[webs] (1,3) to (1,4) node[above]{$m$};
	\draw[webs] (2,3) to (2,4) node[above]{$l$};
	\node[draw, fill=white, minimum width=1cm] at (1.5,3.5) {$\alpha_v$};
\end{tikzpicture}
\end{gather*}
\end{center}

\begin{definition} \label{def: renorm_reccomp}
For each possible triple $(v, w, \sigma)$, \textcolor{revisions}{the curved complex $\textbf{C}^y(v, w)_{\sigma}$ is a $y$-ification of a complex of Soergel bimodules. Explicitly,} we have $\textbf{C}^y(v, w)_{\sigma} \in \mathcal{Y}_{\rho}(R_k-\text{Bim})$\textcolor{revisions}{for $k = l + m + n$ and}

\[
\textcolor{revisions}{\rho = (e_n \sqcup \pi_v) (\pi_{(1^m 0^n)} \sqcup \sigma) (e_m \sqcup \pi_w^{-1}) \in \mathfrak{S}^k.}
\]

Let $c$ denote the \textcolor{revisions}{number of of disjoint cycles} of $\rho$. We set

\[
\hat{\textbf{C}}^y(v, w)_{\sigma} := \left( Qt^{-1} \right)^{\textcolor{revisions}{c - k}} \textbf{C}^y(v, w)_{\sigma}.
\]

\end{definition}

\textcolor{revisions}{In the spirit of \cite{HM19}, we compute $HH^y(\hat{\textbf{C}}^y(v, w)_{\sigma})$ for all input data $v, w, \sigma$ using the identities of Proposition \ref{prop: renorm.markov}. Our computation will mostly follow the computation given there, with slight modifications accounting for our distinct normalization and the presence of the permutation $\sigma$. We begin with our analog of their Lemma 3.4.}

\begin{lemma} \label{lem: recursion_well_defined}
For each pair $v \in \{0, 1\}^{l + m}$, $w \in \{0, 1\}^{l + n}$ and each $\sigma \in \mathfrak{S}^l$, \textcolor{revisions}{there is a unique power series $p_{\sigma}(v, w) \in \mathbb{N}[[a, q, t]]$ satisfying the following identities:}

\begin{enumerate}
	\item \textcolor{revisions}{$p_e(0^m, \emptyset) = \left( \cfrac{1 + A}{(1 - Q)(1 - T)} \right)^m$ and $p_e(\emptyset, 0^n) = \left( \cfrac{1 + A}{(1 - Q)(1 - T)} \right)^n$.}
	
    \item $p_e(0^m1, 0^n1) = \left( \cfrac{1 + A}{(1 - Q)(1 - T)} \right) p_e(0^m, 0^n)$ for all $m, n \geq 0$;
    
    \item $p_{\sigma}(v1, w1) = \left( \cfrac{Q^l + A}{1 - Q} \right) p_{Tr_{l + 1}(\sigma)}(v, w)$ if $|v| = |w| = l \geq 1$ and $\sigma(n) = n$;
    
    \item $p_{\sigma}(v1, w1) = \left( Q^l + A \right) p_{Tr_{l + 1}(\sigma)}(v, w)$ if $|v| = |w| = l \geq 1$ and $\sigma(n) \neq n$;
    
    \item $p_{\sigma}(v0, w1) = p_{\sigma \textcolor{revisions}{\pi_{(10^{l - 1})}}}(v, 1w)$ if $|v| = |w| + 1 = l \geq 1$;
    
    \item $p_{\sigma}(v1, w0) = p_{\textcolor{revisions}{\pi_{(10^{l - 1})}} \sigma}(1v, w)$ if $|v| + 1 = |w| = l \geq 1$;
    
    \item $p_e(0^m, 0^n) = p_e(10^{m - 1}, 10^{n - 1})$ \textcolor{revisions}{for all $m, n \geq 1$};
    
    \item $p_{\sigma}(v0, w0) = Q^{-l}p_{\textcolor{revisions}{e_1 \sqcup \sigma}}(1v, 1w) + Q^{-l}Tp_{\sigma}(0v, 0w)$ if $|v| = |w| = l \geq 1$.
\end{enumerate}
\end{lemma}

\begin{proof}
\textcolor{revisions}{We define a partial order $\leq$ on the set of all binary sequences by declaring $v \leq v'$ if any of the following conditions hold:}

\begin{enumerate}
\item \textcolor{revisions}{$\ell(v) < \ell(v')$;}

\item \textcolor{revisions}{$\ell(v) = \ell(v')$ and $|v| > |v'|$;}

\item \textcolor{revisions}{$\ell(v) = \ell(v')$, $|v| = |v'|$, and $\mathrm{inv}(v) \leq \mathrm{inv}(v')$.}
\end{enumerate}

\textcolor{revisions}{Here $\ell(v)$ denotes the length of $v$ and $\mathrm{inv}(v)$ denotes the number of pairs of indices $i < j$ with $v_i = 1$, $v_j = 0$. It is pointed out in \cite{HM19} that $\leq$ turns the set of binary sequences into a lower finite poset with unique minimum $\emptyset$. We write $(v, w) \leq (v', w')$ if $v \leq v'$ and $w \leq w'$.}

\textcolor{revisions}{We construct $p_{\sigma}(v, w)$ recursively as follows. If $v = \emptyset$, then we must have $|w| = |v| = 0$, so $w = 0^n$ for some $n$. Similarly, if $w = \emptyset$, then $v = 0^m$ for some $m$. In either case, we must have $\sigma = e \in \mathfrak{S}^0$, and so $p_{e}(v, w)$ is uniquely determined by (1).}

\textcolor{revisions}{Now assume neither $v$ nor $w$ is empty. If $v$ and $w$ both contain entirely zeros, then $l = 0$, so we must have $\sigma = e$, and we can rewrite $p_{e}(v, w)$ using (7). Otherwise, we can rewrite $p_{\sigma}(v, w)$ using exactly one of relations (2) - (6) or (8) in terms of $p_{\sigma'}(v', w')$ with $(v', w') < (v, w)$. This process eventually terminates because each $(v, w)$ has only finitely many pairs $(v', w')$ satisfying $(v', w') < (v, w)$.}
\end{proof}

\begin{remark}
\textcolor{revisions}{Notice that $p_{\sigma}(v,w)$ is not just a power series in $a, q, t$, but a rational function in $A, Q, T$. In particular, $p_{\sigma}(v, w)$ only involves \textit{even} powers of $t$.}
\end{remark}

\textcolor{revisions}{Next, we show that the complexes $\hat{\textbf{C}}^y_{\sigma}(v, w)$ satisfy categorical analogs of Properties (1)-(8) in Lemma \ref{lem: recursion_well_defined}. The following is our analog of Lemma 3.7 in \cite{HM19}.}

\begin{lemma} \label{lem: hm3.7}
\textcolor{revisions}{Let $v \in \{0, 1\}^{m + l}$ and $w \in \{0, 1\}^{n + l}$ be sequences with $|v| = |w| + 1 = l \geq 1$. Then}

\[
\textcolor{revisions}{Tr^y_{k, \rho} \left( \hat{\textbf{C}}^y_{\sigma}(v0, w1) \right) \simeq \hat{\textbf{C}}^y_{\sigma \pi_{(10^{l - 1})}} (v, 1w).}
\]
\end{lemma}

\begin{proof}
\textcolor{revisions}{First, a straightforward computation shows that the underlying permutations in each of $\hat{\textbf{C}}^y_{\sigma}(v0, w1)$ and $\hat{\textbf{C}}^y_{\sigma \pi_{(10^l)}} (v, 1w)$ have the same number of disjoint cycles $c$. The overall number of strands $k$ decreases by one from the left-hand side to the right-hand side, so the overall normalization relative to the unnormalized complexes of Definition \ref{def: reccomp} changes by a factor of $Q^{-1}t$ relative to these unnormalized complexes.}

\textcolor{revisions}{Using Proposition \ref{prop: ky_cross_slide}, we have}

\begin{center}
\begin{gather*}
\textcolor{revisions}{\hat{\textbf{C}}^y_{\sigma}(v0, w1) := (Qt^{-1})^{c - k}}
\begin{tikzpicture}[baseline=(current bounding box.center), scale=0.75]
\draw[webs] (1.5,0) node[below]{$n$} to (1.5,2);
\draw[webs] (2.5,0) node[below]{$l - 1$} to (2.5,4.5) node[above]{$l$};
\node[draw, fill=white, minimum width=1cm] at (2,.5) {$\beta_w$};
\draw[webs] (3.5,0) node[below]{$1$} to (3.5,1.5);
\node[draw, fill=white, minimum width=1.5cm] at (3,2) {$\hat{K}^y_{l, \sigma}$};
\draw[webs] (0,0) node[below]{$m$} to (0,2);
\draw[webs] (.5,0) node[below]{$1$} to (.5,2);
\draw[webs] (1.5,2) to[out=90,in=270] (0,4.5) node[above]{$n$};
\draw[color=white, line width=5pt] (0,2) to[out=90,in=270] (1.5,3.5);
\draw[webs] (0,2) to[out=90,in=270] (1.5,3.5);
\draw[color=white, line width=5pt] (.5,2) to[out=90,in=270] (3.5,3.5);
\draw[webs] (.5,2) to[out=90,in=270] (3.5,3.5);
\draw[webs] (1.5,3.5) to (1.5,4.5) node[above]{$m$};
\node[draw, fill=white, minimum width=1cm] at (2,4) {$\alpha_v$};
\draw[webs] (3.5,3.5) to (3.5,4.5) node[above]{$1$};
\end{tikzpicture}
\enspace \textcolor{revisions}{\simeq (Qt^{-1})^{c - k}} \begin{tikzpicture}[baseline=(current bounding box.center), scale=0.75]
\draw[webs] (1.5,0) node[below]{$n$} to (1.5,2);
\draw[webs] (2.5,0) node[below]{$l - 1$} to (2.5,4.5) node[above]{$l$};
\node[draw, fill=white, minimum width=1cm] at (2,.5) {$\beta_w$};
\draw[webs] (3.5,0) node[below]{$1$} to (3.5,1.5);
\draw[webs] (3.5,1.5) to[out=90,in=270] (3,2.5);
\node[draw, fill=white, minimum width=1cm] at (2.75,3) {$\hat{K}^y_{l, \sigma}$};
\draw[webs] (0,0) node[below]{$m$} to (0,2);
\draw[webs] (.5,0) node[below]{$1$} to (.5,1);
\draw[webs] (1.5,2) to[out=90,in=270] (0,4.5) node[above]{$n$};
\draw[color=white, line width=5pt] (0,2) to[out=90,in=270] (1.5,3.5);
\draw[webs] (0,2) to[out=90,in=270] (1.5,3.5);
\draw[color=white, line width=5pt] (.5,1) to[out=90,in=270] (3.5,2.5);
\draw[webs] (.5,1) to[out=90,in=270] (3.5,2.5);
\draw[webs] (1.5,3.5) to (1.5,4.5) node[above]{$m$};
\node[draw, fill=white, minimum width=1cm] at (2,4) {$\alpha_v$};
\draw[webs] (3.5,2.5) to (3.5,4.5) node[above]{$1$};
\end{tikzpicture}
\end{gather*}
\end{center}

\textcolor{revisions}{We may now apply $Tr^y_{k, \rho}$ to the final complex. By locality of $Tr^y$ (Proposition \ref{prop: try_local}) and the renormalized Markov move for a positive crossing (Property (2) of Proposition \ref{prop: renorm.markov}), we obtain}

\begin{center}
\begin{gather*}
\textcolor{revisions}{Tr^y_{k, \rho} \left( \hat{\textbf{C}}^y_{\sigma}(v0, w1) \right) \simeq (Qt^{-1})^{c - k - 1}}
\begin{tikzpicture}[baseline=(current bounding box.center), scale=0.75]
\draw[webs] (1.5,0) node[below]{$n$} to (1.5,2);
\draw[webs] (2.5,0) node[below]{$l - 1$} to (2.5,4.5) node[above]{$l$};
\node[draw, fill=white, minimum width=1cm] at (2,.5) {$\beta_w$};
\node[draw, fill=white, minimum width=1cm] at (2.75,3) {$\hat{K}^y_{l, \sigma}$};
\draw[webs] (0,0) node[below]{$m$} to (0,2);
\draw[webs] (.5,0) node[below]{$1$} to (.5,1);
\draw[webs] (1.5,2) to[out=90,in=270] (0,4.5) node[above]{$n$};
\draw[color=white, line width=5pt] (0,2) to[out=90,in=270] (1.5,3.5);
\draw[webs] (0,2) to[out=90,in=270] (1.5,3.5);
\draw[color=white, line width=5pt] (.5,1) to[out=90,in=270] (3,2.5);
\draw[webs] (.5,1) to[out=90,in=270] (3,2.5);
\draw[webs] (1.5,3.5) to (1.5,4.5) node[above]{$m$};
\node[draw, fill=white, minimum width=1cm] at (2,4) {$\alpha_v$};
\end{tikzpicture}
\ \textcolor{revisions}{\simeq (Qt^{-1})^{c - k - 1}}
\begin{tikzpicture}[baseline=(current bounding box.center), scale=0.75]
\draw[webs] (1.5,0) node[below]{$n$} to (1.5,2);
\draw[webs] (2.5,0) node[below]{$l - 1$} to (2.5,4.5) node[above]{$l$};
\node[draw, fill=white, minimum width=1cm] at (2,.5) {$\beta_w$};
\draw[webs] (0,0) node[below]{$m$} to (0,2);
\draw[webs] (.5,0) node[below]{$1$} to (.5,1);
\draw[webs] (1.5,2) to[out=90,in=270] (0,4.5) node[above]{$n$};
\draw[color=white, line width=5pt] (0,2) to[out=90,in=270] (1.5,3.5);
\draw[webs] (0,2) to[out=90,in=270] (1.5,3.5);
\draw[color=white, line width=5pt] (.5,1) to[out=90,in=270] (2.25,2.5);
\draw[webs] (.5,1) to[out=90,in=270] (2.25,2.5);
\draw[webs] (1.5,3.5) to (1.5,4.5) node[above]{$n$};
\node[draw, fill=white, minimum width=1cm] at (2,4) {$\alpha_v$};
\node[draw, fill=white, minimum width=1cm] at (2.75,3) {$\hat{K}^y_{l, \sigma \omega}$};
\end{tikzpicture}
\ \textcolor{revisions}{= \hat{\textbf{C}}^y_{\sigma \pi_{(10^{l - 1})}} (v, 1w)}
\end{gather*}
\end{center}

\textcolor{revisions}{Here we adopt the notation $\omega := \pi_{(10^{l - 1})} \in \mathfrak{S}^n$ to save space, and we have used Proposition \ref{prop: ky_absorb_renorm} to absorb the braid $\alpha_{(10^{l - 1})}$ into $K^y_{l, \sigma}$.}
\end{proof}

\textcolor{revisions}{By a completely symmetric argument, we also obtain the following analog of Lemma 3.8 in \cite{HM19}:}

\begin{lemma} \label{lem: hm3.8}
\textcolor{revisions}{Let $v \in \{0, 1\}^{m + l}$ and $w \in \{0, 1\}^{n + l}$ be sequences with $|v| + 1 = |w| = l \geq 1$. Then}

\[
\textcolor{revisions}{Tr^y_{k, \rho} \left( \hat{\textbf{C}}^y_{\sigma}(v1, w0) \right) \simeq \hat{\textbf{C}}^y_{\pi_{(10^{l - 1})} \sigma} (1v, w).}
\]
\end{lemma}

\textcolor{revisions}{We will also need an analog of Lemma 3.9 in \cite{HM19}.}

\begin{lemma} \label{lem: hm3.9}
\textcolor{revisions}{Let $v \in \{0, 1\}^{m + l}$ and $w \in \{0, 1\}^{n + l}$ be sequences with $|v| + 1 = |w| = l \geq 1$. Then}

\[
\textcolor{revisions}{HH^y \left( \hat{\textbf{C}}^y_{\sigma}(v0, w0) \right) \simeq Q^{-l} \left( HH^y \left( \hat{\textbf{C}}^y_{e_1 \sqcup\sigma}(1v, 1w) \right) \rightarrow T HH^y \left( \hat{\textbf{C}}^y_{\sigma}(0v, 0w) \right) \right).}
\]
\end{lemma}

\begin{proof}
\textcolor{revisions}{As in Lemma \ref{lem: hm3.7}, a straightforward computation shows that the underlying permutations in all complexes involved have the same number $c$ of disjoint cycles. Meanwhile, the complexes $\hat{\textbf{C}}^y_{\sigma}(v0,w0)$ and $\hat{\textbf{C}}^y_{\sigma}(0v,0w)$ each have an additional strand compared to $\hat{\textbf{C}}^y_{e_1 \sqcup \sigma}(1v,1w)$, so we should expect a differing factor of $Q^{-1}t$ in the respective normalizations.}

\textcolor{revisions}{Again using Proposition \ref{prop: ky_cross_slide}, we have}

\begin{center}
\begin{gather*}
\textcolor{revisions}{\hat{\textbf{C}}^y_{\sigma}(v0, w0) := (Qt^{-1})^{c - k}} \ 
\begin{tikzpicture}[baseline=(current bounding box.center), scale=0.75]
\draw[webs] (1.5,-.5) node[below]{$n$} to (1.5,1);
\draw[webs] (2.5,-.5) node[below]{$l$} to (2.5,1);
\node[draw, fill=white, minimum width=1cm] at (2,.5) {$\beta_w$};
\draw[webs] (3.5,-.5) node[below]{$1$} to (3.5,.5);
\draw[webs] (3.5,.5) to[out=90,in=270] (.5,2.5);
\draw[color=white,line width=5pt] (2.5,1) to[out=90,in=270] (3,2);
\draw[webs] (2.5,1) to[out=90,in=270] (3,2);
\draw[webs] (3,2) to[out=90,in=270] (2.5,3.5);
\draw[webs] (1.5,1) to[out=90,in=270] (0,2.5);
\draw[webs] (0,-.5) node[below]{$m$} to (0,1);
\draw[color=white, line width=5pt] (0,1) to[out=90,in=270] (1.5,3.5);
\draw[webs] (0,1) to[out=90,in=270] (1.5,3.5);
\draw[webs] (1.5,3.5) to (1.5,4.5) node[above]{$m$};
\draw[webs] (2.5,3.5) to (2.5,4.5) node[above]{$l$};
\node[draw, fill=white, minimum width=1cm] at (2,3.5) {$\alpha_v$};
\draw[webs] (0,2.5) to (0,4.5) node[above]{$n$};
\draw[webs] (.5,2.5) to (.5,4.5) node[above]{$1$};
\draw[webs] (3.5,4.5) node[above]{$1$} to (3.5,3.5);
\draw[color=white, line width=5pt] (3.5,3.5) to[out=270,in=90] (.5,1.5);
\draw[webs] (3.5,3.5) to[out=270,in=90] (.5,1.5);
\draw[webs] (.5,1.5) to (.5,-.5) node[below]{$1$};
\node[draw, fill=white, minimum width=1cm] at (3,2) {$\hat{K}^y_{l, \sigma}$};
\end{tikzpicture}
\quad \textcolor{revisions}{\simeq (Qt^{-1})^{c - k} \quad}
\begin{tikzpicture}[baseline=(current bounding box.center), scale=0.75]
\draw[webs] (1.5,-.5) node[below]{$n$} to (1.5,1);
\draw[webs] (2.5,-.5) node[below]{$l$} to (2.5,1);
\node[draw, fill=white, minimum width=1cm] at (2,.5) {$\beta_w$};
\draw[webs] (3.5,-.5) node[below]{$1$} to (3.5,1);
\draw[webs] (3.5,1) to[out=90,in=270] (.5,2.5);
\draw[color=white,line width=5pt] (2.5,1) to[out=90,in=270] (2.5,2.5);
\draw[webs] (2.5,1) to[out=90,in=270] (2.5,2.5);
\draw[webs] (2.5,2.5) to[out=90,in=270] (2.5,3.5);
\node[draw, fill=white, minimum width=1cm] at (2.5,2.5) {$\hat{K}^y_{l, \sigma}$};
\draw[webs] (1.5,1) to[out=90,in=270] (0,2.5);
\draw[webs] (0,-.5) node[below]{$m$} to (0,1);
\draw[color=white, line width=5pt] (0,1) to[out=90,in=270] (1.5,3.5);
\draw[webs] (0,1) to[out=90,in=270] (1.5,3.5);
\draw[webs] (1.5,3.5) to (1.5,4.5) node[above]{$m$};
\draw[webs] (2.5,3.5) to (2.5,4.5) node[above]{$l$};
\node[draw, fill=white, minimum width=1cm] at (2,3.5) {$\alpha_v$};
\draw[webs] (0,2.5) to (0,4.5) node[above]{$n$};
\draw[webs] (.5,2.5) to (.5,4.5) node[above]{$1$};
\draw[webs] (3.5,4.5) node[above]{$1$} to (3.5,2);
\draw[color=white, line width=5pt] (3.5,2) to[out=270,in=90] (.5,1);
\draw[webs] (3.5,2) to[out=270,in=90] (.5,1);
\draw[webs] (.5,1) to (.5,-.5) node[below]{$1$};
\end{tikzpicture}
\end{gather*}
\end{center}

\textcolor{revisions}{We now apply $Tr^y_{k, \rho}$. Again using locality and the renormalized Markov move, we obtain}

\begin{center}
\begin{gather*}
\textcolor{revisions}{Tr^y_{k, \rho}\left( \hat{\textbf{C}}^y_{\sigma}(v0, w0) \right) \simeq (Qt^{-1})^{c - k - 1} \ }
\begin{tikzpicture}[baseline=(current bounding box.center), scale=0.75]
\draw[webs] (1.5,-1) node[below]{$n$} to (1.5,1);
\draw[webs] (2.5,-1) node[below]{$l$} to (2.5,1);
\node[draw, fill=white, minimum width=1cm] at (2,0) {$\beta_w$};
\draw[webs] (2.75,1.25) to[out=90,in=270] (.5,2.5);
\draw[color=white,line width=5pt] (2.5,1) to[out=90,in=270] (2.5,2.5);
\draw[webs] (2.5,1) to[out=90,in=270] (2.5,2.5);
\draw[webs] (2.5,2.5) to[out=90,in=270] (2.5,3.5);
\node[draw, fill=white, minimum width=1cm] at (2.5,2.5) {$\hat{K}^y_{l, \sigma}$};
\draw[webs] (1.5,1) to[out=90,in=270] (0,2.5);
\draw[webs] (0,-1) node[below]{$m$} to (0,1);
\draw[color=white, line width=5pt] (0,1) to[out=90,in=270] (1.5,3.5);
\draw[webs] (0,1) to[out=90,in=270] (1.5,3.5);
\draw[webs] (1.5,3.5) to (1.5,4.5) node[above]{$m$};
\draw[webs] (2.5,3.5) to (2.5,4.5) node[above]{$l$};
\node[draw, fill=white, minimum width=1cm] at (2,3.5) {$\alpha_v$};
\draw[webs] (0,2.5) to (0,4.5) node[above]{$n$};
\draw[webs] (.5,2.5) to (.5,4.5) node[above]{$1$};
\draw[color=white, line width=5pt] (2.75,1.25) to[out=270,in=90] (.5,.5);
\draw[webs] (2.75,1.25) to[out=270,in=90] (.5,.5);
\draw[webs] (.5,.5) to (.5,-1) node[below]{$1$};
\end{tikzpicture}
\end{gather*}
\end{center}

\textcolor{revisions}{Here we see a Jucys-Murphy braid wrapping around\footnote{\textcolor{revisions}{From the \textit{left} rather than the right, but one can easily adapt Proposition \ref{prop: renorm.markov} to this setting.}} $\hat{K}^y_{l, \sigma}$. We can apply identity (6) of Proposition \ref{prop: renorm.markov} to replace this region with a two term convolution. The first term in this convolution is exactly}

\begin{center}
\begin{gather*}
\textcolor{revisions}{(Qt^{-1})^{c - k - 1} Q^{-l}} \ 
\begin{tikzpicture}[baseline=(current bounding box.center), scale=0.75]
\draw[webs] (1.5,-1) node[below]{$n$} to (1.5,1);
\draw[webs] (2.5,-1) node[below]{$l$} to (2.5,1);
\node[draw, fill=white, minimum width=1cm] at (2,0) {$\beta_w$};
\draw[webs] (2,2.5) to[out=90,in=270] (.5,3.5);
\draw[color=white,line width=5pt] (2.5,1) to[out=90,in=270] (2.5,2.5);
\draw[webs] (2.5,1) to[out=90,in=270] (2.5,2.5);
\draw[webs] (2.5,2.5) to[out=90,in=270] (2.5,3.5);
\node[draw, fill=white, minimum width=1.5cm] at (2.5,2) {$\hat{K}^y_{l + 1, e_1 \sqcup \sigma}$};
\draw[webs] (1.5,1) to[out=90,in=270] (0,2.5);
\draw[webs] (0,-1) node[below]{$m$} to (0,1);
\draw[color=white, line width=5pt] (0,1) to[out=90,in=270] (1.5,3.5);
\draw[webs] (0,1) to[out=90,in=270] (1.5,3.5);
\draw[webs] (1.5,3.5) to (1.5,4.5) node[above]{$m$};
\draw[webs] (2.5,3.5) to (2.5,4.5) node[above]{$l$};
\node[draw, fill=white, minimum width=1cm] at (2,3.5) {$\alpha_v$};
\draw[webs] (0,2.5) to (0,4.5) node[above]{$n$};
\draw[webs] (.5,3.5) to (.5,4.5) node[above]{$1$};
\draw[color=white, line width=5pt] (2,1.5) to[out=270,in=90] (.5,.5);
\draw[webs] (2,1.5) to[out=270,in=90] (.5,.5);
\draw[webs] (.5,.5) to (.5,-1) node[below]{$1$};
\end{tikzpicture}
\quad \textcolor{revisions}{= Q^{-l} \hat{\textbf{C}}^y_{e_1 \sqcup \sigma}(1v,1w).}
\end{gather*}
\end{center}

\textcolor{revisions}{Meanwhile, we can manipulate the second term using an isotopy and an inverse Markov move to obtain}

\begin{center}
\begin{gather*}
\textcolor{revisions}{(Qt^{-1})^{c - k - 1} Q^{-l}T} \ 
\begin{tikzpicture}[baseline=(current bounding box.center), scale=0.75]
\draw[webs] (1.5,-1) node[below]{$n$} to (1.5,1);
\draw[webs] (2.5,-1) node[below]{$l$} to (2.5,1);
\node[draw, fill=white, minimum width=1cm] at (2,0) {$\beta_w$};
\draw[webs] (1.75,2) to[out=90,in=270] (.5,3.5);
\draw[color=white,line width=5pt] (2.5,1) to[out=90,in=270] (2.5,2.5);
\draw[webs] (2.5,1) to[out=90,in=270] (2.5,2.5);
\draw[webs] (2.5,2.5) to[out=90,in=270] (2.5,3.5);
\draw[webs] (1.5,1) to[out=90,in=270] (0,2.5);
\draw[webs] (0,-1) node[below]{$m$} to (0,1);
\draw[color=white, line width=5pt] (0,1) to[out=90,in=270] (1.5,3.5);
\draw[webs] (0,1) to[out=90,in=270] (1.5,3.5);
\draw[webs] (1.5,3.5) to (1.5,4.5) node[above]{$m$};
\draw[webs] (2.5,3.5) to (2.5,4.5) node[above]{$l$};
\node[draw, fill=white, minimum width=1cm] at (2,3.5) {$\alpha_v$};
\draw[webs] (0,2.5) to (0,4.5) node[above]{$n$};
\draw[webs] (.5,3.5) to (.5,4.5) node[above]{$1$};
\draw[color=white, line width=5pt] (1.75,2) to[out=270,in=90] (.5,.5);
\draw[webs] (1.75,2) to[out=270,in=90] (.5,.5);
\draw[webs] (.5,.5) to (.5,-1) node[below]{$1$};
\node[draw, fill=white, minimum width=1cm] at (2.5,2) {$\hat{K}^y_{l, \sigma}$};
\end{tikzpicture}
\ \textcolor{revisions}{\simeq (Qt^{-1})^{c - k - 1}Q^{-l}T}
\begin{tikzpicture}[baseline=(current bounding box.center), scale=0.75]
\draw[webs] (1.5,-1) node[below]{$n$} to (1.5,1);
\draw[webs] (2.5,-1) node[below]{$l$} to (2.5,1);
\node[draw, fill=white, minimum width=1cm] at (2,0) {$\beta_w$};
\draw[color=white,line width=5pt] (2.5,1) to[out=90,in=270] (2.5,2.5);
\draw[webs] (2.5,1) to[out=90,in=270] (2.5,2.5);
\draw[webs] (2.5,2.5) to[out=90,in=270] (2.5,3.5);
\node[draw, fill=white, minimum width=1cm] at (2.5,2) {$\hat{K}^y_{l, \sigma}$};
\draw[webs] (1.5,1) to[out=90,in=270] (0,2.5);
\draw[webs] (1.5,3.5) to (1.5,4.5) node[above]{$m$};
\draw[webs] (2.5,3.5) to (2.5,4.5) node[above]{$l$};
\draw[webs] (0,2.5) to (0,4.5) node[above]{$n$};
\draw[webs] (.5,3.5) to (.5,4.5) node[above]{$1$};
\draw[webs] (0,2) to[out=270,in=90] (.5,.5);
\draw[webs] (.5,.5) to (.5,-1) node[below]{$1$};
\draw[color=white, line width=5pt] (0,2) to[out=90,in=270] (.5,3.5);
\draw[webs] (0,2) to[out=90,in=270] (.5,3.5);
\draw[webs] (0,-1) node[below]{$m$} to (0,1);
\draw[color=white, line width=5pt] (0,1) to[out=90,in=270] (1.5,3.5);
\draw[webs] (0,1) to[out=90,in=270] (1.5,3.5);
\node[draw, fill=white, minimum width=1cm] at (2,3.5) {$\alpha_v$};
\end{tikzpicture}
\end{gather*}
\end{center}

\begin{center}
\begin{gather*}
\textcolor{revisions}{\simeq (Qt^{-1})^{c - k}Q^{-l}T}
\begin{tikzpicture}[baseline=(current bounding box.center), scale=0.75]
\draw[webs] (1.5,-1) node[below]{$n$} to (1.5,1);
\draw[webs] (2.5,-1) node[below]{$l$} to (2.5,1);
\node[draw, fill=white, minimum width=1cm] at (2,0) {$\beta_w$};
\draw[color=white,line width=5pt] (2.5,1) to[out=90,in=270] (2.5,2.5);
\draw[webs] (2.5,1) to[out=90,in=270] (2.5,2.5);
\draw[webs] (2.5,2.5) to[out=90,in=270] (2.5,3.5);
\node[draw, fill=white, minimum width=1cm] at (2.5,2) {$\hat{K}^y_{l, \sigma}$};
\draw[webs] (1.5,1) to[out=90,in=270] (0,2.5);
\draw[webs] (1.5,3.5) to (1.5,4.5) node[above]{$m$};
\draw[webs] (2.5,3.5) to (2.5,4.5) node[above]{$l$};
\draw[webs] (0,2.5) to (0,4.5) node[above]{$n$};
\draw[webs] (.5,3.5) to (.5,4.5) node[above]{$1$};
\draw[webs] (-.5,3) to[out=270,in=90] (.5,.5);
\draw[webs] (-.5,3) to (-.5,4.5) node[above]{$1$};
\draw[webs] (.5,.5) to (.5,-1) node[below]{$1$};
\draw[color=white, line width=5pt] (-.5,1) to[out=90,in=270] (.5,3.5);
\draw[webs] (-.5,1) to[out=90,in=270] (.5,3.5);
\draw[webs] (-.5,1) to (-.5,-1) node[below]{$1$};
\draw[webs] (0,-1) node[below]{$m$} to (0,1);
\draw[color=white, line width=5pt] (0,1) to[out=90,in=270] (1.5,3.5);
\draw[webs] (0,1) to[out=90,in=270] (1.5,3.5);
\node[draw, fill=white, minimum width=1cm] at (2,3.5) {$\alpha_v$};
\end{tikzpicture}
\textcolor{revisions}{\quad = Q^{-l}T \hat{\textbf{C}}^y_{\sigma}(0v,0w)}.
\end{gather*}
\end{center}
\end{proof}

\textcolor{revisions}{The following is our analog of Theorem 3.5 in \cite{HM19}.}

\begin{theorem} \label{thm: mainrecursion}
For each pair $v \in \{0, 1\}^{l + m}$, $w \in \{0, 1\}^{l + n}$ and each $\sigma \in \mathfrak{S}^l$, \textcolor{revisions}{the $y$-ified Hochschild cohomology $HH^y(\hat{\textbf{C}}^y(v, w)_{\sigma})$ is homotopy equivalent to the free $\mathbb{Z}^3$-graded $R$-module of graded dimension $p_{\sigma}(v, w)$ with zero differential. In particular, we have $\mathcal{P}^y(T(m, n)) = p_e(0^m, 0^n)$ up to an overall normalization.}
\end{theorem}

\begin{proof}
\textcolor{revisions}{As in \cite{HM19}, we prove the theorem by induction on the set of pairs $(v, w)$. In the base case, $\hat{\textbf{C}}^y_e(0^m, \emptyset) = \hat{\textbf{C}}^y_e(\emptyset, 0^m) = R_m[\mathbb{Y}]$ up to an overall normalization, and we can repeatedly apply identity (1) from Proposition \ref{prop: renorm.markov} to establish Property (1) of Lemma \ref{lem: recursion_well_defined}.}

\textcolor{revisions}{Now, suppose both $v$ and $w$ are nonempty. Here we break into several cases.}

\textcolor{revisions}{\textbf{Case 0}: $(v, w) = (0^m, 0^n)$. In this case, we proceed via a graphical argument. Setting $d := gcd(m, n)$, we manipulate $\hat{\textbf{C}}^y_e(0^m, 0^n)$ as follows:}

\begin{center}
\begin{gather*}
\textcolor{revisions}{(Qt^{-1})^{d - m - n}}
\begin{tikzpicture}[baseline=(current bounding box.center), scale=0.5, tinynodes]
\draw[webs] (1.5,0) node[below]{$n$} to[out=90,in=270] (0,1.5) node[above]{$n$};
\draw[color=white, line width=5pt] (0,0) to[out=90,in=270] (1.5,1.5);
\draw[webs] (0,0) node[below]{$m$} to[out=90,in=270] (1.5,1.5) node[above]{$m$};
\end{tikzpicture}
\ \textcolor{revisions}{\sim (Qt^{-1})^{d - m - n + 1}} \
\begin{tikzpicture}[baseline=(current bounding box.center),scale=0.75,tinynodes]
\draw[webs] (1.5,0) node[below]{$n-1$} to[out=90,in=270] (0,1.5) node[above]{$n-1$};
\draw[color=white, line width=5pt] (.75,1.5) to[out=270,in=90] (0,.75);
\draw[webs] (.75,1.5) node[above]{$1$} to[out=270,in=90] (0,.75);
\draw[webs] (0,.75) to[out=270,in=90] (.75,0) node[below]{$1$};
\draw[color=white, line width=5pt] (0,0) to[out=90,in=270] (1.5,1.5);
\draw[webs] (0,0) node[below]{$m-1$} to[out=90,in=270] (1.5,1.5) node[above]{$m-1$};
\end{tikzpicture}
\ \textcolor{revisions}{\simeq (Qt^{-1})^{d - m - n + 1}} \
\begin{tikzpicture}[baseline=(current bounding box.center),scale=0.75,tinynodes]
\draw[webs] (1.5,0) node[below]{$n-1$} to[out=90,in=270] (0,1.5) node[above]{$n-1$};
\draw[webs] (.75,1.5) node[above]{$1$} to[out=270,in=90] (1.5,.75);
\draw[color=white, line width=5pt] (1.5,.75) to[out=270,in=90] (.75,0);
\draw[webs] (1.5,.75) to[out=270,in=90] (.75,0) node[below]{$1$};
\draw[color=white, line width=5pt] (0,0) to[out=90,in=270] (1.5,1.5);
\draw[webs] (0,0) node[below]{$m-1$} to[out=90,in=270] (1.5,1.5) node[above]{$m-1$};
\end{tikzpicture}
\ \textcolor{revisions}{= \hat{\textbf{C}}^y(10^{m - 1}, 10^{n - 1}).}
\end{gather*}
\end{center}

\textcolor{revisions}{Here $\sim$ denotes two complexes which agree under $HH^y$. The first relation comes from an inverse Markov move (Identity (2) of Proposition \ref{prop: renorm.markov}) and the second comes from an isotopy. This establishes Property (7) of Lemma \ref{lem: recursion_well_defined}.}

\textcolor{revisions}{\textbf{Case 1:} $(v, w) = (v'1, w'1)$ for $v' < v, w' < w$. In this case, $\textbf{C}^y_{\sigma}(v, w)$ is of the form}

\begin{center}
\begin{tikzpicture}[baseline=(current bounding box.center), scale=0.75]
	\draw[webs] (2.5,0) node[below]{$1$} to (2.5,4) node[above]{$1$};
	\draw[webs] (1,0) node[below]{$n$} to (1,1);
	\draw[webs] (1,1) to[out=90,in=270] (0,3);
	\draw[webs] (0,0) node[below]{$m$} to (0,1);
	\draw[color=white, line width=5pt] (0,1) to[out=90,in=270] (1,3);
	\draw[webs] (0,1) to[out=90,in=270] (1,3);
	\draw[webs] (2,0) node[below]{$l$} to (2,1);
	\node[draw, fill=white, minimum width=1cm] at (1.5,.5) {$\beta_{w'}$};
	\draw[webs] (2,1) to node[pos=.5, draw, fill=white]{$\hat{K}^y_{l + 1, \sigma}$} (2,3);
	\draw[webs] (0,3) to (0,4) node[above]{$n$};
	\draw[webs] (1,3) to (1,4) node[above]{$m$};
	\draw[webs] (2,3) to (2,4) node[above]{$l$};
	\node[draw, fill=white, minimum width=1cm] at (1.5,3.5) {$\alpha_{v'}$};
\end{tikzpicture}
\end{center}

\textcolor{revisions}{By locality of $Tr^y_{\rho}$, we can close off the right-most $1$-labeled strand exiting $\hat{K}^y_{l + 1, \sigma}$, applying Identity (1), (4), or (5) from Proposition \ref{prop: renorm.markov}. This decreases the strand count by $1$ and decreases the cycle count by $1$ if and only if $\sigma(l + 1) = l + 1$. Comparing coefficients, this establishes Properties (2), (3), and (4) of Lemma \ref{lem: recursion_well_defined}, respectively.}

\textcolor{revisions}{\textbf{Case 2:} $(v, w) = (v'0, w'1)$ for $v' < v, w' < w$. Here Property (5) of Lemma \ref{lem: recursion_well_defined} follows from Lemma \ref{lem: hm3.7} and the inductive hypothesis.}

\textcolor{revisions}{\textbf{Case 3:} $(v, w) = (v'1, w'0)$ for $v' < v, w' < w$. Here Property (6) of Lemma \ref{lem: recursion_well_defined} follows from Lemma \ref{lem: hm3.8} and the inductive hypothesis.}

\textcolor{revisions}{\textbf{Case 4:} $(v, w) = (v'0, w'0)$ for $v' < v, w' < w$ with $|v'| = |w'| \geq 1$. In this case, we can apply Lemma \ref{lem: hm3.9} to rewrite $HH^y \left( \hat{\textbf{C}}^y_{\sigma}(v, w) \right)$ as a convolution of $HH^y$ of smaller sequences with an even relative shift in homological degree. By the inductive hypothesis, each of the terms in this convolution itself has $HH^y$ concentrated in even homological degrees. Since the component of the differential connecting these terms must have homological degree $1$, it must vanish, and the convolution splits as a direct sum. This establishes Property (8) of Lemma \ref{lem: recursion_well_defined}.}
\end{proof}

We now turn to a comparison of our recursion to that of \cite{HM19}. The authors of that work define a similar family of power series $p'(v, w) \in \mathbb{N}[[a, q, t]]$ which also satisfy $p'(0^m, 0^n) = \mathcal{P}(T(m, n))$ up to an overall normalization. In particular, their computations demonstrate that $HHH(T(m, n))$ is concentrated in \textit{even} homological degrees. By Theorem 1.17 in \cite{GH22}, it follows that $\mathcal{P}^y(T(m, n)) = (1 - T)^{-c} \mathcal{P}(T(m, n))$, where $c$ denotes the number of components of $T(m, n)$. Because of this, the transition from $\mathcal{P}$ to $\mathcal{P}^y$ can be easily accounted for in their recursion by tracking permutations $\sigma$ throughout as in our Theorem \ref{thm: mainrecursion} and multiplying by $(1 - T)^{-1}$ whenever the number of components decreases.

Applying these modifications to their recursion gives the following result:

\begin{theorem} \label{thm: hmrecursion}
For each pair $v \in \{0, 1\}^{l + m}$, $w \in \{0, 1\}^{l + n}$ and each $\sigma \in \mathfrak{S}^l$, let $p'_{y, \sigma}(v, w) \in \mathbb{N}[[a, q, t]]$ denote the unique power series satisfying

\begin{enumerate}
	\item \textcolor{revisions}{$p'_e(0^m, \emptyset) = \left( \cfrac{1 + A}{(1 - Q)(1 - T)} \right)^m$ and $p'_e(\emptyset, 0^n) = \left( \cfrac{1 + A}{(1 - Q)(1 - T)} \right)^n$.}
	
    \item $p'_e(0^m1, 0^n1) = \left( \cfrac{1 + A}{(1 - Q)(1 - T)} \right) p'_e(0^m, 0^n)$ for all $m, n \geq 0$;
    
    \item $p'_{\sigma}(v1, w1) = \left( \cfrac{T^l + A}{1 - T} \right) p'_{Tr_{l + 1}(\sigma)}(v, w)$ if $|v| = |w| = l \geq 1$ and $\sigma(n) = n$;
    
    \item $p'_{\sigma}(v1, w1) = \left( T^l + A \right) p'_{Tr_{l + 1}(\sigma)}(v, w)$ if $|v| = |w| = l \geq 1$ and $\sigma(n) \neq n$;
    
    \item $p'_{\sigma}(v0, w1) = p'_{\sigma \textcolor{revisions}{\pi_{(10^{l - 1})}}}(v, 1w)$ if $|v| = |w| + 1 = l \geq 1$;
    
    \item $p'_{\sigma}(v1, w0) = p'_{\textcolor{revisions}{\pi_{(10^{l - 1})}} \sigma}(1v, w)$ if $|v| + 1 = |w| = l \geq 1$;
    
    \item $p'_e(0^m, 0^n) = p'_e(10^{m - 1}, 10^{n - 1})$ \textcolor{revisions}{for all $m, n \geq 1$};
    
    \item $p'_{\sigma}(v0, w0) = T^{-l}p'_{\textcolor{revisions}{e_1 \sqcup \sigma}}(1v, 1w) + QT^{-l}p'_{\sigma}(0v, 0w)$ if $|v| = |w| = l \geq 1$.   
\end{enumerate}

Then $p'_{y, e}(0^m, 0^n) = \mathcal{P}^y(T(m, n))$ up to an overall normalization.
\end{theorem}

Observe that the recursion of Theorem $\ref{thm: hmrecursion}$ is obtained from that of Theorem $\ref{thm: mainrecursion}$ by interchanging $Q \leftrightarrow T$. It follows immediately that $p'_{y, \sigma}(v, w)(A, Q, T) = p_{\sigma}(A, T, Q)$ for all possible input triples $(v, w, \sigma)$. In particular, since both $p_e(0^m, 0^n)$ and $p'_{y, e}(0^m, 0^n)$ compute $\mathcal{P}^y(T(m, n))$ up to an overall normalization, we immediately obtain

\begin{theorem} \label{thm: uncolor_mirror_sym}
For each $m, n \geq 1$, we have
\[
\mathcal{P}^y(T(m, n))(A, Q, T) = \mathcal{P}^y(T(m, n))(A, T, Q)
\]
up to an overall normalization.
\end{theorem}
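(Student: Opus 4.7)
The plan is to compare the two recursions of Theorems \ref{thm: mainrecursion} and \ref{thm: hmrecursion} term by term. Inspecting the seven defining identities of each, one sees that substituting $Q \leftrightarrow T$ throughout Theorem \ref{thm: mainrecursion} produces exactly Theorem \ref{thm: hmrecursion}: identities (1)--(3) have their coefficients $(1+A)/((1-Q)(1-T))$, $(Q^l + A)/(1-Q)$, and $Q^l + A$ sent to $(1+A)/((1-T)(1-Q))$, $(T^l + A)/(1-T)$, and $T^l + A$; identities (4)--(6) are manifestly symmetric in $Q, T$; and identity (7)'s coefficients $Q^{-l}$ and $Q^{-l}T$ become $T^{-l}$ and $T^{-l}Q = QT^{-l}$.

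By the uniqueness clauses in the two recursions, an induction on $|v| + |w|$ (with base case handled by identity (6), which is itself symmetric in $Q, T$) then yields
\[
p_\sigma(v, w)(A, Q, T) = p'_{y, \sigma}(v, w)(A, T, Q)
\]
for every admissible triple $(\sigma, v, w)$. Specializing to $\sigma = e$, $v = 0^m$, $w = 0^n$ and invoking the two identifications $p_e(0^m, 0^n) = \mathcal{P}^y(T(m, n))$ (from Theorem \ref{thm: mainrecursion}) and $p'_{y, e}(0^m, 0^n) = \mathcal{P}^y(T(m, n))$ (from Theorem \ref{thm: hmrecursion}), both up to an overall normalization, immediately yields the desired identity.

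The principal difficulty in this approach is not the comparison step itself, which is essentially a matching of coefficients, but lies upstream in establishing the two recursions. Theorem \ref{thm: mainrecursion} rests on the full column-projector apparatus of Sections \ref{sec: inf_proj_big}--\ref{sec: twist_proj}, in particular the Markov-type moves for $\hat{K}^y_{n, \sigma}$ in Proposition \ref{prop: renorm.markov}. Theorem \ref{thm: hmrecursion} requires lifting the un-$y$-ified row-colored recursion of \cite{HM19} to $y$-ified homology; the key point is that $HHH(T(m, n))$ is concentrated in even homological degrees, so Theorem 1.17 of \cite{GH22} gives the clean relation $\mathcal{P}^y(T(m, n)) = (1 - T)^{-c} \mathcal{P}(T(m, n))$, where $c$ is the number of components of $T(m, n)$. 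Carrying the permutation $\sigma$ through the \cite{HM19} recursion and inserting a factor of $(1 - T)^{-1}$ each time Markov II merges two components then produces Theorem \ref{thm: hmrecursion}. This parity-to-$y$-ification upgrade is also the reason the hypothesis that $R$ is a field cannot be removed without further work.
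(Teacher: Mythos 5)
Your argument is correct and is essentially the paper's own proof: both observe that the recursion of Theorem \ref{thm: hmrecursion} is obtained from that of Theorem \ref{thm: mainrecursion} by interchanging $Q \leftrightarrow T$, invoke uniqueness to get $p_{\sigma}(v,w)(A,Q,T) = p'_{y,\sigma}(v,w)(A,T,Q)$, and then specialize to $p_e(0^m,0^n)$ and $p'_{y,e}(0^m,0^n)$, each of which computes $\mathcal{P}^y(T(m,n))$ up to normalization. Your surrounding remarks about the parity/$y$-ification input from \cite{GH22} and \cite{HM19} match the paper's discussion preceding Theorem \ref{thm: hmrecursion}.
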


\subsection{Column Colored Torus Link Homology} \label{sec: col_tor_hom}

As discussed in the introduction, one can define a colored homology theory categorifying the $\bigwedge^k$-colored HOMFLYPT polynomial using the projector $(P_{1^k}^y)^{\vee}$ as in \cite{CK12}, \cite{Cau17}, and \cite{HM19}. Let $\mathcal{L} = L_1 \sqcup L_2 \sqcup \dots \sqcup L_n$ be a (framed)\footnote{The choice of framing only affects the overall normalization of the resulting invariant. To simplify matters, we will always assume our links are in blackboard framing.} link, and let $\underline{l} = (l_1, l_2, \dots, l_n)$ be positive integers assigned to each component. We call $l_1, \dots, l_n$ the \textit{colors} assigned to the components. Given a braid respresentative $\beta$ of $\mathcal{L}$, the strands of $\beta$ inherit colors from the corresponding link components. Then the following procedure produces a triply-graded $R$-module:

\begin{enumerate}
    \item Choose a collection of $n$ marked points, one on each component of $\mathcal{L}$, away from the crossings of $\beta$.
    
    \item Replace each $l_i$-colored strands with $l_i$ parallel copies of that strand.
    
    \item Consider the $y$-ified Rouquier complex of the resulting braid. Insert the projector $(P_{1^{l_i}}^y)^{\vee}$ at each marked point so that it spans the parallel copies of the previous step. We refer to this curved complex as $F^y_{\bigwedge^{l_1}, \dots, \bigwedge^{l_n}}(\beta)$.
    
    \item Apply $HHH^y$ to the resulting curved complex.
\end{enumerate}

We denote the resulting $R$-module by $HHH^y_{\bigwedge^{l_1}, \dots, \bigwedge^{l_n}}(\mathcal{L})$.

\begin{definition} \label{def: col_hom}
Let $\beta \in Br_n$ be given, and let $\mathcal{L} = \hat{\beta}$ denote the link obtained by closing $\beta$. Let $\underline{l} = (l_1, \dots, l_n)$ be a coloring of the \textcolor{revisions}{components} of $\mathcal{L}$ as above. Then the \textit{$(\bigwedge^{l_1}, \dots, \bigwedge^{l_n})$-colored y-ified homology of $\mathcal{L}$} is the triply-graded $R$-module

\[
HHH^y_{\bigwedge^{l_1}, \dots, \bigwedge^{l_n}}(\mathcal{L}) := HHH^y(F^y_{\bigwedge^{l_1}, \dots, \bigwedge^{l_n}}(\beta))
\]

\end{definition}

\begin{remark}
In principle, this module could depend on both a choice of braid representative and the choice of marked point on each component\textcolor{revisions}{, but in fact the result is independent of each of these choices. Invariance under two distinct choices of marked points on the same strand of $\beta$ follows immediately from Proposition \ref{prop: inf_cross_slide}, and Proposition \ref{prop: hy_tracelike} ensures invariance under sliding a choice of marked point from the bottom of one strand of $\beta$ to the top of another. Given two braids representing $\mathcal{L}$, they must be related through a finite sequence of Markov moves, and by invariance of choice of marked point, we may always insert projectors at a distant point from the regions where these moves are applied.}
\end{remark}

Computations of this colored homology theory appear significantly more complicated than those of the uncolored theory at first glance, as the bounded $y$-ified Rouquier complexes $F^y(\beta)$ are replaced with semi-infinite curved complexes. Fortunately, the $\mathbb{Z}[u_2, \dots, u_n]$ filtration on $(P_{1^n}^y)^{\vee}$ allows us to reduce to a finite complex upon applying $HH^y$ via a homological parity argument.

\begin{proposition} \label{prop: inf_reduction}
Fix integers $i, j, k \geq 0$, let $n = i + j + k$, and let $\sigma \in \mathfrak{S}^n$ be given. Let $X[\mathbb{Y}] \in \mathcal{Y}_{\sigma}^b(R_n-\text{Bim})$ be such that $HHH^y(\textcolor{revisions}{X[\mathbb{Y}]} \otimes (R_i[\mathbb{Y}] \sqcup (K^y_j)^{\vee} \sqcup R_k[\mathbb{Y}]))$ is supported in even homological degrees. Then up to an overall normalization, we have

\begin{align*}
    \textcolor{revisions}{HHH^y} \big(\textcolor{revisions}{X[\mathbb{Y}]} \otimes (R_i[\mathbb{Y}] \sqcup (P^y_{1^j})^{\vee} \sqcup R_k[\mathbb{Y}])\big)
    & \simeq \textcolor{revisions}{HHH^y}\big(\textcolor{revisions}{X[\mathbb{Y}]} \otimes (R_i[\mathbb{Y}] \sqcup (K^y_j)^{\vee} \sqcup R_k[\mathbb{Y}])\big) \otimes_R R[u_2, \dots, \textcolor{revisions}{u_j}] \\
    & \simeq \left( \prod_{\textcolor{revisions}{l = 2}}^{\textcolor{revisions}{j}} \cfrac{1}{1 - q^{-2l}t^2} \right) \textcolor{revisions}{HHH^y}\big(\textcolor{revisions}{X[\mathbb{Y}]} \otimes (R_i[\mathbb{Y}] \sqcup (K^y_j)^{\vee} \sqcup R_k[\mathbb{Y}])\big)
\end{align*}

\end{proposition}

\begin{proof}
\textcolor{revisions}{In Proposition 4.12 of \cite{EH19}, it is shown that given a complex $M$ with an action of a polynomial ring $R[x]$, there is a homotopy equivalence}

\begin{center}
\begin{equation} \label{eq: eh_parity}
\begin{tikzcd}
\textcolor{revisions}{M \simeq \mathrm{tw}_{\gamma} (R[x] \otimes \mathrm{Cone}( M} \arrow[r, "x", color=revisions] & \textcolor{revisions}{M))}
\end{tikzcd}
\end{equation}
\end{center}

\textcolor{revisions}{for some twist $\gamma$ which strictly increases the degree of $x$. By Proposition \ref{prop: dual_y_struc}, there is an action of the polynomial ring $R[u_2, \dots, u_j]$ on $(P^y_{1^j})^{\vee}$, with the action of each variable $u_l$ given by inclusion of the subcomplex consisting of terms with nonzero $u_l$-degree. By Gaussian elimination, the cone of this endomorphism is homotopy equivalent to the quotient complex of $(P^y_{1^j})^{\vee}$ consisting of terms with $u_l$-degree $0$.}

\textcolor{revisions}{We apply Equation \eqref{eq: eh_parity} to the action of each of these variables in turn. Repeatedly passing to mapping cones in this way results in the quotient complex of $(P^y_{1^j})^{\vee}$ consisting of terms with periodic degree $0$ in all variables; this is exactly $\text{Cone}(\overline{\alpha})$. In total, we obtain a homotopy equivalence}

\begin{center}
\begin{equation} \label{eq: our_parity}
\textcolor{revisions}{(P^y_{1^j})^{\vee} \simeq \mathrm{tw}_{\gamma}(R[u_2, \dots, u_j] \otimes \text{Cone}(\overline{\alpha}))}.
\end{equation}
\end{center}

\textcolor{revisions}{Applying $(-)^{\vee}$ to both sides of the homotopy equivalence of Proposition \ref{prop: fynite} gives $\text{Cone}(\overline{\alpha})^{\vee} \simeq  q^{1 - j}t^{-1} (K_j^y)^{\vee}$. Upon making this replacement, up to an overall normalization, \eqref{eq: our_parity} induces a homotopy equivalence}

\begin{center}
\begin{equation} \label{eq: temp_g1}
\textcolor{revisions}{X[\mathbb{Y}] \otimes (R_i[\mathbb{Y}] \sqcup (P^y_{1^j})^{\vee} \sqcup R_k[\mathbb{Y}]) \simeq \mathrm{tw}_{\gamma} \left( R[u_2, \dots, u_j] \otimes \left( X[\mathbb{Y}] \otimes (R_i[\mathbb{Y}] \sqcup (K^y_j)^{\vee} \sqcup R_k[\mathbb{Y}]) \right) \right)}.
\end{equation}
\end{center}

\textcolor{revisions}{Since the variables $u_l$ all have even homological degree, after applying $HHH^y$, Proposition \ref{prop: paritymiracle} ensures that this convolution becomes a direct sum.}
\end{proof}

\begin{remark}
Note that Proposition \ref{prop: inf_reduction} depends critically on the fact that multiplication by the periodic variables $u_2, \dots, u_n$ induces a closed endomorphism of $(P_{1^n}^y)^{\vee}$. On the other hand, it is easily verified that multiplication by $u_n^{-1}$ does \textit{not} induce a closed endomorphism of $P^y_{1^n}$. This is yet another reason why it is sensible to work with the unital idempotent instead of the counital idempotent.
\end{remark}

\begin{proposition} \label{prop: fin_self_dual}
The finite projector $K_n^y$ is self-dual up to an overall normalization; that is, $(K_n^y)^{\vee} \cong q^{-2(n - 1)}t^{n - 1}K_n^y$
\end{proposition}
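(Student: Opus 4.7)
The plan is to construct an explicit isomorphism $\Phi\colon q^{-2(n-1)}t^{n-1} K_n^y \xrightarrow{\sim} (K_n^y)^\vee$ in $\mathcal{Y}_e(R_n\text{-Bim})$. The construction combines the self-duality $B_{w_0}^\vee \cong B_{w_0}$ (recorded in Section \ref{sec: SoergBim}) with the Hodge-style self-duality of the exterior algebra $\bigwedge[\theta_2, \ldots, \theta_n]$ against the top form $\omega = \theta_2 \wedge \cdots \wedge \theta_n$. Since $(-)^\vee$ extends to $y$-ified complexes by dualizing each $\mathbb{Y}$-component separately (as in the discussion preceding Proposition \ref{prop: dual_y_struc}), the underlying $R[\mathbb{X},\mathbb{X}',\mathbb{Y}]$-module of $(K_n^y)^\vee$ is canonically $B_{w_0}[\mathbb{Y}] \otimes (\bigwedge[\theta_2, \ldots, \theta_n])^\vee$. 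A quick tally of bidegrees gives $(1 + q^{-2}t)^{n-1} = q^{-2(n-1)}t^{n-1}(1 + q^2t^{-1})^{n-1}$, which confirms that the Hodge identification produces exactly the claimed shift.

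The main check is that the identification intertwines the connections. On basis monomials $\Phi$ will send $\theta_{i_1} \wedge \cdots \wedge \theta_{i_k}$ to (up to a shuffle sign) the functional evaluating against the complementary monomial $\theta_{j_1}\wedge\cdots\wedge\theta_{j_{n-1-k}}$, and the key algebraic observation is that under this pairing the contraction operators $\theta_j^\vee$ and multiplication operators $\theta_j$ swap roles. This is the standard fact that the top-form pairing realizes the parity automorphism of the Clifford algebra generated by $\theta_j, \theta_j^\vee$, and is essentially equivalent to the self-duality of a Koszul complex on a regular sequence. Applied to $\delta_{K_n} = \sum_j (x_j - x_j')\theta_j^\vee + \sum_j (y_j - y_1)\theta_j$, dualization followed by the Hodge identification produces the operator $\sum_j (x_j - x_j')\theta_j + \sum_j (y_j - y_1)\theta_j^\vee$ on $B_{w_0}[\mathbb{Y}] \otimes \bigwedge[\theta_\bullet]$; a further change of basis (either directly, or by invoking Proposition \ref{prop: kosz_base} to exchange the ``multiplication'' and ``Koszul contraction'' roles of the two regular sequences $\{x_j - x_j'\}$ and $\{y_j - y_1\}$) brings this back to $\delta_{K_n}$. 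The self-duality of $B_{w_0}$ as an $R_n$-bimodule ensures that the polynomial coefficients $x_j - x_j'$ transport correctly, and the $\mathbb{Y}$-action is preserved tautologically.

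The main obstacle will be sign bookkeeping. Each dualization of a composition $(x_j - x_j')\theta_j^\vee$ or $(y_j - y_1)\theta_j$ picks up a Koszul sign from the contravariance of $(-)^\vee$, and the Hodge shuffle introduces an additional sign $\mathrm{sgn}(i_1,\ldots,i_k,j_1,\ldots,j_{n-1-k})$. Verifying that these signs conspire to give exactly $\delta_{K_n}$ back --- rather than a twisted version like $-\delta_{K_n}$ on selected components, which would obstruct $\Phi$ from being a chain map --- is the bookkeeping crux. This reduction is purely a calculation inside the Clifford algebra acting on $\bigwedge[\theta_2,\ldots,\theta_n]$, entirely parallel to the classical proof of Koszul self-duality; the $\mathbb{Y}$-variables, the $B_{w_0}$-bimodule action, and the curved (rather than chain-complex) nature of $K_n^y$ play no role in the obstacle. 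Once the sign verification is complete, $\Phi$ is manifestly an isomorphism in the correct tridegree.
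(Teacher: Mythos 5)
Your overall route --- self-duality of $B_{w_0}$ combined with the top-form (complement) identification of $\bigwedge[\theta_2, \dots, \theta_n]$ with its dual, producing exactly the shift $q^{-2(n-1)}t^{n-1}$ as in your degree tally --- is the same argument the paper gives, which it states in one line: reverse all morphisms and grading shifts in the definition of $K_n^y$ and use that $B_{w_0}$ is self-dual. Your grading bookkeeping is correct.

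However, the middle of your second paragraph does not hold together as written. Under the grading-correct identification (the one matching the summand of $(K_n^y)^{\vee}$ indexed by $S \subseteq \{2, \dots, n\}$ with the summand of $q^{-2(n-1)}t^{n-1}K_n^y$ indexed by $S^c$ --- the only identification compatible with your tally $(1 + q^{-2}t)^{n-1} = q^{-2(n-1)}t^{n-1}(1 + q^2t^{-1})^{n-1}$), the dual of the component $(x_j - x_j')\otimes\theta_j^{\vee}$ is again a contraction-type component with the same coefficient, and the dual of $(y_j - y_1)\otimes\theta_j$ is again a multiplication-type component: no swap of roles occurs, and the transported connection is already $\delta_{K_n}$ up to the signs you flag. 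The ``swap'' you describe is what happens under the dual-basis identification $\theta_S^* \mapsto \theta_S$, which is not degree-homogeneous and does not yield the claimed shift, so it cannot be combined with your degree count. Consequently the ``further change of basis'' is unnecessary, and the appeal to Proposition \ref{prop: kosz_base} would not supply it in any case: that proposition replaces one coefficient sequence $f_j$ by a homotopic sequence $g_j$ inside the contraction part of a Koszul differential; it never exchanges $\theta_j$ with $\theta_j^{\vee}$, and implementing such an exchange by conjugating with a second Hodge-star would introduce an extra shift of $(q^2t^{-1})^{n-1}$, destroying the normalization. With that middle step removed, your argument reduces to the sign verification you already identify, and it closes exactly as the paper's does.
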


\begin{proof}
Reverse all morphisms and grading shifts in the definition of $K_n^y$ and recall that $B_{w_0}$ is self-dual.
\end{proof}

\begin{example} \label{ex: unknot_comp}
We compute \textcolor{revisions}{our} invariant in the simplest possible setting. Let $\mathcal{L}$ be the unknot, and fix a choice of color $k \geq 1$. An obvious choice of braid representative is the trivial braid $\beta = e \in Br_1$. Any choice of marked point on $\beta$ gives $F^y_{\bigwedge^k}(\beta) = (P_{1^k}^y)^{\vee}$. By Proposition \ref{prop: inf_reduction} below, we have 

\[
HH^y(F^y_{\bigwedge^k}(\beta)) \simeq HH^y((K^y_k)^{\vee}) \otimes_R R[u_2, \dots, u_k]
\]

Applying \textcolor{revisions}{$H^{\bullet}$} to both sides of this homotopy equivalence gives an isomorphism 

\[
HHH^y(\mathcal{L}) \cong HHH^y((K^y_k)^{\vee}) \otimes_R R[u_2, \dots, u_k]
\]

By Proposition \ref{prop: fin_self_dual}, we have $(K^y_k)^{\vee} \cong K^y_k$ up to an overall grading shift. Note that, again up to an overall grading shift, $K^y_k$ is isomorphic to the complex $\hat{\bf{C}}^y(1^k, 1^k)_e$ of Theorem \ref{thm: mainrecursion}; the graded dimension of $HHH^y$ applied to this complex is by definition the polynomial $p_e(1^k, 1^k)$, which is computable using the recursion provided there. Up to an overall grading shift, we obtain

\begin{align}
\text{dim}(HHH^y_{\bigwedge^k}(\mathcal{L})) = \prod_{i = 1}^k \cfrac{Q^{i - 1} + A}{(1 - Q)(1 - Q^{i - 1}T)} \label{result: unlink}
\end{align}

\end{example}

As in Example \ref{ex: unknot_comp}, Propositions \ref{prop: inf_reduction} and \ref{prop: fin_self_dual} often allow us to reduce the computation of $HHH^y_{\bigwedge^k}$ to the evaluation of $HHH^y$ on a finite curved complex. In fact, we have already carried out this computation for a broad class of examples in Theorem \ref{thm: mainrecursion}. From these computations, we immediately obtain this colored invariant for positive torus links with a single $k$-colored component as below:

\begin{theorem} \label{thm: pos_tmn_column_inv}
For each $k, m, n \geq 1$ with $m, n$ coprime, up to an overall normalization, the positive torus knot $T(m, n)$ satisfies

\[
\text{dim}(HHH^y_{\bigwedge^k}(T(m, n))) = \left( \prod_{i = 2}^k \cfrac{1}{1 - Q^{1 - i}T} \right) p_{e}(1^k0^{k(m - 1)}, 1^k0^{k(n - 1)})
\]

More generally, for any $m, n \geq 1$, let $d = \text{gcd}(m, n)$. Then up to an overall normalization, the positive torus link $T(m, n)$ satisfies

\[
\text{dim}(HHH^y_{\bigwedge^k, \bigwedge^1, \dots, \bigwedge^1}(T(m, n))) = \left( \prod_{i = 2}^k \cfrac{1}{1 - Q^{1 - i}T} \right) p_{e}(1^k0^{md^{-1}(d - 1) + k(md^{-1} - 1)}, 1^k0^{nd^{-1}(d - 1) + k(nd^{-1} - 1)})
\]
\end{theorem}

\subsection{Module Structure} \label{sec: mod_sheet}

Before turning our attention to row colored homology, we pause to consider the module structure on $\bigwedge^k$-colored $y$-ified homology. Let $\beta \in Br_n$ be a braid presentation of $\hat{\beta} = \mathcal{L}$. To simplify our discussion, we assume throughout that $\mathcal{L}$ is a knot; the theory carries over almost identically in the case that $\mathcal{L}$ is a link (though some care must be taken when treating multiple sets of periodic variables).

Because $(P_{1^n}^y)^{\vee}$ is a (unital) idempotent, we have a natural isomorphism

\[
\psi_k \colon (P_{1^k}^y)^{\vee} \otimes (P_{1^k}^y)^{\vee} \cong (P_{1^k}^y)^{\vee}
\]

in $\mathcal{CY}_e(R_k-\text{Bim})$; we refer the interested reader to \cite{Hog17} for details. For a given choice of marked point $p \in \beta$, let $\tilde{F}^y_{\bigwedge^k}(\beta_p)$ denote the curved complex obtained from cabling and inserting $(P_{1^k}^y)^{\vee} \otimes (P_{1^k}^y)^{\vee}$, rather than $(P_{1^k}^y)^{\vee}$, at $p$. Then $\psi$ furnishes a natural isomorphism $\tilde{F}^y_{\bigwedge^k}(\beta_p) \cong F^y_{\bigwedge^k}(\beta)$; we denote this isomorphism also by $\psi_k$ for notational convenience.

Recall from Proposition \ref{prop: hyrep} that $HHH^y$ is a representable functor. In particular, we have an isomorphism

\[
HHH^y_{\bigwedge^k}(\mathcal{L}) \cong \text{Hom}_{\mathcal{CY}_{\beta}(R_n-\text{Bim})} (\mathbbm{1}^y_{\beta}, F^y_{\bigwedge^k}(\beta))
\]

The tensor product structure on $\mathcal{CY}(R_n-\text{Bim})$ applied to morphisms induces a map

\begin{align*}
\otimes \colon \text{Hom}_{\mathcal{CY}_e(R_k-\text{Bim})} (\mathbbm{1}^y, (P_{1^k}^y)^{\vee}) \otimes_R \text{Hom}_{\mathcal{CY}_{\beta}(R_n-\text{Bim})} (\mathbbm{1}_{\beta}^y, F^y_{\bigwedge^k}(\beta)) \to \text{Hom}_{\mathcal{CY}_{\beta}(R_n-\text{Bim})}(\mathbbm{1}_{\beta}^y, \tilde{F}^y_{\bigwedge^k}(\beta_p))
\end{align*}

Postcomposing with $\psi_k$ induces a natural isomorphism

\[
\text{Hom}_{\mathcal{CY}_{\beta}(R_n-\text{Bim})}(\mathbbm{1}_{\beta}^y, \tilde{F}^y_{\bigwedge^k}(\beta_p)) \cong \text{Hom}_{\mathcal{CY}_{\beta}(R_n-\text{Bim})}(\mathbbm{1}_{\beta}^y, F^y_{\bigwedge^k}(\beta))
\]

In total, we obtain a map

\[
\psi_k \circ \otimes \colon HHH^y_{\bigwedge^k}(\bigcirc) \otimes_R HHH^y_{\bigwedge^k}(\mathcal{L}) \to HHH^y_{\bigwedge^k}(\mathcal{L})
\]

We rephrase this conclusion below.

\begin{theorem}
Let $\mathcal{L}$ be a knot, and let $\bigcirc$ be the unknot. Then a given choice of marked point $p \in \mathcal{L}$ induces a left $HHH^y_{\bigwedge^k}(\bigcirc)$-module structure on $HHH^y_{\bigwedge^k}(\mathcal{L})$.
\end{theorem}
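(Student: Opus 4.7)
The plan is to adopt the map $\psi_k \circ \otimes$ constructed in the preceding paragraphs as the candidate action and to verify the two module axioms (unitality and associativity). The construction itself is already in place; what remains is to check that these axioms survive the passage to $HHH^y$.

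First I would address unitality. The unital idempotent $(P_{1^k}^y)^{\vee}$ carries a distinguished chain map $\tilde{\nu}_k \colon R_k[\mathbb{Y}] \to (P_{1^k}^y)^{\vee}$, and the unital-idempotent structure provides a homotopy $\psi_k \circ (\tilde{\nu}_k \otimes \mathrm{id}) \simeq \mathrm{id}_{(P_{1^k}^y)^{\vee}} \simeq \psi_k \circ (\mathrm{id} \otimes \tilde{\nu}_k)$. Under the representability isomorphism $HHH^y_{\bigwedge^k}(\bigcirc) \cong \mathrm{Hom}_{\mathcal{CY}_e(R_k\text{-Bim})}(\mathbbm{1}^y_e, (P_{1^k}^y)^{\vee})$, the class $[\tilde{\nu}_k]$ is the candidate for the unit, and the above homotopy shows that tensoring with $[\tilde{\nu}_k]$ followed by $\psi_k$ is homotopic to the identity of $F^y_{\bigwedge^k}(\beta)$. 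Passing to homology produces the required identity element.

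For associativity, I would compare the two maps $\psi_k \circ (\psi_k \otimes \mathrm{id})$ and $\psi_k \circ (\mathrm{id} \otimes \psi_k)$ from $((P_{1^k}^y)^{\vee})^{\otimes 3}$ to $(P_{1^k}^y)^{\vee}$. Both are chain maps realizing the triple idempotent collapse, and the uniqueness clause characterizing $(P_{1^k}^y)^{\vee}$ as a unital idempotent (applied to the object $((P_{1^k}^y)^{\vee})^{\otimes 3}$, which is itself a unital idempotent) forces them to be homotopic in $\mathcal{CY}_e(R_k\text{-Bim})$. Tensoring a choice of homotopy with the remainder of the cabled braid and applying $\mathrm{Hom}_{\mathcal{CY}_\beta(R_n\text{-Bim})}(\mathbbm{1}^y_\beta, -)$ then gives the associativity diagram on homology.

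The main obstacle I anticipate is purely coherence-theoretic: the constructions above use choices of homotopies $\psi_k$ and choices of nullhomotopies witnessing unitality and associativity, and one has to be sure that the induced operations on $HHH^y$ do not depend on those choices. This is handled by Corollary \ref{corr: GHobs2} (via the positivity of the relevant $\mathrm{Hom}$-homology) together with the uniqueness-up-to-canonical-equivalence properties of unital idempotents developed just before the theorem, which force any two such choices to differ by a homotopy whose image in the bigraded $R$-module $HHH^y_{\bigwedge^k}(\mathcal{L})$ vanishes. Once these coherence issues are dispatched, the module axioms reduce to the identities in the previous two paragraphs, completing the proof.
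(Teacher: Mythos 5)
Your proposal is correct and follows essentially the same route as the paper: the action is the map $\psi_k \circ \otimes$ built from the idempotent multiplication $\psi_k$, the representability isomorphism of Proposition \ref{prop: hyrep}, and the tensor product on morphism spaces, with the module axioms supplied by the unital-idempotent structure of $(P^y_{1^k})^{\vee}$. The paper itself stops after constructing $\psi_k \circ \otimes$ (leaving unitality and associativity implicit in the general theory of unital idempotents from \cite{Hog17}), so your explicit verification of the axioms via $\tilde{\nu}_k$, the uniqueness clause, and Corollary \ref{corr: GHobs2} is a consistent elaboration rather than a different argument.
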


In keeping with the discussion in the introduction (c.f. \S \ref{sec: mod_sheet_spec}), we refer to $HHH^y_{\bigwedge^k}(\bigcirc)$ as the \textit{derived sheet algebra} of $HHH^y_{\bigwedge^k}$. One can similarly define a right-module structure over the derived sheet algebra via precomposition with $\psi_k$; naturality of $\psi_k$ ensures that the left and right module structures defined in this way agree.

At least two questions naturally arise:

\begin{enumerate}
    \item Is the module structure above independent of a choice of marked point?
    
    \item If so, can the module structure above be extended to a $dg$-module structure on $HH^y(F^y_{\bigwedge^k}(\beta))$, independent of choice of marked point up to quasi-isomorphism?
\end{enumerate}

As this work is already quite long, we delay considerations of these two questions to future work. We do, however, point out that Proposition \ref{prop: inf_reduction} furnishes a sort of ``freeness" result over the factor $R[u_2, \dots, u_k]$ of the derived sheet algebra for positive torus knots. On the other hand, a simple dimension count in Equation \eqref{result: unlink} reveals that, in contrast with the results of \cite{GH22} (and Proposition \ref{prop: row_color_free} below), the derived sheet algebra does \textit{not} itself carry a free action of the polynomial ring $R[y_1, \dots, y_k]$.

\subsection{Row Colored Torus Link Homology} \label{subsec: Row_tor_link_hom}

By the results of \cite{Hog18}, there is a family of complexes $P_n \in \text{Ch}^-(SBim_n)$ that play an analogous role in categorifying the $\text{Sym}^n$-colored HOMFLYPT polynomial as do our $P_{1^n}$ in the $\bigwedge^n$-colored invariant. These complexes are convolutions of the form $P_n \simeq \text{tw}(K_n' \otimes \mathbb{Z}[v_2, \dots, v_n])$ for a bounded complex $K_n'$ and $v_i$ formal variables of degree $\text{deg}(v_i) = q^{2i}t^{2 - 2i}$. Each $P_n$ is also a unital idempotent in its native category and can therefore be used to define a link invariant via the method outlined above.

We wish to $y$-ify this $\text{Sym}$-colored invariant. To do so requires a $y$-ification of the corresponding family of projectors.

\begin{theorem} \label{thm: yify_row}
There is a $y$-ification $P_n^y := (P_n[\mathbb{Y}], e, \delta_{P_n})$ of the infinite row projector $P_n$.
\end{theorem}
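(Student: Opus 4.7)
The strategy is to adapt the obstruction-theoretic framework used throughout Section \ref{big_sec: yification} to the row projector. Unlike the column case, where Elbehiry supplied an explicit strict $y$-ification of the finite Koszul model $K_n$ that slotted neatly into our recursion, Hogancamp's construction realizes $P_n$ only abstractly as a convolution $P_n \simeq \text{tw}(K_n' \otimes \mathbb{Z}[v_2,\dots,v_n])$ for a bounded complex $K_n'$ and periodic generators $v_i$ of degree $Q^iT^{1-i}$. The proposed proof therefore proceeds in two stages: first produce a (potentially non-strict) $y$-ification of the finite piece $K_n'$, then propagate it along the $v$-filtration.

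For the first stage, I would begin with the naive ansatz $\delta_0 := d_{K_n'} + \sum_i h_i \otimes y_i$, where $h_i \in \text{End}^{-1}(K_n')$ are dot-sliding-type homotopies satisfying $[d,h_i] = x_i - x_i'$. Such $h_i$ exist because $P_n$ is (homotopy equivalent to a complex of) direct summands of Bott-Samelson bimodules of the form $B_{w_0}$-type on which left and right polynomial actions agree up to homotopy, and this descends to $K_n'$. The failure $\delta_0^2 - Z_e$ lives in positive $\mathbb{Y}$-degree, and I would correct $\delta_0$ by successively higher $\mathbb{Y}$-degree perturbations using the obstruction framework of Proposition \ref{prop: GHobs}, with obstructions vanishing because $\text{End}^{\bullet}(P_n)$ has homology concentrated in non-positive homological degrees (a consequence of $P_n$ being a unital idempotent, by the diagonalization results of \cite{Hog18}).

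For the second stage, I would extend the curved lift from $K_n'$ to the semi-infinite convolution $P_n$. The generators $v_j$ give a filtration of $P_n$ with subquotients isomorphic to (shifts of) $K_n'$. Having $y$-ified $K_n'$, the extension proceeds by induction in $v$-degree, with the differential on each layer lifted via another application of Proposition \ref{prop: GHobs}; the requisite vanishing of obstructions again follows from the parity properties of $\text{End}^\bullet(P_n)$. Because each $v_j$ has even homological degree, an argument analogous to Corollary \ref{corr: useful_hpt} guarantees that the resulting infinite series converges in the morphism-space convention of Remark \ref{rem: convergence}.

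The main obstacle, and the reason I expect this argument to yield only existence rather than the full package enjoyed by $P_{1^n}^y$, is that the resulting $\delta_{P_n}$ will generally not be strict (polynomial) in $\mathbb{Y}$, and we lack a recursion analogous to Theorem \ref{thm: intro_yprojtop} that would allow us to dualize and mimic the argument showing $(P_{1^n}^y)^\vee$ is a unital idempotent. Verifying counital/unital idempotency of $P_n^y$ and crossing-slide compatibility would require either a clean recursive model (not available from Hogancamp's construction directly) or a separate analysis of the curved analogue of categorical diagonalization for the row projector. This is precisely the gap flagged in \S \ref{subsec: yify_row}, and we consciously do not attempt to close it here.
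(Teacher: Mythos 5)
Your proposal takes a genuinely different route from the paper, and as written it has real gaps. The paper does not $y$-ify the finite piece $K_n'$ and then propagate along the $v$-filtration at all. Instead it uses Hogancamp's realization of $P_n$ as the homotopy colimit of the directed system $\{FT_n^k, f_k\}$ with $f_k = \text{id}_{FT_n^k}\otimes f_0$, and reduces the theorem to lifting the single chain map $f_0\colon R_n \to FT_n$ to the $y$-ified Rouquier complex $FT_n[\mathbb{Y}]$, which already carries its canonical strict connection $\sum_i h_i y_i$ from dot-sliding homotopies. The lift $\tilde{f}_0$ is built order by order in $\mathbb{Y}$-degree: at each stage the obstruction $\sum_{\mathbf{v}_i \neq 0} h_i (f_0)_{y^{\mathbf{v}-\mathbf{e}_i}}$ is shown by direct computation to be closed, it sits in odd homological degree $n(n-1)-2r-1$, and it is therefore exact because $H^{\bullet}(\text{Hom}(R_n, FT_n))$ is concentrated in even degrees (\cite{EH19}); boundedness of $FT_n$ terminates the process, and the $y$-ified colimit is $P_n^y$. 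This sidesteps entirely the need to construct a connection on $K_n'$ or to control the periodic twist.

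Against this, the concrete problems with your argument are the following. First, the vanishing input you invoke is of the wrong kind and about the wrong object: the obstructions you must kill are \emph{closed elements of odd (typically negative) homological degree} in Hom complexes between the finite pieces (e.g.\ $\text{End}(K_n')$, $\text{Hom}(R_n, FT_n)$, or Hom spaces between shifted $v$-layers), so what is needed is a parity statement — homology concentrated in even degrees — for those complexes. Concentration of $H^{\bullet}(\text{End}(P_n))$ in non-positive degrees does not imply exactness of a closed element living in a negative odd degree, and it is not a formal consequence of $P_n$ being a unital idempotent in any case. Second, your justification for the existence of dot-sliding homotopies on $K_n'$ conflates the row and column models: the complex built from $B_{w_0}$ is the column Koszul model $K_n$, whereas Hogancamp's finite row projector has different chain bimodules, so those homotopies require a separate argument (they do exist, but not for the reason you give). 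Third, and most seriously, in your second stage you must lift not only the differential on each $v$-layer but also the attaching (twist) maps of the convolution, compatibly and all at once; this is precisely the recursive control over a would-be $(K_n')^y$ that the paper explicitly says it lacks (Remark \ref{rem: field_coeff}), which is why it takes the homotopy colimit route instead. Your closing caveat about idempotency is accurate, but the existence statement itself is not secured by the argument as written.
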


\begin{proof}
Throughout, let $FT_n^k$ denote the $k^{th}$ tensor power of the Rouquier complex associated to the full twist braid $FT_n := X_n^n$. In Theorem 2.30 of \cite{Hog18}, $P_n$ is constructed as a homotopy colimit of the directed system $\{FT_n^k, f_k\}_{k = 0}^\infty$ for a system of maps $f_k \colon FT_n^k \to FT_n^{k + 1}$. The maps $f_k$ are of the form $\text{id}_{FT_n^k} \otimes f_0$ for a degree $(q^{-1}t)^{n(n - 1)}$ chain map $f_0 \colon R_n \rightarrow FT_n$ given by inclusion of the unique copy of $R_n$ in $FT_n$ in highest homological degree. To construct $P_n^y$, it suffices to show that $f_0$ lifts to a map $\tilde{f}_0 \colon R_n[\mathbb{Y}] \rightarrow FT_n[\mathbb{Y}]$ of $y$-ified complexes; the homotopy colimit of the directed system $\{FT_n^k[\mathbb{Y}], \tilde{f}_k\}_{k = 0}^{\infty}$ with $\tilde{f}_k := \text{id}_{FT_n^k[\mathbb{Y}]} \otimes \tilde{f}_0$ will then be a $y$-ification of $P_n$.

We proceed by a similar obstruction-theoretic argument as in \cite{GH22} (c.f. our Proposition \ref{prop: GHobs}). Recall that the connection on $FT_n[\mathbb{Y}]$ is given by

\[
\delta_{FT_n} = d_{FT_n} + \sum_{i = 1}^n h_i y_i
\]

where $h_i \in \text{End}^{-1}_{\text{Ch}(SBim_n)}$ are dot-sliding homotopies satisfying $[d, h_i] = x_i - x_i'$. A direct computation gives

\begin{align*}
    [\delta, f_0] & = \left( d_{FT_n} + \sum_{i = 1}^n h_i y_i \right) f_0 \\
    & = \sum_{i = 1}^n h_if_0 y_i
\end{align*}

We claim that there exists a morphism $(f_0)_{y_i} \in \text{Hom}^{n(n - 1) - 2}(R_n, FT_n)$ satisfying $[d, (f_0)_{y_i}] = -h_i f_0$. Indeed, it was shown in \cite{EH19} that the complex $\text{Hom}(R_n, FT_n)$ has homology concentrated in even degrees. Since $-h_if_0$ has degree $n(n - 1) - 1$, which is odd for all $n \geq 1$, it suffices to show that $[d, -h_if_0] = 0$. We compute:

\[
[d, -h_if_0] = -[d, h_i]f_0 - h_i[d, f_0] = (x_i' - x_i)f_0 = 0
\]

Setting $f_{0, 1} := f_0 + \sum_{i = 1}^n (f_0)_{y_i} y_i$, we see that $[\delta, f_{0, 1}]$ is quadratic in the alphabet $\mathbb{Y}$.

We iterate this process. More precisely, suppose we have constructed a map

\[
f_{0, r} := \sum_{\substack {\textbf{v} \in \mathbb{Z}_{\geq 0}^n \\ |\textbf{v}| \leq r}} (f_0)_{y^\textbf{v}} y^{\textbf{v}}
\]

such that $[\delta, f_{0, r}]_{y^\textbf{v}} = 0$ for all $|\textbf{v}| \leq r$. By direct computation, we obtain

\[
[\delta, f_{0, r}] = \sum_{i = 1}^n h_i \sum_{\substack{\textbf{v} \in \mathbb{Z}^n_{\geq 0} \\ |\textbf{v}| = r}} (f_0)_{y^\textbf{v}} y_i y^\textbf{v} = \sum_{\substack{\textbf{v} \in \mathbb{Z}^n_{\geq 0} \\ |\textbf{v}| = r + 1}} \sum_{\textbf{v}_i \neq 0} h_i (f_0)_{y^{\textbf{v} - \textbf{e}_i}} y^\textbf{v}
\]

For each $i$ and $\textbf{v}$ in the latter summation above, $h_i (f_0)_{y^{\textbf{v} - \textbf{e}_i}}$ has homological degree $n(n - 1) - 2r - 1$. This is odd for all values of $i, r$, and $n$. Again by direct computation, for each $\textbf{v} \in \mathbb{Z}^n_{\geq 0}$ satisfying $|\textbf{v}| = r + 1$, we obtain

\begin{align*}
    \left[d, \sum_{\textbf{v}_i \neq 0} h_i (f_0)_{y^{\textbf{v} - \textbf{e}_i}} \right] & = \sum_{\textbf{v}_i \neq 0} \left( [d, h_i] (f_0)_{y^{\textbf{v} - \textbf{e}_i}} + h_i \left[d, (f_0)_{y^{\textbf{v} - \textbf{e}_i}} \right] \right) \\
    & = \sum_{\textbf{v}_i \neq 0} \left((x_i - x_i') (f_0)_{y^{\textbf{v} - \textbf{e}_i}} + h_i \left( \sum_{(\textbf{v} - \textbf{e}_i)_j \neq 0} h_j (f_0)_{y^{\textbf{v} - \textbf{e}_i - \textbf{e}_j}} \right) \right) \\
    & = \sum_{\textbf{v}_i \neq 0} (f_0)_{y^{\textbf{v} - \textbf{e}_i}} (x_i - x_i')  + \sum_{\textbf{v}_i \neq 0} \sum_{(\textbf{v} - \textbf{e}_i)_j \neq 0} \left( h_i h_j (f_0)_{y^{\textbf{v} - \textbf{e}_i - \textbf{e}_j}} \right) \\
    & = \sum_{\substack{i \leq j \\ \textbf{v}_i \neq 0 \\ (\textbf{v} - \textbf{e}_i)_j \neq 0}} (h_ih_j + h_jh_i) (f_0)_{y^{\textbf{v} - \textbf{e}_i - \textbf{e}_j}} \\
    & = 0
\end{align*}

Here the first equality follows from the graded Leibniz rule, the second equality follows by assumption on $[\delta, f_{0, r}]$, and the final equality follows from the identity $h_ih_j + h_jh_i = 0$ for dot-sliding homotopies. Since $\text{Hom}(R_n, FT_n)$ has homology concentrated in even degrees, for each $\textbf{v} \in \mathbb{Z}^n_{\geq 0}$ with $|\textbf{v}| = r + 1$, there exists some $(f_0)_{y^\textbf{v}} \in \text{Hom}(R_n, FT_n)$ satisfying 

\[
[d, (f_0)_{y^\textbf{v}}] = -\sum_{\textbf{v}_i \neq 0} h_i (f_0)_{y^{\textbf{v} - \textbf{e}_i}}
\]

Set $f_{0, r + 1} := f_{0, r} + \sum_{|\textbf{v}| = r + 1} (f_0)_{y^\textbf{v}} y^\textbf{v}$. Then $[\delta, f_{0, r+ 1}]_{y^\textbf{v}} = 0$ for all $|\textbf{v}| \leq r + 1$ by construction. Since $FT_n$ is bounded, this process eventually stabilizes; the resulting sum is the desired chain map $\tilde{f}_0$. (In fact we can always take $\tilde{f}_0 = f_{0, n(n - 1)/2}$.)

\end{proof}

Given a braid $\beta \in Br_n$ and a choice of marked point and color on each component of $\hat{\beta}$, the same cabling and insertion procedure described in \S \ref{sec: col_tor_hom} can be used to produce a curved complex $F^y_{(\text{Sym}^{l_1}, \dots, \text{Sym}^{l_n})}(\beta)$. As in the column-colored case, one hopes that the graded dimension of $HHH^y$ of the result is independent of a choice of braid representative for $\hat{\beta}$ and marked points on each strand. Note, however, that Theorem \ref{thm: yify_row} makes no claim that $P_n^y$ is a unital idempotent in the relevant $y$-ified category, nor is it claimed that $P_n^y$ slides past crossings. As a consequence, we cannot show that the resulting triply-graded $R$-module is an invariant via the usual arguments of \cite{CK12}. That being said, when working over a coefficient field $R$, there is sufficient structure present in this construction to uniquely determine the graded dimension of the conjectural invariant in the context of Theorem \ref{thm: pos_tmn_column_inv}. We carry this out below, beginning with a concrete definition of this conjectural invariant.

\begin{definition} \label{def: sym_y}
Let $\beta \in Br_n$ be given, let $\underline{l} = (l_1, \dots, l_n)$ be a coloring of the strands of $\hat{\beta}$, and choose one marked point on each strand of $\mathcal{L} := \hat{\beta}$ away from the crossings of $\beta$. Then the $(\text{Sym}^{l_1}, \dots, \text{Sym}^{l_n})$-colored $y$-ified homology of $\mathcal{L}$ is the triply-graded $R$-module

\[
HHH^y_{\text{Sym}^{l_1}, \dots, \text{Sym}^{l_n}}(\mathcal{L}) := HHH^y(F^y_{\text{Sym}^{l_1}, \dots, \text{Sym}^{l_n}}(\beta))
\]
\end{definition}

\begin{conjecture} \label{conj: sym_y}
The graded dimension of $HHH^y_{\text{Sym}^{l_1}, \dots, \text{Sym}^{l_n}}(\mathcal{L})$ is independent of all choices made in Definition \ref{def: sym_y} up to an overall normalization.
\end{conjecture}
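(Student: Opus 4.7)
The plan is to reduce $y$-ified invariance of the graded dimension to the already-established un-$y$-ified invariance, via a parity principle analogous to Proposition \ref{prop: paritymiracle}. The central technical input I would prove first is the following extension of Lemma 2.19 of \cite{GH22}: if $C \in \text{Ch}^-(SBim_n)$ is a complex for which $HHH(C)$ is concentrated in even homological degrees, and if $(C[\mathbb{Y}], \sigma, \delta_C)$ is any $y$-ification of $C$, then $HHH^y(C[\mathbb{Y}])$ is free over $R[\mathbb{Y}_{\mathcal{L}}]$ with graded dimension $\prod_c (1-T)^{-1} \cdot \text{dim}(HHH(C))$, where $c$ runs over cycles of $\sigma$. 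The argument is to run the spectral sequence associated to the $\mathbb{Y}$-adic filtration on the complex computing $HHH^y$: its $E_1$ page is $HHH(C) \otimes_R R[\mathbb{Y}_{\mathcal{L}}]$, concentrated in even homological degrees, so all differentials (of homological degree $+1$) must vanish, exactly as in the cited proposition.

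With this principle, the argument for invariance is formal at the level of graded dimensions. Given two choices $(\beta, \{p_i\})$ and $(\beta', \{p_i'\})$ of braid representative and marked points, the un-$y$-ified complexes $HH(F_{\text{Sym}^{l_1}, \dots}(\beta))$ and $HH(F_{\text{Sym}^{l_1}, \dots}(\beta'))$ have cohomology of equal graded dimension, since the un-$y$-ified row-colored homology $HHH_{\text{Sym}^{l_1}, \dots}(\mathcal{L})$ is a well-defined invariant by the idempotency of $P_n$ established in \cite{Hog18} together with the cabling formalism of \cite{CK12}. By Theorem \ref{thm: yify_row}, the curved complexes $F^y_{\text{Sym}^{l_1}, \dots}(\beta)$ and $F^y_{\text{Sym}^{l_1}, \dots}(\beta')$ are $y$-ifications of these un-$y$-ified shadows. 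Applying the parity principle to each yields equal graded dimensions for $HHH^y$, and hence invariance up to the normalization factor $\prod_c (1-T)^{-1}$.

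The main obstacle will be verifying the parity hypothesis that $HHH_{\text{Sym}^{l_1}, \dots}(\mathcal{L})$ is concentrated in even homological degrees. For general $\mathcal{L}$ this is a substantial open problem, and this approach would therefore not establish Conjecture \ref{conj: sym_y} in full generality. It does, however, suffice for the application to positive torus links required by Theorem \ref{thm: maintheorem_intro}, since parity of $HHH_{\text{Sym}^{l_1}, \dots, \text{Sym}^{l_n}}(T(m,n))$ is established in \cite{HM19}. The dependence on a coefficient field is inherited from the use of parity. A more robust approach to the general conjecture would instead require showing directly that $P_n^y$ is a unital idempotent and slides past crossings, but this route is blocked — as noted in \S \ref{subsec: yify_row} — by the absence of a recursive construction of $P_n^y$ analogous to the construction of $P_{1^n}^y$ carried out in Theorem \ref{thm: intro_yprojtop}.
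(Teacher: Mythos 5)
Your route is essentially the paper's: the conjecture itself is left open, and the graded dimension is pinned down only where parity applies, by reducing to the un-$y$-ified invariant exactly as in Proposition \ref{prop: row_color_free} (the \cite{GH22} parity statement, extended to the semi-infinite $P_k^y$) together with the established invariance of $HHH_{\text{Sym}^k}$, which is how Theorem \ref{thm: row_y_well_def} is obtained independently of Conjecture \ref{conj: sym_y}; your field-coefficient caveat matches Remark \ref{rem: field_coeff}. The only quibbles are minor: the relevant input from \cite{GH22} is Theorem 3.16/Lemma 4.13 (not Lemma 2.19), and your spectral-sequence sketch should note the degreewise finite-dimensionality needed for convergence in the semi-infinite setting, as in the proof of Proposition \ref{prop: row_color_free}.
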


\begin{example}
As in the column-colored case, we begin by computing this conjectural invariant for the trivial representation $\beta = e \in Br_1$ of the unknot $\mathcal{L} = \bigcirc$. We assume throughout that $R$ is a field. Any choice of marked point on $\beta$ gives $F^y_{\text{Sym}^k}(\beta) = P_k^y$. By Proposition \ref{prop: row_color_free} below, we obtain

\[
HHH^y_{\text{Sym}^k}(\mathcal{L}) \cong HHH(P_k) \otimes_R R[y_1, \dots, y_k]
\]

Up to an overall normalization, the $R$-module $HHH(P_k)$ is the un $y$-ified $\text{Sym}^k$-colored invariant of the unknot computed in \cite{HM19}. We therefore obtain

\[
\text{dim}(HHH^y_{\text{Sym}^k}(\mathcal{L})) = \prod_{i = 1}^k \cfrac{T^{i - 1} + A}{(1 - T)(1 - QT^{i - 1})}
\]

Note the similarity between this result and that of Example \ref{ex: unknot_comp}.

\end{example}

In \cite{GH22}, Gorsky-Hogancamp show that when working over a coefficient field, homological parity phenomena often ensure that $y$-ified homology is free in $y$-variables. Though those authors restrict their attention to finite complexes, their results extend to our setting, as outlined below.

\begin{proposition} \label{prop: row_color_free}
Let $R$ be a field. Fix integers $i, j, k \geq 0$, and let $n = i + j + k$. Let $X \in \text{Ch}^b(R_n-\text{Bim})$ be such that $HHH(X \otimes (R_i \sqcup P_j \sqcup R_k))$ is supported in even homological degrees, and let $X[\mathbb{Y}] \in \mathcal{Y}_{\sigma}(R_n-\text{Bim})$ be a $y$-ification of $X$ for some $\sigma \in \mathfrak{S}^n$. Let $\pi_0(\sigma)$ denote the orbits of $\sigma$ on the set $\{1, \dots, n\}$, and set $A := \mathbb{Q}[y_c]_{c \in \pi_0(\sigma)}$. Then there is an isomorphism of triply-graded vector spaces

\[
HHH^y(X[\mathbb{Y}] \otimes (R_i[\mathbb{Y}] \sqcup P_j^y \sqcup R_k[\mathbb{Y}])) \cong HHH(X \otimes (R_i \sqcup P_j \sqcup R_k)) \otimes_R A
\]
\end{proposition}

\begin{proof}
This follows almost immediately from Theorem 3.16 and Lemma 4.13 of \cite{GH22}; the only novelty is in adapting the proofs of those statements to the semi-infinite complex $C_0 := HH(X \otimes (R_i \sqcup P_j \sqcup R_k))$. This adaptation requires only that $C_0$ and $H(C_0) \otimes_R A$ be finite-dimensional in each homogeneous component with respect to the tri-grading $(a, q, t)$. The former claim that $C_0$ is finite-dimensional in each degree follows from the convolution description of $P_j$ and the fact that each periodic variable $u'_l$ has positive quantum degree. The latter claim follows from Lemma 4.3 of \cite{HM19} and the fact that no monomoial of the form $(u')^{\textbf{v}} y^\textbf{w}$ has degree $0$.
\end{proof}

\begin{remark} \label{rem: field_coeff}
Note that Proposition \ref{prop: row_color_free} is stated over a coefficient field. We require invertibility of certain coefficients whenever using parity phenomena to argue that the homology of a $y$-ified complex is free over $R[\mathbb{Y}]$; see \cite{GH22} for details.

On the other hand, the corresponding parity arguments that simplify the computation of the homology of periodic complexes as in Proposition \ref{prop: inf_reduction} does \textit{not} require working over a field. One could imagine a computation of $\text{Sym}^k$-colored homology in which these reductions are performed in the other order: first parity is established directly over $\mathbb{Z}$ for all computations involving $(K'_n)^y$, then periodicity is accounted for. Such a route would provide a proof of the results below with integral coefficients by avoiding all invertibility concerns.

To take this route to our results would require a more explicit construction of $P_n^y$ satisfying properties similar to our $(P_{1^n}^y)^{\vee}$. These properties would necessarily include:

\begin{enumerate}
    \item $P^y_n$ is a strict $y$-ification (allowing an application of Proposition \ref{prop: hom_from_tr} to compute $y$-ified homology directly).
    
    \item There is a strict $y$-ification $(K_n')^y$ of the finite row projector satisfying $(K_n')^y \simeq \text{Cone}(u_2) \dots \text{Cone}(u_n)$ (to allow reduction of periodic behavior as in our Proposition \ref{prop: inf_reduction}).
    
    \item $(K_n')^y$ (and its twisted variants) are well-behaved with respect to Markov moves and twisting as in Proposition \ref{prop: renorm.markov} (to allow a recursive computation using convenient complexes).
\end{enumerate}

The failure of Property (3) for the un $y$-ified finite projector $K_n$, as established in Proposition \ref{prop: ylessktrace}, requires us to establish these properties to have any hope of computing $\bigwedge^k$-colored homology, $y$-ified or otherwise. Indeed, without this obstruction, the technical portion of this work would not be much longer than our proof of Theorem \ref{thm: yify_row}! In the mutual interests of length and coherence, we content ourselves in this work with stating our comparison theorem over a coefficient field. We plan to address the properties of $P_n^y$ conjectured in this remark in future work.
\end{remark}

\textcolor{revisions}{In Theorem 4.6 of \cite{HM19}, those authors give a formula for the un $y$-ified, Sym-colored homology of all positive torus knots which is manifestly concentrated in even homological degrees. As a consequence, we can directly apply Proposition \ref{prop: row_color_free} to compute the $y$-ified, Sym-colored homology of these knots  \textit{independently of Conjecture \ref{conj: sym_y}}.}

\textcolor{revisions}{In analogy with our Proposition \ref{prop: inf_reduction}, the relation between the uncolored computations of \cite{HM19} and their colored computations is given by multiplication by the dimension of the polynomial ring $\mathbb{Z}[v_2, \dots, v_n]$. As a consequence, we can adapt their computations to the $y$-ified setting by tracking permutations exactly as in Theorem \ref{thm: hmrecursion}. We state the result below.}

\begin{theorem} \label{thm: row_y_well_def}

For each $k, m, n \geq 1$ with $m, n$ coprime, $\text{dim}(HHH^y_{\text{Sym}^k}(T(m, n)))$ is independent of all choices made in Definition \ref{def: sym_y}. Up to an overall normalization, we have

\[
\text{dim}(HHH^y_{\text{Sym}^k}(T(m, n))) = \left( \prod_{i = 2}^k \cfrac{1}{1 - QT^{1 - i}} \right) p_{y,e}'(1^k0^{k(m - 1)}, 1^k0^{k(n - 1)})
\]

More generally, for any $k, m, n \geq 1$, $\text{dim}(HHH^y_{\text{Sym}^k, \text{Sym}^1, \dots, \text{Sym}^1}(T(m, n)))$ is independent of all choices made in Definition \ref{def: sym_y}. Let $d = \text{gcd}(m, n)$. Then up to an overall normalization, we have

\[
\text{dim}(HHH^y_{\text{Sym}^k, \text{Sym}^1, \dots, \text{Sym}^1}(T(m, n))) = \left( \prod_{i = 2}^k \cfrac{1}{1 - Q^{1 - i}T} \right) p'_{y, e}(1^k0^{md^{-1}(d - 1) + k(md^{-1} - 1)}, 1^k0^{nd^{-1}(d - 1) + k(nd^{-1} - 1)})
\]

\end{theorem}

As in the proof of Theorem \ref{thm: uncolor_mirror_sym}, we immediately obtain

\begin{theorem} \label{thm: row_col_mirror_sym}
For each $k, m, n \geq 1$, we have 
\[
\text{dim}(HHH^y_{\text{Sym}^k, \text{Sym}^1, \dots, \text{Sym}^1}(T(m, n)))(A, Q, T) = \text{dim}(HHH^y_{\bigwedge^k, \bigwedge^1, \dots, \bigwedge^1}(T(m, n)))(A, T, Q)
\]
up to an overall normalization.
\end{theorem}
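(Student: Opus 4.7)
The plan is to reduce the theorem to a direct comparison of two explicit power-series recursions, one governing each side, and to observe that these recursions are interchanged by the substitution $Q \leftrightarrow T$. Concretely, by Theorem \ref{thm: pos_tmn_column_inv}, the column-colored side is expressible up to normalization as
\[
\text{dim}(HHH^y_{\bigwedge^k,\bigwedge^1,\dots,\bigwedge^1}(T(m,n))) = \left(\prod_{i=2}^k \frac{1}{1 - Q^{1-i}T}\right) p_e(v_{m,k}, w_{n,k}),
\]
where $p_e$ is the power series determined by the recursion of Theorem \ref{thm: mainrecursion} and $v_{m,k}, w_{n,k}$ are the explicit binary strings appearing in that theorem. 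By Theorem \ref{thm: row_y_well_def}, the row-colored side admits a parallel expression
\[
\text{dim}(HHH^y_{\text{Sym}^k,\text{Sym}^1,\dots,\text{Sym}^1}(T(m,n))) = \left(\prod_{i=2}^k \frac{1}{1 - QT^{1-i}}\right) p'_{y,e}(v_{m,k}, w_{n,k}),
\]
with $p'_{y,e}$ satisfying the recursion of Theorem \ref{thm: hmrecursion}.

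Next, I would place the two recursions for $p_\sigma$ and $p'_{y,\sigma}$ side by side and verify term by term that the seven clauses defining $p'_{y,\sigma}$ are obtained from those defining $p_\sigma$ by swapping the roles of $Q$ and $T$ throughout. This is essentially a visual check: clauses (1), (4), (5), and (6) are symmetric in $Q$ and $T$ or do not involve these variables; clauses (2), (3), and (7) contain $Q$ (resp.\ $T$) in exactly the positions where the other recursion contains $T$ (resp.\ $Q$). Since each recursion uniquely determines its family of power series from the same initial data, this gives the identity
\[
p'_{y,\sigma}(v,w)(A,Q,T) = p_\sigma(v,w)(A,T,Q)
\]
for all admissible $(v,w,\sigma)$. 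The prefactor $\prod_{i=2}^k (1 - QT^{1-i})^{-1}$ for the row side is likewise obtained from the column prefactor $\prod_{i=2}^k (1 - Q^{1-i}T)^{-1}$ by the same swap, so the equality propagates to the full dimensions.

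The only real obstacle in this plan is a bookkeeping one: I have to be careful that the permutation tracking introduced in Theorem \ref{thm: mainrecursion} (absent in the original row-colored treatment of Hogancamp–Mellit) matches across the two recursions in every case distinction, including how $\sigma(n) = n$ versus $\sigma(n) \neq n$ interacts with the coefficient that turns $(1 - Q)^{-1}$ into its $T$-analogue. This is exactly the point where the extensions of the Hogancamp–Mellit computation recorded in Theorem \ref{thm: hmrecursion}---incorporating the permutation $\sigma$ and the factor $(1-T)^{-1}$ picked up whenever the component count drops---have been set up so that the parity argument of Gorsky–Hogancamp (and the concentration in even homological degree established in \cite{HM19}) delivers precisely the swapped recursion. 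Once this check is completed, the theorem follows at once by equating the two expressions above under $Q \leftrightarrow T$, valid up to the stated overall normalization and requiring only that $R$ be a field so that Proposition \ref{prop: row_color_free} applies.
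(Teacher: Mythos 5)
Your proposal is correct and follows essentially the same route as the paper: both sides are expressed via Theorems \ref{thm: pos_tmn_column_inv} and \ref{thm: row_y_well_def} in terms of the recursions of Theorems \ref{thm: mainrecursion} and \ref{thm: hmrecursion}, which are observed to be interchanged (prefactors included) by the substitution $Q \leftrightarrow T$, exactly as the paper does via the identity $p'_{y,\sigma}(v,w)(A,Q,T) = p_\sigma(v,w)(A,T,Q)$ established before Theorem \ref{thm: uncolor_mirror_sym}. Your remarks about the permutation bookkeeping and the field hypothesis needed for Proposition \ref{prop: row_color_free} match the paper's own caveats (cf.\ Remark \ref{rem: field_coeff}).
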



\appendix
\section{Singular Soergel Bimodules and Rickard Complexes} \label{app: ssbim}

\textcolor{revisions}{In this Appendix we provide a brief introduction to the category $SSBim$ of singular Soergel bimodules, which can be viewed as an enlargement of the category $SBim$ of Soergel bimodules considered in the main body of the paper. Working in $SSBim$ allows us to to perform computations that would be vastly more challenging in $SBim$; we leverage this fact in Appendix \ref{app: dot_slide_nat} to prove Propositions \ref{prop: mu_nu} and \ref{prop: forkslyde}. We provide only the minimal discussion that allows us to prove these results and direct the reader interested in a more thorough introduction to this category to Section 3 of \cite{HRW21}.}

Given a tuple $I = (m_1, \dots, m_r) \in \mathbb{Z}_{\geq 1}^r$ with $m_1 + \dots + m_r = n$, the corresponding \textit{parabolic subgroup} of $\mathfrak{S}^n$ is defined as follows:

$$
\mathfrak{S}_I^n := \mathfrak{S}^{m_1} \times \dots \times \mathfrak{S}^{m_r}
$$

Since parabolic subgroups are in bijection with tuples $I \in \mathbb{Z}_{\geq 1}^{\bullet}$, we will implicitly identify the two in our notation, using $I$ to denote $\mathfrak{S}_I^n$. \textcolor{revisions}{For each such parabolic subgroup $I$, we denote by} $R_n^I$ the subring of $R_n$ \textcolor{revisions}{consisting} of polynomials invariant under the action of \textcolor{revisions}{$I$}. 

Given parabolic subgroups $I \subset J$, we have a reverse inclusion of invariant subrings $R_n^J \subset R_n^I$. Notice that $R_n^I$ is a left $R_n^J$-module by restriction of the usual multiplication action under this inclusion. Let $\ell(I)$, $\ell(J)$ denote the lengths of the longest words of the parabolic subgroups above. In this situation, we have distinguished \textit{merge} and \textit{split} bimodules:

$$
_JM_I := q^{\ell(I) - \ell(J)} R_n^J \otimes_{R_n^J} R_n^I; \quad \quad _IS_J := R_n^I \otimes_{R_n^J} R_n^J
$$

Note that $_JM_I$ is naturally a graded $(R_n^J, R_n^I)$-bimodule under left and right multiplication, as indicated by the subscript notation. Similarly, $_IS_J$ is a graded $(R_n^I, R_n^J)$-bimodule. Given an $(R_n^I, R_n^J)$- and $(R_n^J, R_n^L)$-bimodule for some parabolic subgroup $L$, there is a natural composition given by tensor product over the ring $R_n^J$. We refer to this as \textit{\textcolor{revisions}{horizontal} composition}; we denote this operation by $\otimes$ and the resulting $1$-morphisms e.g. by $ _IS_JM_K := (_IS_J) \otimes (_JM_K)$ for $I \subset J \supset K$. When the ground ring is irrelevant, we often abbreviate this operation simply by concatenation; in these instances, all ground rings are assumed to be compatible.

\begin{definition}
Let $SSBim_n$ denote the quantum-graded $R$-linear $2$-category with objects parabolic subgroups of $\mathfrak{S}^n$, $1$-morphisms generated by the bimodules

$$
R_n^I: I \rightarrow I; \quad \textcolor{revisions}{_JM_I: I \rightarrow J;} \quad \textcolor{revisions}{_IS_J: J \rightarrow I}
$$

under composition, quantum shift, direct sum, and direct summands, and $2$-morphisms all graded bimodule homomorphisms. We call both the category $SSBim_n$ and its $1$-morphisms (\textcolor{revisions}{type $A_{n - 1}$}) \textit{Singular Soergel Bimodules}. The collection of categories $SSBim := \bigsqcup_{n \geq 1} SSBim_n$ is itself a monoidal $2$-category with external tensor product given on objects by concatenation of sequences and on $1$- and $2$-morphisms by tensor product over $R$. We denote this external product by $\sqcup$.

\end{definition}

\begin{remark}
\textcolor{revisions}{Notice that an object of the Hom category $\mathrm{End}_{SSBim_n}((1^n))$ is exactly an $R_n$-bimodule. In particular, there is an obvious inclusion of $SBim_n$ as a full subcategory of $\mathrm{End}_{SSBim_n}((1^n))$. In fact this is an equivalence of categories, but we will not require this fact here.}
\end{remark}

\textcolor{revisions}{The diagrammatic calculus for $SBim$ introduced in Section \ref{sec: SoergBim} has a natural extension to singular Soergel bimodules. Give parabolic subgroups $I = (j, k)$, $J = (j + k) \subset \mathfrak{S}^n$, we denote the possible identity, merge, and split bimodules as follows:}

\begin{gather*}
R^J := 
\begin{tikzpicture}[anchorbase,scale=.5,tinynodes]
	\draw[webs] (1,0) node[below]{$j + k$}
	 to[out=90,in=270] (1,1) node[above,yshift=-2pt]{$j + k$};
\end{tikzpicture}
; \quad\quad
_JM_I := 
\begin{tikzpicture}[anchorbase,scale=.5,tinynodes]
	\draw[webs] (0,0) node[below]{$j$} to[out=90,in=180] (.5,.5);
	\draw[webs] (1,0) node[below]{$k$} to[out=90,in=0] (.5,.5);
	\draw[webs] (.5,.5) to[out=90,in=270] (.5,1) node[above,yshift=-2pt]{$j + k$};
\end{tikzpicture}
; \quad\quad
_IS_J :=
\begin{tikzpicture}[anchorbase,scale=.5,tinynodes]
	\draw[webs] (.5,.5) to[out=180,in=270] (0,1) node[above,yshift=-2pt]{$j$};
	\draw[webs] (.5,.5) to[out=0,in=270] (1,1) node[above,yshift=-2pt]{$k$};
	\draw[webs] (.5,0) node[below]{$j + k$} to[out=90,in=270] (.5,.5);
\end{tikzpicture}
\end{gather*}

\textcolor{revisions}{As was the case for $SBim$, we depict external tensor product $\sqcup$ by horizontal concatenation and horizontal composition $\otimes$ by vertical concatenation.} We also use the following diagrammatic shorthand for \textit{ladder rung bimodules}:

\begin{gather*}
\begin{tikzpicture}[anchorbase,scale=.5,tinynodes]
    \draw[webs] (0,0) node[below]{$i$} to (0, 1.5);
    \draw[webs] (1.5,0) node[below]{$j$} to (1.5, 1.5);
    \draw[webs] (0,1) to (0.75,0.75) node[above,yshift=-2pt]{$k$} to (1.5,.5);
\end{tikzpicture}
:=
\begin{tikzpicture}[anchorbase,scale=.5,tinynodes]
    \draw[webs] (0,0) node[below]{$i$} to (0,1);
    \draw[webs] (0,1) to[out=90,in=180] (.5,1.5);
    \draw[webs] (1,1) node[right,xshift=-3pt]{$k$} to[out=90,in=0] (.5,1.5);
    \draw[webs] (.5,1.5) to (.5,2);
    \draw[webs] (1,1) to[out=270,in=180] (1.5,.5);
    \draw[webs] (2,1) to[out=270,in=0] (1.5,.5);
    \draw[webs] (2,1) to (2,2);
    \draw[webs] (1.5,.5) to (1.5,0) node[below]{$j$};
\end{tikzpicture}
\ ; \quad \quad
\begin{tikzpicture}[anchorbase,scale=.5,tinynodes]
    \draw[webs] (0,0) node[below]{$i$} to (0, 1.5);
    \draw[webs] (1.5,0) node[below]{$j$} to (1.5, 1.5);
    \draw[webs] (0,.5) to (0.75,0.75) node[above,yshift=-2pt]{$k$} to (1.5,1);
\end{tikzpicture}
:=
\begin{tikzpicture}[anchorbase,scale=.5,tinynodes]
    \draw[webs] (0.5,0) node[below]{$i$} to (0.5,0.5);
    \draw[webs] (0.5,0.5) to[out=180,in=270] (0,1);
    \draw[webs] (0,1) to (0,2);
    \draw[webs] (0.5,0.5) to[out=0,in=270] (1,1) node[right,xshift=-3pt]{$k$};
    \draw[webs] (1.5,1.5) to[out=180,in=90] (1,1);
    \draw[webs] (1.5,1.5) to (1.5,2);
    \draw[webs] (1.5,1.5) to[out=0,in=90] (2,1);
    \draw[webs] (2,1) to (2,0) node[below]{$j$};
\end{tikzpicture}
\end{gather*}

Note that we often suppress colors on strands when there is no ambiguity (i.e. when all colors can be determined by balancing colors above and below each trivalent vertex). \textcolor{revisions}{There are a number of of $q$-degree $0$ isomorphisms that can be encoded diagramatically, giving a presentation of $SBim_n$ purely by generators and relations. We refer the interested reader to \cite{CKM14} for a list of these relations\footnote{\textcolor{revisions}{To obtain these relations from their Section 2.2, ignore all relations involving downward oriented strands or tags and take $n \to \infty$.}}. The only such relation we use here is an associativity relation allowing }for unambiguous representation of merges and splits involving more than two strands using vertices of higher valence; for example, letting $I = (a, b, c), J = (a, b + c), K = (a + b + c) \subset \mathfrak{S}^{a + b + c}$, we have:

\begin{gather*}
_IM_K \cong _IM_JM_K =
\begin{tikzpicture}[anchorbase,scale=.5,tinynodes]
    \draw[webs] (0,0) node[below]{$a$} to (0,1);
    \draw[webs] (.5,0) node[below]{$b$} to[out=90,in=180] (1,.5);
    \draw[webs] (1.5,0) node[below]{$c$} to[out=90,in=0] (1,.5);
    \draw[webs] (1,.5) to (1,1);
    \draw[webs] (0,1) to[out=90,in=180] (.5,1.5);
    \draw[webs] (1,1) to[out=90,in=0] (.5,1.5);
    \draw[webs] (.5,1.5) to (.5,2) node[above]{$a + b + c$};
\end{tikzpicture}
\cong
\begin{tikzpicture}[anchorbase,scale=.5,tinynodes]
    \draw[webs] (0,0) node[below]{$a$} to[out=90,in=180] (.5,1);
    \draw[webs] (.5,0) node[below]{$b$} to (.5,1);
    \draw[webs] (1,0) node[below]{$c$} to[out=90,in=0] (.5,1);
    \draw[webs] (.5,1) to (.5,2) node[above,yshift=-2pt]{$a + b + c$};
\end{tikzpicture}
\end{gather*}

We will also be interested in complexes of singular Soergel bimodules. \textcolor{revisions}As $SSBim$ is a $2$-category, the standard constructions of dg categories above do not apply verbatim. We remedy this by applying these constructions to \textcolor{revisions}{$1$-morphism categories; see e.g. Definition 3.20 \cite{HRW21} for further details on this point.}

\textcolor{revisions}{One major benefit in working with \textit{singular} Soergel bimodules is that they allow for the assignment of chain complexes to \textit{colored} braids. We briefly recall these complexes here and again refer the reader to \cite{HRW21} for further details.}

\begin{definition}
The colored braid groupoid $\mathfrak{Br}_n$ on $n$ strands is the set of all pairs of braids $\beta \in Br_n$ and assignments $\textbf{a} = (a_1, \dots, a_n) \in \mathbb{Z}_{\geq 1}^n$ of colors to each strand of $\beta$. Given two colored braids $b = (\beta, \textbf{a})$, $b' = (\beta', \textbf{b})$, their composition is well-defined if and only if $a_{\beta(i)} = b_i$ for each $1 \leq i \leq n$. In this case, we have $bb' = (\beta\beta', \textbf{a})$.
\end{definition}

We employ a graphical calculus for colored braids exactly analogous to that of braids in which each strand is labelled with its corresponding color. For instance, given a sequence of colors $\textbf{a} = (a_1, \dots, a_n)$, we have the \textit{colored Artin generators}:

\begin{gather*}
\sigma_{i, {\textbf{a}}} =
\begin{tikzpicture}[anchorbase,scale=.5,tinynodes]
    \draw[color=white] (0,-1.9) to (1,-1.9);
    \draw[webs] (-1.5,-1) to (-1.5,1) node[above]{$a_1$};
    \node at (-1,0) {$\dots$};
	\draw[webs] (.5,-1) to [out=90,in=270] (-.5,1) node[above]{$a_i$};
	\draw[line width=5pt,color=white] (-.5,-1) to [out=90,in=270] (.5,1);
	\draw[webs] (-.5,-1) to [out=90,in=270] (.5,1) node[above]{$a_{i + 1}$};
    \node at (1,0) {$\dots$};
    \draw[webs] (1.5,-1) to (1.5,1) node[above]{$a_n$};
\end{tikzpicture}
; \quad \quad \sigma_{i, \textbf{a}}^{-1} = 
\begin{tikzpicture}[anchorbase,scale=.5,tinynodes]
    \draw[color=white] (0,-1.9) to (1,-1.9);
    \draw[webs] (-1.5,-1) to (-1.5,1) node[above]{$a_1$};
    \node at (-1,0) {$\dots$};
    \draw[webs] (-.5,-1) to [out=90,in=270] (.5,1) node[above]{$a_{i + 1}$};
	\draw[line width=5pt,color=white] (.5,-1) to [out=90,in=270] (-.5,1);
	\draw[webs] (.5,-1) to [out=90,in=270] (-.5,1) node[above]{$a_i$};
    \node at (1,0) {$\dots$};
    \draw[webs] (1.5,-1) to (1.5,1) node[above]{$a_n$};
\end{tikzpicture}
\end{gather*}

As in the construction of the Rouquier complex, we may assign a \textit{Rickard complex} $F(\beta, \textbf{a}) \in K^b(SSBim_n)$ to each colored braid $(\beta, \textbf{a}) \in \mathfrak{Br}_n$ by specifying this assignment on the Artin generators $\sigma_{i, \textbf{a}}^{\pm 1}$ and extending by vertical composition. The resulting complexes satisfy the (colored) braid relations up to homotopy equivalence, giving a well-defined homomorphism $F: \mathfrak{Br}_n \rightarrow K^b(SSBim_n)$. We record the case $n = 2$, $\textbf{a} = (a, 1)$ below:

\begin{gather*}
F(\sigma_{1, \textbf{a}}) := 
\begin{tikzpicture}[anchorbase,scale=.5,tinynodes]
    \draw[webs] (0,0) node[below]{$1$} to[out=90,in=180] (.5,.5);
    \draw[webs] (1,0) node[below]{$a$} to[out=90,in=0] (.5,.5);
    \draw[webs] (.5,.5) to (.5,1);
    \draw[webs] (.5,1) to[out=180,in=270] (0,1.5) node[above,yshift=-2pt]{$a$};
    \draw[webs] (.5,1) to[out=0,in=270] (1,1.5) node[above,yshift=-2pt]{$1$};
    \draw[thick,->] (1.5,.75) to (3.5,.75);
\end{tikzpicture}
\ q^{-1}t \ 
\begin{tikzpicture}[anchorbase,scale=.5,tinynodes]
    \draw[webs] (0,0) node[below]{$1$} to (0,1.5) node[above,yshift=-2pt]{$a$};
    \draw[webs] (1,0) node[below]{$a$} to (1,1.5) node[above,yshift=-2pt]{$1$};
    \draw[webs] (0,1) to (1,.5);
\end{tikzpicture}
; \quad \quad
F(\sigma_{1, \textbf{a}}^{-1}) := qt^{-1}
\begin{tikzpicture}[anchorbase,scale=.5,tinynodes]
    \draw[webs] (0,0) node[below]{$1$} to (0,1.5) node[above,yshift=-2pt]{$a$};
    \draw[webs] (1,0) node[below]{$a$} to (1,1.5) node[above,yshift=-2pt]{$1$};
    \draw[webs] (0,1) to (1,.5);
    \draw[thick,->] (1.5,.75) to (3.5,.75);
\end{tikzpicture}
\begin{tikzpicture}[anchorbase,scale=.5,tinynodes]
    \draw[webs] (0,0) node[below]{$1$} to[out=90,in=180] (.5,.5);
    \draw[webs] (1,0) node[below]{$a$} to[out=90,in=0] (.5,.5);
    \draw[webs] (.5,.5) to (.5,1);
    \draw[webs] (.5,1) to[out=180,in=270] (0,1.5) node[above,yshift=-2pt]{$a$};
    \draw[webs] (.5,1) to[out=0,in=270] (1,1.5) node[above,yshift=-2pt]{$1$};
\end{tikzpicture}
\end{gather*}

In case $\textbf{a} = (1, a)$, the corresponding Rickard complexes are obtained from the above by reflecting each chain bimodule about a central vertical axis. We will not require explicit formulas for the differentials on these complexes; rather than quoting that result, we mention only that if $a = 1$, we recover the Rouquier complexes of \textcolor{revisions}{Section \ref{sec: rouq}}. The reader interested in the specific form of these differentials can see \cite{HRW21} for more detail.

As in the case of Rouquier complexes, we depict the Rickard complex of a colored braid diagramatically and consider this as an extension of the graphical calculus for $K^b(SSBim)$. As an example, we have the \textit{fork slide} relation in $K^b(SSBim)$:

\begin{proposition}[\cite{RT21}] \label{prop: fork_moves}
\textcolor{revisions}{For each $a, b, c \geq 1$, there exists a $q$-degree $0$ homotopy equivalence}

\begin{gather*}
\begin{tikzpicture}[anchorbase,scale=.5,tinynodes]
    \draw[webs] (1,2) node[above,yshift=-2pt]{$a + b$} to (1,1.5);
    \draw[webs] (1,1.5) to[out=180,in=90] (.5,0) node[below]{$a$};
    \draw[webs] (1,1.5) to[out=0,in=90] (1.5,0) node[below]{$b$};
    \draw[line width=5pt, color=white] (2.5,2) to[out=270,in=90] (-.5,0);
    \draw[webs] (2.5,2) node[above,yshift=-2pt]{$c$} to[out=270,in=90] (-.5,0) node[below]{$c$};
\end{tikzpicture}
\simeq
\begin{tikzpicture}[anchorbase,scale=.5,tinynodes]
    \draw[webs] (1,2) node[above,yshift=-2pt]{$a + b$} to (1,.5);
    \draw[webs] (1,.5) to[out=180,in=90] (.5,0) node[below]{$a$};
    \draw[webs] (1,.5) to[out=0,in=90] (1.5,0) node[below]{$b$};
    \draw[line width=5pt, color=white] (2.5,2) to[out=270,in=90] (-.5,0);
    \draw[webs] (2.5,2) node[above,yshift=-2pt]{$c$} to[out=270,in=90] (-.5,0) node[below]{$c$};
\end{tikzpicture}
\end{gather*}

as well as rotations, reflections, and analogs involving negative crossings. Moreover, each of these homotopy equivalences is a strong deformation retraction from the complex on the left to the complex on the right obtained by Gaussian elimination.
\end{proposition}

The dot sliding homotopies \textcolor{revisions}{of Proposition \ref{prop: dot_slide}} extend to Rickard complexes, though some care must be taken to account for the action of higher degree symmetric polynomials on colored strands. We only use dot slides on $1$-colored strands in this paper; see \cite{HRW21} for more detail.

\section{Proof of Propositions \ref{prop: mu_nu} and \ref{prop: forkslyde}} \label{app: dot_slide_nat}

\textcolor{revisions}{To prove Proposition \ref{prop: mu_nu}, we will make use of the following minimal model for the full twist on two strands; this is a special case of Theorem 3.24 of \cite{HRW21b}.}

\begin{proposition} \label{prop: ft_min}
Let $k \geq 1$ be given. Then there is a strong deformation retraction in $K^b(SSBim_n)$ obtained from Gaussian elimination:

\begin{gather*}
FT_{k, 1} := \ 
\begin{tikzpicture}[anchorbase,scale=.5,tinynodes]
    \draw[webs] (1.5,0) node[below]{$1$} to[out=90,in=270] (0,1.5);
    \draw[line width=5pt,color=white] (0,0) to[out=90,in=270] (1.5,1.5);
    \draw[webs] (0,0) node[below]{$k$} to[out=90,in=270] (1.5,1.5);
    \draw[webs] (1.5,1.5) to[out=90,in=270] (0,3) node[above,yshift=-2pt]{$k$};
    \draw[line width=5pt,color=white] (0,1.5) to[out=90,in=270] (1.5,3);
    \draw[webs] (0,1.5) to[out=90,in=270] (1.5,3) node[above,yshift=-2pt]{$1$};
\end{tikzpicture}
\simeq q^k
\begin{tikzpicture}[anchorbase,scale=.5,tinynodes]
    \draw[webs] (0,0) node[below]{$k$} to[out=90,in=180] (.5,.5);
    \draw[webs] (1,0) node[below]{$1$} to[out=90,in=0] (.5,.5);
    \draw[webs] (.5,.5) to (.5,1);
    \draw[webs] (.5,1) to[out=180,in=270] (0,1.5) node[above,yshift=-2pt]{$k$};
    \draw[webs] (.5,1) to[out=0,in=270] (1,1.5) node[above,yshift=-2pt]{$1$};
    \draw[thick] (1.5,.75) to (2.5,.75) node[above]{$x_n - x_n'$};
    \draw[thick,->] (2.5,.75) to (3.5,.75);
\end{tikzpicture}
q^{k - 2}t
\begin{tikzpicture}[anchorbase,scale=.5,tinynodes]
    \draw[webs] (0,0) node[below]{$k$} to[out=90,in=180] (.5,.5);
    \draw[webs] (1,0) node[below]{$1$} to[out=90,in=0] (.5,.5);
    \draw[webs] (.5,.5) to (.5,1);
    \draw[webs] (.5,1) to[out=180,in=270] (0,1.5) node[above,yshift=-2pt]{$k$};
    \draw[webs] (.5,1) to[out=0,in=270] (1,1.5) node[above,yshift=-2pt]{$1$};
    \draw[thick] (1.5,.75) to (2.5,.75) node[above]{$unzip$};
    \draw[thick,->] (2.5,.75) to (3.5,.75);
\end{tikzpicture}
\ q^{-2}t^2
\begin{tikzpicture}[anchorbase,scale=.5,tinynodes]
    \draw[webs] (0,0) node[below]{$k$} to (0,1.5) node[above,yshift=-2pt]{$k$};
    \draw[webs] (.75,0) node[below]{$1$} to (.75,1.5) node[above,yshift=-2pt]{$1$};
\end{tikzpicture}
\end{gather*}
\end{proposition}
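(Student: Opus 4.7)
The plan is to expand $FT_{k,1}$ as the four-term convolution obtained by tensoring the two Rickard crossing complexes $F(\sigma_{1,(k,1)}) \otimes F(\sigma_{1,(1,k)})$, then simplify each corner and apply iterated Gaussian elimination. Writing each factor as $M_i \to q^{-1}t X_i$, where $M_i$ denotes the merge-split bimodule and $X_i$ the ``crossed-strand'' term appearing in Section~\ref{sec: rick_comp}, the tensor product is a square with corners $M_1 \otimes M_2$, $M_1 \otimes q^{-1}tX_2$, $q^{-1}tX_1 \otimes M_2$, and $q^{-2}t^2 X_1 \otimes X_2$. The last of these composes two crossed-strand terms vertically and collapses (by Reidemeister II at the level of Rickard complexes, or directly by matching colors) to the parallel $(k,1)$-colored strands shifted by $q^{-2}t^2$, producing the rightmost term of the claimed minimal model.

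Next I would simplify the three remaining corners. The corner $M_1 \otimes M_2$ contains a digon on a $(k+1)$-colored strand in the middle, which by the digon-removal isomorphism of Proposition~\ref{prop: web_rels} decomposes as $[k+1]$ copies of a single merge-split, with quantum shifts running through $q^k, q^{k-2}, \dots, q^{-k}$. The mixed corners $M_1 \otimes q^{-1}tX_2$ and $q^{-1}tX_1 \otimes M_2$ can each be reduced using the fork slide and fork twist relations of Proposition~\ref{prop: fork_moves} to direct sums of the same merge-split bimodule, at shifts that exactly match all but one of the summands produced at $M_1 \otimes M_2$. I would then apply Proposition~\ref{prop: gauss_elim} repeatedly to cancel these matching pairs: $k$ of the $k+1$ copies at the $(0,0)$ corner pair with summands at the two mixed corners, and the remaining summands at the mixed corners pair with each other. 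After all cancellations, exactly three terms survive: a single merge-split at homological degree $0$ with shift $q^k$, a single merge-split at homological degree $1$ with shift $q^{k-2}t$, and the parallel strands at homological degree $2$ with shift $q^{-2}t^2$.

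Finally, I would identify the two surviving differentials. The differential into the parallel-strand term is the residual of the induced Rickard differential across the elimination, and by the definition of the second term of the Rickard complex it is given by the $unzip$ comparison map coming from the Frobenius extension structure on partially symmetric polynomials. The differential between the two merge-split terms arises from the discrepancy between the ``top'' and ``bottom'' copies of the middle $(k+1)$-colored strand after digon removal, and I expect this to be detected precisely by multiplication by $x_n - x_n'$, which is the polynomial-valued obstruction to sliding a dot from one side of the unzip to the other. The main obstacle will be bookkeeping the quantum shifts and signs carefully through the iterated Gaussian elimination, especially to confirm that the specific summand surviving at each homological degree is the correct one and that the induced differentials are exactly $x_n - x_n'$ and $unzip$ rather than scalar multiples thereof; this is routine but tedious, and benefits from comparing the resulting complex against the known $k=1$ case, where it reduces to the standard minimal model of the uncolored full twist on two strands.
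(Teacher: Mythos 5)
Your overall strategy (expand the two Rickard crossing complexes, simplify each corner of the resulting square with web relations, then Gaussian eliminate) is the right one, but the treatment of the homological degree $2$ corner contains a genuine error that breaks the bookkeeping for $k \geq 2$. The composite $X_1 \otimes X_2$ of the two ladder-rung bimodules does \emph{not} collapse to the parallel $(k,1)$-colored strands: by the square-switch relation (the last isomorphism in Proposition \ref{prop: web_rels}) it decomposes as $q^{-2}t^2\big([k-1]\,B \oplus I\big)$, where $B$ is the merge-split bimodule and $I$ the identity web. Invoking Reidemeister II here is a category error: R2 is a homotopy equivalence between composites of the \emph{full} crossing complexes (one positive, one negative), and says nothing about the tensor product of the individual second chain bimodules; likewise, "matching colors" does not force a web to be the identity bimodule (indeed $B$ itself has matching external colors). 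A quick Euler-characteristic check exposes the problem: with your corners you have $[k+1]$ copies of $B$ in degree $0$, $2[k]$ in degree $1$, and none in degree $2$, so the net count of $B$'s is $(k+1) - 2k = 1-k$, which cannot match the claimed minimal model (net count $0$) unless $k = 1$. Relatedly, your proposed cancellation "the remaining summands at the mixed corners pair with each other" is not available: Gaussian elimination cancels a pair of summands joined by an isomorphism component of the differential, which necessarily sit in adjacent homological degrees, whereas both mixed corners live in degree $1$. The correct pattern is that $k$ of the $k+1$ degree-$0$ copies of $B$ cancel against the degree-$1$ copies, and the remaining $k-1$ surplus degree-$1$ copies cancel against the $[k-1]$ copies of $B$ in degree $2$ that your decomposition omitted, leaving exactly $q^kB \to q^{k-2}tB \to q^{-2}t^2I$. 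This is precisely the expansion the paper itself displays in the proof of Lemma \ref{lem: dot_slide_ft}, where the degree-$2$ part of $FT_{k,1}$ appears as $q^{-2}[k-1]t^2B \oplus q^{-2}t^2I$.

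Two smaller points: the decomposition of the mixed corners $M_1 \otimes q^{-1}tX_2$ and $q^{-1}tX_1 \otimes M_2$ into $[k]$ copies of $B$ should be justified by the web isomorphisms of Proposition \ref{prop: web_rels} (associativity, digon removal, square switch), not by the fork slide/twist moves of Proposition \ref{prop: fork_moves}, which are homotopy equivalences of complexes containing crossings and do not apply to these individual bimodules. Your identification of the surviving differentials ($x_{k+1} - x_{k+1}'$ between the two copies of $B$, and $unzip$ into $I$) is the expected outcome, but it only becomes a verification once the elimination is run with the corrected list of summands, since the induced maps are residues of the original Rickard differentials through the eliminated terms.
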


\begin{definition} \label{def: stmn}
For each $n \geq 2$, we let $STM_n := _{1^n}S_{\mathfrak{S}^{n - 1}} \otimes FT_{n - 1, 1} \otimes _{\mathfrak{S}^{n - 1}}M_{1^n}$. This complex is depicted below.

\begin{center}
$STM_n := $
\begin{tikzpicture}[anchorbase,scale=.5,tinynodes]
    \draw [decorate,decoration = {brace}] (0,1.2) -- (1,1.2) node[pos=.5,above]{$n - 1$};
    \draw[webs] (0,1) to[out=270,in=180] (.5,.5);
    \node at (.5,1) {$\dots$};
    \draw[webs] (1,1) to[out=270,in=0] (.5,.5);
    \draw[webs] (1.5,1) to (1.5,.5);
    \draw[webs] (.5,.5) to[out=270,in=90] (1.5,-.5) node[right]{$n - 1$};
    \draw[line width=5pt,color=white] (1.5,.5) to[out=270,in=90] (.5,-.5);
    \draw[webs] (1.5,.5) to[out=270,in=90] (.5,-.5) node[left]{$1$};
    \draw[webs] (.5,-.5) to[out=270,in=90] (1.5,-1.5);
    \draw[line width=5pt,color=white] (1.5,-.5) to[out=270,in=90] (.5,-1.5);
    \draw[webs] (1.5,-.5) to[out=270,in=90] (.5,-1.5);
    \node at (.5,-2) {$\dots$};
    \draw[webs] (.5,-1.5) to[out=180,in=90] (0,-2);
    \draw[webs] (.5,-1.5) to[out=0,in=90] (1,-2);
    \draw[webs] (1.5,-1.5) to (1.5,-2);
    \draw [decorate,decoration = {brace,mirror}] (0,-2.2) -- (1,-2.2) node[pos=.5,below,yshift=-2pt]{$n - 1$};
\end{tikzpicture}
\end{center}
\end{definition}

\begin{proof}[\textcolor{revisions}{Proof of Proposition \ref{prop: mu_nu}}]

By \textcolor{revisions}{repeated application of Proposition \ref{prop: fork_moves}}, we obtain a strong deformation retraction $(\overline{\mu}, \overline{\nu})$ from $B_{w_1}J_n$ to $STM_n$; see the diagram below. The result then follows from the minimal complex for $FT_{n - 1,1}$ in Proposition \ref{prop: ft_min}.

\begin{center}
$\overline{\nu} \colon$ 
\begin{tikzpicture}[anchorbase,scale=.5,tinynodes]
    \draw [decorate,decoration = {brace}] (0,1.7) -- (1,1.7) node[pos=.5,above]{$n - 1$};
    \draw[webs] (0,0) to[out=90,in=180] (.5,.5);
    \node at (.5,0) {$\dots$};
    \draw[webs] (1,0) to[out=90,in=0] (.5,.5);
    \draw[webs] (.5,.5) to (.5,1);
    \draw[webs] (.5,1) to[out=180,in=270] (0,1.5);
    \node at (.5,1.5) {$\dots$};
    \draw[webs] (.5,1) to[out=0,in=270] (1,1.5);
    \draw[webs] (1.5,0) to (1.5,1.5);
    \draw[webs] (0,0) to[out=270,in=90] (.5,-1);
    \draw[webs] (1,0) to[out=270,in=90] (1.5,-1);
    \node at (1,-1) {$\dots$};
    \draw[line width=5pt,color=white] (1.5,0) to[out=270,in=90] (0,-1);
    \draw[webs] (1.5,0) to[out=270,in=90] (0,-1);
    \draw[webs] (0,-1) to[out=270,in=90] (1.5,-2);
    \draw[line width=5pt,color=white] (.5,-1) to[out=270,in=90] (0,-2);
    \draw[webs] (.5,-1) to[out=270,in=90] (0,-2);
    \draw[line width=5pt,color=white] (1.5,-1) to[out=270,in=90] (1,-2);
    \draw[webs] (1.5,-1) to[out=270,in=90] (1,-2);
    \node at (.5,-2) {$\dots$};
    \draw [decorate,decoration = {brace,mirror}] (0,-2.2) -- (1,-2.2) node[pos=.5,below,yshift=-2pt]{$n - 1$};
\end{tikzpicture}
$\longleftrightarrow$
\begin{tikzpicture}[anchorbase,scale=.5,tinynodes]
    \draw [decorate,decoration = {brace}] (0,1.2) -- (1,1.2) node[pos=.5,above]{$n - 1$};
    \draw[webs] (0,1) to[out=270,in=180] (.5,.5);
    \node at (.5,1) {$\dots$};
    \draw[webs] (1,1) to[out=270,in=0] (.5,.5);
    \draw[webs] (1.5,1) to (1.5,.5);
    \draw[webs] (.5,.5) to[out=270,in=90] (1.5,-.5) node[right]{$n - 1$};
    \draw[line width=5pt,color=white] (1.5,.5) to[out=270,in=90] (.5,-.5);
    \draw[webs] (1.5,.5) to[out=270,in=90] (.5,-.5) node[left]{$1$};
    \draw[webs] (.5,-.5) to[out=270,in=90] (1.5,-1.5);
    \draw[line width=5pt,color=white] (1.5,-.5) to[out=270,in=90] (.5,-1.5);
    \draw[webs] (1.5,-.5) to[out=270,in=90] (.5,-1.5);
    \node at (.5,-2) {$\dots$};
    \draw[webs] (.5,-1.5) to[out=180,in=90] (0,-2);
    \draw[webs] (.5,-1.5) to[out=0,in=90] (1,-2);
    \draw[webs] (1.5,-1.5) to (1.5,-2);
    \draw [decorate,decoration = {brace,mirror}] (0,-2.2) -- (1,-2.2) node[pos=.5,below,yshift=-2pt]{$n - 1$};
\end{tikzpicture}
$\colon \overline{\mu}$
\end{center}
\end{proof}

\textcolor{revisions}{Next, we turn our attention to Proposition \ref{prop: forkslyde}, beginning by verifying a certain $y$-ified version of Proposition \ref{prop: fork_moves}.} Before doing this, we pause to establish notation. For each $k \geq 0$, let $\Sigma_k^{\pm}, \tilde{\Sigma}_k^{\pm}$ denote the complexes of singular Soergel bimodules depicted below.

\begin{gather*}
\Sigma_k^+ := \ 
\begin{tikzpicture}[anchorbase,scale=.5,tinynodes]
    \draw[webs] (1,2) node[above,yshift=-2pt]{$k + 1$} to (1,1.5);
    \draw[webs] (1,1.5) to[out=180,in=90] (.5,0) node[below]{$1$};
    \draw[webs] (1,1.5) to[out=0,in=90] (1.5,0) node[below]{$k$};
    \draw[line width=5pt, color=white] (2.5,2) to[out=270,in=90] (-.5,0);
    \draw[webs] (2.5,2) node[above,yshift=-2pt]{$1$} to[out=270,in=90] (-.5,0) node[below]{$1$};
\end{tikzpicture}
; \quad \tilde{\Sigma}_k^+ := \ 
\begin{tikzpicture}[anchorbase,scale=.5,tinynodes]
    \draw[webs] (1,2) node[above,yshift=-2pt]{$k + 1$} to (1,.5);
    \draw[webs] (1,.5) to[out=180,in=90] (.5,0) node[below]{$1$};
    \draw[webs] (1,.5) to[out=0,in=90] (1.5,0) node[below]{$k$};
    \draw[line width=5pt, color=white] (2.5,2) to[out=270,in=90] (-.5,0);
    \draw[webs] (2.5,2) node[above,yshift=-2pt]{$1$} to[out=270,in=90] (-.5,0) node[below]{$1$};
\end{tikzpicture}
\end{gather*}

\begin{gather*}
\Sigma_k^- := \ 
\begin{tikzpicture}[anchorbase,scale=.5,tinynodes]
    \draw[webs] (-.5,2) node[above,yshift=-2pt]{$1$} to[out=270,in=90] (2.5,0) node[below]{$1$};
    \draw[line width=5pt,color=white] (1,1.5) to[out=180,in=90] (.5,0);
    \draw[line width=5pt,color=white] (1,1.5) to[out=0,in=90] (1.5,0);
    \draw[webs] (1,2) node[above,yshift=-2pt]{$k + 1$} to (1,1.5);
    \draw[webs] (1,1.5) to[out=180,in=90] (.5,0) node[below]{$k$};
    \draw[webs] (1,1.5) to[out=0,in=90] (1.5,0) node[below]{$1$};
\end{tikzpicture}
; \quad \tilde{\Sigma}_k^- := \ 
\begin{tikzpicture}[anchorbase,scale=.5,tinynodes]
    \draw[webs] (-.5,2) node[above,yshift=-2pt]{$1$} to[out=270,in=90] (2.5,0) node[below]{$1$};
    \draw[line width=5pt,color=white] (1,2) to (1,.5);
    \draw[webs] (1,2) node[above,yshift=-2pt]{$k + 1$} to (1,.5);
    \draw[webs] (1,.5) to[out=180,in=90] (.5,0) node[below]{$k$};
    \draw[webs] (1,.5) to[out=0,in=90] (1.5,0) node[below]{$1$};
\end{tikzpicture}
\end{gather*}

As usual, there are dot-sliding homotopies $h^{\pm} \in \text{End}^{-1}(\Sigma_k^{\pm})$, $\tilde{h}^{\pm} \in \text{End}^{-1}(\tilde{\Sigma}_k^{\pm})$ satisfying $[d, h^+] = [d, \tilde{h}^+] = x_{k + 2} - x_1''$, $[d, h^-] = [d, \tilde{h}^-] = x_1 - x_{k + 2}''$. Let $f_k^{\pm} \in \text{Hom}^0_{\text{Ch}(SSBim_{k + 2})}(\Sigma_k^{\pm}, \tilde{\Sigma}_k^{\pm})$ and $g_k^{\pm} \in \text{Hom}^0_{\text{Ch}(SSBim_{k + 2})}(\tilde{\Sigma}_k^{\pm}, \Sigma_k^{\pm})$ denote the fork-slide homotopy equivalences of Proposition \ref{prop: fork_moves}.

\begin{lemma} \label{lem: dot_slide_fork_slide}
The maps defined above satisfy $f_k^{\pm} h_k^{\pm} - \tilde{h}_k^{\pm} f_k^{\pm} = 0$ and $g_k^{\pm} \tilde{h}_k^{\pm} - h_k^{\pm} g_k^{\pm} = 0$.
\end{lemma}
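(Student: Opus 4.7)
My approach has two main components. First, I would verify that each of the four expressions $f_k^{\pm}h_k^{\pm} - \tilde h_k^{\pm}f_k^{\pm}$ and $g_k^{\pm}\tilde h_k^{\pm} - h_k^{\pm}g_k^{\pm}$ is automatically a closed morphism of degree $q^2t^{-1}$ between the corresponding pair of complexes. This is a one-line application of the graded Leibniz rule combined with the fact that $f_k^{\pm}, g_k^{\pm}$ are chain maps and $R[\mathbb{X},\mathbb{X}'']$-linear: for instance,
\[
[d,\, f_k^{+}h_k^{+} - \tilde h_k^{+}f_k^{+}] \;=\; f_k^{+}(x_{k+2}-x_1'') - (x_{k+2}-x_1'')f_k^{+} \;=\; 0,
\]
and the three other closedness statements are immediate by the same manipulation. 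At this point each difference in question lives in a very restricted space of closed degree $q^2 t^{-1}$ morphisms between two-term complexes supported in homological degrees $0$ and $1$.

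Second, I would establish vanishing by a direct block-level calculation. By Proposition \ref{prop: fork_moves}, the fork slides $f_k^{\pm}, g_k^{\pm}$ are strong deformation retractions produced by Gaussian elimination; in particular, in the template of Proposition \ref{prop: gauss_elim}, $f_k^{\pm}$ acts by identity maps on the surviving summands and is determined by a correction term of the form $-\gamma\varphi^{-1}\kappa$ on the remaining block. The dot-sliding homotopies $h_k^{\pm},\tilde h_k^{\pm}$, by construction, are the standard $zip/unzip$ morphisms on the $1$-colored strand of each chain bimodule of $\Sigma_k^{\pm}$ and $\tilde\Sigma_k^{\pm}$; in particular, they act nontrivially only on the tensor factor corresponding to the crossing, leaving the merge/split vertex alone. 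With explicit labelings of the chain groups in place, each of the four identities reduces to a finite list of bimodule equations that follow from digon removal, associativity, and the Frobenius-extension relation characterizing $zip$ and $unzip$ (Section \S \ref{sec: ssbim}). By the vertical/horizontal symmetry of the diagrammatic setup, it suffices to treat the $f_k^{+}$ case in detail; the other three follow by reflecting the picture and interchanging the roles of $f$ and $g$.

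The main obstacle will be purely bookkeeping. The Gaussian-elimination correction term mixes the colored merge vertex with the $1$-colored strand crossing past it, so one must separate the part of $h_k^{\pm}$ that zips the crossing from the parts that traverse the merge. Once this separation is made explicit and the chain groups of $\Sigma_k^{\pm}, \tilde\Sigma_k^{\pm}$ are individually identified as specific singular Soergel bimodules, each required block identity becomes a single application of the Frobenius-extension relation, and the lemma follows without appeal to any obstruction-theoretic argument.
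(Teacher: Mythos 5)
Your first step (closedness of $f_k^{\pm}h_k^{\pm}-\tilde h_k^{\pm}f_k^{\pm}$ via the Leibniz rule and $R[\mathbb{X},\mathbb{X}'']$-linearity) is correct but does no real work: these are degree $-1$ closed morphisms, and closedness alone does not force vanishing. The heart of your plan --- that the homotopies are ``the standard zip/unzip on the crossing factor'' and that, after Gaussian elimination, each block identity is ``a single application of the Frobenius-extension relation'' --- is where the gap lies. In $\Sigma_k^{\pm}$ the $1$-strand passes through \emph{two} crossings (over a $1$-strand and over a $k$-strand), so $h_k^{\pm}$ is a sum of local homotopies, and once the complexes are expanded via the web relations into the summands the paper calls $X$, $Y$, $Z$ (note $\Sigma_k^{\pm}$ is then supported in homological degrees $0$ through $2$, not $0$ and $1$), the components of the homotopies and of the Gaussian-elimination data are conjugated by the decomposition isomorphisms and are no longer of the simple local form you describe. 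The paper's proof never needs explicit formulas for these components: it computes graded dimensions of $\text{Hom}(X,Y)$, $\text{Hom}(Y,X)$, $\text{Hom}(Z,X)$ via Lemma \ref{lem: morph_dim} to force several components to vanish (e.g.\ $d_1$ and $h_2$ in its notation) or to coincide ($h_3 = h_{10}$, with the scalar pinned down by $[d,h_k^+]=x_{k+2}-x_1''$), extracts relations among the surviving components from that same defining identity, and only then cancels the claimed expressions component by component. Without a substitute for this degree/dimension analysis, your assertion that the remaining bookkeeping reduces to one Frobenius relation per block is unsubstantiated, and it is exactly the step that could fail.

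Your symmetry reduction is also not justified: there is no reflection of the diagrams that interchanges the retraction $f_k^{\pm}$ with its section $g_k^{\pm}$ (duality $(-)^{\vee}$ sends an overhanded merge picture to an underhanded split picture, which is not among the four cases), so the $f$- and $g$-identities are genuinely separate verifications; the paper checks both explicitly in the overhanded case and only waives the underhanded case as analogous. The neighboring Lemma \ref{lem: dot_slide_ft} is a warning that such intertwining statements cannot be assumed to hold strictly: there the $g$-side identity fails on the nose and holds only up to the explicit nonzero homotopy $\tilde{\chi}$, and whether one gets an exact identity or only a homotopy is decided precisely by the degree constraints and Hom-space computations that your outline skips.
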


\begin{proof}
We include a proof only of the overhanded relations $f_k^+ h_k^+ - \tilde{h}_k^+ f_k+ = 0$ and $g_k^+ \tilde{h}_k^+ - h_k^+ g_k^+ = 0$; the underhanded relations are exactly analogous. Let $X, Y, Z \in SSBim_{k + 2}$ be as below.

\begin{gather*}
X := \ 
\begin{tikzpicture}[anchorbase,scale=.5,tinynodes]
    \draw[webs] (0,2) node[above,yshift=-2pt]{$k + 1$} to[out=270,in=180] (.5,1.5);
    \draw[webs] (1,2) node[above,yshift=-2pt]{$1$} to[out=270,in=0] (.5,1.5);
    \draw[webs] (.5,1.5) to (.5,1);
    \draw[webs] (.5,1) to[out=0,in=90] (1,.5);
    \draw[webs] (1,.5) to (1,0) node[below]{$k$};
    \draw[webs] (.5,1) to[out=180,in=90] (0,.5);
    \draw[webs] (0,.5) to[out=180,in=90] (-.5,0) node[below]{$1$};
    \draw[webs] (0,.5) to[out=0,in=90] (.5,0) node[below]{$1$};
\end{tikzpicture}
; \quad Y := \ 
\begin{tikzpicture}[anchorbase,scale=.5,tinynodes]
    \draw[webs] (0,2) node[above,yshift=-2pt]{$k + 1$} to (0,.5);
    \draw[webs] (1,2) node[above,yshift=-2pt]{$1$} to (1,0) node[below]{$k$};
    \draw[webs] (0,1.25) to (1,.75);
    \draw[webs] (0,.5) to[out=180,in=90] (-.5,0) node[below]{$1$};
    \draw[webs] (0,.5) to[out=0,in=90] (.5,0) node[below]{$1$};
\end{tikzpicture}
; \quad Z := \ 
\begin{tikzpicture}[anchorbase,scale=.5,tinynodes]
    \draw[webs] (0,2) node[above,yshift=-2pt]{$k + 1$} to (0,0) node[below]{$1$};
    \draw[webs] (1,2) node[above,yshift=-2pt]{$1$} to (1,.5);
    \draw[webs] (0,1.25) to (1,.75);
    \draw[webs] (1,.5) to[out=180,in=90] (.5,0) node[below]{$1$};
    \draw[webs] (1,.5) to[out=0,in=90] (1.5,0) node[below]{$k$};
\end{tikzpicture}
\end{gather*}

By expanding each Rickard complex and repeatedly applying the \textcolor{revisions}{diagrammtic relations of \cite{CKM14}}, we obtain the explicit descriptions below of the complexes $\Sigma^+_k$ and $\tilde{\Sigma}^+_k$.

\begin{center}
$\Sigma^+_k := $ 
\begin{tikzcd}
                                                             &  & tY \arrow[rrdd, "d_7"]       &  &            \\
X \arrow[rru, "d_1"] \arrow[rrd, "d_2", near start] \arrow[rrddd, "d_3", near end] &  &                              &  &            \\
                                                             &  & q^{-2}tY \arrow[rr, "d_8"]   &  & q^{-2}t^2Y \\
Y \arrow[rruuu, "d_4", near end] \arrow[rrd, "d_6"] \arrow[rru, "d_5"', near start] &  &                              &  &            \\
                                                             &  & q^{-1}tZ \arrow[rruu, "d_9"] &  &           
\end{tikzcd}
\end{center}

\begin{center}
$\tilde{\Sigma}^+_k := $
\begin{tikzcd}
X \arrow[rr, "d_{10}"] & & q^{-1}tZ
\end{tikzcd}
\end{center}

Since $f_k^+$, $g_k^+$ are obtained from Gaussian elimination, Proposition \ref{prop: gauss_elim} furnishes an explicit description of these maps on each component in terms of the differentials on each complex. We depict these maps as vertical blue arrows below. We will also find it convenient to visualize each component of the homotopies $h_k^+$ and $\tilde{h}_k^+$; we depict those components as backwards horizontal red arrows. In total, we see the following collection of morphisms:

\begin{center}
\begin{tikzcd}
                                                                                                        &  & tY \arrow[rrdd, "d_7", near end] \arrow[dddddd, "-d_6d_4^{-1}", bend left=60, near end,blue]  \arrow[lld, "h_1", pos=.6, red, shift left] \arrow[llddd, "h_4", red, shift left, near start]                             &  &            \\
X \arrow[rrd, "d_2", near start] \arrow[rrddd, "d_3", near end] \arrow[rru, "d_1"] \arrow[ddddd, "\text{id}_X", harpoon, bend right, shift left, blue] &  &                                                                                                   &  &            \\
                                                                                                        &  & q^{-2}tY \arrow[rr, "d_8", shift left] \arrow[lld, "h_5"', pos=.2, shift right, red] \arrow[llu, "h_2", red, pos=.1,  shift left]                                                                      &  & q^{-2}t^2Y \arrow[lluu, "h_7", red, shift left, near end] \arrow[ll, "h_8", red, shift left] \arrow[lldd, "h_9", red, shift left, pos=.7] \\
Y \arrow[rruuu, "d_4", near end] \arrow[rrd, "d_6"] \arrow[rru, "d_5"', near start]                                            &  &                                                                                                   &  &            \\
                                                                                                        &  & q^{-1}tZ \arrow[rruu, "d_9"] \arrow[dd, "\text{id}_Z", harpoon, shift left, blue] \arrow[llu, "h_6", shift left, red] \arrow[lluuu, "h_3", red, shift left, pos=.85]                     &  &            \\
                                                                                                        &  &                                                                                                   &  &            \\
X \arrow[uuu, "-d_4^{-1}d_1"', harpoon, bend right, blue] \arrow[uuuuu, "\text{id}_X", harpoon, bend left, shift left, blue] \arrow[rr, "d_{10}"]                                          &  & q^{-1}tZ \arrow[uu, "\text{id}_Z", harpoon, shift left, blue] \arrow[uuuu, "-d_8^{-1}d_9"', pos=.85, bend right, blue] \arrow[ll, "h_{10}", red, shift left] &  &           
\end{tikzcd}
\end{center}

In fact several of these components must vanish \textcolor{revisions}{by degree considerations}. \textcolor{revisions}{There is an analog of Lemma \ref{lem: morph_dim} which allows us to compute the graded dimensions of morphism spaces between singular Soergel bimodules using duality functors; see Proposition 3.19 of \cite{HRW21} for details. Using this technology, one can compute} the following graded dimensions for various relevant morphism spaces:

\begin{align*}
    \text{dim}\left(\text{Hom}_{SSBim_{k + 2}}(X, Y)\right) & = [k] \left( \prod_{i = 1}^k \cfrac{q^2}{1 - q^{2i}} \right) \left( \cfrac{q}{1 - q^2} \right) \left( \cfrac{1}{1 - q^2} \right) q^{-k} \\
    & = (q^2 + q^4 + \dots + q^{2k}) \left( \prod_{i = 1}^k \cfrac{1}{1 - q^{2i}} \right) \left( \cfrac{1}{1 - q^2} \right)^2; \\
    \text{dim}(\text{Hom}_{SSBim_{k + 2}}(Y, X)) & = (q^2 + q^4 + \dots + q^{2k}) \left( \prod_{i = 1}^k \cfrac{1}{1 - q^{2i}} \right) \left( \cfrac{1}{1 - q^2} \right)^2; \\
    \text{dim}(\text{Hom}_{SSBim_{k + 2}}(Z, X)) & = (q + q^3 + \dots + q^{2k + 1}) \left( \prod_{i = 1}^k \cfrac{1}{1 - q^{2i}} \right) \left( \cfrac{1}{1 - q^2} \right)^2
\end{align*}

Observe that the lowest quantum degree of any nonzero morphism from $X$ to $Y$ is $q^2$. Since $d_1$ is a degree $q^0$ morphism from $X$ to $Y$, it must vanish. Similarly, since the lowest quantum degree of any nonzero morphism from $Y$ to $X$ is $q^2$ and $h_2$ is a degree $q^0$ morphism from $X$ to $Y$, it too must vanish. We update the diagram to reflect these requirements below:

\begin{center}
\begin{tikzcd}
                                                                                                        &  & tY \arrow[rrdd, "d_7", near end] \arrow[dddddd, "-d_6d_4^{-1}", bend left=60, near end,blue]  \arrow[lld, "h_1"', pos=.6, red, shift left] \arrow[llddd, "h_4", red, shift left, near start]                             &  &            \\
X \arrow[rrd, "d_2", near start] \arrow[rrddd, "d_3", near end] \arrow[ddddd, "\text{id}_X", harpoon, bend right, shift left, blue] &  &                                                                                                   &  &            \\
                                                                                                        &  & q^{-2}tY \arrow[rr, "d_8", shift left] \arrow[lld, "h_5"', pos=.2, shift right, red]                                                                     &  & q^{-2}t^2Y \arrow[lluu, "h_7", red, shift left, near end] \arrow[ll, "h_8", red, shift left] \arrow[lldd, "h_9", red, shift left, pos=.7] \\
Y \arrow[rruuu, "d_4", near end] \arrow[rrd, "d_6"] \arrow[rru, "d_5"', near start]                                            &  &                                                                                                   &  &            \\
                                                                                                        &  & q^{-1}tZ \arrow[rruu, "d_9"] \arrow[dd, "\text{id}_Z", harpoon, shift left, blue] \arrow[llu, "h_6", shift left, red] \arrow[lluuu, "h_3", red, shift left, pos=.85]                     &  &            \\
                                                                                                        &  &                                                                                                   &  &            \\
X \arrow[uuuuu, "\text{id}_X", harpoon, bend left, shift left, blue] \arrow[rr, "d_{10}"]                                          &  & q^{-1}tZ \arrow[uu, "\text{id}_Z", harpoon, shift left, blue] \arrow[uuuu, "-d_8^{-1}d_9"', pos=.85, bend right, blue] \arrow[ll, "h_{10}", red, shift left] &  &           
\end{tikzcd}
\end{center}

Various relationships among these components also follow from properties of the morphisms they constitute. In order for $f_k^+$ to be a chain map, we must have $d_3 = d_{10}$. Additionally, since $\text{Hom}_{SSBim_{k + 2}}(Z, X)$ is $1$-dimensional in degree $q$, we must have $h_3 = \lambda h_{10}$ for some scalar $\lambda \in R$. (The case $h_{10} = 0$ is excluded by the identity $h_{10}d_{10} = x_{k + 2} - x_1'' \neq 0$ on $\tilde{\Sigma}_k^+$.) Since the only component of $[d_{\Sigma_k^+}, h_k^+]$ mapping from $X$ to itself is $h_3d_3$, we must have $x_{k + 2} - x_1'' = h_3d_3 = \lambda h_{10}d_{10} = \lambda(x_{k + 2} - x_1'')$. Then $\lambda = 1$, so $h_3 = h_{10}$.

Since $[d_{\Sigma_k^+}, h_k^+] = x_{k + 2} - x_1''$, all components of this map that are not endomorphisms must vanish. We record useful consequences of this vanishing in particular components below.

\begin{align*}
Y \to X & : h_1d_4 + h_3d_6 = 0 \Longrightarrow h_1 = -h_3d_6d_4^{-1} \\
q^{-2}tY \to tY & : d_4h_5 + h_7d_8 = 0 \Longrightarrow h_5 = -d_4^{-1}h_7d_8, \ h_7 = -d_4h_5d_8^{-1} \\
q^{-2}tY \to q^{-1}tZ & : d_6h_5 + h_9d_8 = 0 \Longrightarrow h_9 = -d_6h_5d_8^{-1} \\
q^{-1}tZ \to tY & : d_4h_6 + h_7d_9 = 0 \Longrightarrow h_6 = -d_4^{-1}h_7d_9 \\
\end{align*}

Finally, we check that $f_k^+ h_k^+ - \tilde{h}_k^+ f_k+ = 0$ and $g_k^+ \tilde{h}_k^+ - h_k^+ g_k^+ = 0$ via explicit computation. The component of $g_k^+ \tilde{h}_k^+ - h_k^+ g_k^+$ from $q^{-1}tZ$ to $X$ is given by $h_{10} - h_3$; this was shown to vanish above. The other component of $g_k^+ \tilde{h}_k^+ - h_k^+ g_k^+$ maps from $q^{-1}tZ$ to $Y$ and is given by $h_5d_8^{-1}d_9 - h_6$; this vanishes by lines 2 and 4 of the above list of identities.

Next, the component of $f_k^+ h_k^+ - \tilde{h}_k^+ f_k^+$ mapping from $q^{-2}t^2Y$ to $q^{-1}tZ$ is given by $h_9 - d_6d_4^{-1}h_7$; this vanishes by lines 2 and 3 above. The component mapping from $tY$ to $X$ is given by $h_1 + h_{10}d_6d_4^{-1} = h_1 + h_3d_6d_4^{-1}$; this vanishes by line 1 above. The component mapping from $q^{-2}Y$ to $X$ is already $0$. Finally, the component mapping from $q^{-1}tZ$ to $X$ is given by $h_3 - h_{10}$, which again vanishes.
\end{proof}

Let $h \in \text{End}^{-1}(FT_{n - 1, 1})$ denote the dot-sliding homotopy satisfying $[d, h] = x_n - x_n''$. Observe that both $h$ and the dot-sliding homotopy $h_n \in \text{End}^{-1}(B_{w_1}J_n)$ satisfying $[d, h_n] = x_n - x_n''$ are sums of dot-sliding homotopies of the form in Lemma \ref{lem: dot_slide_fork_slide}. Additionally, the strong deformation retraction $\overline{\mu} \colon B_{w_1}J_n \simeq STM_n \colon \overline{\nu}$ of Proposition \ref{prop: mu_nu} consists of compositions of fork-slides of the form of Lemma \ref{lem: dot_slide_fork_slide}. Applying the result of Lemma \ref{lem: dot_slide_fork_slide} repeatedly to these compositions, we immediately obtain

\begin{proposition} \label{prop: half_forkslyde}
The maps above satisfy $h \overline{\mu} = \overline{\mu} h_n$, $h_n \overline{\nu} = \overline{\nu} h$.
\end{proposition}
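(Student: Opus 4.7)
The strategy is to exhibit $\overline{\mu}$ and $\overline{\nu}$ explicitly as compositions of the elementary fork-slide equivalences $g_k^{\pm}$ and $f_k^{\pm}$ of Proposition \ref{prop: fork_moves}, decompose $h$ and $h_n$ correspondingly as sums of local dot-sliding homotopies $\tilde{h}_k^{\pm}$ and $h_k^{\pm}$ on the corresponding braided subwebs, and then invoke Lemma \ref{lem: dot_slide_fork_slide} strand-by-strand.

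More precisely, looking at the diagrams in Proposition \ref{prop: mu_nu}, the map $\overline{\mu} \colon STM_n \to B_{w_1}J_n$ is the composition of $n - 2$ overhanded fork slides $g_k^+$ (for $k = 1, 2, \ldots, n - 2$) applied to the top half of the diagram, followed by $n - 2$ underhanded fork slides $g_k^-$ applied to the bottom half; the map $\overline{\nu}$ is the corresponding composition of $f_k^{\pm}$'s in the reverse order. Similarly, the dot-sliding homotopy $h \in \text{End}^{-1}(FT_{n-1,1})$ satisfying $[d,h] = x_n - x_n''$ decomposes on $STM_n$ as a sum of local $\tilde{h}_k^{\pm}$'s, one on each bit of the two fork-twisted $1$-strands, together with trivial homotopies (i.e., multiplication by scalars in $R[\mathbb{X},\mathbb{X}'']$) on the strands that are not interacting with the distinguished $1$-colored strand at the relevant stage. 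The same analysis applies to $h_n$ on $B_{w_1}J_n$ using the homotopies $h_k^{\pm}$.

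Given this setup, the identity $h\overline{\mu} = \overline{\mu} h_n$ reduces to a chain of intermediate identities, one for each elementary fork slide appearing in the factorization of $\overline{\mu}$: at the $k$-th stage, the two sides differ by either $g_k^+ h_k^+ - \tilde{h}_k^+ g_k^+$ or $g_k^- h_k^- - \tilde{h}_k^- g_k^-$ applied to the relevant local subweb, together with (trivially commuting) components on the inert strands. Each of these vanishes by Lemma \ref{lem: dot_slide_fork_slide}. The identity $h_n \overline{\nu} = \overline{\nu} h$ follows by the mirror argument using $f_k^{\pm}$ in place of $g_k^{\pm}$.

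The main technical obstacle is bookkeeping: one must verify that, at each intermediate stage of the factorization, the ambient dot-sliding homotopy really does restrict to a $\tilde{h}_k^{\pm}$ or $h_k^{\pm}$ on the local subweb being fork-slid, and commutes with the remaining (unchanged) part of the web. This is essentially guaranteed by the locality of both the dot-sliding homotopies and the fork-slide equivalences, together with the fact that all external polynomial actions (by $R[\mathbb{X},\mathbb{X}'']$) are central; once this is made precise, the repeated application of Lemma \ref{lem: dot_slide_fork_slide} closes the argument.
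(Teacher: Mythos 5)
Your proposal is correct and is essentially the paper's own argument: the paper likewise observes that $h$ and $h_n$ are sums of local dot-sliding homotopies of the form treated in Lemma \ref{lem: dot_slide_fork_slide}, that $\overline{\mu}$ and $\overline{\nu}$ factor as compositions of the elementary fork slides of that lemma, and then applies the lemma repeatedly. The only quibble is notational: at each stage the relevant vanishing identities are $f_k^{\pm} h_k^{\pm} - \tilde{h}_k^{\pm} f_k^{\pm} = 0$ and $g_k^{\pm} \tilde{h}_k^{\pm} - h_k^{\pm} g_k^{\pm} = 0$ (your expression $g_k^{+} h_k^{+} - \tilde{h}_k^{+} g_k^{+}$ pairs the homotopies with the wrong sides of $g_k^{+}$), but this does not affect the argument.
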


Now, let $STM_n^{y_n}$ denote the curved complex $STM_n[\mathbb{Y}]$ with connection $\delta_{STM_n} := d_{STM_n} + h(y_n - y_1)$. A direct computation shows that $STM_n^{y_n}$ has curvature $Z_{y_n}$.

\begin{corollary} \label{cor: half_forkslyde}
The strong deformation retraction $\overline{\mu} \colon B_{w_1}J_n \simeq STM_n \colon \overline{\nu}$ of Proposition \ref{prop: mu_nu} lifts without modification to a strong deformation retraction from $B_{w_1}J_n^{y_n}$ to $STM_n^{y_n}$.
\end{corollary}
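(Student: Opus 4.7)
The plan is to verify directly that the data $(\overline{\mu},\overline{\nu},H)$ of the strong deformation retraction from Proposition \ref{prop: mu_nu} satisfies the defining identities in the curved setting without any alteration. Writing the curved connections as $\delta = d + \Delta$ with $\Delta_{B_{w_1}J_n} = h_n(y_n-y_1)$ and $\Delta_{STM_n} = h(y_n-y_1)$, this reduces to three verifications: (i) $\overline{\mu}$ and $\overline{\nu}$ are chain maps with respect to $\delta_{B_{w_1}J_n}$ and $\delta_{STM_n}$; (ii) $\overline{\mu}\overline{\nu} = \text{id}_{STM_n}$ as morphisms of curved complexes; and (iii) $[\delta_{B_{w_1}J_n},H] = \text{id}_{B_{w_1}J_n} - \overline{\nu}\overline{\mu}$.

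For (i), since $\overline{\mu}$ is already a chain map in the uncurved setting, the difference $\delta_{STM_n}\overline{\mu} - \overline{\mu}\delta_{B_{w_1}J_n}$ simplifies to $(h\overline{\mu} - \overline{\mu}h_n)(y_n-y_1)$, and since $y_n-y_1$ is a non-zero-divisor, the chain map condition reduces to the identity $h\overline{\mu} = \overline{\mu}h_n$, which is the content of Proposition \ref{prop: half_forkslyde}; the verification for $\overline{\nu}$ is identical, using the second identity $h_n\overline{\nu} = \overline{\nu}h$. Claim (ii) is a purely categorical statement in the underlying additive category, independent of any differential or connection, and therefore lifts verbatim from Proposition \ref{prop: mu_nu}.

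The main obstacle is claim (iii). Expanding the bracket and using $[d,H] = \text{id} - \overline{\nu}\overline{\mu}$ together with the fact that $y_n-y_1$ is even and central, the required identity reduces to the anticommutation
\[
h_n H + H h_n = 0.
\]
To establish this I would strengthen Lemma \ref{lem: dot_slide_fork_slide} so as to also track the behavior of the contraction homotopy produced by Gaussian elimination in each local fork slide, in addition to the chain maps $f_k^{\pm}, g_k^{\pm}$. The same enumeration of structure maps and dimension-counting arguments in morphism spaces of singular Soergel bimodules that appear in the proof of Lemma \ref{lem: dot_slide_fork_slide} (for instance, the identification $h_3 = h_{10}$, and the relations $h_1 = -h_3 d_6 d_4^{-1}$, $h_5 = -d_4^{-1} h_7 d_8$, etc.) combine, after an explicit application of Proposition \ref{prop: gauss_elim}, to force the local anticommutation of the local contraction homotopy with the local dot-sliding homotopy. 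Since the global $H$ is obtained by composing these local contraction homotopies with identity maps on the untouched strands, and since $h_n$ correspondingly decomposes as a sum of compatible local dot-sliding homotopies, the global identity follows by bookkeeping. The technical bulk of the argument is the local anticommutation verification, which is tedious but mechanical, consistent with the author's decision to silo these computations to Appendix \ref{app: dot_slide_nat}.
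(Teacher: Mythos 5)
Your steps (i) and (ii) coincide with the paper's argument: closedness of $\overline{\mu}$ and $\overline{\nu}$ with respect to the connections reduces, after stripping off $[d,-]$, to the intertwining identities of Proposition \ref{prop: half_forkslyde}, and the on-the-nose identity $\overline{\nu}\overline{\mu} = \text{id}$ transfers verbatim since the maps are unmodified. The divergence, and the genuine gap, is in step (iii). You insist on lifting the Gaussian-elimination homotopy $H$ itself without modification, which forces the strict anticommutation $h_nH + Hh_n = 0$, and you only assert that this follows from a strengthened version of Lemma \ref{lem: dot_slide_fork_slide} plus bookkeeping. Nothing in that lemma's dimension counts forces this: the anticommutator has bidegree $q^2t^{-2}$, and while some of its components die for quantum-degree reasons (e.g.\ those valued in $\text{Hom}(Y,X)$, whose lowest degree is $q^2$), the component from the top homological term back into a $Y$-type term is a quantum-degree-zero endomorphism of $Y$, which is not excluded by degree considerations and would have to be computed to be zero. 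Moreover, the global $H$ for the composite of several fork slides is not a sum of local contraction homotopies; composing SDRs produces cross terms of the form $g_1h_2f_1$, so even a verified local anticommutation together with the intertwining relations would still require a nontrivial assembly argument that you have not supplied. As written, the crucial identity is unproven, and it may simply be false for the unmodified $H$.

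The paper avoids this entirely, and you could too: the statement only requires $\overline{\mu}$ and $\overline{\nu}$ to lift unmodified, not the homotopy. Once (i) and (ii) are in place, $\overline{\mu}\overline{\nu} - \text{id}$ is a closed degree-zero endomorphism of the curved complex whose $\mathbb{Y}$-degree-zero component is nullhomotopic by Proposition \ref{prop: mu_nu}; since conjugation by $(\overline{\mu},\overline{\nu})$ and the invertibility of $J_n$ identify $\text{End}(STM_n) \simeq \text{End}(B_{w_1}J_n) \simeq \text{End}(B_{w_1})$ up to homotopy, the relevant endomorphism complex has homology concentrated in degree zero, and Corollary \ref{corr: GHobs2} produces a curved nullhomotopy (possibly containing higher $\mathbb{Y}$-degree corrections to $H$, which is harmless). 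Together with $\overline{\nu}\overline{\mu} = \text{id}$ holding exactly, this yields the strong deformation retraction without ever needing $h_nH + Hh_n = 0$. Either repair your argument by actually proving that identity (local verification plus the $g_1h_2f_1$-type assembly), or replace step (iii) by this obstruction-theoretic argument.
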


\begin{proof}
We verify directly that $[\delta, \overline{\mu}] = [\delta, \overline{\nu}] = 0$, beginning with the former.

\begin{align*}
    [\delta, \overline{\mu}] & = (d_{STM_n} + h(y_n - y_1)) \overline{\mu} - \overline{\mu} (d_{B_{w_1}J_n} + h_n(y_n - y_1)) \\
    & = [d, \overline{\mu}] + (h \overline{\mu} - \overline{\mu} h_n) (y_n - y_1)
\end{align*}

The first term above vanishes by Proposition \ref{prop: mu_nu}, and the second vanishes by \ref{prop: half_forkslyde}. Similarly, we have

\begin{align*}
    [\delta, \overline{\nu}] & = (d_{B_{w_1}J_n} + h_n(y_n - y_1)) \overline{\nu} - \overline{\nu} (d_{STM_n} + h(y_n - y_1))  \\
    & = [d, \overline{\nu}] + (h_n \overline{\nu} - \overline{\nu} h) (y_n - y_1)
\end{align*}

Again, the first term above vanishes by Proposition \ref{prop: mu_nu}, and the second vanishes by \ref{prop: half_forkslyde}.

It remains to show that $\overline{\mu}$, $\overline{\nu}$ constitute a strong deformation retraction. The identity $\overline{\nu} \overline{\mu} = \text{id}_{STM_n^{y_n}}$ follows immediately from the corresponding uncurved identity. We have a homotopy equivalence

\begin{align*}
\text{End}_{\text{Ch}(SSBim_n)}(STM_n) & \simeq  \text{End}_{\text{Ch}(SBim_n)}(B_{w_1}J_n) \\
& \simeq \text{End}_{\text{Ch}(SBim_n)}(B_{w_1}J_nJ_n^{-1}) \\
& \simeq \text{End}_{\text{Ch}(SBim_n)}(B_{w_1})
\end{align*}

The first homotopy equivalence above is induced from conjugation by $\overline{\mu}$ and $\overline{\nu}$, the second is induced by the invertible functor $- J_n^{-1}$, and the third is induced by the homotopy equivalence $J_n J_n^{-1} \simeq R_n$. Since $B_{w_1}$ is concentrated in homological degree $0$, we must have $H^i(\text{End}_{\text{Ch}(SSBim_n)}(STM_n)) = 0$ for $i \neq 0$. Then $\overline{\mu} \overline{\nu} - \text{id}_{STM_n^{y_n}} \in \text{End}^0_{Z_{y_n}-\text{Fac}(R_n-\text{Bim})}(STM_n^{y_n})$ is nullhomotopic by a direct application of Corollary \ref{corr: GHobs2}.
\end{proof}

Next, we denote by $T_{k, 1}$ the minimal complex for $FT_{k, 1}$. We give names to the bimodules appearing in $T_{k, 1}$ below.

\begin{gather*}
B := \ 
\begin{tikzpicture}[anchorbase,scale=.5,tinynodes]
    \draw[webs] (0,2) node[above,yshift=-2pt]{$k$} to[out=270,in=180] (.5,1.5);
    \draw[webs] (1,2) node[above,yshift=-2pt]{$1$} to[out=270,in=0] (.5,1.5);
    \draw[webs] (.5,1.5) to (.5,.5);
    \draw[webs] (.5,.5) to[out=0,in=90] (1,0) node[below]{$1$};
    \draw[webs] (.5,.5) to[out=180,in=90] (0,0) node[below]{$k$};
\end{tikzpicture}
; \quad I := \ 
\begin{tikzpicture}[anchorbase,scale=.5,tinynodes]
    \draw[webs] (0,2) node[above,yshift=-2pt]{$k$} to (0,0) node[below]{$k$};
    \draw[webs] (1,2) node[above,yshift=-2pt]{$1$} to (1,0) node[below]{$1$};
\end{tikzpicture}
\end{gather*}

Let $h' \in \text{End}^{-1}_{\text{Ch}(SSBim_{k + 1})}(T_{k, 1})$ be the degree $q^2t^{-1}$ morphism depicted in red below.

\begin{center}
\begin{tikzcd}[sep=large]
q^k B \arrow[r, "x_{k + 1} - x_{k + 1}''", harpoon, shift left] & q^{k - 2}t B \arrow[l, "\text{id}_J", red, harpoon, shift left] \arrow[r, "unzip"] & q^{-2}t^2 I
\end{tikzcd}
\end{center}

Observe that $[d, h'] = x_{k + 1} - x_{k + 1}''$ (which vanishes on $I$), so that $h'$ behaves like a dot-sliding homotopy on $T_{k, 1}$. Let $f \in \text{Hom}^0_{\text{Ch}(SSBim_{k + 1})}(FT_{k, 1}, T_{k, 1})$, $g \in \text{Hom}^0_{\text{Ch}(SSBim_{k + 1})}(T_{k, 1}, FT_{k, 1})$ denote the homotopy equivalences of Proposition \ref{prop: ft_min}.

\begin{lemma} \label{lem: dot_slide_ft}
Suppose $k \geq 2$. Then $fh - h'f = 0$, and there is a morphism $\tilde{\chi} \in \text{Hom}^{-2}_{\text{Ch}(SSBim_{k + 1})}(T_{k, 1}, FT_{k, 1})$ satisfying $[d, \tilde{\chi}] = gh' - hg$ and $f \tilde{\chi} = 0$.
\end{lemma}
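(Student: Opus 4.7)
My plan is to split the proof into three stages: first establish the equality $fh = h'f$ by an explicit bookkeeping argument, second produce a nullhomotopy $\tilde{\chi}_0$ of $gh' - hg$, and third adjust $\tilde{\chi}_0$ to obtain a $\tilde{\chi}$ annihilated by $f$.

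For the first claim $fh - h'f = 0$, I would mimic the strategy of Lemma \ref{lem: dot_slide_fork_slide}. Expanding $FT_{k,1}$ as the four-term Rickard complex built from two stacked colored crossings, the homotopy equivalence of Proposition \ref{prop: ft_min} is realized as Gaussian elimination of a single contractible pair, so the components of $f$, $g$, and the homotopy witnessing $gf \sim \text{id}$ can be written explicitly in terms of the remaining differentials. The dot-sliding homotopy $h$ on $FT_{k,1}$ decomposes into pieces assigned to each individual crossing, and each such piece interacts with the Gaussian elimination data as in Lemma \ref{lem: dot_slide_fork_slide}. Applying Lemma \ref{lem: morph_dim} to the relevant Hom spaces between merge/split bimodules on a $(k,1)$-colored pair of strands and imposing the constraint $[d, h] = x_{k+1} - x_{k+1}''$ should force every component of $fh - h'f$ to vanish; the hypothesis $k \geq 2$ enters here to guarantee that the relevant morphism spaces in the offending quantum degrees are nontrivial in the way the argument requires.

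For the existence of $\tilde{\chi}$, I first observe that $gh' - hg$ is closed: by the graded Leibniz rule, $[d, gh' - hg] = g[d, h'] - [d, h]g = g(x_{k+1} - x_{k+1}'') - (x_{k+1} - x_{k+1}'')g = 0$ since $g$ is $R[\mathbb{X}, \mathbb{X}'']$-linear. Next, I would show that $\text{Hom}^{\bullet}(T_{k,1}, FT_{k,1})$ has cohomology concentrated in non-negative degrees. Using the homotopy equivalence $T_{k,1} \simeq FT_{k,1}$, this reduces to the corresponding statement for $\text{End}^{\bullet}(FT_{k,1})$; since $FT_{k,1}$ is invertible with inverse $F(\sigma_{1,(k,1)}^{-1})^{\otimes 2}$, the endomorphism complex is homotopy equivalent to the endomorphism complex of the identity bimodule $R^{(k,1)}_{k+1}$, which is concentrated in homological degree $0$. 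Since $gh' - hg$ has homological degree $-1$, the parity argument produces some $\tilde{\chi}_0 \in \text{Hom}^{-2}(T_{k,1}, FT_{k,1})$ with $[d, \tilde{\chi}_0] = gh' - hg$.

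Finally, to arrange $f\tilde{\chi} = 0$, I would set $\tilde{\chi} := \tilde{\chi}_0 - g(f\tilde{\chi}_0)$. Using $fg = \text{id}_{T_{k,1}}$ from the strong deformation retraction together with the already-established identity $fh = h'f$, one computes $[d, f\tilde{\chi}_0] = f(gh' - hg) = (fg)h' - (fh)g = h' - h'(fg) = 0$, so $f\tilde{\chi}_0$ is closed; the Leibniz rule then gives $[d, g(f\tilde{\chi}_0)] = g[d, f\tilde{\chi}_0] = 0$, so $[d, \tilde{\chi}] = [d, \tilde{\chi}_0] = gh' - hg$. On the other hand, $f\tilde{\chi} = f\tilde{\chi}_0 - (fg)(f\tilde{\chi}_0) = f\tilde{\chi}_0 - f\tilde{\chi}_0 = 0$, as required. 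The main obstacle is the first claim, as it demands the explicit dimension-theoretic analysis of the Gaussian elimination data; once that is in hand, the second claim follows cleanly from the closed-and-adjust procedure outlined above.
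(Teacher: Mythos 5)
Your second and third stages are correct, and they take a genuinely different (and cleaner) route than the paper. For the existence of a nullhomotopy, the paper cites the computations of \cite{HRW21} to get evenness of $H^{\bullet}(\text{End}(FT_{k,1}))$, produces a $\chi$ with $[d,\chi] = gh'f - hgf$, and then shows by a component-by-component analysis that all but one entry of $\chi$ vanish, so that $f\chi = 0$ can be read off and $\tilde{\chi} := \chi g$ works. Your argument replaces the citation by the observation that $FT_{k,1}$ is an invertible Rickard complex, so $\text{Hom}^{\bullet}(T_{k,1},FT_{k,1}) \simeq \text{End}^{\bullet}(FT_{k,1})$ has homology concentrated in degree $0$ (stronger than parity), and it replaces the component analysis by the formal correction $\tilde{\chi} := \tilde{\chi}_0 - g(f\tilde{\chi}_0)$: since Proposition \ref{prop: ft_min} is a strong deformation retraction, $fg = \text{id}_{T_{k,1}}$ on the nose, and your computation that $f\tilde{\chi}_0$ is closed (which uses $fh = h'f$) shows $[d,\tilde{\chi}] = gh'-hg$ while $f\tilde{\chi} = 0$ is automatic. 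This is a real simplification of the paper's second half.

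The gap is in your first stage, which is where the actual content of the lemma sits and where the paper spends essentially all of its effort. You assert that writing out the Gaussian elimination data and "applying Lemma \ref{lem: morph_dim} \dots should force every component of $fh - h'f$ to vanish," but this is precisely the hard bookkeeping: the lemma needs the identity $fh = h'f$ strictly (the downstream corollary lifts $f$ to the curved setting \emph{without modification}, so a mere homotopy $fh \sim h'f$ — which would already follow from your own closedness-plus-vanishing-$H^{-1}$ argument — is not enough), and the paper obtains it by showing that $g(h'f - fh)$ is exact, that the degree $-2$ nullhomotopy $\zeta$ is forced to vanish entry-by-entry using invertibility of the eliminated differentials together with the $\text{Hom}$-space dimension counts, and then composing with $f$ and using $fg = \text{id}$. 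Nothing of this kind is carried out in your sketch, and two misstatements suggest the computation has not been engaged with: the minimal model $T_{k,1}$ is obtained by eliminating many contractible pairs (the intermediate terms carry multiplicities $[k]$ and $[k-1]$), not a single pair; and the hypothesis $k \geq 2$ is used to force a quantum degree $q^{2-k}$ component from $I$ to $B$ (the paper's $h_{12}$) to vanish, because morphisms $I \to B$ start in degree $q^{k}$ — i.e., it guarantees \emph{triviality} of the relevant Hom space in the offending degree, the opposite of the role you assign it. Until the strict identity $fh = h'f$ is actually established, the proposal does not prove the lemma, since your stage three explicitly relies on it.
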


\begin{proof}
Our strategy will be largely the same as in Lemma \ref{lem: dot_slide_fork_slide}. Again applying \textcolor{revisions}{diagrammatic relations} and the identities $[k + 1] = q^k + q^{-1}[k]$, $q^{-1}[k] = q^{k - 2} + q^{-2}[k - 1]$, we obtain an explicit description of $FT_{k, 1}$. We depict the components of all relevant morphisms according to the conventions of Lemma \ref{lem: dot_slide_fork_slide} above; here $FT_{k, 1}$ is the upper complex and $T_{k, 1}$ the lower complex.

\begin{center}
\begin{tikzcd}
& & {q^{-2}[k - 1]tB} \arrow[rrd, "d_7"] \arrow[dll, "h_1", red] \arrow[llddd, "h_4", red, pos=.3] \arrow[rrddd, "d_8", pos=.85] & & \\
{q^{-1}[k] B} \arrow[rru, "d_1", shift left] \arrow[rrd, "d_2", pos=.2, shift left] \arrow[rrddd, "d_3", near end] & & & & {q^{-2}[k - 1]t^2 B} \arrow[ull, "h_7", red, shift left] \arrow[dll, "h_9", red, shift left, near end] \arrow[dddll, "h_{11}", red, shift left, pos=.15] \arrow[ddddd, "-d_8d_7^{-1}", blue, bend left] \\
& & {q^{-1}[k]t B} \arrow[rru, "d_9", near end] \arrow[rrd, "d_{10}", pos=.15] \arrow[dddd, "-d_3d_2^{-1}"', pos=.2, bend left, blue] \arrow[ull, "h_2", red, pos=.17] \arrow[dll, "h_5", red, near start] & & \\
q^k B \arrow[rruuu, "d_4", pos=.65, shift left] \arrow[rru, "d_5", pos=.9, shift left] \arrow[ddd, "\text{id}_B", harpoon, shift left, blue] \arrow[rrd, "d_6"] & & & & q^{-2}t^2I \arrow[ddd, "\text{id}_I", harpoon, shift left, blue] \arrow[ull, "h_{10}", red, shift left, near start] \arrow[uuull, "h_8", shift left, red, pos=.85] \arrow[lld, "h_{12}", red, shift left] \\
& & q^{k - 2}t B \arrow[dd, "\text{id}_B", harpoon, shift left, blue] \arrow[rruuu, "d_{11}", near start] \arrow[rru, "d_{12}"] \arrow[uuull, "h_3", red, shift left, pos=.85] \arrow[ull, "h_6", red, shift left] & & \\
\\
q^k B \arrow[rr, "x_{k + 1} - x_{k + 1}''"] \arrow[uuuuu, "-d_2^{-1}d_5", bend left, blue] \arrow[uuu, "\text{id}_B", harpoon, shift left, blue] & & q^{k - 2}tB \arrow[uu, "\text{id}_B", harpoon, shift left, blue] \arrow[rr, "unzip"] \arrow[uuuuuu, "-d_7^{-1}d_{11}"', bend left, pos=.9, blue] \arrow[ll, "\text{id}_B", red, shift left] & & q^{-2}t^2I \arrow[uuu, "\text{id}_I", harpoon, shift left, blue]
\end{tikzcd}
\end{center}

We can again eliminate many components of these maps by computing graded dimensions of various morphism spaces. We quote the results below.

\begin{align*}
\text{dim}(\text{End}_{SSBim_{k + 1}}(B)) & = (1 + q^2 + \dots + q^{2k}) \left(\prod_{i = 1}^k \cfrac{1}{1-q^{2i}} \right) \left( \cfrac{1}{1 - q^2} \right) \\
\text{dim}(\text{Hom}_{SSBim_{k + 1}}(B, I)) & = q^k \left( \prod_{i = 1}^k \cfrac{1}{1 - q^{2i}} \right) \left( \cfrac{1}{1 - q^2} \right) \\
\text{dim}(\text{Hom}_{SSBim_{k + 1}}(I, B)) & = q^k \left( \prod_{i = 1}^k \cfrac{1}{1 - q^{2i}} \right) \left( \cfrac{1}{1 - q^2} \right)
\end{align*}

In particular, the lowest quantum degree of any morphism between $B$ and $I$ is $k$. Since the highest degree shift appearing in $q^{-2}[k - 1]$ is $q^{k - 4}$, the maximal quantum degree of any component of $d_8$ is $q^{k - 2}$; hence $d_8$ vanishes. The maximal quantum degree of any component of $h_4$ is $q^{-2}$; since $B$ has no negative degree endomorphisms, this component vanishes. The degree of $h_{12}$ must be $q^{2 - k}$; for $k \geq 2$, we have $2 - k < k$, so this component also vanishes. We reproduce the diagram above updated with these restrictions.

\begin{center}
\begin{tikzcd}
& & {q^{-2}[k - 1]tB} \arrow[rrd, "d_7"] \arrow[dll, "h_1", red] & & \\
{q^{-1}[k] B} \arrow[rru, "d_1", shift left] \arrow[rrd, "d_2", pos=.2, shift left] \arrow[rrddd, "d_3", near end] & & & & {q^{-2}[k - 1]t^2 B} \arrow[ull, "h_7", red, shift left] \arrow[dll, "h_9", red, shift left, near end] \arrow[dddll, "h_{11}", red, shift left, pos=.15] \\
& & {q^{-1}[k]t B} \arrow[rru, "d_9"] \arrow[rrd, "d_{10}", pos=.15] \arrow[dddd, "-d_3d_2^{-1}"', pos=.2, bend left, blue] \arrow[ull, "h_2", red, near start] \arrow[dll, "h_5", red, near start] & & \\
q^k B \arrow[rruuu, "d_4", near end] \arrow[rru, "d_5", pos=.9, shift left] \arrow[ddd, "\text{id}_B", harpoon, shift left, blue] \arrow[rrd, "d_6"] & & & & q^{-2}t^2I \arrow[ddd, "\text{id}_I", harpoon, shift left, blue] \arrow[ull, "h_{10}", red, shift left, near start] \arrow[uuull, "h_8"', red, pos=.15] \\
& & q^{k - 2}t B \arrow[dd, "\text{id}_B", harpoon, shift left, blue] \arrow[rruuu, "d_{11}", near start] \arrow[rru, "d_{12}"] \arrow[uuull, "h_3", red, shift left, near end] \arrow[ull, "h_6", red, shift left] & & \\
\\
q^k B \arrow[rr, "x_{k + 1} - x_{k + 1}''"] \arrow[uuuuu, "-d_2^{-1}d_5", bend left, blue] \arrow[uuu, "\text{id}_B", harpoon, shift left, blue] & & q^{k - 2}tB \arrow[uu, "\text{id}_B", harpoon, shift left, blue] \arrow[rr, "unzip"] \arrow[uuuuuu, "-d_7^{-1}d_{11}"', bend left, pos=.9, blue] \arrow[ll, "\text{id}_B", red, shift left] & & q^{-2}t^2I \arrow[uuu, "\text{id}_I", harpoon, shift left, blue]
\end{tikzcd}
\end{center}

As before, we leverage properties of the morphisms $f, g, h, h'$ to establish various useful relations among these components. First, we verify that $gh'f - gfh \in \text{End}^{-1}_{\text{Ch}(SSBim_{k + 1})}(FT_{k, 1})$ is closed via direct computation:

\[
[d, gh'f - gfh] = g[d, h']f - gf[d, h] = g(x_{k + 2} - x_{k + 2}'') - gf(x_{k + 2} - x_{k + 2}'') = 0
\]

By computations in \cite{HRW21}, $H^{\bullet} (\text{End}_{\text{Ch}(SSBim_{k + 2})} (FT_{k, 1}))$ is concentrated in even degrees. It follows that there exists some $\zeta \in \text{End}^{-2}_{\text{Ch}(SSBim_{k + 1})}(FT_{k, 1})$ such that $[d, \zeta] = gh'f - gfh$. We turn our attention to showing that $\zeta = 0$. Below we reproduce the complex for $FT_{k, 1}$ with backwards cyan arrows labeling components of $gh'f - gfh$ and backwards purple arrows labeling components of $\zeta$. We include only the vanishing components of $gh'f - gfh$ for visual clarity; as we will see, no other components will be relevant.

\begin{center}
\begin{tikzcd}
& & {q^{-2}[k - 1]tB} \arrow[rrd, "d_7"', shift right] \arrow[dll, "0", cyan] \arrow[llddd, "0", shift left, pos=.1, cyan] & & \\
{q^{-1}[k] B} \arrow[rru, "d_1", shift left] \arrow[rrd, "d_2", pos=.2, shift left] \arrow[rrddd, "d_3", near end] & & & & {q^{-2}[k - 1]t^2 B}  \arrow[dll, "0", cyan, shift left, near end] \arrow[llll, "\zeta_1", purple, near start] \arrow[lllldd, "\zeta_2"', purple, pos=.3, bend right] \\
& & {q^{-1}[k]t B} \arrow[rru, "d_9"] \arrow[rrd, "d_{10}", pos=.15] & & \\
q^k B \arrow[rruuu, "d_4", near end] \arrow[rru, "d_5", pos=.9, shift left] \arrow[rrd, "d_6"] & & & & q^{-2}t^2I \arrow[ull, "0", cyan, shift left, near start] \arrow[lllluu, "\zeta_3"', purple, bend left, pos=.6] \arrow[llll, "\zeta_4", purple, near end] \\
& & q^{k - 2}t B \arrow[rruuu, "d_{11}", near start] \arrow[rru, "d_{12}"] & &
\end{tikzcd}
\end{center}

Observe that $\zeta_1d_7 = 0$; since $d_7$ is invertible, we have $\zeta_1 = 0$. Similarly, since $\zeta_2d_7 = 0$, we must have $\zeta_2 = 0$. Since $d_2\zeta_3 = 0$ and $d_2$ is invertible, we must have $\zeta_3 = 0$. Finally, $\zeta_4$ is a degree $-k$ morphism from $I$ to $B$; this vanishes by the dimension computations above. Since every component of $\zeta$ vanishes, in fact $\zeta = 0$. It follows immediately that $gh'f - gfh = 0$. Precomposing with $f$, we obtain $fg(h'f - fh) = h'f - fh = 0$, where the first equality follows from the status of the pair $f, g$ as a strong deformation retract. This proves the first claim.

A similar computation shows that $gh'f - hgf \in \text{End}^{-1}_{\text{Ch}(SSBim_{k + 1})}(FT_{k, 1})$ is closed, so there exists some $\chi \in \text{End}^{-2}_{\text{Ch}(SSBim_{k + 1})}(FT_{k, 1})$ satisfying $[d, \chi] = gh'f - hgf$. We reproduce the complex $FT_{k, 1}$ with backwards cyan arrows labeling components of $gh'f - hgf$ and backwards purple arrows labeling components of $\xi$. Again, we include only those components of $gh'f - hgf$ which are of relevance to us.

\begin{center}
\begin{tikzcd}
& & {q^{-2}[k - 1]tB} \arrow[rrd, "d_7"', shift right] \arrow[dll, "0", cyan] \arrow[llddd, "0", shift left, pos=.1, cyan] & & \\
{q^{-1}[k] B} \arrow[rru, "d_1", shift left] \arrow[rrd, "d_2", pos=.2, shift left] \arrow[rrddd, "d_3", near end] & & & & {q^{-2}[k - 1]t^2 B} \arrow[llll, "\chi_1", purple, near start] \arrow[lllldd, "\chi_2"', purple, pos=.3, bend right] \\
& & {q^{-1}[k]t B} \arrow[rru, "d_9"] \arrow[rrd, "d_{10}", pos=.15] & & \\
q^k B \arrow[rruuu, "d_4", near end] \arrow[rru, "d_5", pos=.9, shift left] \arrow[rrd, "d_6"] & & & & q^{-2}t^2I \arrow[ull, "h_{10}", cyan, shift left, near start] \arrow[lllluu, "\chi_3"', purple, bend left, pos=.6] \arrow[llll, "\chi_4", purple, near end] \\
& & q^{k - 2}t B \arrow[rruuu, "d_{11}", near start] \arrow[rru, "d_{12}"] & &
\end{tikzcd}
\end{center}

The same considerations as before show that $\chi_1, \chi_2$, and $\chi_4$ vanish\footnote{One would like to conclude that $\chi = 0$ so that $gh' - hg$ is not just nullhomotopic but identically $0$. Unfortunately, this is not true. Since $d_2\chi_3 = h_{10}$ and $d_2$ is invertible, we must have $\chi_3 = d_2^{-1}h_{10}$. A direct computation shows that $h_{10}$ is the zip map in lowest quantum degree, which is decidedly nonzero.}. In particular, $\chi_3$ is the only nonzero component of $\chi$, so $f \chi = 0$ by inspection. Setting $\tilde{\chi} := \chi g$, we obtain $f \tilde{\chi} = 0$ and

\[
[d, \tilde{\chi}] = [d, \chi] g + \chi [d, g] = (gh'f - hgf)g = (gh' - hg)fg = gh' - hg.
\]
\end{proof}

Applying the functor $_{1^n}S_{\mathfrak{S}^{n - 1}} \otimes - \otimes _{\mathfrak{S}^{n - 1}}M_{1^n}$ to the maps $f, g$ of Lemma \ref{lem: dot_slide_ft} induces a strong deformation retraction from $STM_n$ to $C_n$ as in Proposition \ref{prop: mu_nu}. We refer to the resulting maps also as $f$ and $g$.

\begin{corollary}
The map $f \in \text{Hom}^0_{\text{Ch}(SSBim_n)}(STM_n, C_n)$ lifts without modification to a chain map $f \colon STM_n^{y_n} \to C_n^{y_n}$ of curved complexes. The map $g \in \text{Hom}^0_{\text{Ch}(SSBim_n)}(C_n, STM_n)$ has a strict lift to a chain map $\tilde{g} \colon C_n^{y_n} \to STM_n^{y_n}$ of curved complexes. The maps $f, \tilde{g}$ constitute a strong deformation retraction from $STM_n^{y_n}$ to $C_n^{y_n}$.
\end{corollary}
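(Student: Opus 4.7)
My plan is to mirror the structure of Corollary \ref{cor: half_forkslyde}, but leveraging Lemma \ref{lem: dot_slide_ft} in place of Lemma \ref{lem: dot_slide_fork_slide} to handle the (non-commuting) dot-slides. First, I would verify that $f$ is a chain map of curved complexes by direct computation. Writing $\delta_{STM_n} = d_{STM_n} + h(y_n - y_1)$ and $\delta_{C_n} = d_{C_n} + h'(y_n - y_1)$ (where $h'$ is the degree $q^2t^{-1}$ map on the minimal complex $T_{n-1,1}$ packaged into $C_n$ via the functor $_{1^n}S_{\mathfrak{S}^{n-1}} \otimes - \otimes {}_{\mathfrak{S}^{n-1}}M_{1^n}$), I compute
\[
[\delta, f] = [d, f] + (h'f - fh)(y_n - y_1) = 0,
\]
where the first term vanishes because $f$ is a chain map in the uncurved setting and the second vanishes by the identity $fh = h'f$ from Lemma \ref{lem: dot_slide_ft} (after applying the functor above).

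For $\tilde{g}$, I would guess the strict lift $\tilde{g} := g + \tilde{\chi}(y_n - y_1)$, with $\tilde{\chi}$ as in Lemma \ref{lem: dot_slide_ft} (again after applying the singular functor). Separating $[\delta, \tilde{g}]$ by $\mathbb{Y}$-degree, the degree $0$ component vanishes because $g$ is a chain map, and the linear component is $([d, \tilde{\chi}] - (gh' - hg))(y_n - y_1) = 0$ by construction of $\tilde{\chi}$. The quadratic component $(h\tilde{\chi} - \tilde{\chi}h')(y_n - y_1)^2$ remains, and here is where the main difficulty lies. I would dispatch it by direct inspection of the underlying components: since $\tilde{\chi} = \chi g$ with $\chi$ having a single nonzero component $\chi_3 \colon q^{-2}t^2 I \to q^{-1}[k]t B$, and $h'$ is supported in one summand of the minimal complex away from the image of $\chi$, both compositions can be read off from the diagrams in the proof of Lemma \ref{lem: dot_slide_ft} and shown to vanish termwise.

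Granted that $f$ and $\tilde{g}$ are chain maps of curved complexes, the deformation retraction property breaks into two checks. The equality $f\tilde{g} = \text{id}_{C_n^{y_n}}$ is immediate: $f\tilde{g} = fg + f\tilde{\chi}(y_n - y_1) = \text{id} + 0$ using $fg = \text{id}_{C_n}$ from Proposition \ref{prop: ft_min} and $f\tilde{\chi} = 0$ from Lemma \ref{lem: dot_slide_ft}. For $\tilde{g}f \sim \text{id}_{STM_n^{y_n}}$, I would observe that the degree $0$ component is $gf - \text{id}_{STM_n}$, which is nullhomotopic in the uncurved setting, and appeal to Corollary \ref{corr: GHobs2} to promote a null-homotopy to the $y$-ified setting. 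The cohomological hypothesis of that corollary — that $\text{End}^{\bullet}_{\text{Ch}(SSBim_n)}(STM_n)$ has cohomology concentrated in non-negative degrees — follows from the same chain of homotopy equivalences $\text{End}^{\bullet}(STM_n) \simeq \text{End}^{\bullet}(B_{w_1}J_n) \simeq \text{End}^{\bullet}(B_{w_1}J_nJ_n^{-1}) \simeq \text{End}^{\bullet}(B_{w_1})$ used at the end of Corollary \ref{cor: half_forkslyde}, together with the fact that $B_{w_1}$ is concentrated in homological degree $0$.

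The hard part, as noted, is the quadratic-in-$(y_n - y_1)$ obstruction to $\tilde{g}$ being a chain map. While the formal identities from Lemma \ref{lem: dot_slide_ft} handle everything to linear order, strictness of the lift forces one to either do a component-wise computation of $h\tilde{\chi}$ and $\tilde{\chi}h'$ using the explicit form $\tilde{\chi} = \chi g$, or else prove a second-order naturality result for dot-slides against the fork-slide homotopy. In either case, this reduces (via the same morphism-space considerations as in the proof of Lemma \ref{lem: dot_slide_ft}) to a finite graded-dimension count, which I expect to carry out along the same lines as the explicit diagram chases of Appendix \ref{app: dot_slide_nat}.
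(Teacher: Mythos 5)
Your proposal is correct and follows essentially the same route as the paper: the same unmodified lift of $f$ via $fh = h'f$, the same strict lift $\tilde{g} = g + \tilde{\chi}(y_n - y_1)$, the same cancellation of the $\mathbb{Y}$-linear term via $[d,\tilde{\chi}] = gh' - hg$, the identity $f\tilde{g} = \mathrm{id}$ from $f\tilde{\chi} = 0$, and Corollary \ref{corr: GHobs2} together with the chain of equivalences $\text{End}^{\bullet}(STM_n) \simeq \text{End}^{\bullet}(B_{w_1}) $ for $\tilde{g}f \sim \mathrm{id}$.

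The one place you diverge is the quadratic obstruction $(h\tilde{\chi} - \tilde{\chi}h')(y_n - y_1)^2$, which you flag as the main difficulty and propose to kill by a component-wise diagram chase (or a second-order naturality statement) using $\tilde{\chi} = \chi g$. The paper disposes of it in one line: $h\tilde{\chi} - \tilde{\chi}h'$ lies in $\text{Hom}^{-3}_{\text{Ch}(SSBim_n)}(C_n, STM_n)$, and since both $C_n$ and $STM_n$ are concentrated in homological degrees $0,1,2$, any morphism of homological degree $-3$ between them is identically zero. Your component argument does work (each composite factors through empty homological degrees, e.g.\ $\chi g h' = 0$ because $h'$ lands in degree $0$ while $\chi$ is supported on the degree-$2$ term $q^{-2}t^2 I$, whose target is in fact $q^{-1}[k]B$ rather than $q^{-1}[k]tB$ as you wrote), but it is local bookkeeping for what is globally a trivial degree count; no graded-dimension computation or further appendix-style chase is needed.
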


\begin{proof}
It is easily verified from the explicit description of Proposition \ref{prop: forkslyde} that the connection on $C_n^{y_n}$ is given by $\delta_{C_n} = d_{C_n} + h'(y_n - y_1)$. That $f$ lifts to a chain map of curved complexes follows from Proposition \ref{lem: dot_slide_ft} exactly as in the proof of Corollary \ref{cor: half_forkslyde}. Set $\tilde{g} := g + \tilde{\chi} (y_n - y_1)$. We check that $\tilde{g}$ is a chain map via direct computation:

\begin{align*}
    [\delta, \tilde{g}] & = (d_{STM_n} + h(y_n - y_1)) (g + \tilde{\chi} (y_n - y_1)) - (g + \tilde{\chi} (y_n - y_1)) (d_{C_n} + h' (y_n - y_1)) \\
    & = [d, g] + ([d, \tilde{\chi}] + (hg - gh'))(y_n - y_1) + (h \tilde{\chi} - \tilde{\chi} h') (y_n - y_1)^2 \\
    & = (h \tilde{\chi} - \tilde{\chi} h') (y_n - y_1)^2
\end{align*}

Note that $h \tilde{\chi} - \tilde{\chi} h' \in \text{Hom}^{-3}_{\text{Ch}(SSBim_n)}(C_n, STM_n)$. Since both $C_n$ and $STM_n$ are concentrated in homological degrees $0$, $1$, and $2$, we must have $h \tilde{\chi} - \tilde{\chi} h' = 0$. Hence $\tilde{g}$ is closed.

Since $f \tilde{\chi} = 0$ by Lemma \ref{lem: dot_slide_ft}, we have $f \tilde{g} = f (g + \tilde{\chi} (y_n - y_1)) = fg = \text{id}_{C_n^{y_n}}$. That $\tilde{g} f \sim \text{id}_{STM_n^{y_n}}$ follows from Corollary \ref{corr: GHobs2} exactly as in the proof of Corollary \ref{cor: half_forkslyde}.
\end{proof}

\begin{proof}[Proof of Proposition \ref{prop: forkslyde}]
Set $\tilde{\mu} = \overline{\mu} \tilde{g}$, and observe that $\mu = \overline{\mu} g$ and $\nu = f \overline{\nu}$.
\end{proof}

\bibliographystyle{alpha}
\bibliography{Columns_Homfly}

\end{document}